\newcounter{fonts}
\let\eeee\edef
\csname \Alph{fonts}\Alph{fonts}\endcsname{\noexpand\mathbb{\Alph{fonts}}} 
\newcommand{\arxiv}[1]{\href{http://arxiv.org/abs/#1}{\tt arXiv:\nolinkurl{#1}}}
\newtheorem{thm}[subsection]{Theorem}
\newtheorem*{thm*}{Theorem}
\newtheorem{prop}[subsection]{Proposition}
\newtheorem{lem}[subsection]{Lemma}
\newtheorem{cor}[subsection]{Corollary}
\theoremstyle{definition}
\newtheorem{defn}[subsection]{Definition}
\newtheorem*{defn*}{Definition}
\newtheorem{remark}[subsection]{Remark}
\newtheorem{remarks}[subsection]{Remarks}
\newtheorem{example}[subsection]{Example}
\newtheorem{question}[subsection]{Question}
\numberwithin{equation}{subsection}
\newcommand\sm{\smallskip}
\newcommand{\lv}[1]{} 
\newcounter{question} \setcounter{question}{0}
\newcounter{suggestion}\setcounter{suggestion}{0}
\newcommand{\simlgr}{\buildrel \sim \over \longrightarrow}
\newcommand{\wdh}{\widehat}
\newcommand{\fppf}{_\mathrm{fppf}}
\newcommand{\longto}{\longrightarrow}
\newcommand{\we}{\wedge}
\def\co{\colon}
\def\ot{\otimes} 
\def\op{^{\rm op}}
\newcommand{\me}{^{-1}}
\def\dar[#1]{\ar@<2pt>[#1]\ar@<-2pt>[#1]}
\newcommand{\alg}{_\mathrm{alg}}
\newcommand{\sep}{_\mathrm{sep}}
\newcommand\reallywidehat[1]{%
\savestack{\tmpbox}{\stretchto{%
  \scaleto{%
    \scalerel*[\widthof{\ensuremath{#1}}]{\kern.1pt\mathchar"0362\kern.1pt}%
    {\rule{0ex}{\textheight}}
  }{\textheight}%
}{2.4ex}}%
\stackon[-6.9pt]{#1}{\tmpbox}%
}
\newcommand{\Aut}{\operatorname{Aut}}
\newcommand{\Br}{\operatorname{Br}} 
\newcommand{\End}{\operatorname{End}}
\newcommand{\Gal}{\operatorname{Gal}}
\newcommand{\GL}{\mathbf{GL}}
\newcommand{\Hom}{\operatorname{Hom}}
\newcommand{\cHom}{\mathcal{H}\hspace{-0.4ex}\textit{o\hspace{-0.2ex}m}} 
\newcommand{\cEnd}{\mathcal{E}\hspace{-0.4ex}\textit{n\hspace{-0.2ex}d}} 
\newcommand{\cSym}{\mathcal{S}\hspace{-0.5ex}\textit{y\hspace{-0.3ex}m}}
\newcommand{\cSymd}{\mathcal{S}\hspace{-0.5ex}\textit{y\hspace{-0.3ex}m\hspace{-0.2ex}d}}
\newcommand{\cSkew}{\mathcal{S}\hspace{-0.4ex}\textit{k\hspace{-0.2ex}e\hspace{-0.3ex}w}}
\newcommand{\Id}{\operatorname{Id}}
\newcommand{\inc}{{\operatorname{inc}}}
\newcommand{\Isom}{\operatorname{Isom}}
\newcommand{\Ker}{\operatorname{Ker}}
\newcommand{\Mat}{{\operatorname{M}}}
\newcommand{\Nrd}{\operatorname{Nrd}}
\newcommand{\PGO}{{\mathbf{PGO}}}
\newcommand{\PGL}{\mathbf{PGL}}
\newcommand{\res}{\operatorname{res}}
\newcommand{\Spec}{\operatorname{Spec}}
\newcommand{\Span}{\operatorname{Span}}
\newcommand{\Symm}{{\operatorname{Sym}}}
\newcommand{\SL}{\mathbf{SL}}
\newcommand{\Trd}{\operatorname{Trd}}
\newcommand{\tr}{\operatorname{tr}}
\newcommand{\Tr}{\operatorname{Tr}}
\newcommand{\uSL}{\mathbf {SL}}
\newcommand{\Ralg}{R\mathchar45\mathbf{alg}}
\newcommand{\Sch}{\mathfrak{Sch}} 
\newcommand{\Aff}{\mathfrak{Aff}} 
\newcommand{\Sh}{\mathfrak{Sh}} 
\newcommand{\Sets}{\mathfrak{Sets}} 
\newcommand{\Rings}{\mathfrak{Rings}} 
\newcommand{\Ab}{\mathfrak{Ab}} 
\newcommand{\Grp}{\mathfrak{Grp}} 
\newcommand{\fMod}{\mathfrak{Mod}} 
\newcommand{\QCoh}{\mathfrak{QCoh}}
\newcommand\al{\alpha}
\newcommand\be{\beta}
\newcommand\ga{\gamma} 
\newcommand\Ga{\Gamma}
\newcommand\veps{\varepsilon}
 \newcommand\vphi{\varphi}
\newcommand\si{\sigma}
\newcommand\ze{\zeta}
\newcommand{\bmu}{\boldsymbol{\mu}}
\newcommand{\bnu}{\boldsymbol{\nu}}
\newcommand{\bgamma}{\boldsymbol{\gamma}}
\newcommand{\boeta}{\boldsymbol{\eta}}
\newcommand{\und}{\underline{\hspace{2ex}}} 
\newcommand{\Inn}{\mathrm{Inn}} 
\newcommand{\inj}{\hookrightarrow}
\newcommand{\surj}{\twoheadrightarrow}
\newcommand{\iso}{\overset{\sim}{\longrightarrow}}
\newcommand{\norm}{\mathrm{norm}}
\newcommand{\bnorm}{\mathbf{norm}}
\newcommand{\Seg}{\mathrm{Seg}}
\newcommand{\PSeg}{\mathrm{PSeg}}
\newcommand{\SP}{\mathbf{Sp}}
\newcommand{\PSP}{\mathbf{PSp}}
\newcommand{\bPGO}{\mathbf{PGO}}
\newcommand{\cIsom}{\mathcal{I}\hspace{-0.35ex}\textit{s\hspace{-0.15ex}o\hspace{-0.15ex}m}}
\newcommand{\NPL}{\mathfrak{NPL}} 
\newcommand{\fAzu}{\mathfrak{Azu}}
\newcommand{\fVec}{\mathfrak{Vec}}
\newcommand{\fSh}{\mathfrak{Sh}}
\newcommand{\Sd}{S^{\sqcup d}}
\newcommand{\fTors}{\mathfrak{Tors}}
\newcommand{\fForms}{\mathfrak{Forms}}
\newcommand{\flf}{_\mathrm{flf}}
\newcommand{\QAlg}{\mathfrak{QAlg}}
\newcommand{\etale}{\mathrm{-\acute{e}t}}
\mathchardef\mdash="2D
\newcommand{\fTMod}{T\mdash\fMod}
\newcommand{\fTAzu}{T\mdash\fAzu}
\newcommand{\cTEnd}{T\mdash\cEnd}
\newcommand{\fAff}{\mathfrak{AffMor}}
\newcommand{\fAffMor}{\mathfrak{AffMor}}
\DeclareMathOperator{\TS}{TS}
\DeclareMathOperator{\T}{T}
\DeclareMathOperator{\RN}{\mathcal{RN}}
\DeclareMathOperator{\FN}{\mathcal{FN}}
\newcommand{\bAut}{\mathcal{A}\hspace{-0.4ex}\textit{u\hspace{-0.2ex}t}}
\newcommand{\cAut}{\mathcal{A}\hspace{-0.4ex}\textit{u\hspace{-0.2ex}t}}
\newcommand{\Cl}{\mathcal{C}\hspace{-0.3ex}\ell}
\DeclareMathOperator{\sw}{sw}
\DeclareMathOperator{\Img}{Img}
\newcommand{\tens}{\mathrm{tens}}
\DeclareMathOperator{\ST}{TS}
\newcommand{\FFN}{\FN}
\DeclareMathOperator{\Arr}{Arr}
\begin{document}
\title[The Norm Functor]{The Norm Functor over Schemes}
\author[P. Gille]{Philippe Gille}
\address{UMR 5208 du CNRS - Institut Camille Jordan - Universit\'e Claude Bernard Lyon 1, 43 boulevard du
11 novembre 1918, 69622 Villeurbanne cedex - France \\
and Institute of Mathematics ``Simion Stoilow" of the Romanian Academy,
21 Calea Grivitei Street, 010702 Bucharest, Romania. }
\email{gille@math.univ-lyon1.fr}

\author[E. Neher]{Erhard Neher}
\address{Department of Mathematics and Statistics, University of Ottawa, 150 Louis-Pasteur Private,
Ottawa, Ontario, Canada, K1N 9A7}
\email{Erhard.Neher@uottawa.ca}

\author[C. Ruether]{Cameron Ruether}
\address{The ``Simion Stoilow" Institute of Mathematics of the Romanian Academy, 21 Calea Grivitei Street, 010702 Bucharest, Romania.}
\email{cameronruether@gmail.com}

\thanks{The first and third authors were supported by the project ``Group schemes, root systems, and related representations" founded by the European Union - NextGenerationEU through Romania's National Recovery and Resilience Plan (PNRR) call no. PNRR-III-C9-2023-I8, Project CF159/31.07.2023, and coordinated by the Ministry of Research, Innovation and Digitalization (MCID) of Romania. The research of the second author was partially supported by an NSERC grant. The research of the third author was also partially supported by the NSERC grants of the second author, Kirill Zainoulline,  Mikhail Kotchetov, and Yorck Sommerh\"auser. All three authors thank the referee for their thorough and insightful comments.}

\date{December 10, 2024.}

\maketitle

\noindent{\bf Abstract:} We construct a globalization of Ferrand's norm functor over rings which generalizes it to the setting of a finite locally free morphism of schemes $T\to S$ of constant rank. It sends quasi-coherent modules over $T$ to quasi-coherent modules over $S$. These functors restrict to the category of quasi-coherent algebras. We also assemble these functors into a norm morphism from the stack of quasi-coherent modules over a finite locally free of constant rank extension of the base scheme into the stack of quasi-coherent modules. This morphism also restricts to the analogous stacks of algebras. Restricting our attention to finite \'etale covers, we give a cohomological description of the norm morphism in terms of the Segre embedding. Using this cohomological description, we show that the norm gives an equivalence of stacks of algebras $A_1^2 \equiv D_2$, akin to the result shown in The Book of Involutions.
\medskip

\noindent{\bf Keywords: Algebraic Groups, Azumaya algebras, Exceptional Isomorphism, Norm functor}
\medskip
\noindent {\em MSC 2020: 16H05, 14F20, 20G10, 20G35}
\bigskip

\tableofcontents

\section*{Introduction}
{%
\renewcommand{\thesubsection}{\Alph{subsection}}
One of the coincidences in the theory of algebraic groups is the exceptional isomorphism between the Dynkin diagrams of type $A_1+A_1$ and of type $D_2$. One way this manifests is as an isomorphism between split simply connected groups $\SL_2\times \SL_2 \cong \mathbf{Spin}_4$, or between split adjoint groups $\PGL_2\times \PGL_2 \cong \mathbf{PSO}_4$. However, due to the relationship between algebraic groups and algebras with involution, this also manifests as the following equivalence of groupoids shown in \cite[15.B]{KMRT}. Let $\FF$ be an arbitrary field and
\begin{enumerate}[label={\rm(\roman*)}]
\item let $A_1^2$ be the groupoid of Azumaya algebras of degree $2$ over a quadratic \'etale extension of $\FF$ with $\FF$--algebra isomorphisms as arrows, and
\item let $D_2$ be the groupoid of central simple $\FF$--algebras of degree $4$ equipped with quadratic pairs (see \cite[\S 5]{KMRT}) with $\FF$--algebra isomorphisms respecting the quadratic pair as arrows.
\end{enumerate}
Then, there is an equivalence of categories $A_1^2 \equiv D_2$. In particular, they show in \cite[15.7]{KMRT} that a \emph{norm functor} $\bN \colon A_1^2 \to D_2$ provides this equivalence. 

The norm functor used in \cite{KMRT} is with respect to finite \'etale extensions of the base field. It is a generalization of the \emph{corestriction} with respect to a finite separable field extension $\KK/\FF$ introduced by Riehm in \cite{Riehm}. Riehm's corestriction sends a central simple $\KK$--algebra $A$ of degree $r$ to a central simple $\FF$--algebra $\mathrm{cor}_{\KK/\FF}(A)$ of degree $r^{[\KK:\FF]}$ in such a way that the induced map on Brauer groups
\begin{align*}
\Br(\KK)=H^2(\Gal(\FF\sep,\KK),\FF\sep^\times) &\to H^2(\Gal(\FF\sep,\FF),\FF\sep^\times)=\Br(\FF) \\
[A] &\mapsto [\mathrm{cor}_{\KK/\FF}(A)]
\end{align*} 
agrees with the usual corestriction in Galois cohomology. This was generalized by Knus and Ojanguren, who defined the norm functor of a finite \'etale extensions of rings in \cite{KO75}, and theirs is the version used in \cite{KMRT}. The norm functor was then extended further to the case of a finite locally free extension of rings in two slightly different ways, one by Ferrand in \cite{F} and one by Rost in a preprint \cite{Rost}. Ferrand's norm agrees with the previous norms in the \'etale extension case, see \cite[\S 5]{F}. The Ferrand norm and the Rost norm agree with each other in many common cases, for example on flat modules, see Proposition \ref{noho}. However, these two norms are not isomorphic to one another in general. We review Ferrand's construction in Section \ref{sec_Ferrands_norm}, pointing out its important properties. In particular we review how it is compatible with arbitrary base change. In Section \ref{sec_Rosts_norm}, we explain Rost's construction, show how it differs from Ferrand's, and that it is not compatible with base change in general.

In this paper we continue the ``tradition" of extending the norm functor to new settings. We fix a base scheme $S$ and work on the big fppf ringed site $(\Sch_S,\cO)$ of schemes over $S$ with the global sections functor $\cO$. In fact, we extend the norm functor to a morphism of stacks over $\Sch_S$ between certain stacks of quasi-coherent sheaves. We are able to do so because Ferrand's norm is compatible with arbitrary base change. Precisely, for a finite locally free ring extension $R \to R'$, let $N_{R'/R}\colon \fMod_{R'} \to \fMod_R$ denote Ferrand's norm functor. If $R \to Q$ is any other ring homomorphism, thus making $Q\to R'\otimes_R Q$ a finite locally free extension as well, then for any $R'$--module $M'$ there is an isomorphism
\[
N_{R'/R}(M')\otimes_R Q \cong N_{(R'\otimes_R Q)/Q}(M'\otimes_R Q)
\]
and this is functorial in $M'$. This compatibility with tensor products is exactly what allows the norm to be generalized to quasi-coherent sheaves as they are characterized by a similar condition, see Lemma \ref{lem_quasi_coh_characterization}. Our construction is a general one which takes
\begin{enumerate}[label={\rm(\roman*)}]
\item a family $\fI$ of affine morphisms in $\Sch_S$ which is closed under arbitrary pullbacks and which allows descent (precisely, $\fI$ should be a substack of the stack of affine morphisms $\fAffMor$ as in Appendix \ref{app_stack_morphism}),
\item for each $h\colon U' \to U$ in $\fI$ with $U$ and $U'$ affine, a functor
\[
\cF_h \colon \fMod_{\cO(U')} \to \fMod_{\cO(U)},
\]
\item and for every fiber product diagram in $\Sch_S$
\[
\begin{tikzcd}
V' \ar[r] \ar[d,"h'"] & U' \ar[d,"h"] \\
V \ar[r] & U
\end{tikzcd}
\] 
where $U,U',V,V'$ are all affine, a natural isomorphism of functors
\[
\cF_h(\und) \otimes_{\cO(U)} \cO(V) \iso \cF_{h'}(\und \otimes_{\cO(U')} \cO(V'))
\]
\end{enumerate}
and assembles them into a morphism of stacks
\[
\cF\colon \QCoh_{\fI} \to \QCoh.
\]
Here, $\QCoh_{\fI}$ is the stack with objects $(T' \to T,\cM')$ consisting of a morphism $T' \to T$ in $\fI$ and a quasi-coherent $\cO|_{T'}$--module $\cM'$, while $\QCoh$ is the stack with objects $(X,\cM)$ consisting of a scheme $X\in \Sch_S$ and a quasi-coherent $\cO|_X$--module $\cM$. For a review of quasi-coherent modules on $(\Sch_S,\cO)$ and the details of this construction, see Appendix \ref{app_quasi_coh}.

Unsurprisingly, Ferrand's norm functors, ranging over the family of finite locally free morphisms of a fixed degree $d$, satisfy the necessary conditions of the constructions in Appendix \ref{app_quasi_coh}. When $\fI$ is the family of finite locally free morphisms of degree $d$, we write $\QCoh_\fI = \QCoh\flf^d$. Therefore, we obtain a stack morphism $N \colon \QCoh\flf^d \to \QCoh$, as well as a functor $N_{T/S}\colon \QCoh(T) \to \QCoh(S)$ between categories of quasi-coherent $\cO|_T$--modules and quasi-coherent $\cO$--modules for any finite locally free morphism $T\to S$ of degree $d$. We verify this in Section \ref{sec_norm_of_modules}. The functor $N_{T/S}$ of course has additional specific properties analogous to Ferrand's functor, most of which are in regards to polynomial laws. Since $f\colon T\to S$ is finite locally free, by \cite[Tag 0BD2]{Stacks} there is a functor
\[
\norm \colon f_*(\cO|_T) \to \cO
\]
which arises from the determinant of left multiplication by elements of $f_*(\cO|_T)$ on itself. Given a quasi-coherent $\cO|_T$--module $\cM$, we define a \emph{normic polynomial law} to be a natural transformation $\phi\colon f_*(\cM) \to \cN$, where $\cN$ is a quasi-coherent $\cO$--module, such that
\[
\phi(tm) = \norm(t)\phi(m)
\]
holds for all sections $t\in f_*(\cO|_T)(X)$ and $m\in f_*(\cM)(X)$ and for all $X\in \Sch_S$. We show that the norm functor $N_{T/S}$ has the following properties.

\begin{thm} \label{intro_norm_thm}
Let $f\colon T\to S$ be a finite locally free morphism of schemes and let $N_{T/S} \colon \QCoh(T) \to \QCoh(S)$ be the norm functor.
\begin{enumerate}[label={\rm(\roman*)}]
\item \label{intro_norm_thm_i} For every quasi-coherent $\cM$ over $T$ there exists a normic polynomial law $\nu_{\cM} \colon f_*(\cM)\to N_{T/S}(\cM)$ such that the pair $(N_{T/S}(\cM), \nu_{\cM})$ is universal in the following sense; if $\nu' \colon f_*(\cM) \to \cN'$ is any other normic polynomial law into a quasi-coherent $\cO$--module, then there is a unique $\cO$--module morphism $\varphi \colon N_{T/S}(\cM) \to \cN'$ such that $\nu' = \varphi \circ \nu_{\cM}$.

\item \label{intro_norm_thm_ii} The universal property determines the image of $N_{T/S}$ on morphisms. If $\varphi \colon \cM_1 \to \cM_2$ is a morphism of quasi-coherent modules over $T$, then $\nu_{\cM_2}\circ f_*(\varphi)$ is a normic polynomial law and $N_{T/S}(\varphi)$ is the unique $\cO$--module morphism making the diagram
\[
\begin{tikzcd}
f_*(\cM_1) \ar[r,"\nu_{\cM_1}"] \ar[d,"f_*(\varphi)"] & N_{T/S}(\cM_1) \ar[d,"N_{T/S}(\varphi)"] \\
f_*(\cM_2) \ar[r,"\nu_{\cM_2}"] & N_{T/S}(\cM_2)
\end{tikzcd}
\]
commute.

\item \label{intro_norm_thm_v} The norm functor and universal normic polynomial law respects base change. If we have a fiber product diagram in $\Sch_S$
\[
\begin{tikzcd}
T' \ar[r,"g'"] \ar[d,"f'"] & T \ar[d,"f"] \\
S' \ar[r,"g"] & S
\end{tikzcd}
\]
then there is an isomorphism of functors $N_{T'/S'}\circ g'^* \iso g^*\circ N_{T/S}$ and for any $\cM\in \QCoh(T)$ the diagram
\[
\begin{tikzcd}
f'_*(\cM|_{T'}) \ar[r,"\nu_{(\cM|_{T'})}"] \ar[d,"\mathrm{can}"] & N_{T'/S'}(\cM|_{T'}) \ar[d,"\rotatebox{90}{$\sim$}"] \\
f_*(\cM)|_{S'} \ar[r,"(\nu_{\cM})|_{S'}"]  & N_{T/S}(\cM)|_{S'}
\end{tikzcd}
\]
commutes.

\item \label{intro_norm_thm_iii} $N_{T/S}(\cO|_T) = \cO$ and $\nu_{\cO|_T} = \norm \colon f_*(\cO|_T) \to \cO$.

\item \label{intro_norm_thm_iv} If $\cB$ is a quasi-coherent $\cO|_T$--algebra, then $N_{T/S}(\cB)$ is naturally a quasi-coherent $\cO$--algebra and $\nu_{\cB}$ is multiplicative. The norm also preserves algebra homomorphisms and thus restricts to the categories of quasi-coherent algebras.
\end{enumerate}

Further, if $T\to S$ is finite \'etale of degree $d$ we have the following.
\begin{enumerate}[resume,label={\rm(\roman*)}]
\item \label{intro_norm_thm_vi} $N_{T/S}$ sends (finite) locally free modules to (finite) locally free modules.
\item \label{intro_norm_thm_vii}  $N_{T/S}$ sends Azumaya algebras to Azumaya algebras.
\item \label{intro_norm_thm_viii} $N_{T/S}$ sends locally free $\cO|_T$--modules of constant rank $r$ to locally free modules of constant rank $r^d$ and it sends Azumaya algebras of constant degree $r$ to Azumaya algebras of constant degree $r^d$.
\end{enumerate}
\end{thm}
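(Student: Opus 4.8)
The plan is to build everything on Ferrand's construction over rings together with the compatibility with flat (in fact arbitrary) base change, then glue. For part \ref{intro_norm_thm_i}, I would first establish the ring-theoretic statement: for a finite locally free ring map $R\to R'$ of rank $d$, Ferrand's $N_{R'/R}(M')$ comes equipped with a universal normic polynomial law $\nu_{M'}$ in the sense of \cite{F}. Then, given $\cM\in\QCoh(T)$, I would \emph{define} $\nu_{\cM}$ locally: on an affine $U=\Spec R$ of $S$ with preimage an affine $U'=\Spec R'$ of $T$ (such $U$ form a basis since $f$ is affine), take the sheafification of $R'\mapsto$ Ferrand's $\nu_{M'}$. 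The key point is that these local polynomial laws glue to a natural transformation $f_*(\cM)\to N_{T/S}(\cM)$ of functors on $\Sch_S$, which follows from the base-change natural isomorphism in ingredient (iii) of the general construction in Appendix \ref{app_quasi_coh} (applied to restrictions $\cO(V)\to\cO(V')$); compatibility of Ferrand's $\nu$ with base change is exactly \cite[\S3]{F}. Once $\nu_{\cM}$ exists, the universal property is checked stalk-locally (equivalently, on affines): a normic polynomial law $\nu'\colon f_*(\cM)\to\cN'$ restricts on each affine $U$ to a normic polynomial law in Ferrand's sense, Ferrand's universal property produces a unique $\cO(U)$-module map $\varphi_U$, and uniqueness forces the $\varphi_U$ to agree on overlaps, hence glue to $\varphi\colon N_{T/S}(\cM)\to\cN'$. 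Part \ref{intro_norm_thm_ii} is then formal from \ref{intro_norm_thm_i}: $\nu_{\cM_2}\circ f_*(\varphi)$ is visibly normic (the defining relation only involves postcomposition), so it factors uniquely, and one checks this factorization coincides with $N_{T/S}(\varphi)$ by comparing on affines with Ferrand's functoriality.

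Part \ref{intro_norm_thm_v} is again a gluing statement: the isomorphism of functors $N_{T'/S'}\circ g'^*\iso g^*\circ N_{T/S}$ is part of the output of the general stack construction (it is the coherence data making $N$ a morphism of stacks), so the only new content is that $\nu$ is transported correctly, and this reduces affine-locally to Ferrand's base-change compatibility of $\nu$. For \ref{intro_norm_thm_iii}, over a ring $R\to R'$ one has $N_{R'/R}(R')=R$ with $\nu_{R'}=\norm$, which is the determinant of left multiplication — this is in \cite{F} and matches the definition of $\norm$ via \cite[Tag 0BD2]{Stacks}; globalize by gluing. Part \ref{intro_norm_thm_iv}: if $\cB$ is a quasi-coherent $\cO|_T$-algebra, multiplication $\cB\otimes_{\cO|_T}\cB\to\cB$ together with the monoidal-type behaviour of the norm (Ferrand shows $N_{R'/R}$ is, up to canonical isomorphism, compatible with tensor products, giving $N(\cB)\otimes N(\cB)\to N(\cB\otimes\cB)\to N(\cB)$) endows $N_{T/S}(\cB)$ with an algebra structure; associativity and unitality are checked on affines where they are Ferrand's, and multiplicativity of $\nu_{\cB}$ likewise. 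Preservation of algebra homomorphisms follows from \ref{intro_norm_thm_ii}.

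For the finite étale statements \ref{intro_norm_thm_vi}--\ref{intro_norm_thm_viii}, I would argue by fppf descent using \ref{intro_norm_thm_v}. A finite étale $T\to S$ of degree $d$ splits after an fppf (even finite étale) cover $S'\to S$, becoming $T\times_S S'\cong \coprod_{i=1}^d S'$. Over such a split cover the norm is, by the tensor-product compatibility, simply the $d$-fold tensor product $\cM_1\otimes_{\cO}\cdots\otimes_{\cO}\cM_d$ of the components — this is the standard description of the norm for a split étale extension, and I would prove it by iterating Ferrand's identification $N_{(R'\times R'')/R}\cong N_{R'/R}\otimes_R N_{R''/R}$ together with $N_{R/R}=\id$. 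From that explicit form, a $d$-fold tensor product of (finite) locally free modules is (finite) locally free, a $d$-fold tensor product of Azumaya algebras is Azumaya (tensor product of Azumaya algebras over the same base is Azumaya), and ranks/degrees multiply: rank $r$ gives rank $r^d$, degree $r$ gives degree $r^d$. Then \ref{intro_norm_thm_v} plus fppf descent of the properties "finite locally free of rank $n$" and "Azumaya of degree $n$" (these are fppf-local on the base) transport the conclusions back down to $S$. The main obstacle I anticipate is not any single step but the bookkeeping of coherences: verifying that the locally defined $\nu_{\cM}$ genuinely assemble into a natural transformation of functors on all of $\Sch_S$ (not merely on affines), which requires that the base-change isomorphisms of ingredient (iii) satisfy the cocycle condition — this is presumably handled once and for all in Appendix \ref{app_quasi_coh}, so the real work here is citing it correctly and checking Ferrand's $\nu$ fits the hypotheses; and, for \ref{intro_norm_thm_vii}, making sure the algebra structure of \ref{intro_norm_thm_iv} is the one that becomes the tensor-product algebra structure after the splitting base change, so that "Azumaya" can be checked upstairs.
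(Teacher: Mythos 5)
Your outline for parts \ref{intro_norm_thm_i}--\ref{intro_norm_thm_v} tracks the paper's own proof quite closely: the universal normic law $\nu_{\cM}$ is built from Ferrand's ring-level construction over affines and glued using the base-change coherences of Appendix \ref{app_quasi_coh} (this is Proposition \ref{prop_universal_property}), the functoriality statement \ref{intro_norm_thm_ii} is the formal Corollary \ref{cor_morphism_from_universal}, and base-change compatibility \ref{intro_norm_thm_v} is packaged by Lemma \ref{lem_phi_isomorphism} together with Corollary \ref{cor_unviersal_base_change}. For \ref{intro_norm_thm_iii} your direct appeal to Ferrand's $N_{R'/R}(R')=R$ and $\bnu_{R'}=\bnorm$ (Theorem \ref{Ferrand_property}(i)) is in fact the cleaner citation; the paper's pointer to Example \ref{ex_split_norm} really only covers the split \'etale cover, whereas the theorem is stated for arbitrary finite locally free $T\to S$.

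Where you genuinely diverge is in \ref{intro_norm_thm_vi}--\ref{intro_norm_thm_viii}. The paper deduces the rank claim \ref{intro_norm_thm_viii} directly from Ferrand's projective-rank computation (Lemma \ref{lem_norm_modules_etale} cites \cite[4.1.3]{F} without splitting) and the Azumaya claim from a dedicated compatibility statement, Lemma \ref{lem_norm_etale_neutral_azu}: for $T\to S$ finite \'etale and $\cQ$ finite locally free over $T$, the normic law $\boeta$ of Lemma \ref{lem_Hom_normic} induces a canonical algebra isomorphism $N_{T/S}(\cEnd_{\cO|_T}(\cQ))\cong\cEnd_{\cO}(N_{T/S}(\cQ))$. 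Since every Azumaya algebra is fppf-locally $\cEnd$ of a locally free module, Azumaya-ness and the degree $r^d$ follow immediately, without ever comparing the norm's algebra structure to a tensor-product algebra structure. Your route --- split fppf-locally, identify the norm with a $d$-fold tensor product of components via Proposition \ref{Ferrand_property_v}, use that a tensor product of Azumaya algebras over a common commutative base is Azumaya, and descend --- is also sound, but it leans on exactly the point you flag as an anticipated obstacle: Proposition \ref{Ferrand_property_v} provides an isomorphism of \emph{modules}, so you still owe an argument that the algebra multiplication produced by Lemma \ref{lem_norm_az} agrees, after a splitting base change, with the tensor-product multiplication. This is not hard (both multiplications arise from normic laws out of $f_*(\cB\otimes_{\cO|_T}\cB)$ that agree on $\bnu'$-images, so uniqueness in Proposition \ref{prop_universal_property} and generation by the image of $\bnu$ as in Lemma \ref{lem_nu_generates} forces them to coincide), but it is a genuine step that must be supplied; the paper's $\cEnd$-based route deliberately avoids needing it.
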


Properties \ref{intro_norm_thm_i} and \ref{intro_norm_thm_ii} are shown in Proposition~\ref{prop_universal_property} and Corollary \ref{cor_morphism_from_universal} respectively. The isomorphism of functors in Property \ref{intro_norm_thm_v} comes from Lemma \ref{lem_phi_isomorphism} and the statement about the universal normic laws is Corollary \ref{cor_unviersal_base_change}. Property \ref{intro_norm_thm_iii} follows from Example \ref{ex_split_norm} after considering a sufficient localization. Property \ref{intro_norm_thm_iv} is shown in Lemma \ref{lem_norm_az}. Finally, when $T\to S$ is \'etale, property \ref{intro_norm_thm_vi} follows from Example \ref{ex_split_norm}, property \ref{intro_norm_thm_vii} follows from property \ref{intro_norm_thm_viii}, and property \ref{intro_norm_thm_viii} is Lemma \ref{lem_norm_modules_etale} and Lemma \ref{lem_ferrand}.

In Section \ref{cohomological_description} we give a description of maps on cohomology induced by the norm morphism. In particular, we restrict the norm morphism of stacks to various substacks, in fact subgerbes, which are equivalent to the gerbes of torsors for some semi-direct products of groups. As is discussed in Appendix \ref{app_semi_direct}, the cohomology set $H^1(S,(\GL_r)^d \rtimes \SS_d)$, where $\SS_d$ is the permutation group, classifies isomorphism classes of objects in the fiber over $S$ of the gerbe whose objects are pairs $(T' \to T,\cM')$ where $T'\to T$ is a degree $d$ \'etale cover and $\cM'$ is a locally free $\cO|_{T'}$--module of constant rank $r$. Similarly, isomorphism classes in the fiber over $S$ of the gerbe whose objects are pairs $(T'\to T,\cA')$, where now $\cA'$ is an $\cO|_{T'}$--Azumaya algebra of degree $r$, are classified by $H^1(S,(\PGL_r)^d\rtimes \SS_d)$. Details on the definitions of these gerbes are given at the beginning of Section \ref{cohomological_description}. We know by Theorem \ref{intro_norm_thm}\ref{intro_norm_thm_viii} that the norm will send such objects to $\cO$--modules of rank $r^d$ or Azumaya $\cO$--algebras of degree $r^d$ respectively. Isomorphism classes of these modules are classified by $H^1(S,\GL_{r^d})$ and isomorphisms of those Azumaya algebras are classified by $H^1(S,\PGL_{r^d})$. A general fact about stack morphisms, \cite[III.2.5.3]{Gir}, states that the resulting maps on isomorphism classes will be induced from group homomorphisms $(\GL_r)^d\rtimes \SS_d \to \GL_{r^d}$ and  $(\PGL_r)^d\rtimes \SS_d \to \PGL_{r^d}$. We show that these are the Segre embeddings. On the $\GL$ level, it sends elements $(A_1,\ldots,A_d)\in (\GL_r)^d$ to $A_1\otimes \ldots \otimes A_d$ and elements of $\SS_d$ to the corresponding permutation of the tensor factors of $\cO^{(r^d)} \cong (\cO^r)^{\otimes d}$. The map on the $\PGL$ level is defined similarly.
\begin{thm}
The map on cohomology sets induced by the norm functor, namely
\begin{align*}
\widetilde{N} \colon H^1(S,(\GL_r)^d\rtimes \SS_d) &\to H^1(S,\GL_{r^d})\\
[(T\to S,\cM)] &\mapsto [N_{T/S}(\cM)],
\end{align*}
agrees with the map induced by the Segre embedding $(\GL_r)^d \rtimes \SS_d \to \GL_{r^d}$. Furthermore, the behaviour of the norm functor on Azumaya algebras induces a map of cohomology sets
\begin{align*}
\widetilde{N\alg} \colon H^1(S,(\PGL_r)^d \rtimes \SS_d) &\to H^1(S,\PGL_{r^d}) \\
[(T\to S,\cA)] &\mapsto [N_{T/S}(\cA)].
\end{align*}
which likewise agrees with the map induced by the Segre embedding $(\PGL_r)^d\rtimes \SS_d \to \PGL_{r^d}$.
\end{thm}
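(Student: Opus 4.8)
The plan is to reduce the statement to a computation on the automorphism sheaf of a single split object, using the formalism of bands of gerbes. Recall from Appendix~\ref{app_semi_direct} that the source substack is a gerbe $\cG$ whose objects over $X\in\Sch_S$ are pairs $(T'\to T,\cM')$ with $T'\to T$ a degree-$d$ finite \'etale cover and $\cM'$ a locally free $\cO|_{T'}$--module of constant rank $r$; over $S$ its split object is $(\Sd\to S,\,\bigoplus_{i=1}^{d}\cO^r)$, where $\Sd=\coprod_{i=1}^{d}S$ and the module restricts to $\cO^r$ on each copy, and the automorphism sheaf of this object is $(\GL_r)^d\rtimes\SS_d$ with $\SS_d$ permuting the $d$ copies of $S$ and correspondingly the $d$ copies of $\GL_r$. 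Likewise the target gerbe of rank-$r^d$ locally free modules has split object $(S,\cO^{r^d})$ with automorphism sheaf $\GL_{r^d}$. By Theorem~\ref{intro_norm_thm}\ref{intro_norm_thm_v} and \ref{intro_norm_thm_viii}, $N$ carries objects of $\cG$ to objects of the target gerbe, hence restricts to a morphism of gerbes. By \cite[III.2.5.3]{Gir}, the induced map on $H^1$ is the one functorially attached to the induced homomorphism of bands, i.e.\ to the group homomorphism
\begin{equation*}
\widetilde N\colon\; (\GL_r)^d\rtimes\SS_d \;=\; \uAut\bigl(\Sd\to S,\, \bigoplus\nolimits_{i=1}^{d}\cO^r\bigr) \;\longrightarrow\; \uAut\bigl(N_{\Sd/S}(\bigoplus\nolimits_{i=1}^{d}\cO^r)\bigr) \;=\; \GL_{r^d}.
\end{equation*}
Equivalently, since $N$ is a morphism of stacks it is compatible with descent: a class in $H^1(S,(\GL_r)^d\rtimes\SS_d)$ is realized by a \v{C}ech cocycle $(g_{ij})$ presenting an object of $\cG$ as a twisted form of the split one, and $N$ sends this to the object presented by $(\widetilde N(g_{ij}))$, so the induced map on $H^1$ is post-composition with $\widetilde N$. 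It therefore suffices to identify $\widetilde N$ with the Segre embedding.

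For this I would use the description of the norm on the split cover. Writing $f\colon\Sd\to S$ with $\iota_i\colon S\hookrightarrow\Sd$ the inclusion of the $i$-th copy, so $f\circ\iota_i=\id_S$ and $f_*(\cO|_{\Sd})=\cO^{d}$ as an $\cO$--algebra, Ferrand's computation of the norm over a product ring (reviewed in Section~\ref{sec_Ferrands_norm}, cf.\ \cite{F}) together with $N_{S/S}=\id$ yields a canonical and functorial identification
\begin{equation*}
N_{\Sd/S}\bigl(\bigoplus\nolimits_{i=1}^{d}\cM_i\bigr)\;\cong\;\cM_1\otimes_{\cO}\cdots\otimes_{\cO}\cM_d,
\end{equation*}
compatible with base change, under which the universal normic law $\nu$ becomes $(m_1,\dots,m_d)\mapsto m_1\otimes\cdots\otimes m_d$; taking $\cM_i=\cO^r$ recovers $N_{\Sd/S}(\bigoplus\nolimits_i\cO^r)\cong(\cO^r)^{\otimes d}\cong\cO^{r^d}$, the split object of the target, as in Example~\ref{ex_split_norm}. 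By functoriality of $N$ (Theorem~\ref{intro_norm_thm}\ref{intro_norm_thm_ii}), a block-diagonal automorphism $(A_1,\dots,A_d)\in(\GL_r)^d$ of $\bigoplus\nolimits_i\cO^r$ is then sent to $N_{S/S}(A_1)\otimes\cdots\otimes N_{S/S}(A_d)=A_1\otimes\cdots\otimes A_d$ acting on $(\cO^r)^{\otimes d}$, which is exactly the $(\GL_r)^d$--part of the Segre map.

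The remaining and most delicate point is the $\SS_d$--part. I would take $\sigma\in\SS_d$ as the automorphism of $\Sd\to S$ over $\id_S$ permuting the $d$ copies of $S$, equipped with the tautological identification of $\bigoplus\nolimits_i\cO^r$ with its transport along $\sigma$, and trace the resulting automorphism of $N_{\Sd/S}(\bigoplus\nolimits_i\cO^r)$ through the identification above, using the base-change compatibility of the norm and of $\nu$ from Theorem~\ref{intro_norm_thm}\ref{intro_norm_thm_v}. The key input is that Ferrand's product decomposition is itself $\SS_d$--equivariant, so $\sigma$ acts on $(\cO^r)^{\otimes d}$ by the corresponding permutation of the $d$ tensor factors; together with the previous paragraph, and the observation that conjugating $A_1\otimes\cdots\otimes A_d$ by such a permutation matches the semidirect-product action of $\SS_d$ on $(\GL_r)^d$, this identifies $\widetilde N$ with the Segre embedding $(\GL_r)^d\rtimes\SS_d\to\GL_{r^d}$ described before the statement. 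The Azumaya case is verbatim: one replaces modules by Azumaya algebras and $\GL$ by $\PGL$ throughout, uses Theorem~\ref{intro_norm_thm}\ref{intro_norm_thm_iv} so that the product decomposition becomes a tensor product of $\cO$--algebras, notes that the corresponding gerbe is banded by $(\PGL_r)^d\rtimes\SS_d$ (Appendix~\ref{app_semi_direct}), and runs the same argument. I expect the only genuine difficulty to lie in this last step — pinning down the direction of the $\SS_d$--action and checking the $\SS_d$--equivariance of Ferrand's product decomposition — rather than in anything conceptual.
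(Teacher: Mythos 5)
Your overall strategy coincides with the paper's: realize both source and target as gerbes, identify the automorphism sheaf of the split object $(\Sd\to S,\cO|_{\Sd}^r)$ as $(\GL_r)^d\rtimes\SS_d$, invoke \cite[III.2.5.3]{Gir} (Lemma~\ref{lem_loos_cohom}) to reduce the cohomology statement to identifying the induced group homomorphism, and use Example~\ref{ex_split_norm} for the split norm. The difference lies in how you pin down that homomorphism. The paper (Theorem~\ref{thm_norm_group_hom}) processes a general section $\varphi=(A_1,\dots,A_d)\sigma$ \emph{in one stroke}: it writes out how $\varphi$ acts on $f_*(\cO|_{X^{\sqcup d}}^r)$, observes that $\bnu|_X\circ\varphi$ is still a normic polynomial law explicitly given by $(x_1,\dots,x_d)\mapsto A_1x_{\sigma^{-1}(1)}\otimes\cdots\otimes A_dx_{\sigma^{-1}(d)}$, checks by inspection that $\Seg'(\varphi)$ closes the commuting square with $\bnu$, and then the \emph{uniqueness} in Corollary~\ref{cor_morphism_from_universal} forces $N(\varphi)=\Seg'(\varphi)$. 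Your proposal instead decomposes $\varphi$ into the $(\GL_r)^d$-part (handled by functoriality and Proposition~\ref{Ferrand_property_v}) and the $\SS_d$-part (handled by a separately asserted $\SS_d$-equivariance of Ferrand's product decomposition). That route is workable — the equivariance you flag as the ``delicate point'' is indeed true and verifiable — but it is strictly more labor: you end up proving by hand an equivariance lemma that the universal property hands you for free, since uniqueness collapses the verification into a single diagram chase. Also note the paper's treatment of the Azumaya case (Corollary~\ref{cor_norm_azu_group_hom}) is lighter than what you propose: rather than rerunning the whole argument with $\PGL$ in place of $\GL$, it simply views $(\PGL_r)^d\rtimes\SS_d$ inside $(\GL_{r^2})^d\rtimes\SS_d$ via $\PGL_r\hookrightarrow\GL_{r^2}$ and pulls the conclusion through the commutative square~\eqref{eq_PSeg_GLr2} relating $\PSeg'$ and $\Seg'_{r^2}$.
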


Our cohomological description of the norm functor also extends beyond first cohomology. Over a scheme, Brauer classes of Azumaya algebras lie in the \emph{Brauer-Grothendieck group} $\Br(S) = H^2(S,\GG_m)$, where $\GG_m$ is the multiplicative group. Since we are assuming $f\colon T\to S$ is finite \'etale, we know that the norm functor preserves Azumaya algebras and thus maps classes of Azumaya algebras in $\Br(T)$ to classes in $\Br(S)$. We show that this induced action is compatible with the trace morphism $\tr \colon \Br(T) \to \Br(S)$ induced by the trace morphism $f_*(\GG_m|_T) \to \GG_m$ of \cite[IX.5.1.2]{SGA4}. The following is Proposition \ref{prop_brauer_group}.
\begin{prop}
Let $T \to S$ be a degree $d$ \'etale cover. Let $\cB$ be an Azumaya $\cO|_T$--algebra of constant degree and let $\cA$ be an Azumaya $\cO$--algebra of constant degree. Denoting Brauer classes in square brackets, we have
\begin{enumerate}[label={\rm(\roman*)}]
\item $[N_{T/S}(\cB)] = \tr([\cB]) \in \Br(S)$, and
\item $[N_{T/S}(\cA|_T)] = d[\cA] \in \Br(S)$.
\end{enumerate}
\end{prop}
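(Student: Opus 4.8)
The plan is to pass to a splitting \'etale cover, where the norm functor becomes an iterated tensor product and the statement reduces to a cocycle bookkeeping exercise controlled by the Segre description of the norm morphism.

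I would begin by recording two preliminary identifications. Since $f$ is finite, $R^q f_*(\GG_m|_T) = 0$ for $q > 0$, so $\Br(T) = H^2(T,\GG_m) = H^2(S, f_*(\GG_m|_T))$, and under this identification the trace map $\tr \colon \Br(T)\to\Br(S)$ is by definition the map on $H^2$ induced by the norm morphism $\nu \colon f_*(\GG_m|_T)\to \GG_m$ of \cite[IX.5.1.2]{SGA4}. Next, choose a faithfully flat \'etale morphism $S'\to S$ splitting all the relevant data: the cover becomes trivial, $T\times_S S' \cong \coprod_{i=1}^{d} S'$, and the pulled-back Azumaya algebra becomes a matrix algebra on each of the $d$ copies (both are \'etale-local conditions). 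By Theorem~\ref{intro_norm_thm}\ref{intro_norm_thm_v} the formation of $N_{T/S}(\cB)$ commutes with base change to $S'$, and over the trivial cover $N_{T'/S'}$ is the $d$--fold tensor product of the components (the split norm, Example~\ref{ex_split_norm}, together with Theorem~\ref{intro_norm_thm}\ref{intro_norm_thm_iv}); hence $N_{T/S}(\cB)|_{S'}\cong \rmM_r(\cO_{S'})^{\otimes d}\cong \rmM_{r^d}(\cO_{S'})$, which also re-confirms that $N_{T/S}(\cB)$ is split by $S'$.

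Next I would compare \v{C}ech $2$--cocycles on the cover $S'\to S$. Over the trivial cover the subgroup $f_*(\GG_m|_T)$ of $(\GL_r)^d\rtimes\SS_d$ becomes the group of diagonal scalars $(\GG_m)^d$; the restriction of the Segre homomorphism $(\GL_r)^d\rtimes\SS_d \to \GL_{r^d}$ to it is the multiplication map $(\GG_m)^d\to\GG_m$, and this multiplication map is precisely the restriction of $\nu$ to the trivial cover. This produces a commutative ladder of group extensions over $S$ relating $1\to f_*(\GG_m|_T)\to (\GL_r)^d\rtimes\SS_d\to (\PGL_r)^d\rtimes\SS_d\to 1$ to $1\to\GG_m\to\GL_{r^d}\to\PGL_{r^d}\to 1$, with left vertical arrow $\nu$ and outer verticals the Segre embeddings. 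The class $[(T\to S,\cB)]\in H^1(S,(\PGL_r)^d\rtimes\SS_d)$ maps under the connecting map of the top extension to $[\cB]\in H^2(S,f_*(\GG_m|_T))$, and under the Segre map on $H^1$ — which, by the cohomological description of the norm morphism established above, is $\widetilde{N\alg}$ — to $[(S,N_{T/S}(\cB))]$, which in turn maps under the connecting map of the bottom extension to $[N_{T/S}(\cB)]\in\Br(S)$. Functoriality of the connecting homomorphism along the ladder then gives $[N_{T/S}(\cB)] = \nu_*[\cB] = \tr([\cB])$, which is~(i).

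Part~(ii) follows formally: for an Azumaya $\cO$--algebra $\cA$ one has $[\cA|_T] = f^*[\cA]$ in $\Br(T)$, so~(i) gives $[N_{T/S}(\cA|_T)] = \tr(f^*[\cA])$, and $\tr\circ f^* = d\cdot\mathrm{id}$ on $\Br(S)$ because the composite $\GG_m\xrightarrow{f^{\sharp}} f_*(\GG_m|_T)\xrightarrow{\nu}\GG_m$ is the $d$--th power map (the norm of a scalar in the rank-$d$ algebra $f_*(\cO|_T)$). The step I expect to require the most care is the middle one: setting up the ladder of extensions and the associated connecting maps cleanly when $f_*(\GG_m|_T)$ is an abelian but non-central subgroup of $(\GL_r)^d\rtimes\SS_d$ — the ambient group acts on it through its quotient $\SS_d$, and this is exactly the twist turning $(\GG_m)^d$ into $f_*(\GG_m|_T)$ — and verifying that the connecting map of the top extension really does compute the Brauer class of $\cB$ regarded as an algebra over $T$.
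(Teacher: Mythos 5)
Your proposal is built around the same central observation as the paper's proof---identify the norm on Brauer classes by putting the Segre description of the norm in a commutative ladder with the $\GG_m\to\GL\to\PGL$ boundary maps---so this is not a fundamentally different strategy. The difference, and the gap, is in which extension you run the boundary map through.

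You work directly with the extension
\[
1 \to (\GG_m)^d \to (\GL_r)^d\rtimes\SS_d \to (\PGL_r)^d\rtimes\SS_d \to 1,
\]
(you write $f_*(\GG_m|_T)$ on the left, but the literal kernel of the surjection is $(\GG_m)^d$; the identification with $f_*(\GG_m|_T)$ only appears after twisting by the class of $T$). This kernel is abelian but \emph{not} central, because $\SS_d$ permutes the factors. Consequently the boundary map $H^1(S,(\PGL_r)^d\rtimes\SS_d)\to H^2$ lands in a twisted coefficient group depending on the cocycle: for $x$ with image $[T]\in H^1(S,\SS_d)$ the obstruction lies in $H^2(S,{}^x(\GG_m)^d)=H^2(S,f_*(\GG_m|_T))$. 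To carry your argument through you would need (a) the functoriality of the twisted boundary along a map of short exact sequences with non-central and central kernels respectively, in the sense of Giraud's non-abelian $H^2$, and (b) the identification of this twisted obstruction of $[(T\to S,\cB)]$ with the Brauer class of $\cB$ regarded over $T$. You explicitly flag both as the delicate steps and leave them open, so the proof as written has a genuine gap exactly where you say it will.

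The paper's proof shows that this gap is avoidable rather than merely hard. It never takes a boundary map for the non-central extension. Instead it factors $\PGL_r \to \PGL(N_{T/S}(\cO|_T^r))$ through the twisted intermediate group $f_*(\PGL_r|_T)$ and uses the three \emph{central} short exact sequences
\[
\GG_m\to\GL_r\to\PGL_r,\qquad
f_*(\GG_m|_T)\to f_*(\GL_r|_T)\to f_*(\PGL_r|_T),\qquad
\GG_m\to\GL(N_{T/S}(\cO|_T^r))\to\PGL(N_{T/S}(\cO|_T^r)),
\]
linked by $\res$ on top and by $\tr$ and $\phi\circ\inc$ (a twist of the Segre map) on the bottom. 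The middle sequence is central because $f_*$ of a central extension is again central for finite $f$, and its boundary is the usual one computing $[\cB]$ via $H^1(S,f_*(\PGL_r|_T))\cong H^1(T,\PGL_r|_T)$. All the cohomological bookkeeping then reduces to Lemma~\ref{lem_loos_cohom} together with the check that the induced map on kernels $f_*(\GG_m|_T)\to\GG_m$ is $\tr$, exactly the Segre-on-scalars computation you do. Your preliminary identifications and your part~(ii) ($[\cA|_T]=\res[\cA]$ and $\tr\circ\res=d\cdot\mathrm{id}$ via the $d$-th power) are correct and coincide with the paper's; it is part~(i) where you need either to supply the twisted-coefficient machinery or, better, to reroute through the central extension over $T$ as the paper does.
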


Our goal in Section \ref{Equivalence} is to show that the exceptional isomorphism discussed at the beginning of this introduction still occurs at the level of Azumaya algebras over schemes. In fact, we show that it holds as an equivalence of stacks. We consider the gerbes
\begin{enumerate}[label={\rm (\roman*)}]
\item $\fA_1^{2\etale}$ of quaternion algebras over a degree $2$ \'etale extension, and
\item $\fD_2$ of degree $4$ Azumaya algebras with a quadratic pair.
\end{enumerate}
Quadratic pairs are a characteristic agnostic analogue of orthogonal involutions and thus are appropriate for discussing groups of type $D$ and related objects over a general scheme. We include background on quadratic pairs in Section \ref{sec_quad_pairs}. We begin Section \ref{Equivalence} by constructing a quadratic pair over $\ZZ$ which, after restriction, acts as the split object in $\fD_n$, the gerbe of Azumaya $\cO$--algebras of degree $2n$ with quadratic pair. By restricting the Segre homomorphism to symplectic and orthogonal groups, we then show how the norm gives a morphism from the stack of Azumaya algebras of degree $2r$ with symplectic involution over an \'etale extension of degree $d$ into the stack $\fD_{2^{d-1}r^d}$. By noting that when $r=1$ and $d=2$ we get an equivalence of categories, we obtain the following generalization of \cite[15.7]{KMRT} to our setting.
\begin{thm}\label{thm_D}
There is an equivalence of stacks
\begin{align*}
N\colon \fA_1^{2\etale} &\to \fD_2 \\
(T'\to T,\cB) &\mapsto \big(T,(N_{T'/T}(\cB),\sigma_{N_{T'/T}(\cB)},f_{N_{T'/T}(\cB)})\big)
\end{align*}
given by the norm functor. Over each fiber, a quasi-inverse is given by the Clifford algebra functor.
\end{thm}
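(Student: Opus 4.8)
The plan is to exhibit $N$ as a cartesian functor over $\Sch_S$ between the two gerbes $\fA_1^2$ and $\fD_2$ that is fully faithful and locally essentially surjective; since both are stacks, this yields the claimed equivalence (generalizing \cite[15.7]{KMRT}). That $N$ is a well-defined morphism over $\Sch_S$ is already in hand: by Section~\ref{Equivalence} the canonical symplectic involution of a quaternion algebra $\cB$ over $T'$ is carried by the norm, via the restriction of the Segre homomorphism to symplectic and orthogonal groups, to a quadratic pair $(\sigma_{N_{T'/T}(\cB)},f_{N_{T'/T}(\cB)})$ on $N_{T'/T}(\cB)$, the latter being an Azumaya $\cO|_T$--algebra of degree $2^2=4$ by Theorem~\ref{intro_norm_thm}\ref{intro_norm_thm_iv} and \ref{intro_norm_thm_viii}, and $N$ commutes with base change by Theorem~\ref{intro_norm_thm}\ref{intro_norm_thm_v}. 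Both $\fA_1^2$ and $\fD_2$ are gerbes for the \'etale and hence fppf topology: each contains a split object over every $U\in\Sch_S$ (namely $(\cO|_U\times\cO|_U\text{ over }U,\ M_2\times M_2)$ and the split degree~$4$ quadratic pair constructed in Section~\ref{Equivalence}), and \'etale-locally every degree~$2$ \'etale cover splits, every quaternion algebra splits, and every degree~$4$ Azumaya algebra with quadratic pair becomes the split one (structure theory of quaternion algebras and of quadratic pairs, cf.\ Section~\ref{sec_quad_pairs}).

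Local essential surjectivity then reduces, using that $\fD_2$ is a gerbe, to showing that the split object of $\fD_2$ over $U$ lies in the essential image of $N$. Indeed, $N$ applied to the split object of $\fA_1^2$ over $U$ returns $M_2\otimes_{\cO|_U}M_2=M_4(\cO|_U)$ together with the quadratic pair built from the tensor square of the symplectic involution of $M_2$, and by the explicit description of the split model at the beginning of Section~\ref{Equivalence} (compare Example~\ref{ex_split_norm}) this is the split object of $\fD_2$.

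For full faithfulness one must show that for every $U$ and every pair of objects $x,x'$ of $\fA_1^2$ over $U$ the induced morphism of fppf sheaves $\uIsom(x,x')\to\uIsom(Nx,Nx')$ on $\Sch_{/U}$ is an isomorphism. As $\fA_1^2$ is a gerbe, $\uIsom(x,x')$ is fppf-locally nonempty, and over any base on which it has a section both $\uIsom$-sheaves acquire trivializations identifying the morphism with the group homomorphism $\uAut(x')\to\uAut(Nx')$ induced by $N$; since being an isomorphism of sheaves is fppf-local and $x'$ is fppf-locally split, it suffices to treat the case where $x=x'$ is the split object. There the assertion becomes that the homomorphism of affine group $S$--schemes $\uAut_{\fA_1^2}(\mathrm{split})\to\uAut_{\fD_2}(\mathrm{split})$ induced by $N$ is an isomorphism. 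By the cohomological description of the norm in Section~\ref{cohomological_description}, specialized to symplectic and orthogonal groups as in Section~\ref{Equivalence}, the source is $(\PGL_2\times\PGL_2)\rtimes\SS_2$, the target is $\mathbf{PGO}_4=\mathbf{PGO}(q)$ for $q$ the split rank~$4$ form, and the map sends $(\overline A,\overline B)$ to conjugation by $A\otimes B$ on $M_2\otimes M_2\cong M_4$ and the transposition to conjugation by the flip of the two tensor factors.

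It remains to verify that this Segre homomorphism is an isomorphism of $S$--group schemes, which is the heart of the matter. On identity components, $\PGL_2\times\PGL_2\to\mathbf{PSO}_4$ is the concrete incarnation of the exceptional identification $\mathbf{Spin}_4/\mathrm{Z}(\mathbf{Spin}_4)=(\SL_2\times\SL_2)/(\mu_2\times\mu_2)=\PGL_2\times\PGL_2$ coming from the isomorphism $M_2\otimes M_2\cong M_4$ under which the bilinear form underlying the split quadratic pair corresponds to $q$; it is a homomorphism of smooth affine group schemes over $\ZZ$ (base changed to $S$) with trivial kernel that is fibrewise an isomorphism, hence an isomorphism. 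On component groups one must check that the flip of tensor factors has nontrivial image in $\mathbf{PGO}_4/\mathbf{PSO}_4\cong\ZZ/2$, i.e.\ nontrivial Dickson invariant for the relevant bilinear form; this is visible by reduction to characteristic $0$, where the flip has determinant $-1$. Putting the two together, the Segre homomorphism identifies $(\PGL_2\times\PGL_2)\rtimes\SS_2$ with the full group $\mathbf{PGO}_4$, so $N$ is fully faithful, and together with local essential surjectivity the theorem follows. The main obstacle is exactly this last point carried out integrally — confirming that the Segre map hits the \emph{non-identity} component of $\mathbf{PGO}_4$, so that it is an isomorphism onto $\mathbf{PGO}_4$ and not merely onto $\mathbf{PSO}_4$, together with the accompanying verification that the quadratic pair attached by the norm to the split quaternion algebra is the split one — which is what the preparatory Sections~\ref{sec_quad_pairs} and \ref{Equivalence} are designed to supply.
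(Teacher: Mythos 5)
Your proposal follows essentially the same strategy as the paper's proof: reduce the equivalence of gerbes to the claim that the restricted Segre homomorphism $(\PGL_2\times\PGL_2)\rtimes\SS_2 \to \bPGO_q$ is an isomorphism of group schemes, and check separately that it restricts to an isomorphism on identity components and that the transposition maps to the non-identity component (nontrivial Dickson invariant, verified over $\QQ$ where Dickson is the determinant). Two points differ in presentation rather than substance. First, you re-derive the gerbe-equivalence criterion by hand (local essential surjectivity plus full faithfulness, reduced via local triviality to the automorphism group homomorphism at the split object), whereas the paper simply invokes Theorem \ref{lem_gerbe_equivalence}, which packages exactly this reduction; both are valid, and your version costs a bit more bookkeeping. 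Second, and this is the one place where your argument genuinely elides something: the assertion that $\PGL_2\times\PGL_2 \to \mathbf{PSO}_4$ "has trivial kernel and is a fibrewise isomorphism" is stated but not proved. The paper's Lemma \ref{lem_segre} does real work here — it first shows the pre-quotient map $\SP_{2n_1}\times\dots\times\SP_{2n_d}\to\mathbf{O}_q^+$ has kernel $(\bmu_2)^{d,0}$, passes to the quotient using \cite[VIII.5]{SGA3}, upgrades the resulting monomorphism to a closed immersion via \cite[5.3.5]{Con1}, and then over algebraically closed fields concludes via a dimension count (both sides smooth connected of dimension $6$) that the closed embedding is an isomorphism. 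You should either reproduce this chain or cite the relevant parts of Lemma \ref{lem_segre}\ref{lem_segre1}, \ref{lem_segre3}; as written, the kernel-triviality and fibrewise-isomorphism claims are the heart of the matter and cannot be taken for granted integrally.
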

This is Theorem \ref{thm_norm_eq} and Theorem \ref{thm_quasi_inverse}. If we focus on the fiber over $S$ of this morphism, we get an equivalence of categories between the groupoid of quaternion algebras over a degree $2$ \'etale extension of $S$ into the groupoid of degree $4$ Azumaya $\cO$--algebras with quadratic pair, which is a more direct analogue of \cite[15.7]{KMRT}. 

The organization of the paper is essentially outlined above. Section \ref{Preliminaries} recalls the common objects and some basic results we use throughout the paper. The important details of Ferrand's construction over rings are in Section \ref{sec_review_norms}, which also contains a review of the Rost norm over rings. The construction of our new norm morphism/functor and the proofs of the many parts of Theorem \ref{intro_norm_thm} are in Section \ref{sec_glue_Ferrand}. Section \ref{cohomological_description} develops the cohomological interpretation of the newly constructed norm functor. Section \ref{Equivalence} uses these tools to show Theorem \ref{thm_D}.

We also include three Appendices containing general technical lemmas. Appendix \ref{app_Weil} considers a degree $d$ \'etale cover $f\colon T\to S$ and then for any sheaf $\cF$ on $\Sch_S$ describes $f_*(\cF|_T)$ in terms of twisting sheaves with torsors or in terms of Weil restrictions in the case $\cF$ is representable. Appendix \ref{app_semi_direct} gives a cohomological description of stacks related to semi-direct products of groups which appear frequently in Sections \ref{cohomological_description} and \ref{Equivalence}. Finally, as mentioned above, Appendix \ref{app_quasi_coh} contains a review of the properties of quasi-coherent module on $(\Sch_S,\cO)$ as well as the details of the general construction we use to define the norm morphism.
}

\section{Preliminaries}\label{Preliminaries}

\subsection{Flat Sites}\label{sec_ringed_site}
Following the style of \cite{CF} and \cite{GNR}, most of our objects will be sheaves on a category of schemes equipped with the fppf topology. We now review this setting, highlighting definitions of objects that are most important to us.

For a scheme $X$ we denote by $\Sch_{X}$ the big fppf site of $X$ as in \cite[Expos\'e IV]{SGA3}. The objects of $\Sch_X$ are schemes with a fixed structure morphism $Y\to X$, morphisms are scheme morphisms which respect the structure morphisms, and coverings in the site consist of families of the form $\{Y_i \to Y\}_{i\in I}$ which are jointly surjective and where each $Y_i \to Y$ is flat and locally of finite presentation. When given a cover, we denote $Y_{ij} = Y_i \times_Y Y_j$, as well as $Y_{ijk} = Y_i \times_Y Y_j \times_Y Y_k$, etc. Affine schemes will commonly be denoted with $U$ or $V$ and affine covers by $\{U_i \to Y\}_{i\in I}$ or likewise with $V$. Since $Y$ need not be a separated scheme, an affine cover need not have its $U_{ij}$'s be affine. 
\begin{remark}
In \cite[Tag 021S]{Stacks}, the Stacks project defines ``a'' big fppf site of $X$, instead of ``the''. This distinction, due to set theoretic considerations, is avoided in \cite{SGA3} through the use of universes. We also simply use ``the'' big fppf site and similarly use ``the" big affine fppf site introduced below.
\end{remark}
We denote by $\Aff_X$ the big affine fppf site of $X$ as in \cite[Tag 021S (2)]{Stacks}. It is the full subcategory of $\Sch_X$ consisting of schemes which are affine and have an arbitrary structure morphism to $X$ and where the covers are fppf covers of the form $\{U_i \to U\}_{i=1}^m$ where each $U_i$ and $U$ are affine schemes. We will frequently use the following lemma to discuss sheaves on all of $\Sch_X$ by instead working with sheaves on $\Aff_X$.
\begin{lem}\label{lem_equiv_affine_sheaves}
There is an equivalence of categories between sheaves on $\Sch_X$ and sheaves on $\Aff_X$ given by restricting a sheaf $\cF \colon \Sch_X \to \Sets$ to the objects of $\Aff_X$. Under this equivalence, intrinsic properties, such as being finite locally free or being quasi-coherent as recalled in Appendix \ref{app_quasi_coh}, are preserved.
\end{lem}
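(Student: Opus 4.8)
The plan is to exhibit the restriction functor and its inverse explicitly, and then to invoke the comparison between a site and a topologically dense (``cofinal'') subsite. First I would observe that $\Aff_X$ is a full subcategory of $\Sch_X$, that its topology is the one induced from the fppf topology, and — crucially — that every scheme admits an affine fppf cover (indeed a Zariski cover by affine opens, which is in particular fppf). This makes $\Aff_X$ a \emph{dense} (or \emph{comparison}) subsite of $\Sch_X$ in the sense of SGA4 III.4 (equivalently \cite[Tag 03A0]{Stacks} on ``cocontinuous/continuous functors'' and \cite[Tag 021V]{Stacks} which already records that the big affine fppf site computes the same topos). The inclusion $\Aff_X \hookrightarrow \Sch_X$ is continuous and cocontinuous, so it induces a morphism of topoi, and density gives that the restriction functor $\cF \mapsto \cF|_{\Aff_X}$ is an equivalence of categories of sheaves. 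Concretely, the quasi-inverse sends a sheaf $G$ on $\Aff_X$ to the sheaf on $\Sch_X$ defined on an object $Y \to X$ by choosing an affine open cover $\{U_i \to Y\}$ and setting
\[
\cF(Y) = \operatorname{eq}\Big( \textstyle\prod_i G(U_i) \rightrightarrows \prod_{i,j,k} G(U_{ijk}) \Big),
\]
where one must cover the (possibly non-affine) overlaps $U_{ij}$ by affines $U_{ijk}$ in order to make sense of the equalizer purely in terms of affine objects; independence of the chosen cover follows from the sheaf condition for $G$ and a standard refinement argument.

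Next I would check the three things that make this a clean equivalence rather than merely an adjunction: (1) the presheaf $\cF$ just defined is a sheaf on $\Sch_X$ — this is the cocontinuity input, reducing an arbitrary fppf cover of $Y$ to affine refinements; (2) $\cF|_{\Aff_X} \cong G$ naturally — immediate, since for $U$ affine we may take the cover $\{U \to U\}$; and (3) for a sheaf $\cF'$ already given on $\Sch_X$, the reconstruction from $\cF'|_{\Aff_X}$ recovers $\cF'$ — this is exactly the sheaf condition for $\cF'$ applied to affine open covers. Functoriality in both directions and compatibility of the unit/counit are formal.

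Finally, for the last sentence of the statement I would argue that the ``intrinsic'' properties are local and hence detected on affines. Being a quasi-coherent $\cO$-module is, by the characterization recalled in Section~\ref{sec_O_modules} (cf.\ Lemma \ref{lem_quasi_coh_characterization}), the condition that for every $U' \to U$ in $\Aff_X$ the canonical map $\cM(U) \otimes_{\cO(U)} \cO(U') \to \cM(U')$ is an isomorphism; this condition only ever mentions affine schemes, so it transports verbatim across the equivalence. Likewise, being finite locally free means fppf-locally isomorphic to $\cO^n$, and the witnessing covers may be taken affine; so a module sheaf on $\Sch_X$ is finite locally free iff its restriction to $\Aff_X$ is. (The same remark applies to the algebra/Azumaya variants used later.) I expect the only genuinely fiddly point to be item (1): verifying that the reconstructed presheaf satisfies the sheaf axiom for \emph{non-affine} covers of \emph{non-affine} schemes, which requires simultaneously refining the cover and the base by affines and chasing the resulting double equalizer — entirely standard, but the one place where some care with indices is needed. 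Everything else is formal from the density of $\Aff_X$ in $\Sch_X$.
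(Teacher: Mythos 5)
Your proposal is correct, and it reaches the same two conclusions as the paper, but by noticeably different means. For the equivalence of categories the paper simply cites \cite[Tag 021V]{Stacks}; you cite the same tag but then unfold the argument, exhibiting $\Aff_X$ as a dense subsite, giving the explicit quasi-inverse via equalizers over affine refinements of (possibly non-affine) overlaps, and checking unit/counit. That is exactly what is behind the tag, so this part is the same approach written out in longhand. One cosmetic caveat: your use of $U_{ijk}$ for ``the $k$th member of an affine cover of $U_{ij}$'' collides with the paper's standing convention $U_{ijk}=U_i\times_Y U_j\times_Y U_k$; you explain what you mean in words, so the argument is not wrong, but the notation would need to be changed to avoid conflict.

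Where you genuinely diverge is the preservation of properties. The paper invokes an abstract principle: quasi-coherence and finite local freeness are \emph{intrinsic} properties of sheaves of modules on a ringed site (\cite[Tag 03DM]{Stacks}), and intrinsic properties are by definition preserved by equivalences of ringed topoi (\cite[Tag 03DG]{Stacks}). You instead verify preservation directly: quasi-coherence because Lemma \ref{lem_quasi_coh_characterization} characterizes it by a condition quantified only over morphisms in $\Aff_X$, and finite local freeness because the witnessing fppf covers may always be refined to affine ones. Both are valid. Your route is more elementary and self-contained (it does not require the reader to absorb the ``intrinsic property'' formalism), at the cost of handling each property separately rather than invoking a single general statement; it would also require a slight argument that the affine-only criterion of Lemma \ref{lem_quasi_coh_characterization} is being applied non-circularly, since that lemma implicitly already uses the identification of sheaves on $\Aff_X$ with sheaves on $\Sch_X$ -- this is harmless given how \cite[Tag 0GZV]{Stacks} is actually set up, but worth a remark. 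The paper's route is shorter and automatically covers every intrinsic property used later in the paper, not just the two you name.
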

\begin{proof}
The equivalence of categories (more precisely, equivalence of topoi) is \cite[Tag 021V]{Stacks}. The properties of Appendix \ref{app_quasi_coh} are intrinsic properties by \cite[Tag 03DM]{Stacks}. By the definition of intrinsic property at the beginning of \cite[Tag 03DG]{Stacks}, they are preserved under equivalences of topoi.
\end{proof}

If $Y \in \Sch_X$, then $\Sch_Y$ is naturally a subcategory of $\Sch_X$ by composing the structure morphisms $Y'\to Y$ with the structure morphism $Y \to X$. For a sheaf $\calF$ on $\Sch_X$, we denote by $\calF|_Y$ the restriction of the sheaf to $\Sch_Y$. If $Y'\to Y$ is a morphism of $X$--schemes, then borrowing notation from \cite{Stacks}, we use $t|_{Y'}$ to denote the image under $\cF(Y) \to \cF(Y')$ of a section $t\in \cF(Y)$. This will also be referred to as the restriction of $t$ to $Y'$. It will be clear from context which notion of restriction, for sheaves or for sections, is intended. By a slightly further abuse of notation, we may talk about a section $t\in \cF$, by which we mean a section $t\in \cF(Y)$ for some $Y\in \Sch_S$, i.e., $t$ may be any section over any $Y$.

\subsection{Ringed Sites}\label{sec_O_modules}
We now fix a base scheme $S$. Unless otherwise stated, we assume that a ring is unital, commutative, and associative. The global sections functor 
\begin{align*}
\cO \co \Sch_S &\to \Rings \\
X &\mapsto \cO_X(X)
\end{align*}
where $\Rings$ is the category of commutative rings, is a sheaf with respect to the fppf topology by \cite[Tag 03DU]{Stacks}. It makes $(\Sch_S,\cO)$ into a ringed site as in \cite[Tag 03AD]{Stacks} and we call $\cO$ the \emph{structure sheaf}. If $X \in \Sch_S$ is another scheme, then the structure sheaf of $\Sch_X$ is $\cO|_X$.

From \cite[Tag 03CW]{Stacks}, an \emph{$\cO$--module} is a sheaf $\cM \co \Sch_S \to \Ab$ of abelian groups with a map of sheaves
\[
\cO \times \cM \to \cM
\]
such that for each $X\in \Sch_S$, the map $\cO(X)\times \cM(X) \to \cM(X)$ gives $\cM(X)$ the structure of an $\cO(X)$--module. A morphism of $\cO$--modules is a morphism of sheaves such that the map on $X$ points is $\cO(X)$--linear for all $X\in \Sch_S$. The notion of $\cO|_X$--module on $\Sch_X$ is analogous and likewise for the properties discussed below.

The \emph{internal homomorphism} functor of two $\cO$--modules $\cM$ and $\cN$ is
\begin{align*}
\cHom_{\cO}(\cM,\cN)\co \Sch_S &\to \Ab \\
T &\mapsto \Hom_{\cO|_T}(\cM|_T,\cN|_T).
\end{align*}
It is another $\cO$--module by \cite[03EM]{Stacks}. The internal endomorphisms of an $\cO$--module $\cM$ are denoted by $\cEnd_{\cO}(\cM)=\cHom_{\cO}(\cM,\cM)$, and the subsheaf of automorphisms is denoted $\cAut_{\cO}(\cM)$.

If $g\colon X \to S$ is a morphism of schemes, then we have a direct image (or pushforward) functor $g_*$ and a pull-back functor $g^*$ as in \cite[Tag 03D6]{Stacks}. These form an adjoint pair where $g^*$ is left adjoint to $g_*$, \cite[Tag 03D7]{Stacks}. That is, for each  $\cO|_X$--module $\cE$ and each $\cO$--module $\cF$, we have a natural isomorphism 
\[
\Hom_{\cO|_X-Mod}( g^*\cF, \cE) \simlgr \Hom_{\cO-Mod}( \cF, g_*\cE).
\]
In particular for $\cF= \cO$ we have 
\[
\Hom_{\cO|_X-Mod}( \cO|_X, \cE) \simlgr \Hom_{\cO-Mod}( \cO, g_*\cE)
\]
or in other words $\cE(X) \simlgr (g_*\cE)(S)$.

\subsubsection{Local Types of $\cO$--modules}
We refer to \cite[Tags 03DE, 03DL]{Stacks} for definitions of various properties of $\cO$--modules. Since $S \in \Sch_S$ is a final object, \cite[Tag 03DN]{Stacks} applies and it suffices for us to define local conditions for an fppf-covering of $S$. Quasi-coherent modules are reviewed in Appendix \ref{app_quasi_coh}. Here we briefly review finite locally free modules.

We call an $\cO$--module $\cM$ \emph{finite locally free} or \emph{locally free of finite type} if for all $X\in \Sch_S$, there is a covering $\{X_i \to X\}_{i\in I}$ such that for each $i\in I$, the restriction $\cM|_{X_i}$ is a free $\cO|_{X_i}$--module of finite rank. Explicitly, $\cM|_{X_i} \cong \cO|_{X_i}^{n_i}$ for some non-negative $n_i \in \ZZ$. If $n_i > 0$ for all $i\in I$ we say that $\cM$ is of \emph{finite positive rank}. If all $n_i = n$ for some integer $n$ then we say $\cM$ has \emph{constant rank $n$}. Neither of these notions depend on the cover. If $\cM$ is finite locally free, then so is $\cEnd_{\cO}(\cM)$.

\subsubsection{$\cO$--Algebras}
An \emph{$\cO$--algebra} is an abelian sheaf $\cB \colon \Sch_S \to \Ab$ together with sheaf morphisms
\[
\cO \to \cB \text{ and } \cB\otimes_{\cO} \cB \to \cB
\]
which makes $\cB(X)$ into a $\cO(X)$--algebra for all $X\in \Sch_S$. It is unital, associative, commutative, etc., if each $\calB(X)$ has that property. For an $\cO$--module $\cM$, the sheaf $\cEnd_{\cO}(\cM)$ is naturally an $\cO$--algebra with multiplication coming from composition as usual. 

An $\cO$-algebra $\cA$ is an \emph{Azumaya $\cO$--algebra} if it is finite locally free and it satisfies the following equivalent conditions.
\begin{enumerate}[label={\rm(\roman*)}]
\item \label{defn_Azumaya_i} The enveloping morphism
\begin{align*}
\cA\otimes_{\cO} \cA\op &\to \cEnd_{\cO}(\cA)\\
a\otimes b &\mapsto (x\mapsto axb)
\end{align*}
is an isomorphism.
\item \label{defn_Azumaya_ii} For any $U\in \Aff_S$, we have that $\cA(U)$ is an Azumaya $\cO(U)$--algebra in the sense over rings such as in \cite{Ford} or \cite[III \S 5]{K}. In particular, on $\Aff_S$ an Azumaya $\cO$--algebra is a sheaf of Azumaya algebras.
\item \label{defn_Azumaya_iii} There exists a cover $\{X_i \to S\}_{i\in I}$ such that for each $i\in I$, $\cA|_{X_i} \cong \cEnd_{\cO|_{X_i}}(\cM_i)$ for a locally free $\cO|_{X_i}$--module $\cM$ of finite positive rank.
\item \label{defn_Azumaya_iv} There exists a cover $\{X_i \to S\}_{i\in I}$ such that for each $i\in I$, $\cA|_{X_i} \cong \Mat_{n_i}(\cO|_{X_i})$ for some $0< n_i \in \ZZ$.
\end{enumerate}
Definition \ref{defn_Azumaya_i} above is from \cite[5.1]{Gro-Brau}, and definition \ref{defn_Azumaya_ii} is from \cite[2.5.3.4]{CF}. 

Since an Azumaya $\cO$--algebra is locally a matrix algebra, it locally has a notion of the trace $\Tr \colon \Mat_{n_i}(\cO|_{X_i}) \to \cO|_{X_i}$. These functions are compatible on $X_{ij}$ and therefore glue into a global $\cO$--linear map $\Trd_{\cA}\colon \cA \to \cO$ called the \emph{reduced trace} of $\cA$. The local determinant maps are also compatible and glue into the \emph{reduced norm} $\Nrd_{\cA}\colon \cA \to \cO$, which is multiplicative.

By \cite[2.5.3.6]{CF}, whenever $\cA|_X \cong \cO|_X^{n}$ for some $X\in \Sch_S$ and $n\in \ZZ$, the integer $n$ will be a square. If $n=m^2$, then $m$ is called the \emph{degree} of $\cA|_X$. If $\cA$ is of constant rank $m^2$, we say it is of \emph{constant degree} $m$.

\subsection{Quadratic Pairs}\label{sec_quad_pairs}
Quadratic pairs are the characteristic independent analogue of orthogonal involutions and are used to study quadratic forms and groups of type $D$ when $2$ need not be invertible. We refer to \cite[\S 2.7]{CF} and \cite{GNR}. Background on involutions and quadratic pairs over fields can be found in \cite{KMRT} and background on involutions over rings in \cite{K}.

An \emph{involution} (of the first kind) on an Azumaya $\cO$--algebra is an $\cO$--linear order $2$ anti-automorphism, i.e., $\sigma \colon \cA \to \cA$ such that $\sigma^2=\Id$ and $\sigma(ab)=\sigma(b)\sigma(a)$. We refer to such a pair $(\cA,\sigma)$ as an Azumaya $\cO$--algebra with involution. By \cite[2.7.0.25]{CF}, any Azumaya $\cO$--algebra with involution will have a cover $\{X_i \to S\}_{i\in I}$ (which in fact can be an \'etale cover if one desires) over which $(\cA|_{X_i},\sigma|_{X_i}) \cong (\Mat_{n_i}(\cO|_{X_i}),\eta_{b_i})$ where $\eta_{b_i}$ is the adjoint involution of a regular bilinear form $b_i \colon \cO|_{X_i}^{n_i} \times \cO|_{X_i}^{n_i} \to \cO|_{X_i}$. If the resulting $b_i$ are all symmetric bilinear forms, i.e., $b_i(x,y)=b_i(y,x)$, then we call $\sigma$ an \emph{orthogonal involution}; if they are skew-symmetric, $b_i(x,y)=-b_i(y,x)$, then we call $\sigma$ a \emph{weakly-symplectic involution}; and if they are alternating, $b_i(x,x)=0$, then we call $\sigma$ a \emph{symplectic involution}. These properties do not depend on the cover chosen and are not the only possibilities; for example, a bilinear form can be $\varepsilon$--symmetric for some $\varepsilon\in \cO(X_i)$ with $\varepsilon^2=1$. Our terminology follows \cite{CF} and \cite{GNR} but differ slightly from \cite{KMRT}. In our conventions, if $2=0\in \cO$, then orthogonal and weakly-symplectic coincide. In all cases, a symplectic involution is also weakly-symplectic and so we allow symplectic involutions to also be orthogonal.

Associated to an Azumaya $\cO$--algebra with involution $(\cA,\sigma)$ are a few important subsheaves. We have two $\cO$--module endomorphisms $\Id \pm \sigma \colon \cA \to \cA$ and we define
\begin{align*}
\cSym_{\cA,\sigma} &= \Ker(\Id -\sigma), \\
\cSkew_{\cA,\sigma} &= \Ker(\Id+\sigma),\text{ and} \\
\cSymd_{\cA,\sigma} &= \Img(\Id+\sigma),
\end{align*}
called the modules of \emph{symmetric}, \emph{skew-symmetric}, and \emph{symmetrized} elements, respectively. The symmetrized elements are defined by an image sheaf which involves sheafification. If $\sigma$ is orthogonal, then $\cSym_{\cA,\sigma}$ is finite locally free by \cite[3.3 (ii)]{GNR}. A \emph{quadratic pair} on $\cA$ is a pair $(\sigma,f)$ where
\begin{enumerate}[label={\rm(\roman*)}]
\item $\cA$ is an Azumaya $\cO$--algebra,
\item $\sigma$ is an orthogonal involution on $\cA$, and
\item $f\colon \cSym_{\cA,\sigma} \to \cO$ is an $\cO$--linear map such that $f(x+\sigma(x))=\Trd_{\cA}(x)$ for all $x\in \cA$.
\end{enumerate}
We call $f$ a \emph{semitrace} and also refer to $(\cA,\sigma,f)$ as a \emph{quadratic triple}. If $\frac{1}{2} \in \cO$, then the third condition implies that $f=\frac{1}{2}\Trd_{\cA}$ is the unique $f$ making $(\cA,\sigma,f)$ a quadratic triple, see \cite[4.3(a)]{GNR}. The fact that a unique $f$ exists when $2$ is invertible is a reflection of the bijective correspondence between symmetric bilinear forms and quadratic forms in characteristic not $2$. The connection between quadratic triples and quadratic forms is detailed in \cite[4.4]{GNR} over schemes or \cite[5.11]{KMRT} over fields.

\subsection{Algebraic Groups}\label{sec_algebraic_groups}
We review the groups we need from \cite{CF}.

If $\calB$ is a unital associative $\cO$--algebra which is finite locally free, then the functor of invertible elements
\begin{align*}
\GL_{1,\calB} \co \Sch_S &\to \Grp \\
X &\mapsto \calB(X)^\times,
\end{align*}
is called the \emph{general linear group} of $\cB$. If $\cB = \cEnd_{\cO}(\cO^n)$ we write $\GL_{1,\cB} = \GL_n$. The \emph{multiplicative group} is $\GG_m = \GL_1$. The \emph{$n^{\mathrm{th}}$--roots of unity} are denoted $\bmu_n$ and are the kernel of the map $\GG_m \to \GG_m$ given by $x\mapsto x^n$.

If $\cA$ is an Azumaya $\cO$--algebra, then the reduced norm is a group homomorphism $\Nrd_{\cA} \colon \GL_{1,\cA} \to \GG_m$ and the kernel of this map, i.e., the group of norm $1$ elements, is the \emph{special linear group} $\SL_{\cA}$, \cite[3.5.0.91]{CF}. If $\cA = \cEnd_{\cO}(\cO^n)$, we similarly write $\SL_n$.

The \emph{projective general linear group} of an Azumaya $\cO$--algebra $\cA$ is the subsheaf of $\cAut_{\cO}(\cA)$ consisting of algebra automorphisms. So we may write $\PGL_{\cA} = \cAut_{\cO\mathrm{\mdash alg}}(\cA)$. When $\cA=\cEnd_{\cO}(\cO^n)$, we write $\PGL_n$. The canonical projection $\GL_{1,\cA} \to \PGL_{\cA}$ sends an element to its inner automorphism and this projection has kernel $\GG_m$.

Orthogonal groups are defined in \cite[\S 4]{CF} for quadratic forms and quadratic triples. In particular, let $q\colon \cO^{2n} \to \cO$ be a regular quadratic form. By \cite[4.4 (i)]{GNR} or \cite[2.7.0.31]{CF}, it has a corresponding quadratic triple $(\Mat_{2n}(\cO),\sigma_q,f_q)$ where $\sigma_q$ is the adjoint involution. The \emph{orthogonal groups} are then
\begin{align*}
\mathbf{O}_q(X) &= \{ x\in \GL_{2n}(X) \mid q\circ x = q \}, \text{ and}\\
\mathbf{O}_{\Mat_{2n}(\cO),\sigma_q,f_q}(X) &= \{x\in \GL_{2n}(X) \mid \sigma_q(x)=x^{-1}, f_q(x \und x^{-1}) = f_q\}
\end{align*}
for all $X\in \Sch_{S}$. We have that $\mathbf{O}_q \cong \mathbf{O}_{\Mat_{2n}(\cO), \sigma_q, f_{q}}$ by \cite[4.4.0.44]{CF}. The group $\mathbf{O}_{q}^+$ is a smooth affine $S$--group scheme which is the kernel of the map
\begin{equation}\label{eq_Dickson}
\mathbf{O}_q \to \ZZ/2\ZZ
\end{equation}
called the Dickson map in \cite[IV, 5.1]{K} and called the Arf map in \cite[4.3.0.27]{CF}. An automorphism of a quadratic form induces an automorphism of the associated even Clifford algebra and the Dickson map sends the quadratic form automorphism to the induced map's restriction to the center of the even Clifford algebra.  We use the terminology of \cite{K}.

The \emph{projective orthogonal group} of a regular quadratic form q is defined to be the automorphism group scheme $\bPGO_{\Mat_{2n}(\cO),\sigma_q,f_q} = \cAut(\Mat_{2n}(\cO),\sigma_q,f_q)$. This is also simply denoted by $\bPGO_q$. An automorphism of $(\Mat_{2n}(\cO),\sigma_q,f_q)$ similarly induces an automorphism of the even Clifford algebra, and so there is also a Dickson map $\bPGO_q \to \ZZ/2\ZZ$ whose kernel is denoted $\bPGO_q^+$. The kernel of the canonical projection $\bO_q^+ \to \bPGO_q^+$ is the center of $\bO_q^+$, which is isomorphic to $\bmu_2$.

We will also work with the \emph{symplectic group} of an Azumaya $\cO$--algebra with symplectic involution $(\cA,\sigma)$. As in \cite[\S 7]{CF}, this is
\[
\SP_{\cA,\sigma}(X) = \{ x \in \cA(X) \mid \sigma(x)=x^{-1} \}
\]
for all $X\in \Sch_S$. The \emph{projective symplectic group} associated to $(\cA,\sigma)$ is $\PSP_{\cA,\sigma} = \cAut(\cA,\sigma)$: the automorphism group as an algebra with involution. The kernel of the canonical projection $\SP_{\cA,\sigma} \to \PSP_{\cA,\sigma}$ is the center of $\SP_{\cA,\sigma}$, which is isomorphic to $\bmu_2$.

\subsection{Torsors and Contracted Products}\label{sec_contracted_products}
We work with torsors for various groups using the definition of \cite[2.2.2.2]{CF} or \cite[Tag 03AH]{Stacks}. If
\[
\bG\colon \Sch_S \to \Grp
\]
is a group sheaf, a \emph{$\bG$--torsor} is a sheaf $\cP \colon \Sch_S \to \Sets$ with a map of sheaves $\cP\times \bG \to \cP$ such that
\begin{enumerate}[label={\rm(\roman*)}]
\item $\cP(X)\times \bG(X) \to \cP(X)$ gives a simply transitive right $\bG(X)$--action on $\cP(X)$ for all $X\in \Sch_S$ such that $\cP(X)\neq \O$, and
\item there exists a cover $\{X_i \to S\}_{i\in I}$ such that $\cP(X_i)\neq \O$ for all $i\in I$. 
\end{enumerate}
The sheaf $\bG$ itself, viewed simply as a sheaf of sets with right action given by right multiplication, is called the \emph{trivial torsor}. We will work with non-abelian cohomology as in \cite[III, 2.4.2]{Gir}, defining $H^1(S,\bG)$ to be the set of isomorphism classes of $\bG$--torsors over $S$ for any group sheaf $\bG$. If $\cP$ is a $\bG$--torsor and $X\in \Sch_S$, then it is clear that $\cP|_X$ is a $\bG|_X$--torsor. When we wish to emphasize the morphism $g\colon X \to S$ we will also write $g^*(\cP)=\cP|_X$, mirroring the pullback notation for $\cO$--modules and algebras.

Given a group sheaf $\bG$, a sheaf of sets $\cX$ with a right action of $\bG$, and a sheaf of sets $\cY$ with a left action of $\bG$, the contracted product as defined in \cite[2.2.2.9]{CF} is denoted $\cX \wedge^{\bG} \cY$. It is the sheaf associated to the presheaf $(\cX\times \cY)/\sim$ where the equivalence relation $\sim$ identifies elements $(xg,y)$ and $(x,gy)$ for all appropriate sections $x\in \cX, y\in \cY$, and $g\in \cG$. If $\cX$ also has a left action of $\cG$, then $\cX\wedge^{\bG} \cY$ has an inherited left action, and similarly if $\cY$ also has a right action then so does $\cX\wedge^{\bG} \cY$.

In particular, if $\varphi \colon \bG \to \bH$ is a group sheaf homomorphism and $\cP$ is a $\bG$--torsor, then giving $\bH$ a left action of $\bG$ through $\varphi$ and a right action on itself by multiplication, the contracted product $\cP \wedge^{\bG} \bH$ will have a right $\bH$--action and by \cite[2.2.2.12]{CF} will be an $\bH$--torsor. This is denoted $\varphi_*(\cP)$ and is called \emph{the pushforward of $\cP$ along $\varphi$} or the \emph{twist of $\bH$ by $\cP$}. If we have a morphism $g\colon \cP_1 \to \cP_2$ of $\bG$--torsors, it induces a morphism 
\[
g\times\Id \colon (\cP_1\times\bH)/\sim \to (\cP_2\times\bH)/\sim
\]
between the presheaves defining the contracted products, and thus also induces a map between the contracted products themselves, denoted
\[
\varphi_*(g) \colon \varphi_*(\cP_1) \to \varphi_*(\cP_2).
\]

Furthermore, if $\cP$ is a $\bG$--torsor and if $\cY$ has additional structure, say it is a sheaf of groups, rings, modules, etc., and the left action of $\bG$ is by automorphisms which respect that structure, then the contracted product $\cP \wedge^{\bG} \cY$ will also be a sheaf of groups, rings, modules, etc. This has been formalized in \cite[2.1]{CF}, which we invite the interested reader to consult. The contracted product $\cP \wedge^{\bG} \cY$ will also be locally isomorphic to $\cY$. Indeed, over any cover $\{Y_i \to S\}_{i\in I}$ which trivializes $\cP$, we will have $(\cP\wedge^{\bG} \cY)|_{Y_i} \cong \cY|_{Y_i}$.

\begin{remark}
We have limited ourselves in the above discussion to considering torsors on the site $\Sch_S$. Of course, there is a general notion of a torsor for a sheaf of groups $\bG \colon \cC \to \Grp$ on any site $\cC$ defined in \cite[Tag 03AH]{Stacks}. We use this more general notion only briefly in Appendix \ref{app_semi_direct}.
\end{remark}

\subsection{Stacks}
We make use of stacks over the fppf site $\Sch_S$ defined in Section \ref{sec_ringed_site} though readers familiar with stacks will recognize that the following discussion also holds over a general site. We recall this notion from \cite{Gir} (where the French word for ``stack" is ``\emph{champ}"), \cite{Olsson}, and \cite{Stacks}.

First is the notion of a fibered category. We take the following definition from \cite[3.1]{Olsson}, specifying all notions to be over the site $\Sch_S$. Alternatively, this is \cite[Tag 02XJ]{Stacks}, where they use the terminology ``strongly cartesian" where we use ``cartesian".
\begin{defn}\label{defn_fibered_category}
Let $\fF$ be a category with a functor $p\colon \fF \to \Sch_S$. We call a morphism $\phi \colon u \to v$ in $\fF$ \emph{cartesian} if whenever we have another morphism $\psi \colon w \to v$ such that $p(w)$ factors through $p(\phi)$, diagrammatically
\[
\begin{tikzcd}
w \arrow[dashed]{d} \arrow{dr}{\psi}& \\
u \arrow{r}{\phi} & v 
\end{tikzcd} \overset{p}{\mapsto}
\begin{tikzcd}
p(w) \arrow{d}{h} \arrow{dr}{p(\psi)} & \\
p(u) \arrow{r}{p(\phi)} & p(v),
\end{tikzcd}
\]
then there exists a unique morphism $\lambda \colon w \to u$ in $\fF$, taking the place of the dashed arrow above, such that the diagram commutes and $p(\lambda)=h$.

The category $\fF$, or more precisely the data $p\colon \fF \to \Sch_S$, is called a \emph{fibered category over $\Sch_S$} if for every $x \in \fF$ and morphism $f\colon Y \to p(x)$ in $\Sch_S$, there exists a cartesian morphism $\phi \colon y \to x$ such that $p(\phi)= f$, and therefore also $p(y)=Y$. The object $y$ may be denoted $y=f^*(x)$ and called a \emph{pullback} of $x$ along $f$. We will call the functor $p$ the \emph{structure functor}.
\end{defn}

In the above definition we say ``a" pullback, instead of ``the" pullback, since pullbacks need not be unique. However, they are unique up to unique isomorphism by \cite[3.1]{Olsson}. If the morphism $f\colon Y \to p(x)$ is clear from context we may write $f^*(x) = x|_Y$. By \cite[3.4]{Vis}, compositions of cartesian morphisms are cartesian and isomorphisms in $\fF$ are also cartesian.

A morphism from a fibered category $(\fF,p_\fF)$ over $\Sch_S$ to another fibered category $(\fG,p_\fG)$ over $\Sch_S$ is a functor $\varphi \colon \fF \to \fG$ such that $p_{\fG}\circ \varphi = p_{\fF}$ is an equality of functors into $\Sch_S$ and $\varphi$ sends cartesian morphisms to cartesian morphisms.

Given a fibered category $p \colon \fF \to \Sch_S$, for any scheme $X \in \Sch_S$ we denote by $\fF(X)$ the subcategory of objects $x\in \fF$ such that $p(x)=X$, and morphisms $\phi \colon x \to x'$ such that $p(\phi)=\Id_X$. This is called the \emph{fiber over $X$}. A morphism of fibered categories $\varphi \colon \fF \to \fG$ induces functors $\varphi_X \colon \fF(X) \to \fG(X)$ between the corresponding fibers. We take \cite[3.1.10]{Olsson} as our definition and say that $\varphi$ is an \emph{equivalence of fibered categories} if every $\varphi_X$ is an equivalence of categories.

A fibered category is called \emph{fibered in groupoids} if $\fF(X)$ is a groupoid for all $X\in \Sch_S$. A convenient fact about categories fibered in groupoids is the following.
\begin{lem}\label{lem_cartesian_groupoids}{\cite[3.22]{Vis}}
If $p\colon \fF \to \Sch_S$ is a fibered category, then $\fF$ is fibered in groupoids if and only if every morphism in $\fF$ is a cartesian morphism.
\end{lem}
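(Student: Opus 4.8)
The plan is to prove both directions by elementary diagram chases, using nothing beyond the definition of a fibered category (Definition~\ref{defn_fibered_category}) together with the two facts recalled just before the statement: that isomorphisms in $\fF$ are cartesian and that cartesian morphisms are closed under composition.

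For the implication ``every morphism of $\fF$ is cartesian $\Rightarrow$ $\fF$ is fibered in groupoids'', I would fix $X \in \Sch_S$ and an arbitrary morphism $\phi \colon x \to x'$ in the fiber $\fF(X)$, so that $p(\phi) = \Id_X$, and argue that $\phi$ is invertible. Applying the cartesian property of $\phi$ to the morphism $\Id_{x'} \colon x' \to x'$ — whose image $\Id_X$ factors through $p(\phi)=\Id_X$ via $h = \Id_X$ — produces a unique $\lambda \colon x' \to x$ with $\phi \circ \lambda = \Id_{x'}$ and $p(\lambda) = \Id_X$, i.e.\ a right inverse of $\phi$ lying again in $\fF(X)$. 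Since $\lambda$ is itself cartesian by hypothesis, the same construction applied to $\lambda$ (using $\Id_x \colon x \to x$) yields $\mu \colon x \to x'$ with $\lambda \circ \mu = \Id_x$ and $p(\mu)=\Id_X$. The one-line computation $\mu = \Id_{x'}\circ \mu = (\phi\circ\lambda)\circ\mu = \phi\circ(\lambda\circ\mu) = \phi$ shows $\mu = \phi$, hence $\lambda\circ\phi = \Id_x$ as well, so $\phi$ is an isomorphism and $\fF(X)$ is a groupoid.

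For the converse, suppose $\fF$ is fibered in groupoids and let $\psi \colon w \to v$ be an arbitrary morphism of $\fF$; I want to show it is cartesian. Put $f = p(\psi) \colon p(w) \to p(v)$. Since $\fF$ is a fibered category, there is a cartesian morphism $\phi \colon u \to v$ with $p(\phi) = f$ (hence $p(u) = p(w)$). Applying the cartesian property of $\phi$ to $\psi$, with the factorization $p(\psi) = f = p(\phi)\circ \Id_{p(w)}$, gives a unique $\lambda \colon w \to u$ with $\phi\circ\lambda = \psi$ and $p(\lambda) = \Id_{p(w)}$. Then $\lambda$ is a morphism in the fiber $\fF(p(w))$, which is a groupoid by hypothesis, so $\lambda$ is an isomorphism, and in particular cartesian. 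Therefore $\psi = \phi\circ\lambda$ is a composite of cartesian morphisms, hence cartesian.

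I do not expect a real obstacle here: the argument is a routine chase. The only points requiring care are bookkeeping the direction of the universal property of cartesian morphisms in each application (ensuring the comparison morphism $h$ is taken to be an identity) and, in the second implication, invoking the already-recorded closure of cartesian morphisms under composition and the fact that isomorphisms are cartesian. Should one wish to avoid citing \cite{Vis} for those closure facts, the mildest alternative is to verify directly that the composite of a cartesian morphism with an isomorphism is cartesian, which is again a short diagram chase.
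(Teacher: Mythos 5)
The paper does not reproduce a proof of this lemma; it simply cites \cite[3.22]{Vis}. Your argument is correct and is essentially the standard proof one finds there, so you have, in effect, supplied the proof the paper outsources.

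Both directions check out. In the forward direction you correctly read the universal property: taking $\psi = \Id_{x'}$ with $h = \Id_X$ produces a right inverse $\lambda$ over $\Id_X$, and then applying the cartesian property of $\lambda$ (which holds under the blanket hypothesis) to $\Id_x$ gives $\mu$ with $\lambda\circ\mu = \Id_x$; the computation $\mu = \phi$ then yields the two-sided inverse. One small stylistic alternative worth noting: instead of invoking the cartesian property of $\lambda$, you could apply the cartesian property of $\phi$ a second time to $\psi = \phi$ itself (again with $h = \Id_X$); both $\Id_x$ and $\lambda\circ\phi$ satisfy $\phi\circ(-) = \phi$ over $\Id_X$, so uniqueness forces $\lambda\circ\phi = \Id_x$. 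This avoids the symmetric construction and uses only that $\phi$ is cartesian, which makes the argument marginally more economical but does not change its substance. In the converse direction, factoring $\psi$ through a cartesian lift $\phi$ over $f = p(\psi)$ and observing that the comparison $\lambda$ lies in a groupoid fiber, hence is an isomorphism, hence cartesian, is exactly right; the final appeal to closure of cartesian morphisms under composition (and to isomorphisms being cartesian) is legitimate since the paper records those facts from \cite[3.4]{Vis} immediately before the statement, and you correctly flag that they could be reproved directly if one wished to be fully self-contained.
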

In particular, this means that if $\fF$ is a fibered category and $\fG$ is fibered in groupoids, then any functor $\fF \to \fG$ which is compatible with the structure functors into $\Sch_S$ will be a morphism of fibered categories.

Following \cite[Tag 02ZB]{Stacks} or \cite[\S 3.7]{Vis} (or \cite[3.4.7]{Olsson} when $\fF$ is fibered in groupoids), given two objects $x,x' \in \fF(X)$, we define their \emph{internal homomorphism functor}
\begin{align*}
\cHom(x,x')=\cHom_{\fF}(x,x') \colon \Sch_X &\to \Sets \\
Y &\mapsto \Hom_{\fF(Y)}(x|_Y,x'|_Y).
\end{align*}
Independence from the choice of pullbacks in shown in \cite[\S 3.7]{Vis}. The restriction maps for this functor are defined as follows. If $f\colon Y' \to Y$ is a morphism of $X$--schemes and $\varphi \colon x|_Y \to x'|_Y$ is a morphism in $\fF(Y)$, we have a diagram in $\fF$
\[
\begin{tikzcd}
x|_{Y'} \arrow{d}{\phi_x} \arrow[dashed]{r} & x'|_{Y'} \arrow{d}{\phi_{x'}}\\
x|_Y \arrow{r}{\varphi} & x'|_Y
\end{tikzcd}
\]
where $\phi_x$ and $\phi_{x'}$ are cartesian morphisms. Since $\varphi$ is a morphism in $\fF(Y)$, we have $p(\varphi)=\Id_Y$ and so tautologically $p(\varphi\circ \phi_x)$ factors through $p(\phi_{x'}) = p(\phi_x)$. Therefore by Definition \ref{defn_fibered_category} there is a unique morphism in $\fF(Y')$, which we denote $\varphi|_{Y'}$ or $f^*(\varphi)$ when emphasizing $f$, that takes the place of the dashed arrow. These assignments $\varphi \mapsto \varphi|_{Y'}$ give the restriction maps of the functor $\cHom(x,x')$. When $x=x'$ we write $\cEnd(x) = \cHom(x,x)$. The subfunctors of invertible elements will be denoted $\cIsom(x,x')$ and $\cAut(x)$ respectively. If $\varphi \colon \fF \to \fG$ is a morphism of fibered categories, it induces natural transformations of functors $\varphi_{x,x'} \colon \cHom_{\fF}(x,x') \to \cHom_{\fG}(\varphi(x),\varphi(x'))$ in the canonical way, and this restricts to the $\cIsom$ subfunctors.

\begin{example}\label{hom_sheaf_of_schemes}
Here we present \cite[Example 3.15]{Vis} applied to the category $\Sch_S$. The \emph{arrow fibered category} $p\colon \Arr(\Sch_S) \to \Sch_S$ has
\begin{enumerate}[label={(\roman*)}]
\item objects which are morphisms $T \to W$ in $\Sch_S$,
\item morphisms which are pairs $(f,g)\colon (T_1\to W_1) \to (T_2 \to W_2)$ where $f\colon T_1 \to T_2$ and $g\colon W_1 \to W_2$ are in $\Sch_S$ such that
\[
\begin{tikzcd}
T_1 \ar[r,"f"] \ar[d] & T_2 \ar[d] \\
W_1 \ar[r,"g"] & W_2
\end{tikzcd}
\]
commutes, and
\item structure functor $p\colon \Arr(\Sch_S) \to \Sch_S$ defined by $p(T\to W) = W$ on objects and $p(f,g) = g$ on morphisms.
\end{enumerate} 
The cartesian morphisms are those $(f,g)$ such that the associated commutative square is a fiber product diagram. In particular, this means that a pullback of an object $T \to W_2$ along a morphism $g\colon W_1 \to W_2$ is the object $T\times_{W_2} W_1 \to W_1$ given by the fiber product. This means that for two objects $T_1 \to W$ and $T_2 \to W$ in the same fiber, the homomorphism functor takes the form
\begin{align*}
\cHom_{\Arr(\Sch_S)}(T_1\to W,T_2 \to W) \colon \Sch_W &\to \Sets \\
W' &\mapsto \Hom_{W'}(T_1\times_W W', T_2\times_W W').
\end{align*}
This is a sheaf for the fppf topology (in fact, even for the fpqc topology) by \cite[Tag 040L]{Stacks} or by \cite[2.1.10(2)]{Alper}. For later use, we will use the notation $\cHom_W(T_1,T_2)$ for this sheaf. Of course, this means that the functor $\cIsom_W(T_1,T_2)$, and therefore also $\cAut_W(T_1)$, are sheaves as well.
\end{example}

We take our definition of stack from \cite[Tag 026F]{Stacks}. This is slightly more general than the definition in \cite[4.6.1]{Olsson}, which requires that stacks be fibered in groupoids.
\begin{defn}\label{defn_stacks}
An \emph{$S$--stack}, or simply a \emph{stack} when $S$ is clear from context, is a fibered category $p\colon \fF \to \Sch_S$ such that the following hold.
\begin{enumerate}[label={\rm(\roman*)}]
\item \label{defn_stacks_i} For any $X\in \Sch_S$ and objects $x,x'\in \fF(X)$, the functor
\[
\cHom(x,x')\colon \Sch_X \to \Sets
\]
is an fppf sheaf on $\Sch_X$.
\item \label{defn_stacks_ii} For any fppf covering $\{X_i \to X\}_{i\in I}$ of $\Sch_S$, objects $x_i \in \fF(X_i)$, and isomorphisms $\psi_{ij} \colon x_i|_{X_{ij}} \iso x_j|_{X_{ij}}$ in $\fF$ satisfying the cocycle condition
\[
\psi_{jk}|_{X_{ijk}} \circ \psi_{ij}|_{X_{ijk}} = \psi_{ik}|_{X_{ijk}},
\]
there exists an object $x\in \fF(X)$ and isomorphisms $\alpha_i \colon x_i \iso x|_{X_i}$ in $\fF(X_i)$ such that
\[
\psi_{ij}=(\alpha_j)|_{X_{ij}}^{-1}\circ (\alpha_i)|_{X_{ij}}.
\]
\end{enumerate}
\end{defn}
Intuitively, this means that a stack is a fibered category which allows gluing (or descent) of local objects along glueing data (or descent data). Condition \ref{defn_stacks_i} in Definition \ref{defn_stacks} implies that that the functors $\cEnd(x)$, $\cIsom(x,x')$, and $\cAut(x)$ are sheaves as well.

Most of the stacks we will consider consist of categories of sheaves with certain properties, and are therefore all variations (or more precisely, substacks as defined in \cite[3.29, 4.18]{Vis}) of the following example.
\begin{example}\label{ex_stack_sheaves}
Let $\fSh_S$, usually abbreviated to just $\fSh$ when $S$ is clear, be the category whose
\begin{enumerate}[label={\rm(\roman*)}]
\item objects are pairs $(X,\cF \colon \Sch_X \to \Sets)$ consisting of a scheme $X \in \Sch_S$ and a sheaf $\cF$ on the fppf site of schemes over $X$, and whose
\item morphisms are pairs $(g,\varphi)\colon (X',\cF') \to (X,\cF)$ where $g\colon X' \to X$ is a morphism of $S$--schemes and $\varphi \colon \cF' \to \cF|_{X'}$ is a morphism of sheaves on $\Sch_{X'}$. If $(h,\psi)\colon (X'',\cF'')\to (X',\cF')$ is another morphism, then composition is given by
\[
(g,\varphi)\circ(h,\psi) = (g\circ h,\varphi|_{X''}\circ \psi).
\]
\end{enumerate}
Equipping $\fSh$ with the structure functor $p \colon \fSh \to \Sch_S$ sending $(X,\cF) \mapsto X$ and $(g,\varphi) \mapsto g$ makes $\fSh$ the fibered category of \cite[3.20]{Vis} where we take $\cC = \Sch_S$ and $\cT$ to be the fppf topology. For an object $(X,\cF)$ and a morphism $g\colon X' \to X \in \Sch_S$, the pullback is given by the cartesian morphism $(g,\Id_{\cF|_{X'}}) \colon (X',\cF|_{X'}) \to (X,\cF)$ as outlined in \cite[3.1.3]{Vis}. Since any cartesian morphism is uniquely isomorphic to one of the form $(g,\Id_{\cF|_{X'}})$, we see that a morphism $(g,\varphi)$ is cartesian if and only if $\varphi$ is an isomorphism. The fiber over $X\in \Sch_S$ is the category of sheaves on $\Sch_X$. In fact, \cite[4.11]{Vis} shows that $\fSh$ is a stack. Briefly, this is because for any two $\cF,\cF' \in \fSh(X)$, the homomorphism functor $\cHom(\cF,\cF')$ is simply the sheaf of internal homomorphisms between $\cF$ and $\cF'$ and because $\fSh$ allows descent since sheaves can be glued along glueing data as in \cite[Tag 04TR]{Stacks}. We call $\fSh$ the stack of sheaves over $\Sch_S$.
\end{example}

There is a notion of twisting objects, similar to the contracted product discussed above, for objects in a stack. In particular, let $p\colon \fF \to \Sch_S$ be a stack and assume we have and object $x\in \fF(X)$ in the fiber over $X\in \Sch_S$, a group sheaf $\bG \colon \Sch_X  \to \Grp$, and a homomorphism $\varphi \colon \bG \to \cAut(x)$. Then, by \cite[III.2.3.1]{Gir}, for each $\bG$--torsor $\cP$ there is an object in $\fF(X)$, denoted by $\cP\wedge^\bG x$ or $\cP\wedge^\varphi x$ when we wish to emphasize $\varphi$, such that
\begin{enumerate}[label={\rm(\roman*)}]
\item $\cP\wedge^\bG x$ is locally isomorphic to $x$, and
\item by \cite[III.2.3.2.1]{Gir} there is an isomorphism of $\cAut(x)$--torsors
\[
\cP\wedge^\bG \cAut(x) \iso \cIsom(x,\cP\wedge^\bG x)
\]
where the sheaf on the left is given by the contracted product as in Section \ref{sec_contracted_products}.
\end{enumerate}
We also refer to this construction as the \emph{contracted product}.

Most of the stacks we will be interested in are fibered in groupoids and therefore match the definition of stack from \cite{Olsson}. In fact, they are gerbes in the sense of \cite{Gir} or \cite{Stacks}. We note that the notion of gerbe discussed in \cite[Ch. 12]{Olsson}, namely $\bmu$--gerbe, is more restrictive.
\begin{defn}\label{defn_gerbe}[{\cite[III.2.1.1]{Gir} or \cite[Tag 06NZ]{Stacks} (or \cite[2.2.5.1]{CF} in the split case)}]
A stack $p\colon \fF \to \Sch_S$ is a \emph{gerbe} if
\begin{enumerate}[label={\rm(\roman*)}]
\item \label{defn_gerbe_i} $\fF$ is fibered in groupoids,
\item \label{defn_gerbe_ii} for every $X\in \Sch_S$ there exists a covering $\{X_i \to X\}_{i\in I}$ such that each $\fF(X_i)$ is non-empty, and
\item \label{defn_gerbe_iii} for every $X\in \Sch_S$ and objects $x,x'\in \fF(X)$ there exists a cover $\{X_i \to X\}_{i\in I}$ such that there are isomorphisms $x|_{X_i} \cong x'|_{X_i}$ for all $i \in I$.
\end{enumerate}
\end{defn}

\begin{example}
Consider a group sheaf $\bG\colon \Sch_S \to \Grp$. Let $\fTors(\bG)$ be the fibered category over $\Sch_S$ whose
\begin{enumerate}[label={\rm(\roman*)}]
\item objects are pairs $(X,\cP)$ where $X\in \Sch_S$ and $\cP\colon \Sch_X \to \Sets$ is a $\bG|_X$--torsor,
\item morphisms are pairs $(g,\varphi)\colon (X',\cP') \to (X,\cP)$ where $g\colon X' \to X$ is a morphism of $S$--schemes and $\varphi \colon \cP' \iso \cP|_{X'}$ is a morphism (and hence isomorphism) of $\bG|_{X'}$--torsors, and whose
\item structure functor $p\colon \fTors(\bG) \to \Sch_S$ sends $(X,\cP) \to X$.
\end{enumerate}
This is a gerbe by \cite[III.1.4.5]{Gir}, however we include a justification in our own notation. The category $\fTors(\bG)$ is indeed a fibered category where the pullback of an object $(X,\cP)$ along a morphism $g \colon X' \to X$ in $\Sch_S$ is given by the morphism $(g,\Id_{\cP|_{X'}}) \colon (X',\cP|_{X'}) \to (X,\cP)$. Because all morphisms of torsors are isomorphisms, $\fTors(\bG)$ is fibered in groupoids as any morphism over $\Id_X$ is of the form $(\Id_X,\varphi)$ for an isomorphism $\varphi$. Therefore by Lemma \ref{lem_cartesian_groupoids}, all morphisms in $\fTors(\bG)$ are cartesian.

For $X\in \Sch_S$ and two objects $(X,\cP_1),(X,\cP_2) \in \fTors(\bG)(X)$, the functor of homomorphisms $\cHom_{\fTors(\bG)}((X,\cP_1),(X,\cP_2))$ is a subfunctor of the homomorphisms $\cHom_{\fSh}((X,\cP_1),(X,\cP_2))$ in $\Sh$. Since $\bG$--equivariance of morphisms can be checked locally, compatible local torsor isomorphisms glue into a torsor isomorphism and so $\cHom_{\fTors(\bG)}((X,\cP_1),(X,\cP_2))$ is itself a sheaf. Similarly, gluing torsors along descent data consisting of torsor isomorphism will produce another torsor. Therefore, $\fTors(\bG)$ is a stack over $\Sch_S$. Since torsors are by definition locally isomorphic to $\bG$, they are locally isomorphic to each other. Therefore, $\fTors(\bG)$ is a gerbe.

There is a clear inclusion functor $\fTors(\bG) \to \Sh$ which we claim is a morphism of stacks. It is straightforward to see that it respects the structure functors and so we only need to check that cartesian morphism are preserved, i.e., that the image of any morphism is again cartesian in $\Sh$. A morphism $(g,\varphi)\colon (X',\cP') \to (X,\cP)$ in $\fTors(\bG)$ can be decomposed as
\[
\begin{tikzcd}[column sep = 1in]
(X',\cP') \arrow{dr}{(g,\varphi)} \arrow[swap]{d}{(\Id_{X'},\varphi)} & \\
(X',\cP|_{X'}) \arrow[near start]{r}{(g,\Id_{\cP|_{X'}})} & (X,\cP)
\end{tikzcd}
\]
where $(g,\Id_{\cP|_{X'}})$ is cartesian in $\fSh$ and $(\Id_{X'},\varphi)$ is also cartesian in $\fSh$ by virtue of being an isomorphism. Therefore, their composition $(g,\varphi)$ is also cartesian in $\fSh$ as desired. Hence, the inclusion functor $\fTors(\bG) \to \Sh$ makes the gerbe $\fTors(\bG)$ a substack of $\Sh$.
\end{example}

The following results are analogous to those found in \cite[2.2.4]{CF}, which we borrow some notation from. However, the authors there only consider split fibered categories and stacks. See \cite[2.1.2.1]{CF} for their notion of fibered category and \cite[2.1.3.4]{CF} for stack. Because of this, and also because the result corresponding to Example \ref{ex_forms} is left as an exercise, we include short proofs of these facts in our context.

\begin{example}\label{ex_forms}
Let $p\colon \fF \to \Sch_S$ be a stack and let $X\in \Sch_S$. We call objects $x,x'\in \fF(X)$ \emph{twisted forms} of one another if they are locally isomorphic, that is, if there exists a cover $\{X_i \to X\}_{i \in I}$ and isomorphisms $x|_{X_i} \cong x'|_{X_i}$ in $\fF(X_i)$ for each $i\in I$. Note that this definition is independent of the choice of pullbacks (e.g. \cite[Remark 3.3]{Vis}). For an object $s\in \fF(S)$ we denote by $\fForms(s)$ the subcategory of $\fF$ whose
\begin{enumerate}[label={\rm(\roman*)}]
\item objects are those $x \in \fF$ (not necessarily in $\fF(S)$) such that $x$ is a twisted form of $s|_{p(x)}$, and whose
\item morphisms $y \to x$ are the cartesian morphisms between $y$ and $x$ in $\fF$.
\end{enumerate}
By \cite[4.20]{Vis}, the subcategory of $\fF$ consisting of the same objects but only cartesian morphisms, denoted $\fF_{\mathrm{cart}}$, is itself a stack. It is clear that $\fForms(s)$ is a full subcategory of $\fF_{\mathrm{cart}}$. To see that $\fForms(s)$ is a substack, we will apply \cite[4.19]{Vis} after checking that the following two conditions hold.
\begin{enumerate}[label={\rm(\roman*)}]
\item Any arrow in $\fF_{\mathrm{cart}}$ whose target is in $\fForms(s)$ is also in $\fForms(s)$.
\item If $\{X_i \to X\}_{i\in I}$ is a covering in $\Sch_S$ and $x\in \fF_{\mathrm{cart}}(X)$ is an object such that the pullbacks $x|_{X_i} \in \fForms(s)(X_i)$ for all $i\in I$, then $x\in \fForms(s)(X)$.
\end{enumerate}
To check the first condition, consider an object $x\in \fForms(s)(X)$ and a morphism $\varphi \colon y \to x$ in $\fF_{\mathrm{cart}}$. Let $g=p(\varphi) \colon Y \to X$ be the underlying map of schemes in $\Sch_S$. Since all maps in $\fF_{\mathrm{cart}}$ are cartesian, $y \cong x|_Y$ is a pullback of $x$ along $g$. Let $\{X_i \to X\}_{i\in I}$ be a cover over which $x$ is locally isomorphic to $s|_X$. The local isomorphisms $x|_{X_i} \cong s|_{X_i}$ restrict to isomorphisms $x|_{X_i\times_X Y} \cong s|_{X_i \times_X Y}$, and so $y \cong x|_Y$ is locally isomorphic to $s|_Y$ with respect to the cover $\{X_i\times_X Y \to Y\}_{i\in I}$. Therefore $y \in \fForms(s)(Y)$ and so the morphism $y \to x$ is in $\fForms(s)$ as desired.

The second condition is clear since if each $x|_{X_i}$ is locally isomorphic to $s|_{X_i}$, then there exists a refined cover over which $x$ is locally isomorphic to $s$.

Finally, we show that $\fForms(s)$ is a gerbe. First, since any arrow in $\fForms(s)(X)$ is cartesian, it is an isomorphism by \cite[3.3]{Vis}, proving condition \ref{defn_gerbe_i} of Definition \ref{defn_gerbe}. Next, Definition \ref{defn_gerbe}\ref{defn_gerbe_ii} is obvious since $s|_X \in \fForms(s)(X)$. Finally, by definition, all objects of $\fForms(s)$ in the same fiber are locally isomorphic, thus also Definition \ref{defn_gerbe}\ref{defn_gerbe_iii} holds. This shows that $\fForms(s)$ is a gerbe.
\end{example}

The following proposition connects isomorphism classes in a stack to cohomology sets of automorphism sheaves. In the context of \cite{CF}, part \ref{lem_gerbes_and_torsors_iv} is \cite[2.2.4.5]{CF} and part \ref{lem_gerbes_and_torsors_ii} follows from \cite[2.2.3.6]{CF}. Part \ref{lem_gerbes_and_torsors_iv} is \cite[III.2.5.1]{Gir}.
\begin{prop}\label{lem_gerbes_and_torsors}
Let $p\colon \fF \to \Sch_S$ be a stack and let $s\in \fF(S)$. Consider the gerbe $\fForms(s)$.
\begin{enumerate}[label={\rm(\roman*)}]
\item \label{lem_gerbes_and_torsors_i} For any $X\in \Sch_S$ and $x\in \fForms(s)(X)$, the sheaf $\cIsom(s|_X,x)$ is an $\cAut(s)|_X$--torsor.
\item \label{lem_gerbes_and_torsors_iv} There is an equivalence of stacks $\varphi \colon \fForms(s) \to \fTors(\cAut(s))$ which acts on objects by
\[
x \mapsto (p(x),\cIsom(s|_{p(x)},x))
\]
and on morphisms as follows. Let $f:y \to x$ be a morphism in $\fForms(s)$ and $p(f)\colon Y \to X$ be the underlying morphism in $\Sch_S$. For a chosen cartesian morphism $f''\colon x|_Y \to x$, there exists a unique isomorphism $f'\colon y \iso x|_Y$ in the fiber $\fForms(s)(Y)$ such that the composition $y \xrightarrow{f'} x|_Y \xrightarrow{f''} x$ is $f$. Furthermore, there is a canonical isomorphism
\[
g\colon \cIsom(s|_Y,x|_Y) \iso \cIsom(s|_X,x)|_Y.
\]
Therefore, we define $\varphi(f)$ to be the morphism $(p(f),\phi)$ where $\phi$ is the isomorphism of $\cAut(s)|_Y$--torsors
\begin{align*}
\phi \colon \cIsom(s|_Y,y) &\iso \cIsom(s|_X,x)|_Y \\
h &\mapsto g(f'\circ h).
\end{align*}
A quasi-inverse is given by the contracted product, namely the functor
\begin{align*}
\fTors(\cAut(s)) &\to \fForms(s) \\
(X,\cP) &\mapsto \cP\wedge^{\cAut(s)|_X} s|_X.
\end{align*}

\item \label{lem_gerbes_and_torsors_ii} For any $s'\in \fForms(s)(S)$, the sheaves $\cAut(s)$ and $\cAut(s')$ are twisted forms of one another as group sheaves. In particular, giving $\cAut(s)$ the left action on itself by inner automorphisms, we have
\[
\cIsom(s,s') \wedge^{\cAut(s)} \cAut(s) \cong \cAut(s').
\]
\item \label{lem_gerbes_and_torsors_iii} Denote by $H^1(S,\fForms(s))$ the isomorphism classes of the groupoid $\fForms(s)(S)$ and recall that $H^1(S,\cAut(s))$ is the set of isomorphism classes of $\cAut(s)$--torsors. This is a pointed set and there is a bijective map of pointed sets
\begin{align*}
H^1(S,\fForms(s)) &\to H^1(S,\cAut(s)) \\
[s'] &\mapsto [\cIsom(s,s')]
\end{align*}
where we choose $[s]$ as the basepoint of $H^1(S,\fForms(s))$.
\end{enumerate}
\end{prop}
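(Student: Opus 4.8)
The plan is to prove the four parts essentially in the displayed order, using \ref{lem_gerbes_and_torsors_i} as the backbone for the rest. For \ref{lem_gerbes_and_torsors_i} I would equip $\cIsom(s|_X,x)$ with the right $\cAut(s)|_X$-action given on $Y$-points, for $Y\in\Sch_X$, by precomposition, $(h,a)\mapsto h\circ a$, where $h\colon s|_Y\iso x|_Y$ and $a\colon s|_Y\iso s|_Y$. Whenever $\cIsom(s|_X,x)(Y)\neq\varnothing$, choosing a base point $h_0$ identifies this fibre with $\cAut(s)(Y)$ via $h\mapsto h_0^{-1}\circ h$, so the action is simply transitive there; and since $x$ is a twisted form of $s|_X$ there is an fppf cover of $X$ over which $\cIsom(s|_X,x)$ has sections. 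It is a sheaf by Definition~\ref{defn_stacks}\ref{defn_stacks_i}. Hence it is an $\cAut(s)|_X$-torsor.

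For \ref{lem_gerbes_and_torsors_iv} I would first check that the recipe of the statement defines a functor $\varphi$ over $\Sch_S$: on objects this is \ref{lem_gerbes_and_torsors_i}; on morphisms one uses that cartesian lifts are unique up to unique isomorphism, which yields the canonical identification $g\colon\cIsom(s|_Y,x|_Y)\iso\cIsom(s|_X,x)|_Y$ and the factorization $f=f''\circ f'$ with $f'$ the unique isomorphism over $\Id_Y$, and functoriality (composition, identities) then reduces to the uniqueness clauses of Definition~\ref{defn_fibered_category}. Since $\fForms(s)$ is fibered in groupoids (Example~\ref{ex_forms}) and so is $\fTors(\cAut(s))$ (torsor morphisms are isomorphisms), $\varphi$ is automatically a morphism of stacks by the remark following Lemma~\ref{lem_cartesian_groupoids}, and it remains to show each $\varphi_X$ is fully faithful and essentially surjective. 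Essential surjectivity: given a $\cAut(s)|_X$-torsor $\cP$, the object $\cP\wedge^{\cAut(s)|_X}s|_X$ lies in $\fForms(s)(X)$ (it is locally isomorphic to $s|_X$), and the torsor isomorphism $\cP\wedge^{\cAut(s)|_X}\cAut(s)|_X\iso\cIsom(s|_X,\cP\wedge^{\cAut(s)|_X}s|_X)$ of \cite[III.2.3.2.1]{Gir}, together with the canonical $\cP\wedge^{\cAut(s)|_X}\cAut(s)|_X\cong\cP$, shows $\varphi_X(\cP\wedge^{\cAut(s)|_X}s|_X)\cong\cP$; this simultaneously exhibits the contracted-product functor as a quasi-inverse, since once $\varphi$ is known to be an equivalence with $\varphi\circ\Psi\cong\Id$ the isomorphism $\Psi\circ\varphi\cong\Id$ is automatic. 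Full faithfulness: for $y,x\in\fForms(s)(X)$ pick a cover $\{X_i\to X\}$ with sections $h_i\in\cIsom(s|_X,y)(X_i)$; given an isomorphism $\phi$ of $\cAut(s)|_X$-torsors $\cIsom(s|_X,y)\iso\cIsom(s|_X,x)$, the isomorphisms $f_i:=\phi(h_i)\circ h_i^{-1}\colon y|_{X_i}\iso x|_{X_i}$ agree on overlaps by $\cAut(s)$-equivariance of $\phi$, hence glue (via the sheaf $\cIsom(y,x)$) to a unique morphism $f$ with $\varphi(f)=\phi$, and the same computation gives injectivity on morphisms.

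For \ref{lem_gerbes_and_torsors_ii} I would set $\cP=\cIsom(s,s')$, an $\cAut(s)$-torsor by \ref{lem_gerbes_and_torsors_i}, let $\cAut(s)$ act on itself by inner automorphisms so that $\cP\wedge^{\cAut(s)}\cAut(s)$ carries a group structure, and define $\Theta\colon\cP\wedge^{\cAut(s)}\cAut(s)\to\cAut(s')$ on local sections by $[h,a]\mapsto h\circ a\circ h^{-1}$; the identification $(h\circ b,a)\sim(h,bab^{-1})$ makes $\Theta$ well defined, it is visibly a group homomorphism, and over a cover trivializing $\cP$ it becomes the conjugation isomorphism $\cAut(s)\iso\cAut(s')$, so $\Theta$ is an isomorphism, and since $\cP$ is locally isomorphic to $\cAut(s)$ this also shows $\cAut(s)$ and $\cAut(s')$ are twisted forms of one another. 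Finally \ref{lem_gerbes_and_torsors_iii} is immediate: the equivalence of categories $\varphi_S$ from \ref{lem_gerbes_and_torsors_iv} induces a bijection on isomorphism classes $H^1(S,\fForms(s))\iso H^1(S,\cAut(s))$ sending $[s']\mapsto[\cIsom(s,s')]$ and carrying the basepoint $[s]$ to $[\cIsom(s,s)]=[\cAut(s)]$, the class of the trivial torsor.

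The substantive input is the contracted-product torsor isomorphism \cite[III.2.3.2.1]{Gir} invoked for essential surjectivity; everything else is bookkeeping, the most delicate piece being the precise, functorial definition of $\varphi$ on morphisms, which rests on repeated use of ``unique up to unique isomorphism'' for cartesian lifts.
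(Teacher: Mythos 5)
Your proofs of \ref{lem_gerbes_and_torsors_i}, \ref{lem_gerbes_and_torsors_ii}, and \ref{lem_gerbes_and_torsors_iii} are correct and coincide in substance with the paper's: (i) is simple transitivity of the composition action plus local non-emptiness from the twisted-form hypothesis; (ii) is the conjugation map on the contracted-product presheaf, checked to be an isomorphism after sheafification; and (iii) falls out of (iv) by passing to isomorphism classes over the final object $S$.

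Where you genuinely diverge is \ref{lem_gerbes_and_torsors_iv}. The paper dispatches it with the single citation \cite[III.2.5.1]{Gir}, whereas you unpack the equivalence by hand: you verify that the prescribed recipe for $\varphi$ on morphisms is well-defined and functorial using uniqueness of cartesian lifts, note that a morphism of stacks between categories fibered in groupoids is automatic once the structure functors agree, and then establish that each $\varphi_X$ is an equivalence by checking essential surjectivity (via the contracted product and \cite[III.2.3.2.1]{Gir}) and full faithfulness (via a gluing argument over a trivializing cover, using the $\cAut(s)$-equivariance of a torsor morphism to reconcile the local isomorphisms $f_i = \phi(h_i)\circ h_i^{-1}$ on overlaps, and the sheaf property of $\cIsom(y,x)$ to descend). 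Your route still relies on Giraud for the torsor identification $\cP\wedge^{\cAut(s)|_X}\cAut(s)|_X \iso \cIsom(s|_X, \cP\wedge^{\cAut(s)|_X}s|_X)$, but it replaces the opaque invocation of the global equivalence theorem with an argument that is self-contained modulo that one lemma, at the cost of the bookkeeping you flag (checking that $\varphi(\mathrm{id})=\mathrm{id}$ and that $\varphi$ respects composition via the uniqueness clauses of Definition~\ref{defn_fibered_category}); these checks are routine but, strictly speaking, you assert rather than carry them out, which is the only place the level of detail falls short of a complete write-up.
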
  
\begin{proof}
\noindent\ref{lem_gerbes_and_torsors_i}: The sheaf $\cIsom(s|_X,x)$ has a natural right action of $\cAut(s)|_X$ given by composition. For any $Y \in \Sch_X$, the action $\cIsom(s|_X,x)(Y) \times \cAut(s)(Y) \to \cIsom(s|_X,x)(Y)$ is simply transitive since any two isomorphisms $\varphi_1,\varphi_2 \in \cIsom(s|_X,x)(Y)$ differ by $\varphi_1^{-1}\circ\varphi_2 \in \cAut(s)(Y)$. There also exists a cover over which $\cIsom(s|_X,x)$ is non-empty since $x$ is a twisted form of $s|_X$ by definition. Thus, $\cIsom(s|_X,x)$ is a $\cAut(s)|_X$--torsor.

\noindent\ref{lem_gerbes_and_torsors_iv}: As mentioned above, this is \cite[III.2.5.1]{Gir}.

\noindent\ref{lem_gerbes_and_torsors_ii}: The map of presheaves
\begin{align*}
\big(\cIsom(s,s')\times \cAut(s)\big)/\sim &\to \cAut(s') \\
(g,\varphi) &\mapsto g\circ \varphi \circ g^{-1},
\end{align*}
where $\sim$ is the equivalence relation defined in Section \ref{sec_contracted_products}, is easily seen to be well defined and it is bijective whenever $\cIsom(s,s')$ has a section. Therefore, since $\cIsom(s,s')$ is an $\cAut(s)$--torsor, after sheafification it produces a sheaf isomorphism $\cIsom(s,s') \wedge^{\cAut(s)} \cAut(s) \iso \cAut(s')$ as desired.

\noindent\ref{lem_gerbes_and_torsors_iii}: This follows from \ref{lem_gerbes_and_torsors_iv}. The equivalence of stacks includes an equivalence of categories $\fForms(s)(S) \to \fTors(\cAut(s))(S)$ which in turn produces a bijection of pointed sets
\begin{align*}
H^1(S,\fForms(s)) &\iso H^1(S,\fTors(\cAut(s))) \\
[s'] &\mapsto [\cIsom(s,s')]
\end{align*}
and the latter cohomology set is by definition $H^1(S,\cAut(s))$.
\end{proof}
\begin{remark}\label{rem_gerbe_forms}
If $p\colon \fF \to \Sch_S$ is itself a gerbe, then $\fForms(s) = \fF$ by condition \ref{defn_gerbe}\ref{defn_gerbe_iii} and we may apply Proposition \ref{lem_gerbes_and_torsors} to all of $\fF$. In this case, we will often view the isomorphism $H^1(S,\fF) \iso H^1(S,\cAut(s))$ as an identification and write expressions such as $[s']\in H^1(S,\cAut(s))$ where $s'\in \fF(S)$. This is a natural shorthand and, for example, is equivalent to considering $H^1(S,\GL_r)$ as the set of isomorphism classes of locally free $\cO$--modules of constant rank $r$.
\end{remark}

Our main technique throughout Section \ref{cohomological_description} will be an application of the following lemma. This lemma is \cite[III.2.5.3]{Gir} and it also follows from \cite[2.2.3.9]{CF}.
\begin{lem}\label{lem_loos_cohom}
Let $\varphi \colon \fF \to \fG$ be a morphism of gerbes. For $s\in \fF(S)$ there is an associated morphism of group sheaves $\varphi_s \colon \cAut_{\fF}(s) \to \cAut_{\fG}(\varphi(s))$. Then, the map on first cohomology induced by $\varphi_s$ is the map
\begin{align*}
H^1(S,\cAut_{\fF}(s)) &\to H^1(S,\cAut_{\fG}(\varphi(s))) \\
[s'] &\mapsto [\varphi(s')]
\end{align*}
where we view $H^1(S,\cAut_{\fF}(s))$ as the set of isomorphism classes in $\fF(S)$ and $H^1(S,\cAut_{\fG}(\varphi(s)))$ as the isomorphism classes in $\fG(S)$.
\end{lem}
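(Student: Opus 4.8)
The plan is to reduce the statement to the already-cited results of Giraud and to the functoriality of the $\cIsom$ construction established in the preceding discussion. The key point is that a morphism of gerbes $\varphi\colon \fF\to \fG$ induces, for $s\in\fF(S)$, a morphism of automorphism sheaves $\varphi_s\colon \cAut_{\fF}(s)\to\cAut_{\fG}(\varphi(s))$ (this is part of the hypothesis, and is exactly the natural transformation $\varphi_{s,s}$ restricted to the $\cAut$ subsheaves). The assertion is then that, under the identifications of Proposition~\ref{lem_gerbes_and_torsors}\ref{lem_gerbes_and_torsors_iii}, the induced map $H^1(S,\varphi_s)$ on nonabelian cohomology coincides with $[s']\mapsto[\varphi(s')]$.

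First I would set up the diagram of stacks: since $\fF$ and $\fG$ are gerbes, Remark~\ref{rem_gerbe_forms} lets us write $\fF=\fForms(s)$ and $\fG=\fForms(\varphi(s))$, and Proposition~\ref{lem_gerbes_and_torsors}\ref{lem_gerbes_and_torsors_iv} gives equivalences $\fF\simeq\fTors(\cAut_{\fF}(s))$ and $\fG\simeq\fTors(\cAut_{\fG}(\varphi(s)))$ sending an object $x$ to its torsor $\cIsom(s|_{p(x)},x)$, respectively $\cIsom(\varphi(s)|_{p(x)},x)$. On the other hand, the group homomorphism $\varphi_s$ induces a morphism of gerbes $\fTors(\cAut_{\fF}(s))\to\fTors(\cAut_{\fG}(\varphi(s)))$ by the extension-of-structure-group (contracted product) construction of Section~\ref{sec_contracted_products}, and \emph{by definition} the map $H^1(S,\varphi_s)$ is the one this functor induces on isomorphism classes. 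So the content is that the square
\[
\begin{tikzcd}
\fForms(s) \ar[r,"\varphi"] \ar[d,"\sim"'] & \fForms(\varphi(s)) \ar[d,"\sim"] \\
\fTors(\cAut_{\fF}(s)) \ar[r] & \fTors(\cAut_{\fG}(\varphi(s)))
\end{tikzcd}
\]
commutes up to natural isomorphism of stack morphisms; passing to isomorphism classes of the fibers over $S$ then yields the claim, since going around the top-right sends $[s']\mapsto[\varphi(s')]\mapsto[\cIsom(\varphi(s),\varphi(s'))]$, while the left and bottom give $H^1(S,\varphi_s)$ applied to $[\cIsom(s,s')]$.

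To see that the square commutes it suffices to produce, for each $x\in\fForms(s)(X)$, a natural isomorphism of $\cAut_{\fG}(\varphi(s))|_X$--torsors
\[
\cIsom(s|_X,x)\wedge^{\cAut_{\fF}(s)|_X}\cAut_{\fG}(\varphi(s))|_X \;\iso\; \cIsom(\varphi(s)|_X,\varphi(x)),
\]
compatible with restriction. The map is induced on the presheaf level by $(h,\psi)\mapsto \psi\circ\varphi_x(h)$, where $h\in\cIsom(s|_X,x)$, $\psi\in\cAut_{\fG}(\varphi(s))|_X$, and $\varphi_x\colon\cIsom(s|_X,x)\to\cIsom(\varphi(s)|_X,\varphi(x))$ is the functoriality map; one checks this is well defined on the $\sim$-classes because $\varphi_x(h\circ a)=\varphi(a\text{-conjugate})\ldots$ — more precisely because $\varphi$ is a functor, $\varphi_x(h\circ a) = \varphi_x(h)\circ\varphi_s(a)$ for $a\in\cAut_{\fF}(s)|_X$, which is exactly the relation identified by the contracted product. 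It is bijective locally (where $\cIsom(s|_X,x)$ has a section, both sides are torsors under the same group via a section), hence an isomorphism after sheafification, and naturality in $x$ and compatibility with base change are formal from functoriality of $\varphi$. I expect the only mildly delicate point — the main ``obstacle,'' though it is really bookkeeping — to be verifying that this identification of torsors is the same one underlying Giraud's \cite[III.2.5.3]{Gir}, i.e.\ that our normalizations (basepoints $[s]$ and $[\varphi(s)]$, direction of the action) match, so that the resulting map of pointed sets is genuinely $[s']\mapsto[\varphi(s')]$ and not its composite with some automorphism. Since the statement is attributed to \cite[III.2.5.3]{Gir} and also follows from \cite[2.2.3.9]{CF}, I would in fact simply cite those and record the above as the explanation; no genuinely new argument is required.
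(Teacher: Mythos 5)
Your overall plan is correct and matches what the paper does (the paper simply cites \cite[III.2.5.3]{Gir}; a commented-out proof in the source verifies that $\varphi_{s,s'}\colon\cIsom_\fF(s,s')\to\cIsom_\fG(\varphi(s),\varphi(s'))$ is a $\varphi_s$-morphism of torsors and then cites a standard compatibility result). Your sketch fills in the same idea more explicitly via the commuting square of gerbes and the contracted-product description.

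There is, however, a concrete slip in the displayed formula. You define the map on representatives by $(h,\psi)\mapsto \psi\circ\varphi_x(h)$, but this does not type-check: $\varphi_x(h)$ is a morphism $\varphi(s)|_X\to\varphi(x)$ while $\psi$ is an endomorphism of $\varphi(s)|_X$, so $\psi\circ\varphi_x(h)$ is not defined. The correct formula is $(h,\psi)\mapsto\varphi_x(h)\circ\psi$. With that correction your own well-definedness computation goes through as you intend: the contracted-product relation identifies $(h\circ a,\psi)$ with $(h,\varphi_s(a)\circ\psi)$, and
\[
\varphi_x(h\circ a)\circ\psi = \bigl(\varphi_x(h)\circ\varphi_s(a)\bigr)\circ\psi = \varphi_x(h)\circ\bigl(\varphi_s(a)\circ\psi\bigr),
\]
so both representatives have the same image; moreover $\varphi_x(h)\circ(\psi b)=\bigl(\varphi_x(h)\circ\psi\bigr)\circ b$, which is exactly the $\cAut_\fG(\varphi(s))|_X$-equivariance needed for this to be a morphism of torsors. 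With your original formula the equivariance check would force commutativity of the automorphism sheaf, which is false in general. Once the formula is fixed, the rest of your argument (local bijectivity, sheafification, compatibility of basepoints and of the two normalisations) is sound and indeed proves the square commutes up to natural isomorphism, giving the claim.
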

\lv{%
\begin{proof}
An element $[s'] \in H^1(S,\cAut_{\fF}(s))$ corresponds to the $\cAut_{\fF}(s)$--torsor $\cIsom_{\fF}(s,s')$. In turn, $\cIsom_{\fG}(\varphi(s),\varphi(s'))$ is an $\cAut_{\fG}(\varphi(s))$--torsor whose isomorphism class is $[\varphi(s')]$. Since $\varphi$ is a morphism of stacks, if gives a natural transformation of sheaves
\[
\varphi_{s,s'} \colon \cIsom_{\fF}(s,s') \to \cIsom_{\fG}(\varphi(s),\varphi(s'))
\]
which is compatible with the torsor structures via the group homomorphism $\varphi_s \colon \cAut_{\fF}(s) \to \cAut_{\fG}(\varphi(s))$. In particular, for sections $\psi \in \cAut_{\fF}(s)$ and $g \in \cIsom_{\fF}(s,s')$ we have
\[
\varphi_{s,s'}(g\cdot \psi) = \varphi_{s,s'}(g) \cdot \varphi_{s}(\psi).
\]
Thus, $\varphi_{s,s'}$ is what Loos calls a $\varphi_s$--morphism between torsors and so we may apply \cite[6.4]{Loos} to conclude that $[s']$ maps to $[\varphi(s')]$ under the morphism of cohomology induced by $\varphi_s$.
\end{proof}
}

We also identify equivalences of gerbes using the following Theorem. This is folklore, but we provide a proof since it is not stated in this exact fashion in \cite{Gir}.
\begin{thm}[{\cite{Gir}}]\label{lem_gerbe_equivalence}
Let $\varphi \colon \fF \to \fG$ be a morphism of gerbes. Assume there exists $s\in \fF(S)$ such that the induced group sheaf homomorphism $\varphi_s \colon \cAut_\fF(s) \to \cAut_\fG(\varphi(s))$ is an isomorphism. Then, $\varphi$ is an equivalence of gerbes.
\end{thm}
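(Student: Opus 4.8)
The plan is to reduce the statement to showing that for every $X\in\Sch_S$ the induced functor $\varphi_X\colon\fF(X)\to\fG(X)$ is an equivalence of categories; by the definition of equivalence of gerbes recalled above this is exactly what must be proved. So I would verify that each $\varphi_X$ is fully faithful and essentially surjective, using throughout that $\varphi$, being a morphism of fibered categories, carries cartesian morphisms to cartesian morphisms and hence commutes with pullbacks up to canonical isomorphism.

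For full faithfulness, fix $x,x'\in\fF(X)$. Since $\fF$ is fibered in groupoids, $\Hom_{\fF(X)}(x,x')$ is canonically $\cIsom_\fF(x,x')(X)$, and similarly in $\fG$, so it suffices to show that the natural transformation $\varphi_{x,x'}\colon\cIsom_\fF(x,x')\to\cIsom_\fG(\varphi(x),\varphi(x'))$ induced by $\varphi$ is an isomorphism of sheaves on $\Sch_X$, which may be checked locally. Because $\fF$ is a gerbe, after refining to a cover $\{X_i\to X\}$ there are isomorphisms $s|_{X_i}\iso x|_{X_i}$ and $s|_{X_i}\iso x'|_{X_i}$; conjugating by these trivializes $\cIsom_\fF(x,x')|_{X_i}$ as $\cAut_\fF(s)|_{X_i}$, and applying $\varphi$ trivializes $\cIsom_\fG(\varphi(x),\varphi(x'))|_{X_i}$ as $\cAut_\fG(\varphi(s))|_{X_i}$ in such a way that $\varphi_{x,x'}|_{X_i}$ becomes $\varphi_s|_{X_i}$, an isomorphism by hypothesis. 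Taking sections over $X$ then gives full faithfulness of $\varphi_X$.

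For essential surjectivity, let $y\in\fG(X)$. Since $\fG$ is a gerbe, $y$ is locally isomorphic to $\varphi(s)|_X$: choose a cover $\{X_i\to X\}$ and isomorphisms $\beta_i\colon\varphi(s)|_{X_i}\iso y|_{X_i}$, and put $\gamma_{ij}=\beta_j^{-1}\circ\beta_i\in\cAut_\fG(\varphi(s))(X_{ij})$, which satisfies the cocycle condition. As $\varphi_s$ is an isomorphism of group sheaves, set $\delta_{ij}=\varphi_s^{-1}(\gamma_{ij})\in\cAut_\fF(s)(X_{ij})$; these again form a cocycle, so $(s|_{X_i},\delta_{ij})$ is a descent datum and, $\fF$ being a stack, it descends to an object $x\in\fF(X)$ together with isomorphisms $\alpha_i\colon s|_{X_i}\iso x|_{X_i}$ with $\delta_{ij}=\alpha_j^{-1}\circ\alpha_i$ on $X_{ij}$. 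Applying $\varphi$ yields $\gamma_{ij}=\varphi_s(\delta_{ij})=\varphi(\alpha_j)^{-1}\circ\varphi(\alpha_i)$, whence the local isomorphisms $\beta_i\circ\varphi(\alpha_i)^{-1}\colon\varphi(x)|_{X_i}\to y|_{X_i}$ agree on overlaps; since $\cIsom_\fG(\varphi(x),y)$ is a sheaf they glue to an isomorphism $\varphi(x)\cong y$ in $\fG(X)$. This shows $\varphi_X$ is an equivalence for every $X$, hence $\varphi$ is an equivalence of gerbes.

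The only genuine subtlety — the ``hard part'' — is the bookkeeping concealed in ``commutes with pullbacks up to canonical isomorphism'': one must consistently invoke the canonical isomorphisms $\varphi(z|_Y)\cong\varphi(z)|_Y$ in order to identify $\varphi(\delta_{ij})$ with $\varphi_s(\delta_{ij})$ and to interpret the cocycle relations after applying $\varphi$. A more streamlined alternative is to use Remark~\ref{rem_gerbe_forms} to write $\fF=\fForms(s)$ and $\fG=\fForms(\varphi(s))$, apply the equivalences $\fForms(s)\simeq\fTors(\cAut_\fF(s))$ and $\fForms(\varphi(s))\simeq\fTors(\cAut_\fG(\varphi(s)))$ of Proposition~\ref{lem_gerbes_and_torsors}\ref{lem_gerbes_and_torsors_iv}, note that the group isomorphism $\varphi_s$ induces an equivalence $\fTors(\cAut_\fF(s))\simeq\fTors(\cAut_\fG(\varphi(s)))$ by push-forward of torsors (with quasi-inverse induced by $\varphi_s^{-1}$), and check that these four functors form a square commuting up to natural isomorphism. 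That last check reduces to the natural isomorphism $\cIsom_\fF(s|_{p(x)},x)\wedge^{\cAut_\fF(s)}\cAut_\fG(\varphi(s))\cong\cIsom_\fG(\varphi(s)|_{p(x)},\varphi(x))$, which holds because $\varphi$ induces a $\varphi_s$-morphism of torsors and $\varphi_s$ is invertible; then $\varphi$ is an equivalence because the other three sides of the square are.
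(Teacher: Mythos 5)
Your primary argument — a direct check that each fiber functor $\varphi_X\colon\fF(X)\to\fG(X)$ is fully faithful and essentially surjective — is correct, and it follows a genuinely different route from the paper's proof. The paper instead composes three equivalences, $\fF\simeq\fTors(\cAut_\fF(s))$, $\fTors(\cAut_\fF(s))\simeq\fTors(\cAut_\fG(\varphi(s)))$ (push-forward along the isomorphism $\varphi_s$), and $\fTors(\cAut_\fG(\varphi(s)))\simeq\fG$, and then invokes \cite[III.2.3.11]{Gir} to identify the resulting composite with $\varphi$ up to natural isomorphism; this is essentially the ``streamlined alternative'' you sketch in your last paragraph. Your direct proof avoids the appeal to Giraud's twist-compatibility statement and makes the descent machinery explicit: full faithfulness is reduced to checking that $\varphi_{x,x'}$ is an isomorphism of sheaves, which is done locally by conjugating to $\varphi_s$, and essential surjectivity is proved by transporting the cocycle $\gamma_{ij}\in\cAut_\fG(\varphi(s))(X_{ij})$ back through $\varphi_s^{-1}$, descending in $\fF$, and gluing. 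Both routes buy something: the paper's is shorter once Proposition~\ref{lem_gerbes_and_torsors}\ref{lem_gerbes_and_torsors_iv} and \cite[III.2.3.11]{Gir} are in hand, while yours is self-contained. The one small point worth stating explicitly in a polished write-up — which you do flag — is that $\varphi(\delta_{ij})$ is identified with $\varphi_s(\delta_{ij})$, and $\varphi(\alpha_i)$ is viewed as a map $\varphi(s)|_{X_i}\to\varphi(x)|_{X_i}$, only after transporting through the canonical isomorphisms $\varphi(z|_Y)\cong\varphi(z)|_Y$ coming from $\varphi$ preserving cartesian morphisms.
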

\begin{proof}
By \cite[III.2.5.1]{Gir} there is an equivalence of stacks
\begin{align*}
\fF &\to \fTors(\cAut_\fF(s)) \\
x &\mapsto (p(x),\cIsom(s|_{p(x)},x))
\end{align*}
and likewise for $\fG \to \fTors\big(\cAut_\fG(\varphi(s))\big)$. In the case of this second equivalence, we use the quasi-inverse
\begin{align*}
\fTors\big(\cAut_\fG(\varphi(s))\big) &\to \fG \\
(X,\cP) &\mapsto \cP \wedge^{\cAut_\fG(\varphi(s))|_X} \varphi(s)|_X.
\end{align*}
where $X\in \Sch_S$. Additionally, since $\varphi_s$ is an isomorphism, there is an obvious equivalence of categories
\begin{align*}
\fTors\big(\cAut_\fF(s)\big) &\to \fTors\big(\cAut_\fG(\varphi(s))\big) \\
(X,\cP) &\mapsto (X,\cP \wedge^{\varphi_s|_X} \cAut_\fG(\varphi(s))|_X)
\end{align*}
which simply interprets an $\cAut_\fF(s)$--torsor $\cP$ as an $\cAut_\fG(\varphi(s))$--torsor by giving it the right action coming from $\varphi_s^{-1}$. Therefore, we have a chain of equivalences
\[
\begin{tikzcd}[column sep=2ex]
\fF \ar[r] & \fTors(\cAut_\fF(s)) \ar[r] & \fTors\big(\cAut_\fG(\varphi(s))\big) \ar[r] & \fG \\[-5ex]
x \ar[rrr,mapsto] & & & \cIsom(s|_{p(x)},x)\wedge^{\varphi_s|_{p(x)}} \varphi(s)|_{p(x)}.
\end{tikzcd}
\]
Now, by composing the equivalence $\fF \to \fTors(\cAut_\fF(s))$ with its quasi-inverse, we see that there is an isomorphism
\[
x \iso \cIsom(s|_{p(x)},x)\wedge^{\cAut_\fF(s)|_{p(x)}} s|_{p(x)}.
\]
Furthermore, by \cite[III.2.3.11]{Gir} we have that
\[
\varphi\big(\cIsom(s|_{p(x)},x)\wedge^{\cAut_\fF(s)|_{p(x)}} s|_{p(x)}\big) \iso \cIsom(s|_{p(x)},x)\wedge^{\varphi_s|_{p(x)}} \varphi(s)|_{p(x)}.
\]
Therefore, $\varphi$ is canonically isomorphic to the composition of three equivalences above and is hence an equivalence itself, as claimed.
\end{proof}

We will use the following results, in particular Corollary \ref{cor_identity_equiv}, to identify a fiberwise quasi-inverse of an equivalence of gerbes.
\begin{prop}\label{identity_equiv}
Let $\fF \to \Sch_S$ be a gerbe with a global object $x\in \fF(S)$. If $\varphi \colon \fF \to \fF$ is a morphism of gerbes such that there is an isomorphism $g \in \cIsom(\varphi(x),x)(S)$ and the induced group sheaf homomorphism
\[
\bAut_\fF(x) \xrightarrow{\varphi_x} \bAut_\fF(\varphi(x)) \xrightarrow{\Inn(g)} \bAut_\fF(x)
\]
is the identity, then for every $T\in \Sch_S$ there is a natural isomorphism of functors $\rho_T \colon \varphi_T \iso \Id_{\fF(T)}$, on the fiber $\fF(T)$ over $T$.
\end{prop}
\begin{proof}
As a preliminary step, we replace $\varphi$ with an isomorphic functor $\varphi'$ which has the property that $\varphi'(x|_T)=x|_T$ for all $T\in \Sch_S$ and the induced group sheaf homomorphism $\varphi'_x \colon \bAut_\fF(x) \to \bAut_\fF(x)$ is the identity. This is of course possible by only modifying the image of $x$ under $\varphi$, and using $g$ to modify homomorphisms involving $x$. In detail, we define $\varphi' \colon \fF \to \fF$ to behave on objects by
\[
y \mapsto {\begin{cases} \varphi(y) & y\neq x|_{p(y)} \\ x|_{p(y)} & y = x|_{p(y)} \end{cases}}
\]
and on morphisms by
\[
(f\colon y \to z) \mapsto {\begin{cases} \varphi(f), & y\neq x|_{p(y)} \text{ and } z\neq x|_{p(z)} \\ g|_{p(z)}\circ \varphi(f), & y\neq x|_{p(y)} \text{ and } z = x|_{p(z)} \\ \varphi(f)\circ g|_{p(y)}^{-1}, & y = x|_{p(y)} \text{ and } z \neq x|_{p(z)} \\ g|_{p(z)}\circ \varphi(f) \circ g|_{p(y)}^{-1}, & y = x|_{p(y)} \text{ and } z = x|_{p(z)}.  \end{cases}}
\]
Then, the isomorphisms
\begin{align*}
\Id_\varphi(y) \colon \varphi(y) &\iso \varphi(y)=\varphi'(y) \text{ for } y\neq x|_{p(y)} \text{, and} \\ 
g|_T \colon \varphi(x|_T) &\iso x|_T = \varphi'(x|_T) \text{ for all } T\in \Sch_S
\end{align*}
provide a natural isomorphism $\varphi \iso \varphi'$. The fact that $\varphi'_x$ is the identity follows immediately from our assumption that $\Inn(g)\circ \varphi_x = \Id$. Hence, without loss of generality we may assume that $\varphi$ itself has these properties. By Theorem \ref{lem_gerbe_equivalence}, $\varphi$ is an equivalence of categories and so for each $T\in \Sch_S$, $\varphi_T \colon \fF(T) \to \fF(T)$ is an equivalence of categories between the fibers.

The core of our approach is the following point of view. Given $T\in \Sch_S$ and $y,z\in \fF(T)$, we may choose a cover $\{T_i\to T\}_{i\in I}$ which ``splits" both $y$ and $z$, i.e., such that there are isomorphisms $\alpha_i \colon y|_{T_i} \iso x|_{T_i}$ and $\beta_i \colon z|_{T_i} \iso x|_{T_i}$ for each $i\in I$. Setting $\alpha_{ij} = \alpha_j|_{T_{ij}} \circ (\alpha_i)^{-1}|_{T_{ij}}$ and likewise for $\beta_{ij}$, we obtain 1-cocycles in $\bAut_\fF(x|_T)$ defining $y$ and $z$ respectively. With this setup, a morphism $f \colon y \to z$ is equivalent to automorphisms $g_i \in \bAut_\fF(x)(T_i)$ such that all the diagrams
\[
\begin{tikzcd}
x|_{T_{ij}} \ar[r,"g_i|_{T_{ij}}"] \ar[d,"\alpha_{ij}"] & x|_{T_{ij}} \ar[d,"\beta_{ij}"] \\
x|_{T_{ij}} \ar[r,"g_j|_{T_{ij}}"] & x|_{T_{ij}}
\end{tikzcd}
\]
commute. Indeed, we think of this as part of the following diagram.
\[
\begin{tikzcd}
(y|_{T_i})|_{T_{ij}} \ar[d,equals] \ar[r,"\alpha_i|_{T_{ij}}"] \ar[rrr,bend left,"(f|_{T_i})|_{T_{ij}}"] & x|_{T_{ij}} \ar[r,"g_i|_{T_{ij}}"] \ar[d,"\alpha_{ij}"] & x|_{T_{ij}} \ar[d,"\beta_{ij}"] & (z|_{T_i})|_{T_{ij}} \ar[d,equals] \ar[l,swap,"\beta_i|_{T_{ij}}"] \\
(y|_{T_j})|_{T_{ij}} \ar[r,"\alpha_j|_{T_{ij}}"] \ar[rrr,bend right,"(f|_{T_j})|_{T_{ij}}"] & x|_{T_{ij}} \ar[r,"g_j|_{T_{ij}}"] & x|_{T_{ij}} & (z|_{T_j})|_{T_{ij}} \ar[l,swap,"\beta_j|_{T_{ij}}"]
\end{tikzcd}
\]
To provide some foreshadowing, the result essentially follows from the fact that, because of our assumption on $\varphi_x \colon \bAut_\fF(x) \to \bAut_\fF(x)$, the center square of such diagrams is fixed by $\varphi$.

For a $T\in \Sch_S$, we construct a natural isomorphism $\rho_T \colon \varphi_T \to \Id_{\fF(T)}$ using the approach above. Take an object $y\in \fF(T)$ and consider a cover $\{T_i \to T\}_{i\in I}$ and isomorphisms $\alpha_i \colon y|_{T_i} \iso x|_{T_i}$ in $\fF(T_i)$. Because $x|_{T_i}$ is fixed by $\varphi$, we also obtain a splitting of $\varphi(y)$ over the same cover, namely $\varphi(\alpha_i) \colon \varphi(y)|_{T_i} \iso x|_{T_i}$. Also, because $\alpha_{ij} \in \bAut_\fF(x)(T_{ij})$ is fixed, we have
\[
\alpha_j|_{T_{ij}} \circ (\alpha_i)^{-1}|_{T_{ij}} = \alpha_{ij} = \varphi(\alpha_j)|_{T_{ij}} \circ \varphi(\alpha_i)^{-1}|_{T_{ij}}.
\]
Therefore, if we choose the identity $x|_{T_i} \to x|_{T_i}$ for each $i\in I$, the diagrams
\[
\begin{tikzcd}
(\varphi(y)|_{T_i})|_{T_{ij}} \ar[d,equals] \ar[r,"\varphi(\alpha_i)|_{T_{ij}}"] & x|_{T_{ij}} \ar[r,equals] \ar[d,"\alpha_{ij}"] & x|_{T_{ij}} \ar[d,"\alpha_{ij}"] & (y|_{T_i})|_{T_{ij}} \ar[d,equals] \ar[l,swap,"\alpha_i|_{T_{ij}}"] \\
(\varphi(y)|_{T_j})|_{T_{ij}} \ar[r,"\varphi(\alpha_j)|_{T_{ij}}"] & x|_{T_{ij}} \ar[r,equals] & x|_{T_{ij}} & (y|_{T_j})|_{T_{ij}} \ar[l,swap,"\alpha_j|_{T_{ij}}"]
\end{tikzcd}
\]
commute and therefore define an isomorphism $\rho_y \colon \varphi(y) \to y$ in $\fF(T)$. Note that $\rho_y$ does not depend on the choice of cover or isomorphisms. Indeed, if we have a second cover we can compare the construction over some refinement of the cover, therefore it is sufficient to assume that we have a cover $\{T_i \to T\}_{i\in I}$ and that we make two choices of isomorphisms
\[
\alpha_i \colon y|_{T_i} \iso x|_{T_i} \text{ and } \beta_i \colon y|_{T_i} \iso x|_{T_i}.
\]
Using these, we define $\rho_y \colon \varphi(y) \to y$ and $\rho_y' \colon \varphi(y) \to y$ such that
\[
\rho_y|_{T_i} = \alpha_i^{-1} \circ \varphi(\alpha_i) \text{ and } \rho_y'|_{T_i} = \beta_i^{-1} \circ \varphi(\beta_i).
\]
However, the morphism $\beta_i\circ \alpha_i^{-1}$ is an automorphism of $x|_{T_i}$ and is thus fixed by $\varphi$ by assumption. Therefore
\[
\beta_i\circ \alpha_i^{-1} = \varphi(\beta_i)\circ \varphi(\alpha_i)^{-1}
\]
which can be rearranged to
\[
\rho_y|_{T_i} = \alpha_i^{-1}\circ \varphi(\alpha_i) = \beta_i^{-1}\circ \varphi(\beta_i) = \rho_y'|_{T_i},
\]
so $\rho_y=\rho_y'$ globally. Thus, we may describe $\rho_y$ as the unique map such that, for any $W\in \Sch_T$ and isomorphism $\alpha \colon y|_W \to x|_W$, we have $\rho_y|_W = \alpha^{-1}\circ \varphi(\alpha)$. This shows that the $\rho_y$ are canonical morphisms.

We claim that the isomorphisms $\rho_y$ for $y\in \fF(T)$ define a natural isomorphism of functors $\rho_T \colon \varphi_T \to \Id_{\fF(T)}$. We must show that, given a morphism $f\colon y \to z$ in $\fF(T)$, the diagram
\[
\begin{tikzcd}
\varphi(y) \ar[d,"\varphi(f)"] \ar[r,"\rho_y"] & y \ar[d,"f"] \\
\varphi(z) \ar[r,"\rho_z"] & z
\end{tikzcd}
\]
commutes. Choose a cover $\{T_i \to T\}_{i\in I}$ which mutually splits $y$ and $z$, denote their splitting isomorphisms with $\alpha$'s and $\beta$'s as above. Let $g_i \in \bAut_\fF(x)(T_i)$ be the isomorphisms defining $f$, i.e.,
\[
f|_{T_i} = \beta_i^{-1} \circ g_i \circ \alpha_i
\]
for each $i\in I$. This means that
\[
\varphi(f)|_{T_i} = \varphi(\beta_i)^{-1} \circ g_i \circ \varphi(\alpha_i).
\]
We can then compute that
\begin{align*}
(\rho_z \circ \varphi(f))|_{T_i} &= \big(\beta_i^{-1}\circ \varphi(\beta_i)\big)\circ \varphi(\beta_i)^{-1} \circ g_i \circ \varphi(\alpha_i) \\
&= \beta_i^{-1} \circ g_i \circ \varphi(\alpha_i) \\
&= \beta_i^{-1} \circ g_i \circ \alpha_i \circ \big(\alpha_i^{-1}\circ \varphi(\alpha_i)\big) \\
&= (f\circ \rho_y)|_{T_i}
\end{align*}
for each $i\in I$, and therefore $\rho_z\circ \varphi(f) = f \circ \rho_y$ globally, as required. This finishes the proof.
\end{proof}

\begin{cor}\label{cor_identity_equiv}
Let $\varphi \colon \fF \to \fG$ and $\psi\colon \fG \to \fF$ be morphisms of gerbes over $\Sch_S$. Assume there exists a global object $x\in \fF(S)$ and an isomorphism $g \in \cIsom((\psi\circ\varphi)(x),x)(S)$ such that the induced group sheaf homomorphism
\[
\bAut_\fF(x) \xrightarrow{(\psi\circ\varphi)_x} \bAut_\fF((\psi\circ\varphi)(x)) \xrightarrow{\Inn(g)} \bAut_\fF(x)
\]
is the identity. Then, $\varphi$ and $\psi$ are fiberwise mutual quasi-inverse equivalences of categories. I.e., for every $T\in \Sch_S$, the fiber functors $\varphi_T$ and $\psi_T$ are quasi-inverses.
\end{cor}
\begin{proof}
It is immediate from Proposition \ref{identity_equiv} that for all $T\in \Sch_S$ there is a natural isomorphism of functors $\psi_T \circ \varphi_T \iso \Id_{\fF(T)}$. For the other composition, $\varphi\circ \psi$, we take $\varphi(x) \in \fG(S)$ as the global object and
\[
\varphi(g) \colon (\varphi\circ \psi)(\varphi(x)) \iso \varphi(x)
\]
as the isomorphism. Now let $h\in \bAut_\fG(\varphi(x))$ be any isomorphism. Since $\varphi$ is an equivalence of gerbes it is full, and so there exists $f\in \bAut_\fF(x)$ such that $\varphi(f)=h$. Then we can compute,
\begin{align*}
&\varphi(g) \circ (\varphi\circ\psi)(h) \circ \varphi(g)^{-1} \\
=& \varphi(g) \circ (\varphi\circ\psi)(\varphi(f)) \circ \varphi(g)^{-1} \\
=& \varphi\big(g\circ (\psi\circ\varphi)(f) \circ g^{-1}) \\
=& \varphi(f) \\
=& h
\end{align*}
and so Proposition \ref{identity_equiv} applies again. This means that for any $T\in \Sch_S$, we also have a natural isomorphism of functors $\varphi_T \circ \psi_T \iso \Id_{\fG(T)}$, and so $\varphi_T$ and $\psi_T$ are quasi-inverses.
\end{proof}

Throughout the paper we use various stacks and gerbes, some of which we have created our own notation for. Here we provide a small index noting where they are each defined.
\begin{enumerate}[label={(\roman*)}]
\item $\QCoh$ and $\QCoh\flf$ -- Section \ref{the_construction}.
\item $\QAlg$ and $\QAlg\flf$ -- Remark \ref{rem_QAlg}.
\item $\fMod_r$, $\fMod_r^{d\etale}$, $\fAzu_r$, and $\fAzu_r^{d\etale}$ -- start of Section \ref{cohomological_description}.
\item $\fTMod_r$ -- after \eqref{eq_comp_d}.
\item $\fTAzu_r$ -- before Lemma \ref{lem_TAzu_aut}.
\item $\fA_m^{d\etale}$, $\fC_m^{d\etale}$, and $\fD_m$ -- start of Section \ref{Equivalence}.
\item $\fC_{(n_1,\ldots,n_d)}$ -- Section \ref{subsec_twisting_triples}.
\item $\fF(\bG)^{d\etale}$, which is a generalization of the previous stacks featuring ``$d\etale$" -- above \eqref{AppenB_gerbe_tors_equiv}.
\item $\QCoh_\fJ$ and $\fAffMor$ -- Appendix \ref{app_stack_morphism}.
\end{enumerate}

\section{The Norm Functor over Rings}\label{sec_review_norms}
We begin by reviewing Ferrand's construction over rings from \cite{F}, which generalizes the construction of Knus and Ojanguren \cite{KO75} (see \cite[5.3]{F}) and also Tignol's construction over a field from \cite{Ti}. The norm functor has also been generalized in various other ways, for example to quasi-coherent sheaves on algebraic spaces, by Rydh in his thesis \cite{Rydh}. The interested reader can find the details in \cite{Rydh2}.

We also discuss the norm functor defined by Rost in \cite{Rost}, which has arguably a simpler definition and agrees with Ferrand's in many cases, such as when the module is flat, see Proposition \ref{noho}. We do so in the interest of being comprehensive and to justify why we generalize Ferrand's norm instead of Rost's. Critically, we provide an example showing that Rost's norm is not compatible with arbitrary base change and therefore does not generalize to our setting of stacks.

\subsection{Ferrand's Norm Functor over Rings}\label{sec_Ferrands_norm}
Ferrand's construction begins with the \emph{$\Gamma$--algebra}, also called the \emph{divided power algebra}, of a module. We summarize this construction and refer to \cite[IV,\S 5.4]{B:A2} and the related exercises as well as \cite{Roby} for more details. 

\subsubsection{The Construction}\label{sec_The_Construction}
Let $R$ be a unital, commutative, associative ring and let $M$ be an $R$--module. We let $\Gamma_R(M)$ be the unital, commutative, associative $R$--algebra generated by symbols $\gamma^d(m)$ for $d\in \NN = \{0,1,\ldots\}$ and $m\in M$, subject to the relations
\begin{align*}
\gamma^0(m) &= 1_{\Gamma(M)} \\
\gamma^d(rm) &= r^d \gamma^d(m) \\
\gamma^d(m+n) &= \sum_{r=0}^d \gamma^r(m)\gamma^{d-r}(n) \\
\gamma^{d_1}(m)\gamma^{d_2}(m) &= \binom{d_1 + d_2}{d_1} \gamma^{d_1+d_2}(m)
\end{align*}
for all $d,d_1,d_2 \in \NN$, $m,n\in M$, and $r\in R$. Here $\binom{d_1 + d_2}{d_1}=\frac{(d_1+d_2)!}{d_1!d_2!}$ is the binomial coefficient. We note that \cite{B:A2} uses the notation $\gamma_d(m)$ where we follow \cite{F} and use $\gamma^d(m)$. The algebra $\Gamma_R(M)$ is $\NN$--graded by total degree of the ``exponents", i.e., setting
\[
\Gamma_R^d(M) = \Span_R\left(\left\{\gamma^{d_1}(m_1)\gamma^{d_2}(m_2)\ldots \gamma^{d_k}(m_k) \mid \sum_{i=1}^k d_i = d, m_i\in M \right\}\right)
\]
gives an $\NN$--grading $\Gamma_R(M) = \oplus_{d\in \NN} \Gamma_R^d(M)$. The assignment $M\mapsto \Gamma_R(M)$ is functorial in $M$. Given a morphism $f\colon M_1 \to M_2$ of $R$--modules, we set $\Gamma_R(f)$ to be the map defined on generators as
\begin{align*}
\Gamma_R(f) \colon \Gamma_R(M_1) &\to \Gamma_R(M_2) \\
\gamma^d(m) &\mapsto \gamma^d(f(m)).
\end{align*}
This is an algebra homomorphism and thus we have a functor $\Gamma_R \colon \fMod_R \to \Rings_R$. The morphisms $\Gamma_R(f)$ are also graded morphisms and therefore via restriction to homogeneous components we also obtain endofunctors $\Gamma_R^d \colon \fMod_R \to \fMod_R$.

Given two $R$--modules $M$ and $N$, \cite[2.4.1]{F} says that there exists a unique $R$--linear map
\begin{equation}\label{eq_Ferrand_mu}
\mu \colon \Gamma_R^d(M)\otimes_R \Gamma_R^d(N) \to \Gamma_R^d(M\otimes_R N)
\end{equation}
with the property that $\gamma^d(m)\otimes \gamma^d(n) \mapsto \gamma^d(m\otimes n)$ for all $m\in M$ and $n\in N$.

Now consider a ring extension $R\to R'$. We may consider $R'$ as an $R$--module and for $d\in \NN$ obtain the $R$--module $\Gamma_R^d(R')$. Using the map of \eqref{eq_Ferrand_mu} and the multiplication of $R'$, $m \colon R'\otimes_R R' \to R'$, we define a multiplication on $\Gamma_R^d(R')$ by
\[
\Gamma_R^d(R') \otimes_R \Gamma_R^d(R') \xrightarrow{\mu} \Gamma_R^d(R'\otimes_R R') \xrightarrow{\Gamma_R^d(m)} \Gamma_R^d(R').
\]
This makes $\Gamma_R^d(R')$ a unital, commutative, associative $R$--algebra as it inherits these properties from $R'$. If we are now also given an $R'$--module $M'$, we may similarly view it as an $R$--module and construct $\Gamma_R^d(M')$. This can be equipped with the structure of a $\Gamma_R^d(R')$--module, again using the map \eqref{eq_Ferrand_mu} and the map $R'\otimes_R M' \to M'$ defined by $r'\otimes m' \mapsto r'm'$ coming from the $R'$--module structure of $M'$. Therefore, the composition
\[
\Gamma_R^d(R')\otimes_R \Gamma_R^d(M') \xrightarrow{\mu} \Gamma_R^d(R'\otimes_R M') \to \Gamma_R^d(M')
\]
makes $\Gamma_R^d(M')$ a $\Gamma_R^d(R')$--module, as in \cite[2.4.6]{F}. This structure has the property that $\gamma^d(r')\cdot \gamma^d(m') = \gamma^d(r'm')$.

\begin{example}\label{aldm-c2}
Let $d=2$. The $\Ga^2_R(A)$--module structure of $\Ga^2_R(M)$ is determined by the formulas below where $a,a_1, a_2\in A$ and $m, m_1, m_2\in M$: 
\begin{align*}
  \ga^2(a) \cdot \ga^2(m) &= \ga^2(a m), \\
  \ga^2(a) \cdot (\ga^1(m_1) \times \ga^1(m_2)) &= \ga^1(am_1) \times \ga^1(am_2), \\
   (\ga^1(a_1) \times \ga^1(a_2))\cdot \ga^2(m) &= \ga^1(a_1m) \times \ga^1(a_2m),
  \\ (\ga^1(a_1) \times \ga^1(a_2))\cdot(\ga^1(m_1) \times \ga^1(m_2)) & = 
  (\ga^1(a_1 m_1) \times \ga^1(a_2m_2) )
  \\ & \quad + (\ga^1(a_1 m_2) \times \ga^1(a_2 m_1)).
\end{align*}
Specializing $M=A$, these formulas become the ones in \cite[2.4.5]{F}, where the $\Ga^2_R(A)$--action on itself (= multiplication in $\Ga^2_R(A)$) is denoted by $\star$ and where $\ga^1(\cdot)$ is taken as identification.
\end{example}

Now, assume that the ring extension $R \to R'$ is locally free of finite rank $d$. We therefore have the determinant map, $\det \colon \End_R(R') \to R$. For $r' \in R'$, the determinant of the left multiplication by $r'$ yields the norm map
\begin{equation}\label{eq_ring_norm}
\norm_{R'/R} \colon R' \to R.
\end{equation}
By \cite[3.1.2]{F}, there exists an $R$--algebra homomorphism
\begin{equation}\label{eq_Gamma_pi}
\pi \colon \Gamma_R^d(R') \to R
\end{equation}
with the property that $\pi(\gamma^d(r'))=\norm_{R'/R}(r')$ for all $r'\in R'$. This is used to define the \emph{norm} of an $R'$--module $M'$. Namely, the $R$--module
\begin{equation}\label{eq_Ferrands_norm}
N_{R'/R}(M') = \Gamma_R^d(M')\otimes_{\Gamma_R^d(R')} R
\end{equation}
where $\Gamma_R^d(R')$ acts on $R$ via $\pi$. Since $\Gamma_R^d$ is a functor, so is $N_{R'/R} \colon \fMod_{R'} \to \fMod_R$. The norm of each $M'$ comes equipped with a canonical (non-linear) function
\begin{align*}
\nu_{M'} \colon M' &\to N_{R'/R}(M') \\
m' &\mapsto \gamma^d(m')\otimes 1_R.
\end{align*}
This function has the property that for all $m'\in M'$ and $r'\in R'$, we have $\nu_{M'}(r'm') = \norm_{R'/R}(r')\cdot \nu_{M'}(m')$. This can be seen by calculation since
\begin{align*}
r'm' \mapsto \gamma^d(r'm')\otimes 1_R &= (\gamma^d(r')\cdot \gamma^d(m'))\otimes 1_R \\ 
&= \gamma^d(m')\otimes \pi(\gamma^d(r'))\cdot 1_R \\
&= \gamma^d(m')\otimes \norm_{R'/R}(r').
\end{align*}

\subsubsection{Polynomial Laws}
While the above description is the essentials of Ferrand's construction, he instead primarily works with \emph{polynomial laws}. Let $\Rings_R$ denote the category of $R$--algebras which are themselves associative, commutative, and unital. For an $R$--module $N$, denote by $\bW_R(N) \colon \Rings_R \to \Ab$ the functor $Q \mapsto N\otimes_R Q$. Note that $\bW_R$ is functorial itself. A morphism $\varphi \colon N_1 \to N_2$ of $R$--modules gives rise to a natural transformation $\bW_R(\varphi) \colon \bW_R(N_1) \to \bW_R(N_2)$ defined over $Q\in \Rings_R$ by $\bW_R(\varphi)(Q)=\varphi\otimes 1 \colon N_1\otimes_R Q \to N_2\otimes_R Q$. For two $R$--modules $N_1$ and $N_2$, a natural transformation of functors $\bW_R(N_1)\to \bW_R(N_2)$ is called a \emph{polynomial law}. Of course, $\bW_R(\varphi)$ defined above is an example. Such examples are linear, however a general polynomial law need not be. For example, a polynomial law $\bnu \colon \bW_R(N_1) \to \bW_R(N_2)$ is called \emph{homogeneous of degree $d$} if we have $\bnu(qn)=q^d\bnu(n)$ for all $Q\in \Rings_R$, $q\in Q$, and $n\in N_1\otimes_R Q$. We will generally denote polynomial laws in bold. The canonical, and indeed universal as explained below, example of such a homogeneous of degree $d$ polynomial law is
\begin{equation}\label{eq_gamma_poly_law}
\bgamma^d \colon \bW_R(N_1) \to \bW_R(\Gamma_R^d(N_1))
\end{equation}
which behaves over $Q \in \Rings_R$ by
\[
\sum_{i=1}^k n_i\otimes q_i \mapsto \sum_{(a_1,\ldots,a_k)\in \NN^k \atop a_1+\ldots + a_k = d} \gamma^{a_1}(n_1)\dots \gamma^{a_k}(n_k) \otimes q_1^{a_1}\dots q_k^{a_k}.
\]

By \cite[2.2.4]{F}, which itself quotes \cite[IV.1]{Roby}, if $\bnu \colon \bW_R(N_1) \to \bW_R(N_2)$ is a homogeneous polynomial law of degree $d$ between two $R$--modules, then there exists a unique $R$--module homomorphism $\varphi_{\bnu} \colon \Gamma_R^d(N_1) \to N_2$ such that $\bnu = \bW_R(\varphi_{\bnu})\circ \bgamma^d$.

If $R \to R'$ is a finite locally free extension, for any $Q\in \Rings_R$ the extension $Q\to R'\otimes_R Q$ is also finite locally free and hence has a its own norm map. Given a morphism $f\colon Q_1 \to Q_2$ in $\Rings_R$, the associated norm maps are related by the commutative diagram
\begin{equation}\label{eq_norm_poly_law}
\begin{tikzcd}[column sep=1in]
R'\otimes_R Q_2 \ar[r,"\norm_{(R'\otimes_R Q_2)/Q_2}"] & Q_2 \ar[r,"1\otimes \Id"] & R\otimes_R Q_2 \\
R'\otimes_R Q_1 \ar[u,swap,"\Id\otimes f"] \ar[r,"\norm_{(R'\otimes_R Q_1)/Q_1}"] & Q_1 \ar[u,swap,"f"] \ar[r,"1\otimes \Id"] & R\otimes_R Q_2. \ar[u,swap,"\Id\otimes f"]
\end{tikzcd}
\end{equation}
Therefore, we have a polynomial law, $\bnorm \colon \bW_R(R') \to \bW_R(R)$, given by the norms. If $R'$ is of rank $d$, then the norm is a homogeneous polynomial law of degree $d$. The map $\pi \colon \Gamma_R^d(R')\to R$ used above is simply $\varphi_{\bnorm}$ coming from the universal property of $\Gamma_R^d(R')$.

If $M'$ is an $R'$--module and $N$ is an $R$--module, we may consider polynomial laws $\bnu \colon \bW_R(M') \to \bW_R(N)$. If such a $\bnu$ has the property that
\[
\bnu(r'm')=\bnorm(r')\bnu(m')
\]
for all $Q\in \Rings_R$, $r'\in R'\otimes_R Q$, and $m'\in M'\otimes_R Q$, then using the terminology of Ferrand we say that $\bnu$ is a \emph{normic polynomial law}. Of course, $\bnorm \colon \bW_R(R') \to \bW_R(R)$ is a normic polynomial law since the norm is multiplicative, i.e., we have
\[
\norm_{(R'\otimes_R Q)/Q}(ab)=\norm_{(R'\otimes_R Q)/Q}(a)\norm_{(R'\otimes_R Q)/Q}(b)
\]
for all $Q\in \Rings_R$.

\subsubsection{Base Change}
The construction of the $\Gamma$--algebra is compatible with ring extensions as follows, demonstrated in \cite[III.3]{Roby}. Given an arbitrary ring extension $R\to Q$ and an $R$--module $M$, there is a canonical graded isomorphism of $Q$--algebras
\begin{align}
\varphi_Q \colon \Gamma_R(M)\otimes_R Q &\iso \Gamma_Q(M\otimes_R Q) \label{eq_f_tilde} \\
\gamma^d(m)\otimes q &\mapsto q\cdot \gamma^d(m\otimes 1). \nonumber
\end{align}
Since this isomorphism is graded, for each $d$ it restricts to an isomorphism of $Q$--modules $\varphi_Q^d \colon \Gamma_R^d(M)\otimes_R Q \iso \Gamma_R^d(M\otimes_R Q)$. In the case $R \to R'$ is another ring extension, then $\varphi_Q^d \colon \Gamma_R^d(R')\otimes_R Q \iso \Gamma_R^d(R'\otimes_R Q)$ is an isomorphism of $Q$--algebras.

Since $\varphi_Q^d$ is the restriction of a ring homomorphism, for $\sum a_i = d$ and $m_i \in M$ we have
\begin{align*}
&\varphi_Q^d(\gamma^{a_1}(m_1)\dots\gamma^{a_k}(m_k)\otimes q) \\
=& \varphi_Q^d\big((\gamma^{a_1}(m_1)\otimes q)\cdot (\gamma^{a_2}(m_2)\otimes 1)\dots (\gamma^{a_k}(m_k)\otimes 1)\big) \\
=& q\cdot \gamma^{a_1}(m_1\otimes 1)\dots \gamma^{a_k}(m_k\otimes 1).
\end{align*}

For our purposes, we will be interested in the following notion of base change. Consider a pushout diagram of commutative $R$--algebras, or equivalently simply a pushout diagram of rings,
\[
D = \begin{tikzcd}
R' \ar[r,"f'"] & Q' \\
R \ar[u] \ar[r,"f"] & Q \ar[u]
\end{tikzcd}
\]
where the left vertical arrow is a finite locally free extensions of rank $d$, which then also holds for the right vertical arrow. Moreover, since this is a pushout diagram, there is a unique isomorphism of $R$--algebras
\begin{equation}\label{eq_pushout_psi}
\psi \colon R'\otimes_R Q \iso Q'.
\end{equation}
However, because we have applications to stacks in mind, we avoid identifying $Q'$ and $R'\otimes_R Q$.
\begin{lem}\label{lem_pushout_Gamma_base_change}
Assume we have a pushout diagram of rings as above. Then, there is a commutative diagram
\[
\begin{tikzcd}
\Gamma_R^d(R') \ar[r,"\Id\otimes 1"] \ar[d,"\pi_{R'}"] & \Gamma_R^d(R')\otimes_R Q \ar[r,"\varphi_Q^d"] \ar[d,"\pi_{R'}\otimes \Id"] & \Gamma_Q^d(R'\otimes_R Q) \ar[r,"\Gamma_Q^d(\psi)"] \ar[d,"\pi_{R'\otimes_R Q}"] & \Gamma_Q^d(Q') \ar[d,"\pi_{Q'}"] \\
R \ar[rr,bend right=20,swap,"f"] \ar[r,"\Id\otimes 1"] & R\otimes_R Q \ar[r,"\mathrm{can}"] & Q \ar[r,equals] & Q
\end{tikzcd}
\]
where $\varphi_Q^d$ is as defined in \eqref{eq_f_tilde}, $\pi_{R'}$ is the unique morphism such that $\bW_R(\pi_{R'})\circ \bgamma^d =\bnorm \colon \bW_R(R')\to \bW_R(R)$, and likewise for $\pi_{R'\otimes_R Q}$ and $\pi_{Q'}$.
\end{lem}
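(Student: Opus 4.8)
The plan is to prove the diagram commutes by chopping it into its three constituent squares together with the bottom triangle, and verifying each separately. The bottom triangle and the leftmost square are formal: $f = \mathrm{can}\circ(\Id\otimes 1)$ is just the defining property of the canonical map $R\otimes_R Q\to Q$, and the leftmost square commutes because $(\pi_{R'}\otimes\Id)(x\otimes 1) = \pi_{R'}(x)\otimes 1$ for every $x\in\Gamma^d_R(R')$. The entire content therefore lies in the middle square (top $\varphi^d_Q$, bottom $\mathrm{can}$) and the rightmost square (top $\Gamma^d_Q(\psi)$, bottom the identity of $Q$), and both of these I would settle via the \emph{uniqueness} part of the universal property of divided powers, namely \cite[2.2.4]{F} (quoting \cite[IV.1]{Roby}): a morphism out of $\Gamma^d_Q(R'\otimes_R Q)$ is determined by the homogeneous degree-$d$ polynomial law obtained by precomposing it with the universal law $\bgamma^d_Q$.

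For the rightmost square, I would first note that the $\psi$ of \eqref{eq_pushout_psi} is an isomorphism of $Q$-algebras, since the pushout is a pushout of $R$-algebras and $Q$ is one of the corners. Because the ring norm is the determinant of left multiplication and hence transports along a $Q$-algebra isomorphism, one gets $\norm_{Q'/Q}\circ\psi = \norm_{(R'\otimes_R Q)/Q}$, and therefore $\bnorm_{Q'/Q}\circ\bW_Q(\psi) = \bnorm_{(R'\otimes_R Q)/Q}$ as polynomial laws. Combining this with the naturality of the universal law, which gives $\bW_Q(\Gamma^d_Q(\psi))\circ\bgamma^d_Q = \bgamma^d_Q\circ\bW_Q(\psi)$, and with the defining property of $\pi_{Q'}$, yields $\bW_Q\bigl(\pi_{Q'}\circ\Gamma^d_Q(\psi)\bigr)\circ\bgamma^d_Q = \bnorm_{(R'\otimes_R Q)/Q} = \bW_Q(\pi_{R'\otimes_R Q})\circ\bgamma^d_Q$, so uniqueness forces $\pi_{Q'}\circ\Gamma^d_Q(\psi) = \pi_{R'\otimes_R Q}$, which is exactly the rightmost square.

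The middle square is the crux. Here I would use that restriction of scalars gives a functor $\Rings_Q\to\Rings_R$, so any polynomial law over $R$ restricts to one over $Q$ after the natural identifications $N\otimes_R Q''\cong(N\otimes_R Q)\otimes_Q Q''$ for $Q''\in\Rings_Q$. Restricting the norm polynomial law $\bnorm\colon\bW_R(R')\to\bW_R(R)$ of \eqref{eq_norm_poly_law} in this way produces precisely the norm polynomial law $\bnorm_{(R'\otimes_R Q)/Q}$ of the extension $Q\to R'\otimes_R Q$; this amounts to the base-change compatibility of the ring norm, i.e. that $\norm_{(R'\otimes_R Q'')/Q''}$ restricted to elements $r'\otimes 1$ is the image of $\norm_{R'/R}(r')$. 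Meanwhile, Roby's base-change isomorphism $\varphi^d_Q$ of \eqref{eq_f_tilde}, being natural in the module, identifies the restriction of $\bgamma^d_R$ with $\bgamma^d_Q$. Restricting the defining identity $\bW_R(\pi_{R'})\circ\bgamma^d_R = \bnorm$ and transporting it along $\varphi^d_Q$ then exhibits $\mathrm{can}\circ(\pi_{R'}\otimes\Id)\circ(\varphi^d_Q)^{-1}$ as a $Q$-module map $\Gamma^d_Q(R'\otimes_R Q)\to Q$ whose composite with $\bgamma^d_Q$ equals $\bnorm_{(R'\otimes_R Q)/Q}$; since $\pi_{R'\otimes_R Q}$ has the same property, uniqueness gives $\pi_{R'\otimes_R Q}\circ\varphi^d_Q = \mathrm{can}\circ(\pi_{R'}\otimes\Id)$. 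Pasting the four pieces yields the lemma.

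The only genuine obstacle I anticipate is the bookkeeping in the middle square: checking carefully that $\varphi^d_Q$ really does intertwine the restricted $\bgamma^d_R$ with $\bgamma^d_Q$, and that the restriction of $\bnorm$ is the norm law of $R'\otimes_R Q$ over $Q$ (rather than just a law that agrees with it at a few algebras). Everything else is either the universal property of divided powers or a one-line identity. One should resist the temptation to verify the middle square element-by-element on generators $\gamma^d(r')\otimes q$: this would be clean, but it is not legitimate, since $\Gamma^d_R(R')$ need not be generated as an $R$-module by the $d$-th divided powers, so the polynomial-law route is the one to take.
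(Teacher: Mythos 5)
Your proof proposal is correct and takes essentially the same route as the paper: both isolate the middle and right squares, observe that every map out of $\Gamma_Q^d(R'\otimes_R Q)$ to $Q$ is determined by the degree-$d$ law obtained by precomposing with $\bgamma_Q^d$, show that the two relevant laws agree (via base-change compatibility of the ring norm and the fact that $\varphi_Q^d$ intertwines the universal laws, which is exactly the "bookkeeping" you flag), and invoke uniqueness from \cite[2.2.4]{F}. The one detail worth noting: the intermediate checks that the paper performs are by tracing elements of $R'\otimes_R P$ through diagrams of natural transformations — this is legitimate and not the illicit element-trace on $\Gamma_R^d(R')$-generators against which you rightly caution.
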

\begin{proof}
The commutativity of the left square of the diagram is obvious. The commutativity of the middle square of the diagram is more involved, following from the universal properties defining $\pi_{R'}$ and $\pi_{R'\otimes_R Q}$.

We consider the canonical isomorphism $\bW_Q(f) \colon \bW_R(R)|_Q \iso \bW_Q(Q)$ arising from the isomorphisms $f\otimes\Id \colon R\otimes_R P \iso Q\otimes_Q P$ for all $P\in \Rings_Q$ as well as the isomorphism $\bW_Q(\Id\otimes 1_Q)\colon \bW_R(R')|_Q \iso \bW_Q(R'\otimes_R Q)$ arising from the isomorphisms $R'\otimes_R P \iso (R'\otimes_R Q)\otimes_Q P$. Similarly, there is an isomorphism
\[
\bW_Q(\varphi_Q^d\circ (\Id\otimes 1_Q)) \colon \bW_R(\Gamma_R^d(R'))|_Q \iso \bW_Q(\Gamma_Q^d(R'\otimes_R Q)).
\]
Let $\bgamma_R^d \colon \bW_R(R')\to \bW_R(\Gamma_R^d(R'))$ be the polynomial law of \eqref{eq_gamma_poly_law} and likewise let $\bgamma_Q^d \colon \bW_Q(R'\otimes_R Q) \to \bW_Q(\Gamma_Q^d(R'\otimes_R Q))$ be the analogous polynomial law for $R'\otimes_R Q$. We claim that the diagram
\[
\begin{tikzcd}
\bW_R(R')|_Q \ar[r,"\bgamma_R^d|_Q"] \ar[d,"\bW_Q(\Id\otimes 1_Q)"] & \bW_R(\Gamma_R^d(R'))|_Q \ar[d,"\bW_Q(\varphi_Q^d\circ(\Id\otimes 1_Q))"] \\
\bW_Q(R'\otimes_R Q) \ar[r,"\gamma_Q^d"] & \bW_Q(\Gamma_Q^d(R'\otimes_R Q))
\end{tikzcd}
\]
commutes. For $P\in \Rings_Q$, one can trace the image of an element $\sum_{i=1}^k r'_i\otimes p_i \in R'\otimes_R P$ through the diagram, obtaining
\[
\begin{tikzcd}
\displaystyle\sum_{i=1}^k r'_i\otimes p_i \ar[r,mapsto] \ar[d,mapsto] & \displaystyle\sum_{(a_1,\ldots,a_k)\in \NN^k \atop a_1+\ldots +a_k = d} \gamma^{a_1}(r'_1)\dots \gamma^{a_k}(r'_k)\otimes p_1^{a_1}\dots p_k^{a_k} \ar[d,mapsto] \\
\displaystyle\sum_{i=1}^k (r'_i\otimes 1_Q)\otimes p_i \ar[r,mapsto] & \displaystyle\sum_{(a_1,\ldots,a_k)\in \NN^k \atop a_1+\ldots+a_k = d} \gamma^{a_1}(r'_1\otimes 1_Q)\dots \gamma^{a_k}(r'_k\otimes 1_Q)\otimes p_1^{a_1}\dots p_k^{a_k}
\end{tikzcd}
\]
which justifies our claim. Next, we let $\bnorm_{R'} \colon \bW_R(R')\to \bW_R(R)$ be the normic polynomial law associated to the norm of $R'$ and likewise we let $\bnorm_{R'\otimes_R Q} \colon \bW_Q(R'\otimes_R Q) \to \bW_Q(Q)$ be the one associated to $R'\otimes_R Q$. We claim that the diagram
\[
\begin{tikzcd}[column sep=10ex]
\bW_R(R')|_Q \ar[r,"\bnorm_{R'}|_Q"] \ar[d,"\bW_Q(\Id\otimes 1_Q)"] & \bW_R(R)|_Q \ar[d,"\bW_Q(f)"] \\
\bW_Q(R'\otimes_R Q) \ar[r,"\bnorm_{R'\otimes_R Q}"] & \bW_Q(Q)
\end{tikzcd}
\]
commutes. For a ring $P\in \Rings_Q$ this diagram becomes
\[
\begin{tikzcd}[column sep=20ex]
R'\otimes_R P \ar[r,"\norm_{(R'\otimes_R P)/P}"] \ar[d,"(\Id\otimes 1_Q)\otimes \Id"] & P \ar[r,"1_R \otimes \Id"] \ar[d,equals] & R\otimes_R P \ar[d,"f\otimes \Id"] \\
(R'\otimes_R Q)\otimes Q P \ar[r,"\norm_{((R'\otimes_R Q)\otimes_Q P)/P}"] & P \ar[r,"1_Q\otimes \Id"] & Q\otimes_Q P.
\end{tikzcd}
\]
The right square clearly commutes. The leftmost arrow in the diagram is an isomorphism of $P$--modules and the determinant respects such isomorphisms. In particular, the determinant of left multiplication by $x\in R'\otimes_R P$ will be the same as the determinant of left multiplication by $(\Id\otimes 1_Q)(x) \in R'\otimes_R Q)\otimes_Q P$. Thus, the left square commutes and hence the original diagram commutes as claimed.

Therefore, we have a commutative diagram
\[
\begin{tikzcd}
\bW_R(R')|_Q \ar[r,"\bgamma_R^d|_Q"] \ar[d,"\bW_Q(\Id\otimes 1_Q)"] \ar[rr,bend left,"\bnorm_{R'}|_Q"] & \bW_R(\Gamma_R^d(R'))|_Q \ar[d,"\bW_Q(\varphi_Q^d\circ(\Id\otimes 1_Q))"] \ar[r,"\bW_R(\pi_{R'})|_Q"] & \bW_R(R)|_Q \ar[d,"\bW_Q(f)"] \\
\bW_Q(R'\otimes_R Q) \ar[r,"\gamma_Q^d"] \ar[rr,bend right,"\bnorm_{R'\otimes_R Q}"] & \bW_Q(\Gamma_Q^d(R'\otimes_R Q)) & \bW_Q(Q).
\end{tikzcd}
\]
The composition $\bW_Q(f) \circ \bW_R(\pi_{R'})|_Q \circ \bW_Q(\varphi_Q^d\circ(\Id\otimes 1_Q))^{-1}$ appears over $P \in \Rings_Q$ as the outer edges in the following commutative diagram.
\[
\begin{tikzcd}[column sep=15ex]
\Gamma_R^d(R')\otimes_R P \ar[r,"\pi_{R'}\otimes \Id"] & R\otimes_R P \ar[d,swap,"(\Id\otimes 1_Q)\otimes \Id"] \ar[dd,bend left=60,start anchor=east,end anchor=east,"f\otimes \Id"] \\
(\Gamma_R^d(R')\otimes_R Q)\otimes_Q P \ar[u,swap,"\mathrm{can}\otimes \Id"] \ar[r,"(\pi_{R'}\otimes\Id)\otimes \Id"] & (R\otimes_R Q)\otimes_Q P \ar[d,swap,"\mathrm{can}\otimes \Id"] \\
\Gamma_Q^d(R'\otimes_R Q)\otimes_Q P \ar[u,swap,"(\varphi_Q^d)^{-1}\otimes \Id"] & Q\otimes_Q P
\end{tikzcd}
\]
By instead using the bottom rectangle, we obtain the equality
\[
\bW_Q(f) \circ \bW_R(\pi_{R'})|_Q \circ \bW_Q(\varphi_Q^d\circ(\Id\otimes 1_Q))^{-1} = \bW_Q(\mathrm{can}\circ (\pi_{R'}\otimes\Id) \circ (\varphi_Q^d)^{-1}).
\]
Finally, the universal property which defines $\pi_{R'\otimes_R Q}$ then enforces that
\[
\pi_{R'\otimes_R Q} = \mathrm{can}\circ (\pi_{R'}\otimes\Id) \circ (\varphi_Q^d)^{-1}
\]
as desired. Thus the middle square commutes.

For the commutativity of the right square we have a similar argument. We consider the commutative diagram
\[
\begin{tikzcd}
\bW_Q(Q') \ar[r,"\bgamma_Q^d"] \ar[rr,bend left,"\bnorm_{Q'}"] & \bW_Q(\Gamma_Q^d(Q')) \ar[r,"\bW_Q(\pi_{Q'})"] & \bW_Q(Q) \ar[d,equals] \\
\bW_Q(R'\otimes_R Q) \ar[u,swap,"\bW_Q(\psi)"] \ar[r,"\bgamma_{R'\otimes_R Q}^d"] \ar[rr,bend right,"\bnorm_{R'\otimes_R Q}"] & \bW_Q(\Gamma_Q^d(R'\otimes_R Q)) \ar[u,swap,"\bW_Q(\Gamma_Q^d(\psi))"] & \bW_Q(Q)
\end{tikzcd}
\]
where it is clear the left square commutes and the outer square involving the norms commutes because the determinant is invariant under module isomorphisms as above. Therefore, since $\bW_Q(\pi_{Q'})\circ \bW_Q(\Gamma_Q^d(\psi)) = \bW_Q(\pi_{Q'}\circ \Gamma_Q^d(\psi))$, the universal property of $\pi_{R'\otimes_R Q}$ enforces that
\[
\pi_{R'\otimes_R Q} = \pi_{Q'}\circ \Gamma_Q^d(\psi)
\]
as claimed. This finishes the proof.
\end{proof}

\begin{lem}\label{lem_Ferrand_base_change_isos}
Consider a pushout diagram of rings
\[
D = \begin{tikzcd}
R' \ar[r,"f'"] & Q' \\
R \ar[u] \ar[r,"f"] & Q \ar[u]
\end{tikzcd}
\]
where the vertical arrows are finite locally free extensions of rank $d$.
\begin{enumerate}[label={\rm(\roman*)}]
\item \label{lem_Ferrand_base_change_isos_i} For an $R'$--module $M'$ there is a canonical isomorphism of $Q$--modules
\[
\begin{tikzcd}
\theta_{D,M'} \colon N_{R'/R}(M')\otimes_R Q \ar[r,"\sim"] \ar[d,equals] & N_{Q'/Q}(M'\otimes_{R'} Q') \ar[d,equals] \\[-3ex]
(\Gamma_R^d(M')\otimes_{\Gamma_R^d(R')} R)\otimes_R Q & \Gamma_Q^d(M'\otimes_{R'} Q')\otimes_{\Gamma_Q^d(Q')} Q \\[-4ex]
\big(\gamma^{a_1}(m'_1)\dots\gamma^{a_k}(m'_k)\otimes r \big)\otimes q \ar[r,mapsto] & \gamma^{a_1}(m'_1\otimes 1)\dots\gamma^{a_k}(m'_k\otimes 1)\otimes f(r)q
\end{tikzcd}
\]
for $a_i\in \NN$ with $\sum a_i = d$.
\item \label{lem_Ferrand_base_change_isos_ii} There is an isomorphism of functors
\[
\theta_D \colon N_{R'/R}(\und)\otimes_R Q \iso N_{Q'/Q}(\und \otimes_{R'} Q') 
\]
induced by the isomorphisms of \ref{lem_Ferrand_base_change_isos_i}.
\end{enumerate} 
\end{lem}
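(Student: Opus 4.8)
The plan is to unwind both modules in part \ref{lem_Ferrand_base_change_isos_i} into base changes of $\Gamma$-algebras and then match them up using the isomorphism $\varphi_Q^d$ of \eqref{eq_f_tilde}, the pushout isomorphism $\psi$ of \eqref{eq_pushout_psi}, and Lemma \ref{lem_pushout_Gamma_base_change}. By associativity of the tensor product,
\[
N_{R'/R}(M')\otimes_R Q \cong \Gamma_R^d(M')\otimes_{\Gamma_R^d(R')} Q \cong \big(\Gamma_R^d(M')\otimes_R Q\big)\otimes_{\Gamma_R^d(R')\otimes_R Q} Q,
\]
where $\Gamma_R^d(R')\otimes_R Q$ acts on $Q$ through $\mathrm{can}\circ(\pi_{R'}\otimes\Id_Q)$. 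First I would apply $\varphi_Q^d$ (for $M'$ in the left factor and for $R'$ in the subscript ring) to rewrite this as $\Gamma_Q^d(M'\otimes_R Q)\otimes_{\Gamma_Q^d(R'\otimes_R Q)} Q$; this requires that $\varphi_Q^d\colon \Gamma_R^d(M')\otimes_R Q\iso \Gamma_Q^d(M'\otimes_R Q)$ be semilinear over the ring isomorphism $\varphi_Q^d\colon \Gamma_R^d(R')\otimes_R Q\iso\Gamma_Q^d(R'\otimes_R Q)$, which follows from the compatibility of the map $\mu$ of \eqref{eq_Ferrand_mu} with base change (it is characterised by $\gamma^d(m)\otimes\gamma^d(n)\mapsto\gamma^d(m\otimes n)$, a property stable under $-\otimes_R Q$), cf.\ \cite[III.3]{Roby}. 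Then I would apply the functor $\Gamma_Q^d$ to the canonical isomorphism $M'\otimes_R Q = M'\otimes_{R'}(R'\otimes_R Q)\xrightarrow{\Id\otimes\psi} M'\otimes_{R'}Q'$ and to $\psi$ itself, obtaining $\Gamma_Q^d(M'\otimes_{R'}Q')\otimes_{\Gamma_Q^d(Q')} Q$ where $\Gamma_Q^d(Q')$ now acts through $\pi_{Q'}$. The only compatibility left to check is that the composite ring isomorphism $\Gamma_Q^d(\psi)\circ\varphi_Q^d\colon \Gamma_R^d(R')\otimes_R Q\iso\Gamma_Q^d(Q')$ intertwines $\mathrm{can}\circ(\pi_{R'}\otimes\Id_Q)$ with $\pi_{Q'}$, and this is precisely the commutativity of the right-hand portion of the diagram in Lemma \ref{lem_pushout_Gamma_base_change}. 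The composite of these isomorphisms is $\theta_{D,M'}$; chasing a generator $\gamma^{a_1}(m_1')\cdots\gamma^{a_k}(m_k')$ through the explicit formulas for $\varphi_Q^d$ recorded after \eqref{eq_f_tilde} yields the stated formula. Alternatively, part \ref{lem_Ferrand_base_change_isos_i} can be deduced from the special case $Q' = R'\otimes_R Q$ treated by Ferrand in \cite{F} by composing with the isomorphisms induced by $\psi$.

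For part \ref{lem_Ferrand_base_change_isos_ii}, every isomorphism used to build $\theta_{D,M'}$ is natural in $M'$: $\varphi_Q^d$ is a natural transformation in its module argument, $\Gamma_Q^d$ is a functor, the isomorphism $M'\otimes_R Q\iso M'\otimes_{R'}Q'$ is natural in $M'$, and base changing along the fixed maps $\pi_{R'}$, $\pi_{Q'}$ preserves naturality. Hence $\theta_D$ is a natural isomorphism of functors $N_{R'/R}(\und)\otimes_R Q\iso N_{Q'/Q}(\und\otimes_{R'}Q')$; equivalently one checks naturality directly on generators from the formula in part \ref{lem_Ferrand_base_change_isos_i}, since a morphism $\varphi\colon M_1'\to M_2'$ acts by $\gamma^{a_i}(m_i')\mapsto\gamma^{a_i}(\varphi(m_i'))$ on both sides.

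I expect the main obstacle to be the bookkeeping in the first step: keeping track of the various module structures and checking the semilinearity of $\varphi_Q^d$ over $\Gamma_R^d(R')\otimes_R Q$, i.e.\ the base change compatibility of the multiplication map $\mu$. Once that is settled, matching the two $\Gamma_Q^d(Q')$-module structures on $Q$ is handled cleanly by Lemma \ref{lem_pushout_Gamma_base_change} and everything else is formal. A minor point worth stating explicitly is that $\psi$, and therefore $\Gamma_Q^d(\psi)$, is a ring isomorphism (being the pushout isomorphism \eqref{eq_pushout_psi}), so that the change-of-base-ring manipulations above are legitimate.
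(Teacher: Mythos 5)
Your proposal takes essentially the same route as the paper: you build $\theta_{D,M'}$ from the same chain of isomorphisms (the associativity/base-change isomorphism, then $\varphi_Q^d$ from \eqref{eq_f_tilde}, then $\Gamma_Q^d$ applied to the pushout isomorphism $\psi$ of \eqref{eq_pushout_psi}), invoke Lemma \ref{lem_pushout_Gamma_base_change} to reconcile the two $\Gamma$-module structures on $Q$, and verify the explicit formula and the naturality in part \ref{lem_Ferrand_base_change_isos_ii} by the same generator chase and functoriality of $\Gamma_Q^d$. Your added observation that $\varphi_Q^d$ must be semilinear over the corresponding ring isomorphism is a worthwhile point the paper glosses over, and your alternative deduction from Ferrand's base-change statement is precisely what the paper records in Remark \ref{rem_Ferrand_iso}.
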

\begin{proof}
\noindent\ref{lem_Ferrand_base_change_isos_i}: Because $D$ is a pushout diagram, we have a unique isomorphism $\psi \colon R'\otimes_R Q \iso Q'$ as in \eqref{eq_pushout_psi}. This isomorphism also gives us an isomorphism
\[
\psi_{M'} \colon M'\otimes_R Q \xrightarrow{(\Id\otimes 1)\otimes \Id} M'\otimes_{R'} R'\otimes_R Q \xrightarrow{\Id\otimes \psi} M'\otimes_{R'} Q'
\]
which is $\psi$--equivariant. Now, we consider the following composition of isomorphisms
\begin{align*}
(\Gamma_R^d(M')\otimes_{\Gamma_R^d(R')} R)\otimes_R Q \xrightarrow{\mathrm{can}} &(\Gamma_R^d(M')\otimes_R Q)\otimes_{\Gamma_R^d(R')\otimes_R Q} (R\otimes_R Q) \\
\xrightarrow{\varphi_Q^d  \otimes \mathrm{can}} &\Gamma_Q^d(M'\otimes_R Q)\otimes_{\Gamma_Q^d(R'\otimes_R Q)} Q \\
\xrightarrow{\Gamma_Q^d(\psi_{M'})\otimes\Id} &\Gamma_Q^d(M'\otimes_{R'} Q')\otimes_{\Gamma_Q^d(Q')} Q
\end{align*}
where $\varphi_Q^d$ is the isomorphism of \eqref{eq_f_tilde} and the final two isomorphisms are well-defined due to the results of Lemma \ref{lem_pushout_Gamma_base_change}. Tracing an element through this composition yields
\begin{align*}
\big(\gamma^{a_1}(m'_1)\dots \gamma^{a_k}(m'_k)\otimes r\big)\otimes q \mapsto &(\gamma^{a_1}(m'_1)\dots \gamma^{a_k}(m'_k)\otimes 1_Q)\otimes (r\otimes q) \\
\mapsto &\gamma^{a_1}(m'_1\otimes 1_Q)\dots \gamma^{a_k}(m'_k\otimes 1_Q) \otimes f(r)q \\
\mapsto &\gamma^{a_1}(m'_1\otimes 1_{Q'})\dots \gamma^{a_k}(m'_k\otimes 1_{Q'}) \otimes f(r)q
\end{align*}
which is the claimed formula.

\noindent\ref{lem_Ferrand_base_change_isos_ii}: The fact that $\theta_D$ is a natural transformation follows from the functoriality of the $\Gamma$--algebras. In particular, for a morphism $\phi \colon M'_1 \to M'_2$ of $R'$--modules, tracing an element of $N_{R'/R}(M'_1)\otimes_R Q$ through the diagram
\[
\begin{tikzcd}[column sep=10ex]
N_{R'/R}(M'_2)\otimes_R Q \ar[r,"\theta_D(M'_2)"] & N_{Q'/Q}(M'_2\otimes_{R'}Q') \\
N_{R'/R}(M'_1)\otimes_R Q \ar[r,"\theta_D(M'_1)"] \ar[u,swap,"N_{R'/R}(\phi)\otimes\Id"] & N_{Q'/Q}(M'_1\otimes_{R'}Q') \ar[u,swap,"N_{Q'/Q}(\phi\otimes\Id)"]
\end{tikzcd}
\]
yields, using the formula of \ref{lem_Ferrand_base_change_isos_i} above,
\[\small
\begin{tikzcd}[column sep=2ex]
\gamma^{a_1}(\phi(m'_1))\dots\gamma^{a_k}(\phi(m'_k))\otimes r\otimes q \ar[r,mapsto] & \gamma^{a_1}(\phi(m'_1)\otimes 1)\dots\gamma^{a_k}(\phi(m'_k)\otimes 1)\otimes f(r)q \\
\gamma^{a_1}(m'_1)\dots\gamma^{a_k}(m'_k)\otimes r\otimes q \ar[r,mapsto] \ar[u,mapsto] & \gamma^{a_1}(m'_1\otimes 1)\dots\gamma^{a_k}(m'_k\otimes 1)\otimes f(r)q \ar[u,mapsto]
\end{tikzcd}
\]
and so we see the diagram commutes.
\end{proof}

\begin{remark}\label{rem_Ferrand_iso}
In the case when $Q'=R'\otimes_R Q$, Ferrand states that there is an isomorphism of functors $N_{R'/R}(\und)\otimes_R Q \cong N_{(R'\otimes_R Q)/Q}(\und\otimes_R Q)$ as his property (N2) in \cite[\S 1]{F}.  Composing this with the isomorphism $N_{(R'\otimes_R Q)/Q}(\rho)$, where $\rho \colon M'\otimes_R Q \iso M'\otimes_{R'} (R'\otimes_R Q)$ is the canonical isomorphism, yields the isomorphism of Lemma \ref{lem_Ferrand_base_change_isos}\ref{lem_Ferrand_base_change_isos_ii} above.
\end{remark}

\subsubsection{Universal Property}\label{sec_Ferrand_universal_property}
For an $R'$--module $M'$, Ferrand assembles the canonical functions $\nu_{M'} \colon M' \to N_{R'/R}(M')$ from Section \ref{sec_The_Construction} into a normic polynomial law as follows. For $Q\in \Rings_R$, we define the function
\[
\bnu_{M'}(Q) \colon M'\otimes_R Q \xrightarrow{\nu_{M'\otimes_R Q}} N_{(R'\otimes_R Q)/Q}(M'\otimes_R Q) \xrightarrow{\phi_Q} N_{R'/R}(M')\otimes_R Q
\]
where the final map $\phi_Q$ is the isomorphism $\big((\varphi_Q^d)\otimes \mathrm{can})\circ \mathrm{can}\big)^{-1}$, using the notation of Lemma \ref{lem_Ferrand_base_change_isos}. For a morphism $f\colon Q_1 \to Q_2$ in $\Rings_R$, we claim the diagram
\[
\begin{tikzcd}
M'\otimes_R Q_2 \ar[r,"\nu_{M'\otimes_R Q_2}"] & N_{(R'\otimes_R Q_2)/Q_2}(M'\otimes_R Q_2) \ar[r,"\phi_{Q_2}"] & N_{R'/R}(M')\otimes_R Q_2 \\
M'\otimes_R Q_1 \ar[r,"\nu_{M'\otimes_R Q_1}"] \ar[u,swap,"\Id\otimes f"] & N_{(R'\otimes_R Q_1)/Q_1}(M'\otimes_R Q_1) \ar[r,"\phi_{Q_1}"] & N_{R'/R}(M')\otimes_R Q_1 \ar[u,swap,"\Id\otimes f"]
\end{tikzcd}
\]
commutes. Indeed, tracing an element along the bottom and up we get
\begin{align*}
\sum_{i=1}^k m'_i\otimes q_i \mapsto &\gamma^d\big(\sum_{i=1}^k m'_i\otimes q_i\big)\otimes 1_{Q_1} \\
= &\sum_{(a_1,\ldots,a_k)\in \NN^k \atop a_1+\ldots +a_k = d} \gamma^{a_1}(m'_1\otimes q_1)\dots\gamma^{a_k}(m'_k\otimes q_k) \otimes 1_{Q_1} \\
= &\sum_{(a_1,\ldots,a_k)\in \NN^k \atop a_1+\ldots +a_k = d} q_1^{a_1}\dots q_k^{a_k}\gamma^{a_1}(m'_1\otimes 1_{Q_1})\dots\gamma^{a_k}(m'_k\otimes 1_{Q_1}) \otimes 1_{Q_1} \\
= &\sum_{(a_1,\ldots,a_k)\in \NN^k \atop a_1+\ldots +a_k = d} \gamma^{a_1}(m'_1\otimes 1_{Q_1})\dots\gamma^{a_k}(m'_k\otimes 1_{Q_1}) \otimes q_1^{a_1}\dots q_k^{a_k} \\
\mapsto &\left(\sum_{(a_1,\ldots,a_k)\in \NN^k \atop a_1+\ldots +a_k = d} \gamma^{a_1}(m'_1)\dots \gamma^{a_k}(m'_k) \otimes 1_R \right)\otimes q_1^{a_1}\dots q_k^{a_k} \\
\mapsto &\left(\sum_{(a_1,\ldots,a_k)\in \NN^k \atop a_1+\ldots +a_k = d} \gamma^{a_1}(m'_1)\dots \gamma^{a_k}(m'_k) \otimes 1_R \right)\otimes f(q_1^{a_1}\dots q_k^{a_k}).
\end{align*}
Instead, going up and then across the top begins by sending
\[
\sum_{i=1}^k m'_i\otimes q_i \mapsto \sum_{i=1}^k m'_i\otimes f(q_i),
\]
which then follows similar computations to arrive at
\[
\left(\sum_{(a_1,\ldots,a_k)\in \NN^k \atop a_1+\ldots +a_k = d} \gamma^{a_1}(m'_1)\dots \gamma^{a_k}(m'_k) \otimes 1_R \right)\otimes f(q_1)^{a_1}\dots f(q_k)^{a_k}.
\]
The two paths are therefore equal.

This shows that $\bnu_{M'} \colon \bW_R(M') \to \bW_R(N_{R'/R}(M'))$ is a well-defined natural transformation given over $Q \in \Rings_R$ by $\bnu_{M'}(Q)$, justifying our previous notation. It is clear that this is a normic polynomial law since each $\nu_{M'}$ is normic. Ferrand proves that this polynomial law has the following properties.
\begin{thm}[Ferrand]\label{Ferrand_property}
The functor $N_{R'/R} \colon \fMod_{R'} \to \fMod_R$ together with the normic polynomial laws $\bnu_{M'} \colon \bW_R(M') \to \bW_R(N_{R'/R}(M'))$ for every $R'$--module $M'$ have the following properties.
\begin{enumerate}[label={\rm (\roman*)}]
\item\label{Ferrand_property_i}  $N_{R'/R}(R')=R$ and $\bnu_{R'} = \bnorm$.
\item\label{Ferrand_property_iii} The pair $(N_{R'/R}(M'),\bnu_{M'})$ is universal among such pairs. If $(E,\bnu')$ is an $R$--module and normic polynomial law $\bnu' \colon \bW_R(M') \to \bW_R(E)$ pair, then there is a unique morphism of $R$--modules $\varphi \colon N_{R'/R}(M') \to E$ such that $\bnu' = \bW_R(\varphi) \circ \bnu_{M'}$.
\item\label{Ferrand_property_iv} The universal property of $\bnu_{M'}$ above induces the image of the norm functor on morphisms. If $\varphi \colon M'_1 \to M'_2$ is an $R'$--module morphism, then $\bnu_{M'_2}\circ \bW_R(\varphi)$ is a normic polynomial law and
\[
N_{R'/R}(\varphi)\colon N_{R'/R}(M'_1) \to N_{R'/R}(M'_2)
\]
is the unique map making the diagram below commute.
\[
\begin{tikzcd}
\bW_R(M'_1) \arrow{d}{\bW_R(\varphi)} \arrow{r}{\bnu_{M'_1}} & \bW_R\big(N_{R'/R}(M'_1)\big) \arrow{d}{\bW_R(N_{R'/R}(\varphi))} \\
\bW_R(M'_2) \arrow{r}{\bnu_{M'_2}} & \bW_R\big(N_{R'/R}(M'_2)\big)
\end{tikzcd}
\]
\end{enumerate}
\end{thm}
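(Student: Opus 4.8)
The plan is to reduce everything to the universal property of the divided-power functor recalled just before the statement: every homogeneous polynomial law of degree $d$ out of $\bW_R(N)$ factors uniquely through $\bgamma^d\colon\bW_R(N)\to\bW_R(\Gamma_R^d(N))$, cf. \cite[2.2.4]{F} and \cite[IV.1]{Roby}.

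For \ref{Ferrand_property_i}, the module identity is immediate from Section~\ref{sec_The_Construction}, since $N_{R'/R}(R')=\Gamma_R^d(R')\otimes_{\Gamma_R^d(R')}R=R$ with $\Gamma_R^d(R')$ acting through $\pi$, and $\nu_{R'}(r')=\gamma^d(r')\otimes 1_R=\pi(\gamma^d(r'))=\norm_{R'/R}(r')$. To upgrade this to the equality $\bnu_{R'}=\bnorm$ of polynomial laws, I would evaluate $\bnu_{R'}$ at an arbitrary $Q\in\Rings_R$ using its definition in Section~\ref{sec_Ferrand_universal_property}: it is $\nu_{R'\otimes_R Q}=\norm_{(R'\otimes_R Q)/Q}$ postcomposed with the base-change isomorphism $\phi_Q$, and Lemma~\ref{lem_pushout_Gamma_base_change} applied to the pushout square whose upper-right corner is $R'\otimes_R Q$ shows that $\phi_Q$ is the canonical identification $N_{(R'\otimes_R Q)/Q}(R'\otimes_R Q)=Q=R\otimes_R Q=N_{R'/R}(R')\otimes_R Q$; hence $\bnu_{R'}(Q)=\norm_{(R'\otimes_R Q)/Q}=\bnorm(Q)$.

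For \ref{Ferrand_property_iii}, let $\bnu'\colon\bW_R(M')\to\bW_R(E)$ be a normic polynomial law. First one checks that $\bnu'$ is homogeneous of degree $d$: specializing the normic identity to $r'=q\cdot 1\in R'\otimes_R Q$ with $q\in Q$ and using that the norm of the scalar $q$ over the rank-$d$ extension $Q\to R'\otimes_R Q$ equals $q^d$ gives $\bnu'(qm')=q^d\bnu'(m')$. By the universal property above there is then a unique $R$-linear $\psi\colon\Gamma_R^d(M')\to E$ with $\bnu'=\bW_R(\psi)\circ\bgamma^d$. The heart of the matter --- and the step I expect to be the main obstacle --- is to show that $\bnu'$ being normic is equivalent to $\psi$ being semilinear along $\pi$, i.e.\ $\psi(\xi\cdot m)=\pi(\xi)\psi(m)$ for all $\xi\in\Gamma_R^d(R')$ and $m\in\Gamma_R^d(M')$. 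Granting this, $\psi$ factors uniquely through the quotient $\Gamma_R^d(M')\twoheadrightarrow\Gamma_R^d(M')\otimes_{\Gamma_R^d(R')}R=N_{R'/R}(M')$ as some $\varphi$, the relation $\bnu'=\bW_R(\varphi)\circ\bnu_{M'}$ holds because $\bnu_{M'}$ is by construction $\bgamma^d$ followed by that same quotient, and uniqueness of $\varphi$ follows from uniqueness of $\psi$ and surjectivity of the quotient. To establish the equivalence I would rewrite the normic identity as the equality of the two polynomial laws $\bW_R(R'\oplus M')\to\bW_R(E)$ sending $(r',m')\mapsto\bnu'(r'm')$ and $(r',m')\mapsto\bnorm(r')\,\bnu'(m')$; both are homogeneous of degree $d$ in each of $r'$ and $m'$, hence --- applying the universal property one variable at a time --- factor uniquely as bilinear maps $\Gamma_R^d(R')\otimes_R\Gamma_R^d(M')\to E$, and unwinding the definition of the $\Gamma_R^d(R')$-module structure on $\Gamma_R^d(M')$ from Section~\ref{sec_The_Construction} together with the defining property $\bW_R(\pi)\circ\bgamma^d=\bnorm$ of $\pi$ identifies these two bilinear maps with $\xi\otimes m\mapsto\psi(\xi\cdot m)$ and $\xi\otimes m\mapsto\pi(\xi)\psi(m)$. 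Their equality is precisely the asserted semilinearity.

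Finally, \ref{Ferrand_property_iv} is formal once \ref{Ferrand_property_iii} is available. For an $R'$-linear $\varphi\colon M'_1\to M'_2$, the composite $\bnu_{M'_2}\circ\bW_R(\varphi)$ is normic because over each $Q\in\Rings_R$ the map $\bW_R(\varphi)(Q)=\varphi\otimes\Id$ is $(R'\otimes_R Q)$-linear, so $\bnu_{M'_2}\big((\varphi\otimes\Id)(r'm')\big)=\bnorm(r')\,\bnu_{M'_2}\big((\varphi\otimes\Id)(m')\big)$. By \ref{Ferrand_property_iii} there is a unique $R$-module map $N_{R'/R}(M'_1)\to N_{R'/R}(M'_2)$ through which $\bnu_{M'_2}\circ\bW_R(\varphi)$ factors over $\bnu_{M'_1}$; since $N_{R'/R}(\varphi)$, defined by functoriality of $\Gamma_R^d$ and its compatibility with the maps $\pi$, makes the square in the statement commute, it must be this unique map.
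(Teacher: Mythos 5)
The paper states this theorem without proof, attributing it directly to Ferrand; your reconstruction is correct and is, in essence, Ferrand's argument. The one step you flag as "the main obstacle" — that the normic condition on $\bnu'$ is equivalent to $\pi$-semilinearity of the unique $\psi\colon\Gamma_R^d(M')\to E$ — is indeed the crux, and your route to it is right: a polynomial law $\bW_R(A\oplus B)\to\bW_R(E)$ that is bihomogeneous of degree $(d,d)$ corresponds, via Roby's decomposition $\Gamma_R^{2d}(A\oplus B)\cong\bigoplus_{p+q=2d}\Gamma_R^p(A)\otimes_R\Gamma_R^q(B)$ (only the $(d,d)$ summand can support it), to a unique $R$-linear map $\Gamma_R^d(A)\otimes_R\Gamma_R^d(B)\to E$, and under this correspondence the two sides of the normic identity become $\xi\otimes m\mapsto\psi(\xi\cdot m)$ and $\xi\otimes m\mapsto\pi(\xi)\psi(m)$, using that $\gamma^d(r')\cdot\gamma^d(m')=\gamma^d(r'm')$ and $\bW_R(\pi)\circ\bgamma^d=\bnorm$. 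The remaining points — checking $\bnu'$ is homogeneous of degree $d$ by specializing to scalars, the factorization through $\Gamma_R^d(M')\otimes_{\Gamma_R^d(R')}R$, part (i) via the explicit description of $\nu_{R'}$ together with the base-change compatibility, and the formal deduction of (iv) from (iii) — are all correct as written.
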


To give an example of the form these norms take, and for later reference, we quote another result of Ferrand.
\begin{prop}[{\cite[3.2.4]{F}}]\label{Ferrand_property_v}
Let $S_1, \ldots, S_m$ be finite projective $R$--algebras of ranks $d_1, \ldots, \allowbreak d_m$ respectively. Put $S=S_1 \times \cdots \times S_m$ and let $F$ be an $S$--module. Thus, $F=F_1 \times \cdots \times F_m$ for $S_i$--modules $F_i$, $i=1, \ldots , m$. Then, there exists an isomorphism
\[ \phi \co N_{S/R}(F_1 \times \cdots \times F_m) \simlgr 
      N_{S_1/R}(F_1) \ot_R \cdots \ot_R N_{S_m/R}(F_m)
\] 
of $R$--modules such that the normic polynomial $\bnu_F$ satisfies 
\[ 
   (\bW_R(\phi) \circ \bnu_F) (x_1, \ldots, x_m) = \bnu_{F_1}(x_1) \ot \cdots 
      \ot \bnu_{F_m}(x_m)
\]
for $x_i \in F_i \ot_R Q$, $Q\in \Rings_R$. \sm 

In particular, if $S = R\times \cdots \times R$ and $E_1, \ldots, E_m$ is a family of $R$--modules, then we have an isomorphism
\[ \phi \co N_{S/R}(E_1 \times \cdots \times E_m)
    \simlgr E_1 \ot_R \cdots \ot_R E_m\]
such that the normic polynomial law of $E_1 \times \cdots \times E_m$ is given by 
\[ (\bW_R(\phi) \circ \bnu_{E_1 \times \cdots \times E_m}) (y_1 , \ldots y_m) 
   = y_1 \ot \cdots \ot y_m\]
for $y_i \in E_i \ot_R Q$.
\end{prop}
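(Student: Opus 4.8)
The plan is to show that the pair $\bigl(\bigotimes_{i=1}^m N_{S_i/R}(F_i),\,\bmu\bigr)$, where $\bmu$ is the polynomial law $\bW_R(F)\to\bW_R\bigl(\bigotimes_i N_{S_i/R}(F_i)\bigr)$ given over $Q\in\Rings_R$ by $(x_1,\ldots,x_m)\mapsto\bnu_{F_1}(x_1)\otimes\cdots\otimes\bnu_{F_m}(x_m)$, satisfies the same universal property as $\bigl(N_{S/R}(F),\bnu_F\bigr)$ in Theorem~\ref{Ferrand_property}\ref{Ferrand_property_iii}; then $\phi$, its bijectivity, and the displayed formula all follow formally from the uniqueness of universal objects. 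That $\bmu$ is a well-defined polynomial law is immediate from the functoriality of $-\otimes_R-$ in each variable, the naturality of the $\bnu_{F_i}$, and compatibility of tensor products with base change. It is normic: for $s=(s_1,\ldots,s_m)\in S\otimes_R Q$ and $x=(x_1,\ldots,x_m)\in F\otimes_R Q$ one has $\bmu(sx)=\bigl(\prod_i\norm_{(S_i\otimes_R Q)/Q}(s_i)\bigr)\bmu(x)$ since each $\bnu_{F_i}$ is normic, and left multiplication by $s$ on $S\otimes_R Q=\bigoplus_i(S_i\otimes_R Q)$ preserves each summand and acts there as left multiplication by $s_i$, whence $\norm_{(S\otimes_R Q)/Q}(s)=\prod_i\norm_{(S_i\otimes_R Q)/Q}(s_i)$ by multiplicativity of the determinant over a direct sum. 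Thus Theorem~\ref{Ferrand_property}\ref{Ferrand_property_iii} yields a unique $R$--linear $\phi\colon N_{S/R}(F)\to\bigotimes_i N_{S_i/R}(F_i)$ with $\bW_R(\phi)\circ\bnu_F=\bmu$, which is exactly the asserted compatibility with the normic laws.

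To prove the universal property of $\bigl(\bigotimes_i N_{S_i/R}(F_i),\bmu\bigr)$, let $\bnu'\colon\bW_R(F)\to\bW_R(E)$ be any normic polynomial law. Evaluating the normic identity on $s=q\cdot1_S$ shows $\bnu'$ is homogeneous of degree $d:=d_1+\cdots+d_m=\rank_R S$, so by the universal property of $\bgamma^d$ (\cite[2.2.4]{F}, cf.\ \eqref{eq_gamma_poly_law}) it factors uniquely as $\bnu'=\bW_R(g)\circ\bgamma^d$ for an $R$--linear $g\colon\Gamma_R^d(F)\to E$. Applying the canonical graded isomorphism $\Gamma_R^d(M_1\oplus\cdots\oplus M_m)\iso\bigoplus_{a_1+\cdots+a_m=d}\Gamma_R^{a_1}(M_1)\otimes_R\cdots\otimes_R\Gamma_R^{a_m}(M_m)$ to $F=F_1\oplus\cdots\oplus F_m$, I would feed $s=r_1e_1+\cdots+r_me_m$ (the $e_i$ the structure idempotents of $S$) into the normic identity for $\bnu'$ over the polynomial ring $R[r_1,\ldots,r_m]$ and compare coefficients of the monomials $r_1^{c_1}\cdots r_m^{c_m}$. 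Using $\norm_{S/R}(r_1e_1+\cdots+r_me_m)=\prod_i r_i^{d_i}$ together with $\gamma^d\bigl(\sum_i r_i x_i\bigr)=\sum_{a_1+\cdots+a_m=d}\bigl(\prod_i r_i^{a_i}\bigr)\gamma^{a_1}(x_1)\otimes\cdots\otimes\gamma^{a_m}(x_m)$, this forces $g$ to annihilate every summand $\Gamma_R^{a_1}(F_1)\otimes\cdots\otimes\Gamma_R^{a_m}(F_m)$ with $(a_1,\ldots,a_m)\neq(d_1,\ldots,d_m)$ — first on the pure powers $\gamma^{a_i}(x_i)$, then on all $\gamma$--monomials by polarization of polynomial laws. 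Hence $g$ descends to $\bar g\colon\bigotimes_i\Gamma_R^{d_i}(F_i)\to E$. Feeding instead $s=(s_i,1,\ldots,1)$ with $s_i\in S_i$ into the normic identity, and polarizing in $s_i$, shows that $\bar g$ is balanced for the $\Gamma_R^{d_i}(S_i)$--action on the $i$--th tensor factor along the structure map $\pi_{S_i}\colon\Gamma_R^{d_i}(S_i)\to R$ (it turns $\gamma^{d_i}(s_i)\cdot\eta_i$ into multiplication by $\norm_{S_i/R}(s_i)=\pi_{S_i}(\gamma^{d_i}(s_i))$), for each $i$; so $\bar g$ descends further to $\varphi\colon\bigotimes_i\bigl(\Gamma_R^{d_i}(F_i)\otimes_{\Gamma_R^{d_i}(S_i)}R\bigr)=\bigotimes_i N_{S_i/R}(F_i)\to E$ with $\bW_R(\varphi)\circ\bmu=\bnu'$.

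Uniqueness of $\varphi$ holds because the coefficients of $\bgamma^{d_i}$ generate $\Gamma_R^{d_i}(F_i)$ as an $R$--module, hence the coefficients of $\bnu_{F_i}$ generate $N_{S_i/R}(F_i)$, hence those of $\bmu$ generate $\bigotimes_i N_{S_i/R}(F_i)$, so any map out of it is determined by its composite with $\bmu$. This proves $\bigl(\bigotimes_i N_{S_i/R}(F_i),\bmu\bigr)$ is universal; applying its universal property to the normic law $\bnu_F$ gives $\phi'\colon\bigotimes_i N_{S_i/R}(F_i)\to N_{S/R}(F)$ with $\bW_R(\phi')\circ\bmu=\bnu_F$, and the two uniqueness statements then force $\phi'\circ\phi=\id$ and $\phi\circ\phi'=\id$. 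So $\phi$ is an isomorphism, and evaluating at $x=(f_1,\ldots,f_m)$ over $Q\in\Rings_R$ and tracking the diagonal component $\gamma^{d_1}(f_1)\otimes\cdots\otimes\gamma^{d_m}(f_m)$ of $\bgamma^d(x)$ through $\phi$ yields $(\bW_R(\phi)\circ\bnu_F)(x_1,\ldots,x_m)=\bnu_{F_1}(x_1)\otimes\cdots\otimes\bnu_{F_m}(x_m)$. The final assertion is the case $S_i=R$ for all $i$: since $\Gamma_R^1(M)=M$ canonically, $N_{R/R}$ is the identity functor with $\bnu_E=\id_E$, and the general formulas specialize directly to the displayed ones.

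I expect the main obstacle to be the bookkeeping in the second paragraph: extracting the two families of relations on $g$ cleanly from the normic identity, which requires care with multihomogeneous coefficient extraction for polynomial laws (working over the auxiliary polynomial rings $R[r_1,\ldots,r_m]$ and $R[u_1,\ldots,u_k]$), together with the standard divided-power facts that the relevant $\gamma$--monomials span the graded pieces $\Gamma_R^{d_i}(F_i)$ and $\Gamma_R^{d_i}(S_i)$ and that $\bigotimes_i(A_i\otimes_{B_i}R)\cong\bigl(\bigotimes_i A_i\bigr)\otimes_{\bigotimes_i B_i}R$ naturally.
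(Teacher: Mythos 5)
The paper does not give a proof of this proposition: it is quoted from Ferrand, and the sentence immediately following the statement says only that the existence of $\phi$ is proved in \cite[3.2.4]{F} and that the formula for the normic law can be inferred from his proof. There is therefore no in-text argument to compare yours against. Your self-contained reconstruction is correct, and it is, up to organization, the natural argument: verify that $\bmu=\bnu_{F_1}\otimes\cdots\otimes\bnu_{F_m}$ is normic using $\norm_{S/R}=\prod_i\norm_{S_i/R}$, factor an arbitrary normic law through $\Gamma_R^d(F)$ via \cite[2.2.4]{F}, invoke $\Gamma_R^d\bigl(\bigoplus_iF_i\bigr)\cong\bigoplus_{|a|=d}\bigotimes_i\Gamma_R^{a_i}(F_i)$ together with $\norm_{S/R}(r_1e_1+\cdots+r_me_m)=\prod_ir_i^{d_i}$ to kill the off-diagonal summands, and check balancedness over each $\Gamma_R^{d_i}(S_i)$. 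The two ``polarization'' steps you flag are indeed the places that need care, and both hinge on the same fact: the normic identity $\bnu'(sx)=\bnorm(s)\bnu'(x)$ is an equality of polynomial laws, i.e.\ it holds over every $Q\in\Rings_R$, so you may evaluate over a polynomial ring in as many indeterminates as needed (to vary $r_1,\ldots,r_m$, the $s_i$, and each $x_j$ simultaneously) and compare multihomogeneous components; without this one would kill only the pure-power monomials $\gamma^{a_1}(x_1)\otimes\cdots\otimes\gamma^{a_m}(x_m)$ and would obtain balancedness only against $\gamma^{d_i}(s_i)$ and only for diagonal pure tensors $\eta$, which is not enough. Your uniqueness argument is sound as written — the $\gamma$-monomial coefficients of $\bgamma^{d_i}$ generate $\Gamma_R^{d_i}(F_i)$, so their images under the balance quotient generate $N_{S_i/R}(F_i)$ with no localization needed — though it is perhaps cleaner to observe that $\bmu$ factors as $\bW_R(q)\circ\bgamma^d$ with $q\colon\Gamma_R^d(F)\to\bigotimes_iN_{S_i/R}(F_i)$ surjective and appeal directly to uniqueness of factorizations through $\Gamma_R^d$.
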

The existence of the isomorphism $\phi$ in Proposition \ref{Ferrand_property_v} is proven in \cite[3.2.4]{F} and the formulas for the normic polynomials can be inferred from the proof of loc. cit.

The universal normic polynomial laws are compatible with the isomorphisms $\theta_D$ of Lemma \ref{lem_Ferrand_base_change_isos}\ref{lem_Ferrand_base_change_isos_ii} in the following way.
\begin{lem}\label{lem_Ferrand_natural_iso}
Consider a pushout diagram of rings
\[
D = \begin{tikzcd}
R' \ar[r,"f'"] & Q' \\
R \ar[u] \ar[r,"f"] & Q \ar[u]
\end{tikzcd}
\]
where the vertical arrows are finite locally free extensions of rank $d$. Consider the natural isomorphism $\theta_D$ of Lemma \ref{lem_Ferrand_base_change_isos}\ref{lem_Ferrand_base_change_isos_ii}. Let $\psi \colon R'\otimes_R Q \iso Q'$ be the unique isomorphism of \eqref{eq_pushout_psi} and $\psi_{M'} \colon M'\otimes_R Q \iso M'\otimes_{R'} Q'$ the associated isomorphism of $Q$--modules. For any $R'$--module $M'$, the universal normic polynomial laws are related via the diagram
\[
\begin{tikzcd}[column sep=10ex]
\bW_R(M')|_Q \ar[r,"\bnu_{M'}|_Q"] & \bW_R(N_{R'/R}(M'))|_Q \\
 & \bW_Q(N_{R'/R}(M')\otimes_R Q) \ar[d,"\bW_Q(\theta_D(M'))"] \ar[u,swap,"\mathrm{can}"] \\
\bW_Q(M'\otimes_{R'} Q') \ar[r,"\bnu_{(M'\otimes_{R'} Q')}"] \ar[uu,swap,"\psi'"] & \bW_Q(N_{Q'/Q}(M'\otimes_{R'} Q')).
\end{tikzcd}
\]
Here, the canonical isomorphism is given over $P \in \Rings_Q$ by the canonical map $(N_{R'/R}(M')\otimes_R Q)\otimes_Q P \iso N_{R'/R}(M')\otimes_R P$ and the isomorphism $\psi'$ is given by
\[
(M'\otimes_{R'} Q')\otimes_Q P \xrightarrow{\psi_{M'}^{-1}\otimes \Id} (M'\otimes_R Q)\otimes_Q P \xrightarrow{\mathrm{can}} M'\otimes_R P.
\]
\end{lem}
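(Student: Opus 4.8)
The plan is to verify the asserted commutativity over each $P \in \Rings_Q$ by tracing a generic element, using the three explicit formulas already available: the description of $\theta_D$ in Lemma~\ref{lem_Ferrand_base_change_isos}\ref{lem_Ferrand_base_change_isos_i}, the description of $\psi_{M'}$ coming from the pushout isomorphism \eqref{eq_pushout_psi}, and the explicit form of $\bnu_{M'}(P)$ computed in \S\ref{sec_Ferrand_universal_property}. Since $\bnu_{M'}|_Q$ and $\bnu_{(M'\otimes_{R'}Q')}$ are homogeneous polynomial laws of degree $d$ and the remaining maps $\psi'$, $\mathrm{can}$, $\theta_D(M')$ are $Q$--linear isomorphisms, the two composites in the statement are homogeneous polynomial laws of degree $d$ from $\bW_Q(M'\otimes_{R'}Q')$ to $\bW_Q(N_{Q'/Q}(M'\otimes_{R'}Q'))$; by \cite[2.2.4]{F} it therefore suffices to compare the induced $Q$--linear maps out of $\Gamma_Q^d(M'\otimes_{R'}Q')$, equivalently to compare the two composites on generic elements $\sum_i x_i\otimes X_i$ over polynomial rings $Q[X_1,\dots,X_k]$ with $x_i \in M'\otimes_{R'}Q'$.

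For such an element the chase runs as follows: $\psi'$ carries $\sum_i x_i\otimes X_i$ into $M'\otimes_R P$ (here one uses the explicit form of $\psi_{M'}^{-1}$ as the inverse of the base change map $M'\otimes_R Q \iso M'\otimes_{R'}Q'$); applying $\bnu_{M'}(P)$ and the formula from \S\ref{sec_Ferrand_universal_property} produces a sum of terms $\big(\gamma^{a_1}(m'_1)\cdots\gamma^{a_k}(m'_k)\otimes 1_R\big)\otimes(\text{monomial})$ in $N_{R'/R}(M')\otimes_R P$; the isomorphism $\mathrm{can}^{-1}$ rewrites each such term as $\big((\gamma^{a_1}(m'_1)\cdots\otimes 1_R)\otimes 1_Q\big)\otimes(\text{monomial})$ in $(N_{R'/R}(M')\otimes_R Q)\otimes_Q P$; and finally $\theta_D(M')$, via the formula of Lemma~\ref{lem_Ferrand_base_change_isos}\ref{lem_Ferrand_base_change_isos_i} applied with $r = 1_R$ and $q = 1_Q$, sends this to $\big(\gamma^{a_1}(m'_1\otimes 1_{Q'})\cdots\otimes 1_Q\big)\otimes(\text{monomial})$. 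On the other hand, the same element pushed through $\bnu_{(M'\otimes_{R'}Q')}(P)$ — again by the formula of \S\ref{sec_Ferrand_universal_property}, now for the extension $Q\to Q'$ and the module $M'\otimes_{R'}Q'$ — yields exactly the same sum. Matching monomial coefficients gives equality of the two maps on $\Gamma_Q^d(M'\otimes_{R'}Q')$, hence of the two composites.

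A slightly more conceptual variant avoids extracting coefficients: one first checks that $\bW_Q(\theta_D(M'))\circ\mathrm{can}^{-1}\circ\bnu_{M'}|_Q\circ\psi'$ is itself a normic polynomial law with respect to $\bnorm_{Q'/Q}$ — this holds because $\psi'$ is semilinear along the pushout isomorphism $\psi$ and so converts the $Q'$--action into the $R'\otimes_R Q$--action, $\bnu_{M'}|_Q$ is normic with respect to $\bnorm_{(R'\otimes_R Q)/Q}$ (a consequence of \eqref{eq_norm_poly_law} and Lemma~\ref{lem_pushout_Gamma_base_change}), the remaining maps are $Q$--linear, and the norm is invariant under the algebra isomorphism $R'\otimes_R Q\iso Q'$ exactly as in the proof of Lemma~\ref{lem_pushout_Gamma_base_change} — whereupon Ferrand's universal property (Theorem~\ref{Ferrand_property}\ref{Ferrand_property_iii}) reduces the claim to checking that $\theta_D(M')$ intertwines it with $\bnu_{(M'\otimes_{R'}Q')}$ on a spanning family, landing us back at the computation above. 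Either way, the only genuine difficulty is organizational: keeping the several $\Gamma$--algebras, the many canonical base change isomorphisms, and in particular the objects $Q'$ and $R'\otimes_R Q$ carefully distinct — precisely the bookkeeping that Lemmas~\ref{lem_pushout_Gamma_base_change} and \ref{lem_Ferrand_base_change_isos} were set up to package, so that no new idea is required here.
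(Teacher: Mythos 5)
Your proposal is correct and takes essentially the same route as the paper: both proofs verify the commutativity by evaluating over a ring $P \in \Rings_Q$ and chasing an element through the diagram, using the explicit description of $\theta_D$ from Lemma~\ref{lem_Ferrand_base_change_isos}\ref{lem_Ferrand_base_change_isos_i}, the pushout isomorphism $\psi_{M'}$, and the formula for $\bnu_{M'}(P)$ from Section~\ref{sec_Ferrand_universal_property}. The only stylistic difference is that you invoke the universal property of $\Gamma_Q^d$ to reduce to generic elements over polynomial rings and you note an alternative packaging via normicity, whereas the paper simply traces an arbitrary $\sum m'_i\otimes p_i \in M'\otimes_R P$; these are cosmetic variations on the identical computation.
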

\begin{proof}
Working over a ring $P\in \Rings_Q$, the diagram becomes
\[
\begin{tikzcd}[column sep=10ex]
M'\otimes_R P \ar[r,"\bnu_{M'}(P)"] & N_{R'/R}(M')\otimes_R P \\
 & N_{R'/R}(M')\otimes_R Q \otimes_Q P \ar[d,"\theta_D(M')\otimes\Id"] \ar[u,swap,"\mathrm{can}"] \\
(M'\otimes_{R'} Q')\otimes_Q P \ar[r,"\bnu_{M'\otimes_{R'} Q'}(P)"] \ar[uu,"\psi'(P)"] & N_{Q'/Q}(M'\otimes_{R'}Q')\otimes_Q P.
\end{tikzcd}
\]
Starting in $M'\otimes_R P$ and tracing an element, we obtain
\[\small
\begin{tikzcd}[column sep=2ex]
\displaystyle\sum_{i=1}^k m'_i\otimes p_i \ar[r,mapsto] & \left(\displaystyle\sum_{(a_1,\ldots,a_k)\in \NN^k \atop a_1+\ldots +a_k = d} \gamma^{a_1}(m'_1)\dots \gamma^{a_k}(m'_k)\otimes 1_R \right)\otimes p_1^{a_1}\dots p_k^{a_k} \\
 & \left(\displaystyle\sum_{(a_1,\ldots,a_k)\in \NN^k \atop a_1+\ldots +a_k = d} \gamma^{a_1}(m'_1)\dots \gamma^{a_k}(m'_k) \otimes 1_R \right)\otimes 1_Q\otimes p_1^{a_1}\dots p_k^{a_k} \ar[u,mapsto] \ar[d,mapsto] \\
\displaystyle\sum_{i=1}^k (m'_i\otimes 1)\otimes p_i \ar[uu,mapsto] \ar[r,mapsto] &  \left(\displaystyle\sum_{(a_1,\ldots,a_k)\in \NN^k \atop a_1+\ldots +a_k = d} \gamma^{a_1}(m'_1\otimes 1)\dots \gamma^{a_k}(m'_k\otimes 1)\otimes 1_Q\right)\otimes p_1^{a_1}\dots p_k^{a_k}
\end{tikzcd}
\]
which verifies that the diagram commutes as claimed.
\end{proof}

\subsection{Rost's Norm Functor}\label{sec_Rosts_norm}
Here, we outline the construction of Rost's norm which was used in his preprint \cite{Rost} and a special case of which was addressed in \cite{Kr}. The reader will immediately notice similarities between Rost's and Ferrand's definitions, the most obvious being that where Ferrand uses the $\Gamma$--module $\Gamma_R^d(M')$, Rost instead uses the symmetric tensors $\TS_R^d(M')$. These similarities are what lead to the two norms being isomorphic in many usual cases. For example, it is show in \cite[2.2]{Lu} that $\Gamma_R^d(M') \cong \TS_R^d(M')$ if $M'$ is flat as an $R$--module or if $d! \in R^\times$. However, we provide an example at the end of this section which shows that the Rost norm does not respect arbitrary (i.e., non-flat) base change and thus is not always isomorphic to the Ferrand norm. This also implies that the Rost norm is not suitable for constructing a morphism of stacks using the construction of Appendix \ref{app_quasi_coh}, as we do with Ferrand's norm in Section \ref{sec_glue_Ferrand} below. In this section, for a finite locally free ring extension of constant rank $R \to R'$, we will use the notation $\RN_{R'/R}$ for Rost's norm and $\FN_{R'/R}$ for Ferrand's norm as in \eqref{eq_Ferrands_norm}.

To begin, we review some basic notions while setting notation. Let $R$ be our base ring and let $M$ be an $R$--module. We denote the tensor algebra by $\T_R(M)$ and the $d$--graded component by $\T_R^d(M) = \Span(\{m_1\otimes \ldots \otimes m_d \mid m_i \in M\})$. There is a natural action of the symmetric group $\SS_d$ on $\T_R^d(M)$ by permuting the tensor factors and the fixed points of this action are called the \emph{symmetric tensors}, denoted
\[
\TS_R^d(M) = (\T_R^d(M))^{\SS_d}.
\]
It is clear that both $\T_R^d \colon \fMod_R \to \fMod_R$ and $\TS_R^d \colon \fMod_R \to \fMod_R$ are functors by sending a morphism $\varphi \colon M \to N$ to the morphism which acts as
\[
m_1\otimes\ldots\otimes m_d \mapsto \varphi(m_1)\otimes\ldots\otimes \varphi(m_d).
\]
If $A$ is an $R$--algebra, then $\T_R^d(A)$ and $\TS_R^d(A)$ are naturally $R$--algebras as well by defining the multiplication tensor component-wise, i.e.,
\[
(a_1\otimes\ldots\otimes a_d)(b_1\otimes\ldots\otimes b_d)=(a_1b_1)\otimes\ldots\otimes(a_db_d)
\]
and extended linearly.
\begin{example}\label{bant-sym2}
Let $n=2$. For $a_1, a_2\in A$ put $a_1 \star a_2 = a_1\ot a_2 + a_2 \ot a_1$, the shuffle product of $a_1$ and $a_2$. We then have the following multiplication rules in $\TS^2_R(A)$ where $a, a_1, a_2, b, b_1, b_2\in A$:
\begin{align*}
  (a^{\ot 2})(b^{\ot 2}) & = (ab)^{\ot 2}, 
 \\  (a_1 \star a_2) b^{\ot 2} &= a_1 b \star a_2 b, 
 \\ (a_1 \star a_2) (b_1 \star b_2) &= (a_1b_1 \star a_2 b_2) + (a_1 b_2 \star a_2b_1).
\end{align*}
\end{example}

If in addition $M$ is an $A$--module, then $\T_R^d(M)$ is naturally a $\T_R^d(A)$--module in the same tensor component-wise way.

Both $\T_R^d(A)$ and $\T_R^d(M)$ have natural actions of $\SS_d$, which interact as follows. For $\sigma \in \SS_d$ we have
\[
\sigma\big((a_1\otimes\ldots\otimes a_d)\cdot (m_1\otimes\ldots\otimes m_d)\big) = \sigma(a_1\otimes\ldots\otimes a_d) \cdot \sigma(m_1\otimes\ldots\otimes m_d).
\]
Additionally, when we consider the symmetric tensors, $\TS_R^d(M)$ is naturally a $\TS_R^d(A)$--module. Of course, we may also simply consider $\T_R^d(M)$ as a $\TS_R^d(A)$--module.

For $S\in \Ralg$ and an $R$--module $M$ we have a canonical isomorphism
\begin{equation}\label{sybi-bc1} \begin{split}
 \sfT^n_S(M\ot_R S)\qquad &\simlgr \qquad \sfT^n_R(M) \ot_R S, \\
    (v_1\ot s_1)\ot_S \cdots \ot_S (v_n \ot s_n) &\mapsto
      \;  (v_1 \ot_R \cdots \ot_R v_n)\ot_R (s_1 \cdots s_n)
\end{split}\end{equation}
of $S$--modules. The group $\SS_n$ acts on $\sfT_S(M\ot_R S)$ naturally, and on $\sfT^n_R(M) \ot_R S$ by acting on the first factor. The isomorphism \eqref{sybi-bc1} is equivariant with respect to these two actions. Hence, by restriction we get an isomorphism of $R$--modules,
\begin{equation}
  \label{sybi-bc2} \TS_S^n(M\ot_R S) \simlgr \big( \sfT^n_R(M)\ot_R S \big){}^{\SS_n}.
\end{equation}
The reader should however not confuse $\big( \sfT^n_R(M)\ot_R S \big){}^{\SS_n}$ and the base change $\sfT^N_R(M)^{\SS_n} \ot_R S = \TS^n_R(M) \ot_R S$. We always have a canonical $S$--linear map
\begin{equation} \label{sybi-bc3}
 \frb_{S/R} \co \TS^n_R(M) \ot_R S \longto \TS^n_S(M\ot_R S),
\end{equation}
sending $v^{\ot n}\ot_R 1_S \in \TS^n_R(M) \ot S$ to $(v \ot 1_S)^{\ot n} \in \TS^n_S(M\ot_R S)$,
which is in general neither injective nor surjective, \cite[5.3, 5.5]{Lu}. Nevertheless, by \cite[5.2]{Lu}, the base change map $\frb_{S/R}$ is an isomorphism in the following cases:
\begin{enumerate}[label={\rm (\roman*)}]
  \item \label{sybi-bc-i} $n!$ is invertible in $R$,
  \item\label{sybi-bc-ii} $M$ is a flat $R$--module,
  \item \label{sybi-bc-iii} $S$ is a flat $R$--module.
\end{enumerate}
We will come back to this in Section \ref{bcrn}.

Now, let $P$ be a finite locally free, i.e., finitely generated projective, $R$--module. We consider the exterior algebra $\wedge_R^d(P)$, which is the quotient of $\T_R^d(P)$ by the submodule
\[
I_d = \Span(\{p_1\otimes\ldots\otimes p_d \mid p_i=p_j \text{ for some }i,j\}).
\]
We define $I_d^{(ij)} = \Span(\{p_1\otimes\ldots\otimes p_d \mid p_i=p_j\})$ for a particular pair $i,j$. Thus it is clear that $I_d$ is the non-direct sum of the various $I_d^{(ij)}$ with $1\leq i < j \leq d$. Then, we have that
\[
I_d^{(ij)} = (\T_R^d(P))^{(ij)}
\]
is the fixed point set of the transposition $(ij)\in \SS_d$. This is easily verified if $P$ is finite free by calculations with a basis, and therefore also holds for all finite locally free module by faithfully flat descent. We use this to argue the following.
\begin{lem}\label{lem_TSA_on_wedge}
Let $A$ be an $R$--algebra and let $P$ be an $A$--module which is finite locally free as an $R$--module. Then the $\TS_R^d(A)$--module action on $\T_R^d(P)$ stabilizes the submodule $I_d$ and thus induces a natural $\TS_R^d(A)$--module structure on $\wedge_R^d(P)$.
\end{lem}
\begin{proof}
We will show that the $\TS_R^d(A)$--module structure stabilizes each submodule $I_d^{(ij)}$, and thus also stabilizes $I_d$. So, consider an $a\in \TS_R^d(A)$ and $x\in I_d^{(ij)}=(\T_R^d(P))^{(ij)}$. Then, it is simple to see that $a\cdot x$ is fixed by $(ij)$ since
\[
(ij)(a \cdot x) = (ij)(a) \cdot (ij)(x) = a\cdot x
\]
because both $(ij)(a)=a$ and $(ij)(x) = x$ by assumption. Therefore $a\cdot x \in (\T_R^d(P))^{(ij)}=I_d^{(ij)}$ also and we are done.
\end{proof}

The Rost norm is then constructed from these objects in the following scenario. Let $R \to R'$ be a finite locally free ring extension of constant rank $d$. Viewing $R'$ simply as a finite locally free $R$--module, its $d$--fold exterior product $\wedge_R^d(R')$ is a line bundle since $R'$ is rank $d$. Thus, $\End_R(\wedge_R^d(R'))\cong R$. Now, because $R'$ is also an $R'$--module, Lemma \ref{lem_TSA_on_wedge} says that $\wedge_R^d(R')$ has an action of $\TS_R^d(R')$, which is equivalent to the data of a ring homomorphism
\begin{equation}\label{eq_TS_rho}
\rho \colon \TS_R^d(R') \to R.
\end{equation}
This is a homomorphism of commutative rings since $R'$ is a commutative $R$--algebra. Now, let $M'$ be an arbitrary $R'$--module, which we also consider as an $R$--module. Then $\TS_R^d(M')$ is naturally a $\TS_R(R')$--module, and so we may define the Rost norm by
\[
\RN_{R'/R}(M') = \TS_R^d(M')\otimes_{\TS_R^d(R')} R.
\]
The functoriality of $\TS_R^d$ is inherited by the Rost norm and so we get a functor
\[
\RN_{R'/R} \colon \fMod_{R'} \to \fMod_R.
\]

\subsection{Comparison of the Norms}
Let $d\in \NN$ and let $M$ be an $R$--module. We have the $R$--algebra $(\Ga_R(M), \times)$ as defined in Section \ref{sec_The_Construction}. The shuffle product $\star$ equips
\[ 
\TS_R(M) = \textstyle \bigoplus_{n\in \NN} \, \TS_R^n(M)
\]
with the structure of a commutative, $\NN$-graded $R$--algebra, denoted  $(\TS_R(M), \star)$, (\cite[IV; \S5.3]{B:A2}, \cite[III, \S5]{Roby}). By \cite[Prop.~III.1]{Roby}, there exists a unique $R$--algebra homomorphism
\begin{equation*}\label{chm0}
(\Ga_R(M), \times) \to (\TS_R(M), \star), \quad \ga^d(m) \mapsto m^{\ot d}.
\end{equation*}
It is $\NN$--graded. Therefore, by restriction, it gives rise to an $R$--linear map
\begin{equation}\label{chm1}
\sfc_M \co \Ga^d_R(M) \to \ST^d_R(M),  
\end{equation} 
referred to as the {\em canonical homomorphism}. By \cite[Prop 2.2]{Lu}, $\sfc_M$ is an isomorphism in the following cases:
\begin{enumerate}[label={\rm (\roman*)}]
  \item\label{chm-i}  $n!$ is invertible in $R$, or
  \item \label{chm-ii} $M$ is a flat $R$--module. 
\end{enumerate}
However, it is in general neither injective nor surjective, see \cite[3.1, 3.2, 4.4]{Lu}.  
Regarding \ref{chm-ii}, the special case of $M$ being free has been established in \cite[Prop.~IV.5, p.~272]{Roby}. Since both functors $\Ga^d_R$ and $\TS_R^d$ commute with filtered direct limits, one gets \ref{chm-ii} as an application of Lazard's Theorem, see \cite[5.5.2.5]{Deligne}.

\begin{lem}\label{rco}
Let $d\in \NN_+$, and let $A$ be an $R$--algebra.
\begin{enumerate}[label={\rm (\roman*)}]
\item\label{rco-a} Let $M$ be an $A$--module. Then the $\Ga^d_R(A)$--module $\Ga^d_R(M)$ and the $\TS^d_R(A)$--module $\TS^d_R(M)$ are related by the canonical homomorphisms $\sfc_{M}$ and $\sfc_{A}$ of \eqref{chm1}:
\begin{equation}\label{rco1}
  \sfc_{M} (\wdh a \cdot \wdh m) = \sfc_{A}(\wdh a) \cdot \sfc_{M}(\wdh m)
\end{equation}
where $\wdh a\in \Ga^d_R(A)$ and $\wdh m\in \Ga^d_R(M)$.
\item\label{rco-b} In particular, 
\begin{equation}  \label{rco11}
\sfc_{A} \co \Ga^d_R(A) \to \TS_R^d(A)
\end{equation}
is a homomorphism of $R$--algebras.
\end{enumerate}
\end{lem}

\begin{proof}
\noindent\ref{rco-a}: It is sufficient to establish the equation \eqref{rco1} after a faithfully flat extension $S\in \Ralg$, i.e., we claim
\begin{equation}\label{rco2} 
(\sfc_{M}\ot 1_S)(\wdh a\cdot \wdh m) = (\sfc_{A}\ot 1_S)(a) \, \cdot \, 
    (\sfc_{M}\ot 1_S)(\wdh m)
\end{equation} 
for $\wdh a\in \Ga^d_R(A)\ot_R S$ and $\wdh m\in \Ga^d_R(M)\ot_R S$. By \cite[2.3.1]{F}, we know that there is a finite free $R$--algebra $S$ such that
\[ 
   \Span_S\{ \ga^d(m)\ot 1_S \mid m\in M\} = \Ga^d_R(M)\ot_R S,   
\] 
and that the analogous equation holds for $\Ga^d_R(A)$. Since \eqref{rco2} is bilinear in $\wdh a$ and $\wdh m$, it is enough to verify it for $\wdh a$ and $\wdh m$ of the form $\wdh a=\ga^d(a)\ot 1_S$ and $\wdh m=\ga^d(m)\ot 1_S$ for $a\in A$ and $m\in M$. In this case \eqref{rco2} follows from
\begin{align*}
  &(\sfc_{M}\ot 1_S)\big( (\ga^d(a)\ot 1_S) \cdot (\ga^d(m)\ot 1_S)\big)
  \\=& (\sfc_{M}\ot 1_S)\big( (\ga^d(a)\cdot \ga^d(m))\ot 1_S\big)
  \\=&(\sfc_{M}\ot 1_S)(\ga^d(a\cdot m) \ot 1_S)
      = \sfc_{M}(\ga^d(a\cdot m)) \ot 1_S
  \\ =& (a\cdot m)^{\ot d} \ot 1_S = (a{}^{\,\ot d} \cdot m{}^{\,\ot d}) \ot 1_S
  \\ =& [\sfc_{A}(\ga^d(a))\cdot \sfc_{M}( \ga^d(m))] \ot 1_S
  \\ =& [ \sfc_{A}(\ga^d(a))\ot 1_S] \cdot
                  [\sfc_{M}(\ga^d(m))\ot 1_S]
  \\=& \big( (\sfc_{A}\ot 1_S)(\ga^d(a)\ot 1_S) \big) \cdot
           \big( (\sfc_{M}\ot 1_S)(\ga^d(m)\ot 1_S)\big).
\end{align*}
\noindent\ref{rco-b}: The map $\sfc_{A}$ is a homomorphism of $R$--algebras by \eqref{rco1} with $M=A$.
\end{proof}

\begin{example}\label{extwo}
It is easy to show that \eqref{rco11} for $d=2$ is a homomorphism of $R$--algebras.
Indeed, $\Ga^2_R(A)$ is spanned by $\ga^2(a)$, $a\in A$, and $\ga^1(a_1) \times \ga^1(a_2)$ for $a_1, a_2 \in A$. By definition, $\sfc_A\big(\ga^2(a)\big) = a^{\ot 2}$. The image of $\ga^1(a_1) \times \ga^1(a_2)$ can be calculated from this using the relations given in Section \ref{sec_The_Construction}: 
\begin{align*}
\sfc_A\big(\ga^1(a_1) \times \ga^1(a_2)\big) 
  & = \sfc_A\big(\ga^2(a_1+ a_2) -\ga^2(a_1) - \ga^2(a_2)\big) 
\\ & = (a_1 + a_2)^{\ot 2} - a_1^{\ot 2} - a_2^{\ot 2} \\
 &= a_1 \ot a_2 + a_2 \ot a_1 \\
 &= a_1 \star a_2. 
\end{align*}
We then have, using the relations of Example \ref{aldm-c2} for $M=A$ and Example \ref{bant-sym2}:
\begin{align*}
\sfc_A(\ga^2(a) \cdot \ga^2(b)\big) &= \sfc_A\big(\ga^2(ab)\big) = (ab)^{\ot 2} \\
&= (a^{\ot 2})(b^{\ot 2}) = \sfc_A\big(\ga^2(a)\big)\sfc_A\big(\ga^2(b)\big). \\
\sfc_A\big( (\ga^1(a_1) \times \ga^1(a_2))\times \ga^2(b)\big) &= \sfc_A\big( \ga^1(a_1 b) \times \ga^1(a_2b)\big) \\
& = (a_1b)  \star (a_2b) = (a_1 \star a_2)b^{\ot 2} \\
& = \sfc_A\big( \ga^1(a_1) \times \ga^1(a_2)\big) \sfc_A\big(\ga^2(b)\big).
\end{align*}
We leave the verification for the other products to the reader.
\end{example} 

\begin{lem}\label{rcoco} Let $R'\in \Ralg$ be finite locally free of constant rank $d\in \NN_+$. Then the canonical homomorphism 
\[ 
\sfc_{R'} \co \Ga^d_R(R') \to \TS_R^d(R')
\] is an isomorphism of $R$--algebras. Moreover, both triangles of the diagram below are commutative:
\begin{equation} \label{rco-b1} \vcenter{
\xymatrix@C=60pt{
   R' \ar[r]^{\ga^d} \ar[d]_{\psi_R} & \Ga_R^d(R') \ar[d]^\pi \ar[dl]_{\sfc_{R'}}^\cong \\ \TS_R^d(R') \ar[r]_\rho & R
}}\end{equation}
where $\Psi_R(r')=r'\otimes\ldots\otimes r'$ with $d$ factors, $\rho$ is the $R$--algebra homomorphism of \eqref{eq_TS_rho}, and $\pi$ is the $R$--algebra homomorphism \eqref{eq_Gamma_pi}.
\end{lem}

\begin{proof} The map $\sfc_{R'}$ is a homomorphism of $R$--algebras by Lemma \ref{rco}. It is then an isomorphism by condition \ref{chm-ii} below \eqref{chm1}.  That $\psi_R = \sfc_{R'} \circ \ga^d$ is clear from the definitions of $\psi_R$ and $\sfc_{R'}$. Regarding $\rho \circ \sfc_{R'} = \pi$, we have
\begin{align*}
  (\rho \circ \sfc_{R'})\, \big( \ga^d(r')\big) = \rho(r'{}^{\, \ot d}) = \norm_{R'/R}(r') = \pi\big(\ga^r(r')\big)
\end{align*}
for any $r'\in R'$. Hence $\pi = \rho \circ \sfc_{R'}$ as soon as $\{\ga^d(r'): r'\in R'\}$ spans $\Ga^d_R(R')$. As in the proof of Lemma \ref{rco}\ref{rco-a}, this can be achieved by an application of \cite[2.3.1]{F}.
\end{proof}

The proof of the following lemma is straightforward and left to the reader.
\begin{lem}\label{tpl} Let $A$ and $A'$ be $R$--algebras, let $M$ and $N$ be be a right respectively left $A$--module, and let $M'$ and $N'$ be a right respectively left $A'$--module. Assume that
$\be_A \co A \to A'$ is a homomorphism of $R$--algebras, and that 
$\be_M \co M \to M'$ and $\be_N \co N \to N'$ are $R$--linear maps which are equivariant with respect to $\be_A$, i.e., 
\begin{equation}\label{tpl1}
\be_M(a \cdot m) = \be_A(a) \cdot \be_M(m) \quad \text{and}\quad
\be_N(a \cdot n) = \be_A(a) \cdot \be_N(n)
\end{equation}
holds for $a\in A$, $m\in M$ and $n\in N$. Then
    \[ \be \co M \ot_A N \simlgr M'\ot_{A'} N', \quad
         m \ot_A n \mapsto \be_M(m) \ot_{A'} \be_N(n)\]
    is a well-defined $R$--module homomorphism. It is an isomorphism if $\be_A$, $\be_M$ and $\be_N$ are isomorphisms of $R$--modules.
\end{lem}

We now show that the canonical morphism $\sfc_{M'}$ induces a canonical homomorphism from the Ferand norm to the Rost norm.
\begin{prop}\label{noho} Let $R'\in \Ralg$ be finite locally free of constant rank $d\in \NN_+$, and let $M'$ be an $R'$--module. Then the canonical map $\sfc_{M'}$ of \eqref{chm1} together with $\Id_R$ give rise to an $R$--linear map
\begin{align}
\sfn_{M'} \co \FFN_{R'/R}(M') &\longto \RN_{R'/R}(M') \label{noho1} \\
\wdh m \ot_{\Ga^d_R(R')} r &\; \mapsto \; \sfc_{M'}(\wdh m) \ot_{\TS^d_R(R')} r \nonumber
\end{align} 
 between the Ferrand and Rost norm modules of $M'$, which is an isomorphism if $\sfc_{M'}$ is so, e.g., if $M'$ is a flat $R$--module.
\end{prop}
\begin{proof} Recall 
 \begin{align*}
   \FFN_{R'/R}(M') &= \Ga^d_R(M') \ot_{\Ga^d_R(R')} R \quad \text{and}
   \\ \RN_{R'/R}(M') &= \TS^d_R(M') \ot_{\TS^d_R(R')} R. 
 \end{align*}
By \eqref{rco1}, the map $\sfc_{M'}$ satisfies the first condition in \eqref{tpl1} with respect to the $R$--algebra homomorphism $\sfc_{R'}$. The actions of $\Ga^d_R(R')$ and of $\TS^d_R(R')$ on $R$ are given by the homomorphisms $\pi$ and $\rho$ respectively, which satisfy $\pi = \rho \circ \sfc_{R'}$. This says that $\be_R = \Id_R$ satisfies $\be_R(\wdh a\cdot r) = \sfc_{R'}(\wdh a) \cdot \be_R(r)$ for $\wdh a \in \Ga^d_R(R')$. Thus also the second condition in \eqref{tpl1} is satisfied. The existence of the $R$--linear map $\sfn_{M'}$ then follows from Lemma~\ref{tpl}. Moreover, since both $\sfc_{R'}$ and $\be_R$ are isomorphisms, $\sfn_{M'}$ is an isomorphism as soon as $\sfc_{M'}$ is an isomorphism. By condition \ref{chm-ii} below \eqref{chm1}, this is the case if $M'$ is a flat $R$--module.   
\end{proof}
 
\begin{remarks}
\begin{enumerate}[label={\rm (\roman*)}]
\item[]
\item If $R'/R$ is finite \'etale of constant rank and $M'$ is a finite locally free $R$--module, the isomorphism $\FFN_{R'/R}(M') \cong \RN_{R'/R}(M')$ is proven in \cite[Prop.~5]{Bachmann} with a different approach.
\item Proposition~\ref{noho} allows the possibility that $\sfn_{M'}$ is an isomorphism, even if $M'$ is not a flat $R$-module, or that $\FFN_{R'/R}(M')$ and $\RN_{R'/R}(M')$ are isomorphic using a different map. We will address these questions in Corollary \ref{gleico} and Example \ref{noeq}.
\end{enumerate}
\end{remarks}

\begin{lem}\label{glei} Let $R'=R[\veps]$, $\veps^2 = 0$, let $M$ be an $R$--module, and let $M'$ be the $R'$--module obtained from $M$ by putting $\veps \cdot M' = 0$. Then there exists a commutative diagram of $R$--modules,
\begin{equation}\label{glei0}\vcenter{ \xymatrix@C=50pt{ 
 \FFN_{R'/R}(M') \ar[r]^\cong \ar[d]_{\sfn_{M'}} & \Ga^2_R(M) \ar[d]^{\sfc_{M'}} \\
  \RN_{R'/R}(M') \ar[r]^\cong & \TS^2_R(M) 
}}\end{equation}
in which the horizontal maps are isomorphisms, given by 
\begin{equation}\label{glei00}
 x \ot_{\Ga^2_R(R')} 1_R \mapsto x\quad \text{and}\quad 
 y \ot_{\TS^2_R(R')} 1_R \mapsto y 
\end{equation}
for $x\in \Ga^2_R(M')$ and $y\in \TS^2_R(M')$. \end{lem}

\begin{proof} It is immediate that 
\begin{equation} \label{glei1}
\TS^2_R(R') = R (1_R^{\ot 2}) \oplus R(1_R \star \veps) \oplus R \veps^{\ot 2},
\end{equation}
where $1_R \star \veps = 1_R \ot \veps + \veps \ot 1_R$. Let $\sfc_{R'}\co \Ga^2_R(R') \to \TS_R^2(R')$ be the canonical map which we know to be an isomorphism of $R$--algebras by Lemma \ref{rcoco}. We have seen in the Example~\ref{extwo} that $\sfc_{R'}\big( \ga^1(r'_1) \times \ga^1(r'_2) \big) = r'_1 \star r'_2$ for $r'_1, r'_2 \in R'$. Since $\sfc_{R'}$ is an isomorphism, this implies
\begin{equation} \label{glei2}
\Ga^2_R(R') = \Span_R \{ \ga^2(1_R), \ga^2(\veps), \ga^1(1_R) \times \ga^1(\veps)\}
\end{equation}
with $\sfc_{R'}$ acting as 
\begin{align*}
  \ga^2(1_R)= 1_{\Ga^2_R(R')} \quad &\mapsto\quad 1_R^{\ot 2} = 1_{\ST^2_R(R')} \\
  \ga^1(1_R) \times \ga^1(\veps) \quad &\mapsto \quad 1_R \star \veps, \\
  \ga^2(\veps) \quad &\mapsto \quad \veps^{\ot 2}. 
  \end{align*}
Next we claim that 
\begin{equation}\label{glei3} \begin{split}
  I_{\TS} &= R ( 1_R \star \veps) \oplus R \veps^{\ot 2}, \quad \text{and}\\ 
  I_{\Ga} &= R\big( \ga^1(1_R) \times \ga^1(\veps)\big) \oplus R \ga^2(\veps)
\end{split}\end{equation}
are nilpotent ideals of the $R$--algebras $\TS^2(R')$ and $\Ga^2_R(R')$ respectively. Indeed, for $I_{\TS}$ this follows from the multiplication rules, cf.~Example \ref{bant-sym2},
\begin{align*}
  &1_R^{\ot 2}  = 1_{\TS^2_R(R')} \, , \\
  & (1_R \star \veps)(1\star \veps) = 1_R \star \veps^2 + \veps \star \veps = 2 \veps^{\ot 2}\, , \\
  & (1_R \star \veps)\veps^{\ot 2} = \veps \star \veps^2 = 0\, , \\
  & (\veps^{\ot 2})(\veps^{\ot 2}) = (\veps^2)^{\ot 2} = 0\,. 
  \end{align*}  
Since $\sfc_{R'}(I_\Ga) = I_{TS}$, we also obtain that $I_{\Ga}$ is a nilpotent ideal of $\Ga^2_R(R')$. 

Our next claims are that 
\begin{equation}\label{glei4} \begin{split}
&\text{\em $I_\Ga$ annihilates the $\Ga^2_R(R')$--modules $\Ga^2_R(M')$ and $R$, and} \\
&\text{\em $I_{TS}$ annihilates the $\TS^2_R(R')$--modules $\TS^2_R(M')$ and $R$.}
\end{split}\end{equation}
Since $\Ga^2_R(M') = \Span_R \{\ga^2(m), \ga^1(m_1) \times \ga^1(m_2): m, m_1, m_2\in M\}$, the claim \eqref{glei4} for $\Ga^2_R(M')$ follows from the formulas below, cf.~Example \ref{aldm-c2}:  
\begin{align*}
  &\ga^2(\veps) \cdot \ga^2(m) = \ga^2(\veps m) = \ga^2(0) = 0, \\
& \ga^2(\veps) \cdot \big( \ga^1(m_1) \times \ga^1(m_2)\big) 
    = \ga^1(\veps m_1) \times \ga^1(\veps m_2) = 0 , \\
& \big( \ga^1(1_R) \times \ga^1(\veps)\big) \cdot \ga^2(m) 
  = \ga^1(m) \times \ga^1(\veps m) =  0, \\
& \big( \ga^1(1_R) \times \ga^1(\veps)\big) \cdot \big( \ga^1(m_1) \times \ga^1(m_2)\big)\\ &\quad =
  \ga^1(m_1) \star \ga^1(\veps m_2) + \ga^1(m_2 \star \ga^1(\veps m_1) = 0  .
  \end{align*}
We also have $I_{\TS} \cdot \TS^2_R(M')= 0$ because $I_{TS}$ even annihilates $M'\ot M'$. Thus, it remains to consider $R$. The $\TS^2_R(R')$--module $R$ is given by the algebra homomorphism $\rho$ of \eqref{eq_TS_rho}. Untangling the definition, it suffices to show that 
\[ \TS^2_R(R') \to \End_R\big(\we^2_R(R')\big), \quad \we^2_R(R') = R(1 \we \veps),\]
is zero on $I_{\TS}$, which follows from 
\begin{align*}
  &(1_R \star \veps) \cdot (1_R \we \veps)  = 1_R \we \veps^2 + \veps \we \veps = 0  , \\
 & \veps^{\ot 2} \cdot (1 \we \veps) = \veps \we \veps^2 = 0. 
\end{align*}
Finally, $\pi (I_{\Ga}) = 0$ because $\pi (I_\Ga) = (\rho \circ \sfc_{R'})(I_\Ga) = \rho(I_{\TS}) = 0$. This finishes the proof of \eqref{glei4}. 

Because $\Ga^2_R(R')/ I_\Ga \cong R$ and $\TS^2_R(R')/I_{\TS} \cong R$, we get 
\begin{align*}
  \FFN_{R'/R}(M') & = \Ga^2_R(M') \ot_{\Ga^2_R(R')} R = 
     \Ga^2_R(M)\ot_{\Ga^2_R(R')/I_\Ga} R \\ & \quad \cong \Ga^2_R(M) \ot_R R \cong \Ga^2_R(M)\text{, and} \\
 \RN_{R'/R}(M') &= \TS^2_R(M') \ot_{\TS^2_R(R')} R = \cdots = \TS^2_R(M). 
  \end{align*} 
We thus get the diagram \eqref{glei0} with horizontal isomorphisms. It is easily seen to be commutative. \end{proof} 

\begin{cor}  \label{gleico} The canonical map $\sfn_{M'}$ between the Ferrand norm and the Rost norm of an $R'$--module $M'$ is in general neither injective nor surjective.
\end{cor}

\begin{proof} By Lemma~\ref{glei}, it is enough to prove this for $\sfc_{M'}$. This has been done in \cite[Ex.~3.2 and Ex.~4.6]{Lu}. \end{proof}

\begin{example}\label{noeq}
We provide an example where $\FFN_{R'/R}(M') \not \cong \RN_{R'/R}(M')$. We specialize Lemma~\ref{glei} by taking $R=\ZZ$, $R'=\ZZ[\veps]$, and $M=\ZZ/2\ZZ = \{ \bar 0, \bar 1\}$. It then suffices to show that $\Ga^2_\ZZ(\ZZ/2\ZZ) \not \cong \TS^2_\ZZ(\ZZ/2\ZZ)$, which follows from 
\begin{equation}  \label{noeq1} \Ga^2_\ZZ(\ZZ/2\ZZ) \cong \ZZ/4\ZZ, \quad
\TS^2_\ZZ(\ZZ/2\ZZ) \cong \ZZ/2\ZZ.
\end{equation}
To prove \eqref{noeq1}, recall that the $\ZZ$--module $\Ga^2_\ZZ(\ZZ/2\ZZ)$ is spanned by $\ga^2(\ZZ/2\ZZ)$ and $\ga^1(\ZZ/2\ZZ) \times \ga^1(\ZZ/2\ZZ)$. Since $\ga^1(\bar 0) = 0 = \ga^2(\bar 0)$ and
\[ \ga^1(\bar 1) \times \ga^1(\bar 1) = 2 \ga^2(\bar 1)\] 
by the relations of Section \ref{sec_The_Construction}, it follows that $\Ga^2_\ZZ(\ZZ/2\ZZ) = \Span_\ZZ\{\ga^2(\bar 1)\}$. One easily verifies that the first two and the fourth relation of Section \ref{sec_The_Construction} are satisfied, and that from the third relation only
\[ 0 = \ga^1(\bar 1) \times \ga^1(\bar 1) = \ga^2(\bar 1) + 
    \ga^1(\bar 1) \times \ga^1(\bar 1) + \ga^2(\bar 1) = 4 \ga^2(\bar 1)
\]
yields some new information. Thus, $\Ga^2_\ZZ(\ZZ/2\ZZ) \cong \ZZ/4\ZZ$ holds. Finally, since $\sfT^2_\ZZ(\ZZ/2\ZZ) = \{ \bar 0 \ot \bar 0, \bar 1 \ot \bar 1\}=\TS^2_\ZZ(\ZZ/2\ZZ)$, we get $\TS^2_\ZZ(\ZZ/2\ZZ) \cong \ZZ/2\ZZ$.
\end{example}

\subsection{Base change for the Rost norm.} \label{bcrn} 
To construct a base change map for the Rost norm, we will use the following folklore lemma.
\begin{lem}\label{tpl-a} 
Let $A$ be an $R$--algebra, let $M$ be a right $A$--module, let $N$ be a left $A$--module and let $S\in \Ralg$. Then, with respect to the canonical actions of the $S$--algebra $A\ot_R S$ on $M\ot_R S$ and $N\ot_R S$ the map
\begin{equation} \label{tpla1}\begin{split}
  (M\ot_A N) \ot_R S &\simlgr (M\ot_R S) \ot_{A\ot_R S} (N\ot_R S)
\\  m\ot_A n \ot_R 1_S &\; \mapsto \; (m\ot_R 1_S) \ot_{A\ot_R S} (n \ot_R 1_S)
\end{split}\end{equation} 
is an isomorphism of $S$--modules with inverse
\[  (m\ot_R s_1) \ot_{A\ot S} (n \ot_R s_2) \mapsto m\ot_A n \ot_R (s_1s_2).
\]
\end{lem}

Now, let $R'\in \Ralg$ be finite locally free of constant rank $d$, let $M'$ be an $R'$--module, let $S\in \Ralg$, and put $S'=R'\ot_RS$. The {\em base change map for the Rost norm\/} is the $S$--linear map
\begin{equation} \label{bcrn1}\begin{split}
\frb_{M'}^{\RN} \co \RN_{R'/R}(M') \ot S &\longto \RN_{S'/S}(M'\ot_R S) \\
(x \ot_{\TS_R^d(R')} 1_R)\ot s  &\mapsto  \frb_{M'}(x\ot s)\ot_{\TS_S^d(S')} \ot 1_S,
\end{split}\end{equation}
obtained as the composition of the two $S$--linear maps 
\[\xymatrix{ 
  \RN_{R'/R}(M') \ot_R S =  ( \TS^d_R(M') \ot_{\TS^d_R(R')} R ) \ot_R S 
      \ar[d]^\cong_{\eqref{tpl-a}}
           \\  
   (\TS^d_R(M')\ot_R S) \ot_{\TS^d_R(R') \ot_R S} (R\ot_R S) 
        \ar[d]_{\eqref{tpl}} 
      \\
     \TS^d_S(M'\ot_R S)\ot_{\TS^d_S(S')}S  
}\]
where the second map uses the base change maps  
\begin{align*}
  \frb_{M'} \co \TS^d_R(M') \ot_R S &\longto \TS^d_S(M'\ot_R S) \quad \text{and} \\
  \frb_{R'} \co \TS^d_R(R') \ot_R S &\longto \TS^d_S(S')
\end{align*}
of \eqref{sybi-bc3} and the canonical isomorphism $R\ot_R S$, which satisfy the conditions \eqref{tpl1}. We know that $\frb_{R'}$ is an isomorphism since $R'$ is a flat $R$--module. Hence,  by \ref{sybi-bc-i} and \ref{sybi-bc-ii} after \eqref{sybi-bc3}, the base change map for the Rost norm is an isomorphism if
\begin{enumerate}[label={\rm (\roman*)}]
  \item \label{bcrn-i} $d!$ is invertible in $R$, or
  \item\label{bcrn-ii} $M$ is a flat $R$--module. 
\end{enumerate}
Of course, one suspects at this point that $\frb_{M'}^{\RN}$ is in general not an isomorphism. This is indeed the case, as we will show now.

\begin{lem}
Let $R'=R[\veps]$, $\veps^2 = 0$, let $S\in \Ralg$, and let $M$ be an $R$--module which we view as $R[\veps]$--module $M'$ by putting $\veps \cdot M = 0$. Then there exists a commutative diagram of $S$--linear maps, 
\[ \xymatrix@C=50pt{
   \RN_{R'/R}(M') \ot_R S \ar[r]^\cong \ar[d]_{\frb^{\RN}_{M'}} 
   & \TS^2_R(M) \ot_R S \ar[d]^{\frb_{M}}
 \\  
   \RN_{S'/S}(M'\ot_R S) \ar[r]^\cong & \TS^2_S(M\ot_R S)
}\]
in which the horizontal maps are isomorphisms.  
\end{lem}

\begin{proof} The top horizontal map is the $S$--extension of the isomorphism in the diagram \eqref{glei0}; the bottom horizontal map is the same isomorphism, but for the $S$--module $M\ot_R S$ and the extension $R'\ot_R S = S[\veps]$ of $S$. By \eqref{glei00} and \eqref{bcrn1}, an element $(x\ot_{\TS^2_R(R')} 1_R)\ot_R s \in \RN_{R'/R}(M') \ot_R S$ is mapped to $\frb_{M'} \ot s \in \TS^2_S(M\ot_R S)$ in both ways of the diagram. 
\end{proof}

\begin{example} By \cite[Ex.~5.3 and Ex.~5.5]{Lu}, there exist $R$, $S$ and $M$ such that $\frb_{M}$ is not injective and not surjective. Hence the same holds for $\frb_{M'}^{\RN}$.
\end{example}

\section{Globalizing Ferrand's Norm Functor}\label{sec_glue_Ferrand}
In this section we globalize Ferrand's norm over rings to our setting over a scheme by applying the constructions of Appendix \ref{app_quasi_coh} to produce a norm morphism of stacks. We first define the norm of quasi-coherent modules and then in the subsequent subsection we show that this norm also sends quasi-coherent algebras to quasi-coherent algebras in a natural way.

\subsection{The Norm of Quasi-coherent Modules}\label{sec_norm_of_modules} 
\subsubsection{The Construction}\label{the_construction}
We will apply Proposition \ref{prop_stack_morphism_assembled} in the following context. First, as in Appendix \ref{app_stack_morphism}, define the stack of quasi-coherent modules over $S$, denoted $p \colon \QCoh \to \Sch_S$, to have the following.
\begin{enumerate}[label={\rm(\roman*)}]
\item Its objects are pairs $(X,\cF)$ with $X\in \Sch_S$ and $\cF$ a quasi-coherent $\cO|_X$--module on $\Sch_X$.
\item Its morphisms are pairs $(g,\varphi)\colon (X',\cF') \to (X,\cF)$ where $g\colon X' \to X$ is a morphism of $S$--schemes and $\varphi \colon \cF' \to g^*(\cF)$ is a morphism of $\cO|_{X'}$--modules. Composition is given by $(g,\varphi)\circ (h,\psi) = (g\circ h,h^*(\varphi)\circ \psi)$.
\item Its structure functor is given by $(X,\cF)\mapsto X$ and $(g,\varphi)\mapsto g$.
\end{enumerate}
For a scheme $X \in \Sch_S$, the fiber $\QCoh(X)$ over $X$ is the category of quasi-coherent $\cO|_X$--modules. We point out that this is essentially the same definition as in \cite[Tag 03YL]{Stacks} but with some opposite conventions. For example, their fiber over $X$ is the opposite category of quasi-coherent $\cO|_X$--modules. Furthermore, the set theoretic issues mentioned in \cite[Tag 03YL]{Stacks} will not play a role in this paper. A definition of $\QCoh$ analogous to ours is used in \cite[4.3.11]{Olsson}, denoted $\mathrm{QCOH}$, but using the Zariski topology.

Next, define the stack of quasi-coherent modules over a finite locally free extension of rank $d$, denoted $p \colon \QCoh\flf^d \to \Sch_S$, to have
\begin{enumerate}[label={\rm(\roman*)}]
\item objects which are pairs $(h\colon T' \to T, \cM)$ where $h\colon T' \to T$ is a finite locally free morphism of constant rank $d$ in $\Sch_S$ and $\cM$ is a quasi-coherent $\cO|_{T'}$--module,
\item morphisms which are triples
\[
(f,g,\varphi)\colon (j\colon X' \to X,\cN) \to (h\colon T' \to T, \cM)
\]
where $f$ and $g$ are morphisms in $\Sch_S$ such that
\[
\begin{tikzcd}
X' \ar[r,"g"] \ar[d,"j"] & T' \ar[d,"h"] \\
X \ar[r,"f"] & T
\end{tikzcd}
\]
is a fiber product diagram and $\varphi \colon \cN \iso g^*(\cM)$ is an isomorphism of $\cO|_{X'}$--modules, and
\item structure functor given by $(h\colon T' \to T,\cM) \mapsto T$ and $(f,g,\varphi)\mapsto f$.
\end{enumerate}
In the notation of Appendix \ref{app_stack_morphism}, we write $\QCoh\flf^d = \QCoh_{\fI}$ where $\fI$ is the stack of finite locally free morphisms of constant rank $d$ in $\Sch_S$ (viewed as a full substack of the stack of affine morphisms $\fAff$, also defined in \ref{app_stack_morphism}). For each object $h\colon U' \to U$ in $\fI$, i.e., each finite locally free morphism of constant rank $d$, where $U$ and $U'$ are affine schemes, we set
\[
\cF_h = N_{\cO(U')/\cO(U)} \colon \fMod_{\cO(U')} \to \fMod_{\cO(U)}
\]
to be Ferrand's norm functor. For each fiber product diagram in $\Sch_S$ of the form on the left below with associated pushout diagram of rings on the right below
\[
D = \begin{tikzcd}
V' \ar[r,"f'"] \ar[d,"h'"] & U' \ar[d,"h"] \\
V \ar[r,"f"] & U,
\end{tikzcd}
\hspace{10ex}
\begin{tikzcd}
\cO(U') \ar[r] & \cO(V') \\
\cO(U) \ar[u] \ar[r] & \cO(V). \ar[u]
\end{tikzcd}
\]
where $h,h' \in \fI$ and where $U,U',V,V'$ are all affine schemes, we use the isomorphism of functors
\[
\theta_D \colon \cF_h(\und)\otimes_{\cO(U)}\cO(V) \iso \cF_{h'}(\und \otimes_{\cO(U')}\cO(V')).
\]
from Lemma \ref{lem_Ferrand_base_change_isos}\ref{lem_Ferrand_base_change_isos_ii} corresponding to the pushout diagram of rings. We now verify that these isomorphisms satisfy the required assumptions.
\begin{lem}\label{lem_verify_assumptions}
For the stack $\fI$ of finite locally free morphisms of rank $d$ in $\Sch_S$, the functors $\cF_h$ and natural isomorphisms $\theta_D$ chosen above satisfy assumptions \ref{assumption_1} and \ref{assumption_2}.
\end{lem}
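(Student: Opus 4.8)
The plan is to verify the two coherence conditions directly from the explicit description of $\theta_D$ provided by Lemma \ref{lem_Ferrand_base_change_isos}\ref{lem_Ferrand_base_change_isos_i}, tracing generators of the form $\gamma^{a_1}(m'_1)\dots\gamma^{a_k}(m'_k)\otimes r$ through the relevant diagrams. The first assumption (normalization/identity) should be nearly immediate: when the fiber product diagram $D$ is the trivial one where $V=U$ and $V'=U'$, the unique pushout isomorphism $\psi\colon \cO(U')\otimes_{\cO(U)}\cO(U) \iso \cO(U')$ is the canonical one, $f = \Id$, and the formula in \ref{lem_Ferrand_base_change_isos_i} sends $\gamma^{a_1}(m'_1)\dots\gamma^{a_k}(m'_k)\otimes r\otimes 1 \mapsto \gamma^{a_1}(m'_1)\dots\gamma^{a_k}(m'_k)\otimes r$, i.e., $\theta_D$ reduces to the canonical isomorphism $N_{\cO(U')/\cO(U)}(\cM)\otimes_{\cO(U)}\cO(U)\cong N_{\cO(U')/\cO(U)}(\cM)$, which is what Assumption \ref{assumption_1} demands.

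The substance is in the second assumption, the cocycle-type compatibility: given two composable fiber-product squares stacked vertically (a square $D_1$ over a square $D_2$, whose composite is a third square $D_{12}$), the natural isomorphisms must satisfy $\theta_{D_{12}} = (\theta_{D_1}\text{-twisted})\circ(\theta_{D_2})$ after base-changing appropriately, up to the canonical identifications $(\cF_h(\und)\otimes W)\otimes W' \cong \cF_h(\und)\otimes W'$. First I would set up notation: let the bottom square have rings $\cO(U)\to\cO(V)\to\cO(W)$ with $\cO(U')\to\cO(V')\to\cO(W')$ above, all vertical maps finite locally free of rank $d$, and $\psi_1\colon\cO(V')\otimes_{\cO(V)}\cO(W)\iso\cO(W')$, $\psi_2\colon\cO(U')\otimes_{\cO(U)}\cO(V)\iso\cO(V')$, and $\psi_{12}\colon\cO(U')\otimes_{\cO(U)}\cO(W)\iso\cO(W')$ the three pushout isomorphisms; these satisfy the obvious transitivity $\psi_{12} = \psi_1\circ(\psi_2\otimes\Id_{\cO(W)})$ up to canonical associativity isomorphisms of tensor products, because iterated pushouts are pushouts. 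Then I would trace a generator $\gamma^{a_1}(m'_1)\dots\gamma^{a_k}(m'_k)\otimes r$ of $N_{\cO(U')/\cO(U)}(\cM)$, tensored up to $\cO(W)$, through both composites. Using the formula of \ref{lem_Ferrand_base_change_isos_i} twice for the right-hand composite, the image is $\gamma^{a_1}(m'_1\otimes 1\otimes 1)\dots\gamma^{a_k}(m'_k\otimes 1\otimes 1)\otimes (g\circ f)(r)\,w$ (where $f\colon\cO(U)\to\cO(V)$, $g\colon\cO(V)\to\cO(W)$), while the left-hand composite $\theta_{D_{12}}$ gives directly $\gamma^{a_1}(m'_1\otimes 1)\dots\gamma^{a_k}(m'_k\otimes 1)\otimes h(r)\,w$ with $h = g\circ f\colon\cO(U)\to\cO(W)$; these agree once one identifies $M'\otimes_{\cO(U')}\cO(V')\otimes_{\cO(V')}\cO(W')$ with $M'\otimes_{\cO(U')}\cO(W')$ via $\psi_1\circ(\psi_{M',2}\otimes\Id)$ and checks this identification is exactly $\psi_{M',12}$, which follows from transitivity of the $\psi$'s.

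The main obstacle, such as it is, is bookkeeping rather than conceptual: keeping straight the several canonical identifications (associativity of $\otimes$, the isomorphisms $\varphi_Q^d$ of \eqref{eq_f_tilde} and their transitivity, and the compatibility of $\pi_{R'\otimes_R Q}$ with iterated base change from Lemma \ref{lem_pushout_Gamma_base_change}) and confirming they are coherent enough that the explicit generator computation is legitimate. It may be cleanest to observe that $\theta_D$ was built in Lemma \ref{lem_Ferrand_base_change_isos} as a composite of $\mathrm{can}$, $\varphi_Q^d\otimes\mathrm{can}$, and $\Gamma_Q^d(\psi_{M'})\otimes\Id$, each of which is individually transitive under iterated base change (the first two by standard properties of tensor products and of the divided-power base-change isomorphism from \cite[III.3]{Roby}, the last by transitivity of the pushout isomorphisms $\psi$), so that $\theta_{D_{12}}$ decomposes as the asserted composite of $\theta_{D_1}$ and $\theta_{D_2}$ functor-by-functor. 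I would write this out as: (a) recall the precise statements of Assumptions \ref{assumption_1} and \ref{assumption_2}; (b) dispatch Assumption \ref{assumption_1} in one line as above; (c) for Assumption \ref{assumption_2}, reduce to the three-layer diagram, invoke transitivity of the pushout isomorphisms and of $\varphi_Q^d$, and conclude by the generator trace. I do not expect any genuinely hard point; the content was already packaged into Lemmas \ref{lem_pushout_Gamma_base_change} and \ref{lem_Ferrand_base_change_isos}.
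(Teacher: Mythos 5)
Your proposal is correct and follows the same strategy as the paper: both verify Assumption \ref{assumption_1} by tracing a generator through the trivial square, and verify Assumption \ref{assumption_2} by tracing generators $\gamma^{\overline{a}}(\overline{m'})\otimes u\otimes v\otimes w$ through both composites using the explicit formula from Lemma \ref{lem_Ferrand_base_change_isos}\ref{lem_Ferrand_base_change_isos_i}, the agreement boiling down to transitivity of the canonical pushout identifications. The only difference is presentational: the paper writes out the element trace in full in a commuting diagram, while you emphasize upfront that the content is transitivity of the $\psi$'s and of $\varphi_Q^d$, which amounts to the same verification.
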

\begin{proof}
To verify assumption \ref{assumption_1} holds, consider a fiber product diagram of schemes and associated pushout diagram of rings of the following form.
\[
D=
\begin{tikzcd}
U' \arrow{d}{h} \arrow[equals]{r} & U' \arrow{d}{h} \\
U \arrow[equals]{r} & U,
\end{tikzcd}
\hspace{10ex}
\begin{tikzcd}
\cO(U') \ar[r,equals] & \cO(U') \\
\cO(U) \ar[u] \ar[r,equals] & \cO(U). \ar[u]
\end{tikzcd}
\]
For any $\cO(U')$--module $M'$, we trace an element through the diagram
\[
\begin{tikzcd}[column sep=1.2in]
N_{\cO(U')/\cO(U)}(M')\otimes_{\cO(U)}\cO(U) \arrow{d}{\theta_D(M')} \arrow{dr}{\mathrm{can}} & \\
N_{\cO(U')/\cO(U)}\big(M'\otimes_{\cO(U')}\cO(U')\big) \arrow[swap]{r}{N_{\cO(U')/\cO(U)}(\mathrm{can})} & N_{\cO(U')/\cO(U)}(M')
\end{tikzcd}
\]
to obtain
\[
\begin{tikzcd}
(\gamma^{a_1}(m'_1)\dots\gamma^{a_k}(m'_k)\otimes u_1)\otimes u_2 \ar[dr,mapsto] \ar[d,mapsto] & \\
\gamma^{a_1}(m'_1\otimes 1)\dots\gamma^{a_k}(m'_k\otimes 1)\otimes u_1u_2 \ar[r,mapsto] & \gamma^{a_1}(m'_1)\dots\gamma^{a_k}(m'_k)\otimes u_1u_2
\end{tikzcd}
\]
where $\sum a_i = d$ and $m'_i\in M'$. This shows that the diagram
\[
\begin{tikzcd}[column sep=0.8in]
\cF_h(\und)\otimes_{\cO(U)}\cO(U) \arrow{d}{\theta_D} \arrow{dr}{\mathrm{can}} & \\
\cF_h\big(\und\otimes_{\cO(U')}\cO(U')\big) \arrow[near start]{r}{\cF_h(\mathrm{can})} & \cF_h(\und)
\end{tikzcd}
\]
commutes as required.

To verify assumption \ref{assumption_2} holds, consider fiber product diagrams of affine schemes
\[
D_f = \begin{tikzcd}
V' \arrow{d}{h'} \arrow{r}{f'} & U' \arrow{d}{h} \\
V \arrow{r}{f} & U
\end{tikzcd},\;
D_g = \begin{tikzcd}
W' \arrow{d}{h''} \arrow{r}{g'} & V' \arrow{d}{h'} \\
W \arrow{r}{g} & V
\end{tikzcd},
\text{ and }
D_{f\circ g} = \begin{tikzcd}
W' \arrow{d}{h''} \arrow{r}{f'\circ g'} & U' \arrow{d}{h} \\
W \arrow{r}{f \circ g} & U
\end{tikzcd}
\]
with $h,h',h'' \in \fI$ and the associated pushout diagrams of rings
\[
\begin{tikzcd}
\cO(U') \ar[r,"f'^*"] & \cO(V') \\
\cO(U) \ar[u,swap,"h^*"] \ar[r,"f^*"] & \cO(V), \ar[u,"h'^*"]
\end{tikzcd}\;
\begin{tikzcd}
\cO(V') \ar[r,"g'^*"] & \cO(W') \\
\cO(V) \ar[u,"h'^*"] \ar[r,"g^*"] & \cO(W), \ar[u,"h''^*"]
\end{tikzcd}
\text{ and }
\begin{tikzcd}
\cO(U') \ar[r,"(f'\circ g')^*"] & \cO(W') \\
\cO(U) \ar[u,"h^*"] \ar[r,"(f\circ g)^*"] & \cO(W). \ar[u,"h''^*"]
\end{tikzcd}
\]
For an $\cO(U')$--module $M'$, we claim that the diagram
\[
\begin{tikzcd}[column sep=-10ex]
 & N_{\cO(U')/\cO(U)}(M')\otimes_{\cO(U)}\cO(W) \arrow{dddd}{\theta_{D_{f\circ g}}(M')}\\
\big(N_{\cO(U')/\cO(U)}(M')\otimes_{\cO(U)}\cO(V)\big)\otimes_{\cO(V)}\cO(W) \arrow{ur}{\mathrm{can}} \arrow{d}{\theta_{D_f}(M')\otimes\Id} &  \\
N_{\cO(V')/\cO(V)}\big(M'\otimes_{\cO(U')}\cO(V')\big)\otimes_{\cO(V)}\cO(W) \arrow{d}{\theta_{D_g}(M'\otimes_{\cO(U')}\cO(V'))} & \\
N_{\cO(W')/\cO(W)}\big((M'\otimes_{\cO(U')}\cO(V'))\otimes_{\cO(V')}\cO(W')\big) \arrow[swap]{dr}{N_{\cO(W')/\cO(W)}(\mathrm{can})} & \\
& N_{\cO(W')/\cO(W)}(M' \otimes_{\cO(U')}\cO(W'))
\end{tikzcd}
\]
commutes. Indeed, this can be seen by tracing an element through the diagram. For $\overline{a}=(a_1,\ldots,a_k)\in \NN^k$ with $\sum a_i = d$ and $\overline{m'} = (m'_1,\ldots,m'_k)\in (M')^k$, we use the notation $\gamma^{\overline{a}}(\overline{m'}) = \gamma^{a_1}(m'_1)\dots \gamma^{a_k}(m'_k)$. We have
\[
\begin{tikzcd}[column sep=-5ex]
(\gamma^{\overline{a}}(\overline{m'})\otimes u)\otimes v \otimes w \ar[r,mapsto] \ar[d,mapsto] & (\gamma^{\overline{a}}(\overline{m'})\otimes u)\otimes g^*(v)w \ar[ddd,mapsto] \\
(\gamma^{\overline{a}}(\overline{m'\otimes 1_{\cO(V')}})\otimes f^*(u)v) \otimes w \ar[d,mapsto] & \\
\gamma^{\overline{a}}(\overline{m'\otimes 1_{\cO(V')}\otimes 1_{\cO(W')}})\otimes g^*(f^*(u)v)w \ar[dr,mapsto] & \\
 & \gamma^{\overline{a}}(\overline{m'\otimes 1_{\cO(W')}})\otimes (f\circ g)^*(u)g^*(v)w\\[-5ex]
 & =\gamma^{\overline{a}}(\overline{m'\otimes 1_{\cO(W')}})\otimes g^*(f^*(u)v)w
\end{tikzcd}
\]
for all $u\in \cO(U)$, $v\in \cO(V)$, and $w\in \cO(W)$, which shows that the diagram
\[\begin{tikzcd}[ampersand replacement=\&]
\big(\cF_h(\und)\otimes_{\cO(U)}\cO(V)\big)\otimes_{\cO(V)}\cO(W) \arrow{r}{\mathrm{can}} \arrow{d}{\theta_{D_f}\otimes\Id} \& \cF_h(\und)\otimes_{\cO(U)}\cO(W) \arrow{dd}{\theta_{D_{f\circ g}}} \\
\cF_{h'}\big(\und\otimes_{\cO(U')}\cO(V')\big)\otimes_{\cO(V)}\cO(W) \arrow{d}{\theta_{D_g}} \& \\
\cF_{h''}\big((\und\otimes_{\cO(U')}\cO(V'))\otimes_{\cO(V')}\cO(W')\big) \arrow{r}{\cF_{h''}(\mathrm{can})} \& \cF_{h''}(\und \otimes_{\cO(U')}\cO(W'))
\end{tikzcd}
\]
commutes as required. This finishes the proof.
\end{proof}

Lemma \ref{lem_verify_assumptions} allows us to apply Proposition \ref{prop_stack_morphism_assembled} to our choice of functors and natural isomorphisms, yielding a stack morphism
\begin{equation}\label{eq_norm_stack_morphism}
N \colon \QCoh\flf^d \to \QCoh
\end{equation}
which we call the \emph{norm morphism}. Since Proposition \ref{prop_stack_morphism_assembled} uses the construction of Lemma \ref{lem_quasi_coh_functor_construction}, for each $h\colon T' \to T$ in $\fI$ we have a norm functor $N_{T'/T}\colon \QCoh(T') \to \QCoh(T)$ between categories of quasi-coherent modules. In particular, for a finite locally free cover $T\to S$ of degree $d$ of our fixed base scheme $S$, we have a norm functor
\begin{equation}\label{eq_norm_functor}
N_{T/S} \colon \QCoh(T) \to \QCoh(S).
\end{equation}
This generalizes the norm appearing in \cite[Ch.2, 6.5.5]{EGA}, where they define the norm only for line bundles. By construction, $N_{T/S}$ has the property that for $\cM\in \QCoh(T)$ and $U\in \Aff_S$, we have
\[
N_{T/S}(\cM)(U) = N_{\cO(T\times_S U)/\cO(U)}(\cM(T\times_S U))
\]
where the right hand side is Ferrand's norm functor. Thus, $N_{T/S}$ is the unique extension of Ferrand's norm to the stack $\QCoh(T)$ using his compatibility isomorphisms as in \ref{rem_Ferrand_iso}. Furthermore, the restriction along a morphism $V \to U$ in $\Aff_S$ has the following nice expression,
\begin{align}
N_{\cO(T\times_S U)/\cO(U)}(\cM(T\times_S U)) &\to N_{\cO(T\times_S V)/\cO(V)}(\cM(T\times_S V)) \label{eq_norm_restriction} \\
\gamma^{\overline{a}}(\overline{m})\otimes u &\mapsto \gamma^{\overline{a}}(\overline{m|_{T\times_S V}})\otimes u|_V \nonumber
\end{align}
for $\overline{m}=(m_1,\ldots,m_k)\in \cM(T\times_S U)^k$ and $u \in \cO(U)$.

\subsubsection{The Universal Normic Polynomial Law}
Let $T\to S$ be a finite locally free cover of degree $d$ and consider the norm functor $N_{T/S}\colon \QCoh(T) \to \QCoh(S)$ of \eqref{eq_norm_functor}. Theorem \ref{Ferrand_property}\ref{Ferrand_property_iii} is preserved in a sense for this globalized norm functor. Since $f\colon T\to S$ is finite locally free, there is a canonical norm
\[
\bnorm \colon f_*(\cO|_T) \to \cO
\]
defined in \cite[Tag 0BD2]{Stacks} or \cite[II 6.5.1]{EGA} for sheaves on schemes, but which generalizes immediately to sheaves on $\Sch_S$. It is the globalized version of the norm of a finite locally free ring extension as in \eqref{eq_ring_norm}. For a quasi-coherent $\cO|_T$--module $\cM$, we define a \emph{normic polynomial law} to be a natural transformation $\bnu \colon f_*(\cM) \to \cN$, where $\cN$ is a quasi-coherent $\cO$--module, such that
\[
\bnu(tm) = \bnorm(t)\bnu(m)
\]
for all appropriate sections $t\in f_*(\cO|_T)$ and $m\in f_*(\cM)$. For a fixed $\cO|_T$--module $\cM$, we can form the category of normic polynomial laws, denoted $\NPL_{T/S}(\cM)$, whose
\begin{enumerate}[label={\rm(\roman*)}]
\item objects are pairs $(\cN,\bnu)$ where $\cN$ is a quasi-coherent $\cO$--module and $\bnu\colon f_*(\cM) \to \cN$ is a normic polynomial law, and whose
\item morphisms $(\cN,\bnu) \to (\cN',\bnu')$ are $\cO$--module maps $\varphi \colon \cN \to \cN'$ such that $\bnu' = \varphi \circ \bnu$.
\end{enumerate}
  
\begin{prop}\label{prop_universal_property}
Let $f\colon T\to S$ be a finite locally free morphism and let $\cM$ be a quasi-coherent $\cO|_T$--module. Then, there is a normic polynomial law $\bnu_{\cM} \colon f_*(\cM) \to N_{T/S}(\cM)$, given over $U \in \Aff_S$ by the function
\[
\nu_{\cM(T\times_S U)}\colon \cM(T\times_S U) \to N_{\cO(T\times_S U)/\cO(U)}(\cM(T\times_S U))
\]
of Section \ref{sec_Ferrand_universal_property}, such that $(N_{T/S}(\cM),\bnu_{\cM})$ is an initial object in the category $\NPL_{T/S}(\cM)$.
\end{prop}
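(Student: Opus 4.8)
The plan is to deduce the statement from Ferrand's ring-theoretic universal property, Theorem \ref{Ferrand_property}\ref{Ferrand_property_iii}, by working affine-locally and invoking quasi-coherence at every step; the base-change compatibilities of Lemmas \ref{lem_Ferrand_base_change_isos} and \ref{lem_Ferrand_natural_iso} are precisely what allow the local data to be assembled into global data.

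First I would check that the locally defined functions assemble into a well-defined natural transformation. For $U \in \Aff_S$ one has $f_*(\cM)(U) = \cM(T\times_S U)$ and, by the construction of $N_{T/S}$ recorded just after \eqref{eq_norm_functor}, $N_{T/S}(\cM)(U) = N_{\cO(T\times_S U)/\cO(U)}(\cM(T\times_S U))$, so the function $\nu_{\cM(T\times_S U)}$ of Section \ref{sec_Ferrand_universal_property} is defined and takes values in $N_{T/S}(\cM)(U)$. For a morphism $V \to U$ in $\Aff_S$, the square with corners $\cO(U), \cO(T\times_S U), \cO(V), \cO(T\times_S V)$ is a pushout of rings, because $T\times_S V = (T\times_S U)\times_U V$; Lemma \ref{lem_Ferrand_natural_iso} applied to this pushout, together with the explicit restriction formula \eqref{eq_norm_restriction}, then gives commutativity of the naturality square. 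By Lemma \ref{lem_equiv_affine_sheaves} it suffices to work over $\Aff_S$, so this produces a natural transformation $\bnu_{\cM}\colon f_*(\cM) \to N_{T/S}(\cM)$ of sheaves. It is normic because over each affine $U$ the map $\nu_{\cM(T\times_S U)}$ satisfies the ring-level normic identity by construction (Section \ref{sec_The_Construction}) and $\bnorm$ restricts to $\norm_{\cO(T\times_S U)/\cO(U)}$, and an equality of sheaf morphisms can be tested on affines.

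For the universal property, suppose $(\cN', \bnu') \in \NPL_{T/S}(\cM)$. Fix $U\in\Aff_S$ and set $R = \cO(U)$, $R' = \cO(T\times_S U)$, $M' = \cM(T\times_S U)$. Since $\cM$, $\cN'$ and $f_*(\cO|_T)$ are quasi-coherent and $f$ is finite locally free, evaluation over $\Spec Q \to U$ for $Q \in \Rings_R$ identifies $f_*(\cM)(\Spec Q) \cong M'\otimes_R Q$, $f_*(\cO|_T)(\Spec Q) \cong R'\otimes_R Q$ with $\bnorm$ restricting to $\norm_{(R'\otimes_R Q)/Q}$, and $\cN'(\Spec Q) \cong \cN'(U)\otimes_R Q$. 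Hence the restriction of $\bnu'$ to $U$ is a normic polynomial law $\bW_R(M') \to \bW_R(\cN'(U))$ in Ferrand's sense, and Theorem \ref{Ferrand_property}\ref{Ferrand_property_iii} yields a unique $R$--module map $\varphi_U \colon N_{T/S}(\cM)(U) \to \cN'(U)$ with $\bnu'|_U = \bW_R(\varphi_U)\circ \bnu_{\cM}|_U$. Using \eqref{eq_norm_restriction} (equivalently the isomorphisms $\theta_D$ of Lemma \ref{lem_Ferrand_base_change_isos}\ref{lem_Ferrand_base_change_isos_i}), the compatibility of $\bnu_{\cM}$ with restriction from the previous step, and quasi-coherence of $\cN'$, the uniqueness clause of Theorem \ref{Ferrand_property}\ref{Ferrand_property_iii} forces the family $(\varphi_U)_{U\in\Aff_S}$ to be compatible with restriction along all morphisms of $\Aff_S$; since source and target are sheaves this glues to an $\cO$--module morphism $\varphi\colon N_{T/S}(\cM) \to \cN'$, which is $\cO$--linear because each $\varphi_U$ is, and which satisfies $\bnu' = \varphi\circ\bnu_{\cM}$ because this holds over every affine. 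Uniqueness of $\varphi$ follows in the same way: any $\cO$--module morphism $\psi$ with $\bnu' = \psi\circ\bnu_{\cM}$ restricts over each $U$ to a map with the ring-level universal property, hence equals $\varphi_U$.

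The hard part will not be any single computation but rather the careful use of quasi-coherence in the previous paragraph to identify the restriction of an arbitrary sheaf-theoretic normic polynomial law with a genuine ring-theoretic normic polynomial law over $\cO(U)$; once that reduction is in place, the statement is an essentially formal consequence of Ferrand's theorem and the base-change lemmas, and the remaining verifications (naturality squares, gluing) are routine diagram chases of the kind already carried out in Section \ref{sec_Ferrand_universal_property} and in the proofs of Lemmas \ref{lem_Ferrand_base_change_isos} and \ref{lem_Ferrand_natural_iso}.
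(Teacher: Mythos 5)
Your proposal is correct and follows essentially the same route as the paper's proof: verify naturality over $\Aff_S$ via the base-change compatibilities of Lemmas \ref{lem_Ferrand_base_change_isos} and \ref{lem_Ferrand_natural_iso}, then reduce the universal property affine-locally to Theorem \ref{Ferrand_property}\ref{Ferrand_property_iii} by using quasi-coherence to turn the restriction of a sheaf-theoretic normic law into a ring-theoretic one, and glue the resulting unique maps. The paper is somewhat more explicit about the diagrams that verify compatibility of the $\varphi_U$ with restrictions, but the reduction and the role of quasi-coherence are exactly as you describe.
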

\begin{proof}
We first check that the proposed functions over each $U\in \Aff_S$ assemble into a natural transformation of sheaves on $\Aff_S$. Let $g \colon V \to U$ be a morphism in $\Aff_S$. We then have a fiber product diagram
\[
D = \begin{tikzcd}
T\times_S V \ar[r,"g'"] \ar[d] & T\times_S U \ar[d] \\
V \ar[r,"g"] & U
\end{tikzcd}
\]
in $\Sch_S$. To shorten notation, we set
\begin{align*}
R &= \cO(U) & R' &=\cO(T\times_S U) \\
Q &=\cO(V) & Q' &=\cO(T\times_S V) \\
M'_U &= \cM(T\times_S U) & M'_V &= \cM(T\times_S V).
\end{align*}
Because this is a fiber product diagram, there is a canonical isomorphism $\varphi' \colon M'_U \otimes_{R'} Q' \iso M'_U\otimes_R Q$ and since $\cM$ is quasi-coherent, there is a canonical isomorphism $\rho\colon M'_U \otimes_{R'} Q' \iso M'_V$. We have a diagram
\[
\begin{tikzcd}[column sep=15ex]
M'_V \ar[r,"\nu_{M'_V}"] & N_{Q'/Q}(M'_V) \\
M'_U\otimes_{R'}Q' \ar[u,swap,"\rho"] \ar[r,"\nu_{M'_U\otimes_R' Q'}"] \ar[dd,"\varphi'"] & N_{Q'/Q}(M'_U\otimes_{R'} Q') \ar[u,swap,"N_{Q'/Q}(\rho)"] \\
 & N_{R'/R}(M'_U)\otimes_R Q \ar[u,swap,"\theta_D(M'_U)"] \\
M'_U\otimes_R Q \ar[r,"\nu_{M'_U}"] & N_{R'/R}(M'_U)\otimes_R Q \ar[u,equals] \\
M'_U \ar[u,swap,"\Id\otimes 1"] \ar[r,"\nu_{M'_U}"] \ar[uuuu,bend left=50,start anchor=west,end anchor=west,"f_*(\cM)(g)"] & N_{R'/R}(M'_U) \ar[u,swap,"\Id\otimes 1"] \ar[uuuu,swap,bend right=50,start anchor=east,end anchor=east,"N_{T/S}(\cM)(g)"]
\end{tikzcd}
\]
where the top square commutes by Theorem \ref{Ferrand_property}\ref{Ferrand_property_iv}, the middle square commutes by Lemma \ref{lem_Ferrand_natural_iso}, and the bottom square commutes because Ferrand's universal normic polynomial law $\bnu_{M'_U}\colon \bW_R(M'_U) \to \bW_R\big(N_{R'/R}(M'_U)\big)$ is a natural transformation. The two faces involving curved arrows commute by definition. Hence, we have a polynomial law
\[
\bnu_{\cM} \colon f_*(\cM) \to N_{T/S}(\cM)
\]
which is clearly normic since each $\nu_{M'_U}$ is so.

To justify that this polynomial law has the claimed universal property, let $\bnu \colon f_*(\cM) \to \cN$ be another normic polynomial law into a quasi-coherent $\cO$--module. Fix an affine scheme $U \in \Aff_S$ and a morphism $g \colon W \to V$ in $\Aff_U$. Using the same notation as above, as well as $\cO(W)=P$, $\cO(T\times_S W)=P'$, and $\cM(T\times_S W)=M'_W$, we have a diagram
\[
\begin{tikzcd}
M'_U\otimes_R P & M'_U\otimes_{R'} P' \ar[l,swap,"\sim"] \ar[r,"\sim"] & M'_W \ar[r,"\bnu(W)"] & \cN(W) & \cN(U)\otimes_R P \ar[l,swap,"\sim"] \\[2ex]
M'_U\otimes_R Q \ar[u,swap,"\Id\otimes g^*"] & M'_U\otimes_{R'} Q' \ar[l,swap,"\sim"] \ar[r,"\sim"] \ar[u,swap,"\Id\otimes g'^*"] & M'_V \ar[r,"\bnu(V)"] \ar[u,swap,"f_*(\cM)(g)"] & \cN(V) \ar[u,swap,"\cN(g)"] & \cN(U)\otimes_R Q \ar[l,swap,"\sim"] \ar[u,swap,"\Id\otimes g"]
\end{tikzcd}
\]
and hence we may take the long horizontal compositions as $\bnu_U(V)$ and $\bnu_U(W)$ respectively to define a normic polynomial law $\bnu_U \colon \bW_R(M'_U) \to \bW_R(\cN(U))$. Therefore, by Theorem \ref{Ferrand_property}\ref{Ferrand_property_iii}, there exists a unique $R$--linear map $\phi_U \colon N_{T/S}(\cM)(U)= N_{R'/R}(M'_U) \to \cN(U)$ such that $\bW_R(\phi_U) \circ \bnu_{M'_U} = \bnu_U$. For the affine scheme $V$, we similarly obtain a $Q$--linear homomorphism $\phi_V \colon N_{T/S}(\cM)(V) \to \cN(V)$. Due to the isomorphisms $M'_V \cong M'_U\otimes_R Q$ and $\cN(V)\cong \cN(U)\otimes_R Q$ as well as the uniqueness of $\phi_U$, we will have a diagram
\[
\begin{tikzcd}
N_{Q'/Q}(M'_V) \ar[r,"\phi_V"] & \cN(V) \\
N_{R'/R}(M'_U)\otimes_R Q \ar[u,swap,"\rotatebox{90}{$\sim$}"] \ar[r,"\phi_U\otimes 1"] & \cN(U)\otimes_R Q \ar[u,swap,"\rotatebox{90}{$\sim$}"] \\
N_{R'/R}(M'_U) \ar[u,swap,"\Id\otimes 1"] \ar[r,"\phi_U"] & \cN(U) \ar[u,swap,"\Id\otimes 1"]
\end{tikzcd}
\]
which shows that the various $\phi_U$ assemble into a map $\phi\colon N_{T/S}(\cM) \to \cN$ which is $\cO$--linear and satisfies $\phi \circ \bnu_{\cM} = \bnu$. If $\phi_2 \colon N_{T/S}(\cM) \to \cN$ is any other such map, then in a similar manner to above we can extract a natural transformation
\[
\phi_{2,U} \colon \bW_R(M'_U) \to \bW_R(N_{R'/R}(M'_U))
\]
satisfying $\bW_R(\phi_{2,U}) \circ \bnu_{M'_U} = \bnu_U$. Then, uniqueness of $\phi_U$ requires that $\phi_{2,U} = \phi_U$ and since this holds for all $U\in \Aff_S$, we must have $\phi_2 = \phi$ globally. Thus, $\phi$ is the unique such morphism as desired.
\end{proof}
For an $\cO|_T$--module $\cM$, we will refer to $\bnu_{\cM} \colon f_*(\cM) \to N_{T/S}(\cM)$ as the universal polynomial law associated to $\cM$. The analogue of Theorem \ref{Ferrand_property}\ref{Ferrand_property_iv} also holds.

\begin{cor}\label{cor_morphism_from_universal}
The universal property of Proposition \ref{prop_universal_property} above induces the image of the norm functor on morphisms. If $\varphi \colon \cM_1 \to \cM_2$ is a morphism of quasi-coherent $\cO|_T$--modules, then $\bnu_{\cM_2}\circ f_*(\varphi)$ is a normic polynomial law and $N_{T/S}(\varphi)$ is the unique $\cO$--module map making the diagram
\[
\begin{tikzcd}
f_*(\cM_1) \ar[r,"\bnu_{\cM_1}"] \ar[d,"f_*(\varphi)"] & N_{T/S}(\cM_1) \ar[d,"N_{T/S}(\varphi)"] \\
f_*(\cM_2) \ar[r,"\bnu_{\cM_2}"] & N_{T/S}(\cM_2)
\end{tikzcd}
\]
commute.
\end{cor}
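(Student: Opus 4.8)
The plan is the standard argument that a universal (initial) object pins down the induced action on morphisms, reduced affine-locally to Ferrand's Theorem~\ref{Ferrand_property}\ref{Ferrand_property_iv}. First I would check that $\bnu_{\cM_2}\circ f_*(\varphi)$ is a normic polynomial law into a quasi-coherent $\cO$--module, so that $(N_{T/S}(\cM_2),\bnu_{\cM_2}\circ f_*(\varphi))$ is an object of $\NPL_{T/S}(\cM_1)$. Since $\varphi$ is $\cO|_T$--linear, its pushforward $f_*(\varphi)$ is $f_*(\cO|_T)$--linear, so $f_*(\varphi)(tm)=t\,f_*(\varphi)(m)$ for all sections $t\in f_*(\cO|_T)$ and $m\in f_*(\cM_1)$; applying the normic law $\bnu_{\cM_2}$ gives $\bnu_{\cM_2}\big(f_*(\varphi)(tm)\big)=\bnu_{\cM_2}\big(t\,f_*(\varphi)(m)\big)=\bnorm(t)\,\bnu_{\cM_2}\big(f_*(\varphi)(m)\big)$, which is exactly the normic condition.

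Next I would show that the displayed square commutes when its right vertical arrow is $N_{T/S}(\varphi)$. By Lemma~\ref{lem_equiv_affine_sheaves} it suffices to compare the two composites $f_*(\cM_1)\to N_{T/S}(\cM_2)$ as morphisms of sheaves on $\Aff_S$, hence to compare their values at each $U\in\Aff_S$. Fixing such a $U$ and abbreviating $R=\cO(U)$, $R'=\cO(T\times_S U)$, $M'_i=\cM_i(T\times_S U)$, and $\phi=\varphi(T\times_S U)\colon M'_1\to M'_2$, the construction of the norm morphism in Section~\ref{the_construction} (via Proposition~\ref{prop_stack_morphism_assembled} and Lemma~\ref{lem_quasi_coh_functor_construction}) gives $N_{T/S}(\varphi)(U)=N_{R'/R}(\phi)$, while by Proposition~\ref{prop_universal_property} the value of $\bnu_{\cM_i}$ at $U$ is Ferrand's canonical function $\nu_{M'_i}\colon M'_i\to N_{R'/R}(M'_i)$. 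Thus the commutativity of the square at $U$ is precisely the $Q=R$ instance of the commutative diagram in Theorem~\ref{Ferrand_property}\ref{Ferrand_property_iv} attached to the extension $R\to R'$ and the morphism $\phi$, that is, the identity $N_{R'/R}(\phi)\circ\nu_{M'_1}=\nu_{M'_2}\circ\phi$. Since this holds for every $U$, the square commutes globally.

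Finally, with the square known to commute, I would invoke the uniqueness half of Proposition~\ref{prop_universal_property}: $(N_{T/S}(\cM_1),\bnu_{\cM_1})$ is an initial object of $\NPL_{T/S}(\cM_1)$, so there is exactly one $\cO$--module map $N_{T/S}(\cM_1)\to N_{T/S}(\cM_2)$ that is a morphism to the object $(N_{T/S}(\cM_2),\bnu_{\cM_2}\circ f_*(\varphi))$ of that category, i.e.\ exactly one making the square commute, and by the previous paragraph $N_{T/S}(\varphi)$ is it. I do not expect a genuine obstacle; the only point requiring care is the bookkeeping in the second paragraph, namely matching the appendix's local description of the norm morphism on morphisms with Ferrand's $N_{R'/R}(\phi)$ and confirming that $\bnu_{\cM_i}$ restricts affine-locally to $\nu_{M'_i}$, both of which are immediate from the respective constructions.
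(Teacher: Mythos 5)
Your proof is correct and follows the same approach as the paper: show the square commutes by unwinding the explicit affine-local definitions (which reduce to Ferrand's Theorem~\ref{Ferrand_property}\ref{Ferrand_property_iv}), then invoke the initiality from Proposition~\ref{prop_universal_property} to get uniqueness. The paper simply asserts that the commutativity "is clear from the explicit definitions," while you spell out the reduction to $N_{R'/R}(\phi)\circ\nu_{M'_1}=\nu_{M'_2}\circ\phi$ over each affine $U$ and verify the normic condition for $\bnu_{\cM_2}\circ f_*(\varphi)$ directly, so the content is the same.
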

\begin{proof}
It is clear from the explicit definitions of $\bnu_{\cM_i}$ as well as $N_{T/S}(\varphi)$ that the above diagram commutes. Therefore, the uniqueness claim follows from Proposition \ref{prop_universal_property}.
\end{proof}

The next corollary shows that the universal normic polynomial law is stable under base change.
\begin{cor}\label{cor_unviersal_base_change}
Consider a fiber product diagram in $\Sch_S$
\[
D = \begin{tikzcd}
T' \ar[r] \ar[d,"f'"] & T \ar[d,"f"] \\
S' \ar[r] & S
\end{tikzcd}
\]
where $f\colon T\to S$ is finite locally free of degree $d$ and hence so is $f'\colon T' \to S'$. Then, for any $\cM\in \QCoh(T)$, the diagram
\[
\begin{tikzcd}
f'_*(\cM|_{T'}) \ar[r,"\bnu_{(\cM|_{T'})}"] \ar[d,"\psi"] & N_{T'/S'}(\cM|_{T'}) \ar[d,"\phi_D"] \\
f_*(\cM)|_{S'} \ar[r,"(\bnu_{\cM})|_{S'}"]  & N_{T/S}(\cM)|_{S'}
\end{tikzcd}
\]
commutes. Here, $\psi$ is the canonical isomorphism coming from the isomorphisms $T\times_S X \iso T'\times_{S'} X$ for any $X\in \Sch_{S'}$ and $\phi_D$ is the isomorphism of Lemma \ref{lem_phi_isomorphism}.
\end{cor}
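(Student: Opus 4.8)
The plan is to reduce this to the ring-theoretic compatibility already recorded in Lemma~\ref{lem_Ferrand_natural_iso}. The first step: all four arrows of the square are morphisms of sheaves of sets on $\Sch_{S'}$ --- the vertical arrows are $\cO|_{S'}$-module isomorphisms and the horizontal ones are normic polynomial laws, i.e.\ natural transformations of sheaf-valued functors --- so by naturality together with separatedness of the targets each composite is determined by its restriction to $\Aff_{S'}$; equivalently, by Lemma~\ref{lem_equiv_affine_sheaves} it is enough to check commutativity after evaluating at an arbitrary $U'\in\Aff_{S'}$.

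For the second step I would fix such a $U'$ and regard it as an object of $\Aff_S$ through $U'\to S'\to S$. Since $f$ is affine, $T\times_S U'$ and $T'\times_{S'}U'$ are affine, and the canonical isomorphism $T\times_S U'\iso T'\times_{S'}U'$ coming from $T'=T\times_S S'$ induces compatible isomorphisms of rings $\cO(T\times_S U')\iso\cO(T'\times_{S'}U')$ and of modules $\cM(T\times_S U')\iso(\cM|_{T'})(T'\times_{S'}U')$. Unwinding the construction of Section~\ref{sec_norm_of_modules}, the square evaluated at $U'$ identifies with a square of Ferrand's constructions for the finite locally free ring extension $\cO(U')\to\cO(T\times_S U')$: the two left corners both become $\cM(T\times_S U')$ (reached once through $f'_*$ and once through $f_*$, identified by $\psi$), the two right corners both become $N_{\cO(T\times_S U')/\cO(U')}(\cM(T\times_S U'))$, the horizontal arrows become Ferrand's canonical function $\nu$ (by Proposition~\ref{prop_universal_property}), and $\phi_D$ becomes the comparison isomorphism of Lemma~\ref{lem_phi_isomorphism}, which over affines is built from the base-change isomorphisms $\theta_D$ of Lemma~\ref{lem_Ferrand_base_change_isos}. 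At this point commutativity is exactly an instance of Lemma~\ref{lem_Ferrand_natural_iso}, together with the bare functoriality of the $\Gamma^d$-construction, which gives the compatibility of $\nu$ with the canonical identifications.

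The main obstacle is entirely in the second step and is a matter of careful bookkeeping rather than of new ideas: one must check that the globally constructed isomorphism $\phi_D$ of Lemma~\ref{lem_phi_isomorphism} really does restrict, over each $U'\in\Aff_{S'}$, to the expected $\theta$-type isomorphism, and that the several canonical identifications in play --- for the base changes $T\times_S(-)$, for the pushforwards $f_*$ and $f'_*$, and for restriction of sheaves along $g'\colon T'\to T$ --- are mutually coherent. This verification is of the same kind already performed inside the proof of Proposition~\ref{prop_universal_property} for a morphism $V\to U$ of affine schemes over $U$, now with $g\colon S'\to S$ in the role of $V\to U$; I expect it to transfer essentially verbatim, so that combined with Lemma~\ref{lem_Ferrand_natural_iso} it yields the asserted commutativity.
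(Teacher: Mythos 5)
Your proposal is correct and follows essentially the same route as the paper: reduce via Lemma~\ref{lem_equiv_affine_sheaves} to checking over an arbitrary affine $U \in \Aff_{S'}$, then unwind everything to Ferrand's ring-level constructions. The only difference is that the paper carries out the affine-level verification by a direct element trace $m' \mapsto \gamma^d(m')\otimes 1$ through $\phi_D(U)$, whereas you cite Lemma~\ref{lem_Ferrand_natural_iso} (specialized to the degenerate pushout with $R=Q$, then evaluated at $P=Q$) together with naturality of $\nu$; this is a legitimate packaging of the same computation, since Lemma~\ref{lem_Ferrand_natural_iso}'s own proof is exactly such an element trace.
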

\begin{proof}
We may check over affine schemes $U\in \Aff_{S'}$, where the diagram becomes
\[
\begin{tikzcd}[column sep =10ex]
\cM(T'\times_{S'} U) \ar[r,"\nu_{\cM(T'\times_{S'} U)}"] \ar[d,"\psi(U)"] & N_{\cO(T'\times_{S'} U)/\cO(U)}(\cM(T'\times_{S'} U)) \ar[d,"\phi_D"] \\
\cM(T\times_S U) \ar[r,"\nu_{\cM(T\times_S U)}"] & N_{\cO(T\times_S U)/\cO(U)}(\cM(T\times_S U)).
\end{tikzcd}
\]
Tracing an element through yields
\[
\begin{tikzcd}
m' \ar[r,mapsto] \ar[d,mapsto] & \gamma^d(m')\otimes 1 \ar[d,mapsto] \\
m'|_{T\times_S U} \ar[r,mapsto] & \gamma^d(m'|_{T\times_S U})\otimes 1
\end{tikzcd}
\]
where $\phi_D$ is the composition
\[
\gamma^d(m')\otimes 1 \mapsto (\gamma^d(m')\otimes 1)\otimes 1 \mapsto \gamma^d(m'\otimes 1)\otimes 1 \mapsto \gamma^d(m'|_{T\times_S U})\otimes 1.
\]
This verifies that the diagram commutes.
\end{proof}

We will need the following result for the next section.
\begin{lem}\label{lem_norm_modules_etale} Assume that $T \to S$ is finite \'etale of constant degree $d$. If $\cM$ is a locally free $\cO|_T$--module of constant rank $r$, then $N_{T/S}(\cM)$ is a locally free $\cO$--module of constant rank $r^d$.
\end{lem}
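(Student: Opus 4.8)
The statement is local on $S$, and the relevant properties are intrinsic (Lemma~\ref{lem_equiv_affine_sheaves}), so the plan is to reduce to the case where everything is trivialized and then compute directly using Ferrand's construction over rings together with Proposition~\ref{Ferrand_property_v}. First I would observe that ``locally free of constant rank'' is a condition that can be checked fppf-locally, in fact étale-locally, on $S$; and since $N_{T/S}$ respects base change by Corollary~\ref{cor_unviersal_base_change} (i.e. $N_{T'/S'}(\cM|_{T'}) \cong N_{T/S}(\cM)|_{S'}$ for a fiber product), it suffices to find, for each point of $S$, an fppf (indeed étale) neighborhood $S' \to S$ over which $N_{T/S}(\cM)|_{S'}$ is free of rank $r^d$. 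So I may replace $S$ by such an $S'$.

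Now comes the key reduction: since $T \to S$ is finite étale of constant degree $d$, after passing to a suitable étale cover of $S$ the cover $T \to S$ splits, i.e. $T \times_S S' \cong S' \sqcup \cdots \sqcup S'$ ($d$ copies). Over such an $S'$, writing $T' = T\times_S S'$, the module $\cM|_{T'}$ decomposes as a $d$-tuple $(\cM_1,\ldots,\cM_d)$ of quasi-coherent $\cO|_{S'}$-modules, one on each component, and each $\cM_i$ is a twisted form of what $\cM$ was, hence still locally free of constant rank $r$. Passing to a further étale (or Zariski) refinement if necessary, I may assume each $\cM_i$ is in fact free of rank $r$ over $\cO|_{S'}$. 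At this point the ring-level statement Proposition~\ref{Ferrand_property_v} applies (in the ``in particular'' form, with $S = R\times\cdots\times R$): for the split extension, $N_{T'/S'}(\cM_1\times\cdots\times\cM_d) \cong \cM_1 \otimes_{\cO|_{S'}} \cdots \otimes_{\cO|_{S'}} \cM_d$. A free module of rank $r$ tensored with itself $d$ times is free of rank $r^d$, so $N_{T'/S'}(\cM|_{T'})$ is free of rank $r^d$ over $\cO|_{S'}$, which by the base-change isomorphism gives that $N_{T/S}(\cM)|_{S'}$ is free of rank $r^d$.

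Finally, since the original $S$ is covered by such $S'$, the $\cO$-module $N_{T/S}(\cM)$ is étale-locally free of constant rank $r^d$, hence locally free of constant rank $r^d$ by definition. The one point requiring a little care — and the main obstacle, though it is more bookkeeping than a genuine difficulty — is making sure that Proposition~\ref{Ferrand_property_v}, which is a statement about modules over rings, globalizes correctly: one needs that over an affine $U \in \Aff_{S'}$ the value $N_{T'/S'}(\cM|_{T'})(U) = N_{\cO(T'\times_{S'}U)/\cO(U)}(\cM|_{T'}(T'\times_{S'}U))$ is computed by Ferrand's ring-level norm of the split extension $\cO(U)^d$, and that the isomorphism $\phi$ of Proposition~\ref{Ferrand_property_v} is natural in $U$ (so that it sheafifies). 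This naturality is immediate from the functoriality of $N_{R'/R}$ in $R$ and in $M'$, together with the explicit restriction formula \eqref{eq_norm_restriction}, so no obstruction actually arises; one could alternatively invoke the already-cited Example~\ref{ex_split_norm}. Combining these steps completes the proof.
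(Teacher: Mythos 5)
Your proof is correct, but it takes a genuinely different route from the paper's. The paper's proof is considerably shorter: it restricts to $\Aff_S$ via Lemma~\ref{lem_equiv_affine_sheaves}, observes that over each affine $U$ the module $\cM(T\times_S U)$ is finitely generated projective of rank $r$ over $\cO(T\times_S U)$, and then invokes Ferrand's ring-level result \cite[4.1.3]{F} directly, which already says that the norm of a rank-$r$ projective module along a finite \'etale extension of rings of rank $d$ is projective of rank $r^d$; the statement over schemes is then immediate. Your proof instead performs a base-change reduction: exploit compatibility of $N_{T/S}$ with base change (the functorial isomorphism here is really Lemma~\ref{lem_phi_isomorphism} rather than Corollary~\ref{cor_unviersal_base_change}, though the latter does refer to $\phi_D$) together with the fact that locally free of constant rank is an fppf-local condition, split the \'etale cover and trivialize $\cM$ after an \'etale refinement of $S$, and then apply the explicit split-case computation of Proposition~\ref{Ferrand_property_v} (equivalently Example~\ref{ex_split_norm}), giving the tensor product of $d$ free modules of rank $r$. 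What this buys: your version is more self-contained, replacing the citation of \cite[4.1.3]{F} by the much more elementary Proposition~\ref{Ferrand_property_v} plus descent, and it makes the role of the \'etale hypothesis transparent (it's what lets $T\to S$ split locally). The cost is the extra bookkeeping around base change and globalization, which you correctly flag and dispose of; the paper's route avoids all of this by delegating to Ferrand.
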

\begin{proof}
By Lemma \ref{lem_equiv_affine_sheaves} we may examine the restriction of $N_{T/S}(\cM)$ to $\Aff_S$. There, for each $U \in \Aff_S$ the $\cO(T\times_S U)$--module $\cM(T\times_S U)$ is projective of rank $r$. Therefore, by \cite[4.1.3]{F}, $N_{T/S}(\cM)(U)$ is a projective $\cO(U)$--module of rank $r^d$, which implies the stated claim globally.
\end{proof}

\begin{example}\label{ex_split_norm}
For $i=1, \ldots, m$, let $f_i \co S_i \to S$ be finite locally free morphisms of constant degree $d_1, \ldots, d_m$ respectively. Hence the canonical map $f \co S'= S_1 \sqcup \cdots \sqcup S_m \to S$ is finite locally free of constant degree $d = d_1 + \cdots + d_m$. Any $S'$--scheme $T$ is of the form $T_1 \sqcup \cdots \sqcup T_m$ for $T_i \in \Sch_{S_i}$, so that any quasi-coherent $\cO|_{S'}$--module $\cE$ is given by the formula
\[
\cE(T_1\sqcup \ldots \sqcup T_m) = \cE_1(T_1)\times\ldots\times \cE_m(T_m)
\]
where $\cE_i$, $i=1, \ldots, m$,  are quasi-coherent $\cO|_{S_i}$--modules.
Then the following holds. 
\begin{enumerate}[label={\rm(\roman*)}]
\item \label{ex_split_norm_i} The norm of $\cE$ is 
\[ 
     N_{S'/S}(\cE) = N_{S_1/S}(\cE_1) \otimes_{\cO} \cdots \otimes_{\cO} N_{S_m/S}(\cE_m). 
\] 
In particular, for the split \'etale cover $f \colon \Sd \to S$ the norm of any $\cO|_{\Sd}$--module $\cE$ is 
  \[ N_{\Sd/S}(\cE) = \cE_1\otimes_{\cO} \ldots \otimes_{\cO} \cE_d. \]

\item \label{ex_split_norm_ii} We have $f_*(\cE)=f_{1 *}(\cE_1) \times \cdots \times f_{m*}(\cE_m)$, and the universal normic polynomial law is
\begin{align*}
\bnu \colon f_{1 *}(\cE_1) \times \cdots \times f_{m*}(\cE_m) &\mapsto N_{S_1/S}(\cE_1)\otimes_{\cO} \ldots \otimes_{\cO} N_{S_m/S}(\cE_m) \\
(e_1,\ldots,e_m) &\mapsto \bnu_1(e_1)\otimes \ldots \otimes \bnu_m(e_m)
\end{align*}
where $\bnu_i\colon f_{i*}(\cE_i) \to N_{S_i/S}(\cE_i)$ is the universal polynomial law for $N_{S_i/S}(\cE_i)$. In particular, for the split \'etale cover $\Sd \to S$ we get
\begin{align*}
\bnu \colon \cE_1\times\ldots\times \cE_d &\to \cE_1\otimes_\cO \ldots\otimes_\cO \cE_d \\
(e_1,\ldots,e_d)&\mapsto e_1\otimes\ldots \otimes e_d.
\end{align*}
\end{enumerate}
\end{example}
\begin{proof}
Both claims follow from Proposition \ref{Ferrand_property_v} after localizing with respect to an affine cover.
\end{proof}

\subsection{The Norm of Quasi-coherent Algebras}
The functor $N_{T/S}$ of \eqref{eq_norm_functor} restricts to the category of quasi-coherent $\cO|_T$--algebras. This is shown in Lemma \ref{lem_norm_az}, which is a globalization of parts of \cite[3.2.5]{F}. Alternatively, when $T\to S$ is \'etale, it is a globalization of \cite[4.5]{KO75}. In turn, this means that the morphism of stacks $N$ of \eqref{eq_norm_stack_morphism} restricts to the stack of quasi-coherent algebras. However, we begin with two technical lemmas about norms of modules which will be needed.
\begin{lem}\label{lem_nu_generates}
Let $f\colon T\to S$ be a finite locally free morphism and let $\cM$ be a quasi-coherent $\cO|_T$--module. Consider its universal normic polynomial law $(N_{T/S}(\cM),\bnu_{\cM})$ from Proposition \ref{prop_universal_property}. Then, $N_{T/S}(\cM)$ is generated as an $\cO$--module by the image of $\bnu_{\cM}$. Precisely, we mean that $N_{T/S}(\cM)$ is the sheaf associated to the presheaf
\[
U \mapsto \Span_{\cO(U)}(\{\bnu_{\cM}(m) \mid m\in f_*(\cM)(U)\})
\]
for $U\in \Sch_S$.
\end{lem}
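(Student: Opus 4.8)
The plan is to reduce everything to the affine case, where the analogous statement for Ferrand's construction over rings is available, and then push the conclusion back up using the characterization of quasi-coherent modules and the explicit description of $N_{T/S}$ given in Section~\ref{the_construction}. First I would recall from \eqref{eq_norm_restriction} and the construction that for each $U\in\Aff_S$ we have $N_{T/S}(\cM)(U)=N_{\cO(T\times_S U)/\cO(U)}(\cM(T\times_S U))$, and that under this identification the universal normic polynomial law $\bnu_{\cM}$ evaluated over $U$ is Ferrand's function $\nu_{\cM(T\times_S U)}$ from Section~\ref{sec_The_Construction}, sending $m'\mapsto \gamma^d(m')\otimes 1$. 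The key input is that $N_{R'/R}(M')=\Gamma_R^d(M')\otimes_{\Gamma_R^d(R')}R$ is, by its very definition, spanned as an $R$--module by the elements $\gamma^{a_1}(m'_1)\cdots\gamma^{a_k}(m'_k)\otimes 1$ with $\sum a_i=d$; and each such monomial lies in the $R$--span of the image of $\nu_{M'}$, because the universal polynomial law $\bgamma^d$ of \eqref{eq_gamma_poly_law} expresses $\gamma^d(\sum t_i m'_i\otimes q_i)$ as a $Q$--linear combination of exactly these monomials, so already the single-element values $\nu_{M'}(m')=\gamma^d(m')\otimes 1$ together with their scalar multiples and $R$--linear combinations recover all generators. (Concretely one can invoke that $\Gamma_R^d(M')$ is generated by $\{\gamma^{d_1}(m_1)\cdots\gamma^{d_k}(m_k)\}$, so it suffices to hit those mod the relation from $\pi$, and polarization of $\gamma^d$ does this.) Hence for every affine $U$ the $\cO(U)$--module $N_{T/S}(\cM)(U)$ equals $\Span_{\cO(U)}\{\bnu_{\cM}(m)\mid m\in f_*(\cM)(U)\}$ on the nose — no sheafification needed at the affine level.

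Next I would invoke Lemma~\ref{lem_equiv_affine_sheaves} to pass between $\Sch_S$ and $\Aff_S$: the presheaf $\cP$ defined by $U\mapsto \Span_{\cO(U)}\{\bnu_{\cM}(m)\mid m\in f_*(\cM)(U)\}$ on $\Sch_S$, when restricted to $\Aff_S$, has $\cP(U)=N_{T/S}(\cM)(U)$ for all affine $U$ by the previous paragraph, and the restriction maps agree via \eqref{eq_norm_restriction} (which sends $\gamma^{\overline a}(\overline m)\otimes u$ to $\gamma^{\overline a}(\overline{m|})\otimes u|$, i.e. carries generators to generators $\cO$--linearly). Therefore $\cP|_{\Aff_S}$ and $N_{T/S}(\cM)|_{\Aff_S}$ are the same presheaf on $\Aff_S$; but $N_{T/S}(\cM)|_{\Aff_S}$ is already a sheaf (it is quasi-coherent, being in the image of the norm stack morphism, cf. Theorem~\ref{intro_norm_thm} and the construction), so the sheafification of $\cP|_{\Aff_S}$ equals $N_{T/S}(\cM)|_{\Aff_S}$. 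By the equivalence of topoi in Lemma~\ref{lem_equiv_affine_sheaves}, sheafification commutes with restriction, so the sheafification of $\cP$ on $\Sch_S$ is $N_{T/S}(\cM)$, which is exactly the assertion.

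The only genuinely delicate point is the purely ring-theoretic claim that $\nu_{M'}(M')$ spans $N_{R'/R}(M')$ over $R$ — everything else is bookkeeping with the equivalence of sites and the explicit formulas already recorded in the paper. I expect the cleanest way to nail it is to note that $\Gamma_R^d(M')$ is additively generated by monomials $\gamma^{d_1}(m'_1)\cdots\gamma^{d_k}(m'_k)$ with $\sum d_i = d$, that these are obtained (up to integer coefficients absorbed into the $R$--span, via the multinomial expansion of $\bgamma^d$ applied to $m'_1+\cdots+m'_k$ over a polynomial extension $Q=R[q_1,\dots,q_k]$ and then specializing the $q_i$) from the values $\gamma^d(m')\otimes 1=\nu_{M'\otimes_R Q}(\sum m'_i\otimes q_i)$, and then project to $N_{R'/R}(M')$. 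If one prefers to avoid the polynomial-ring maneuver, \cite[1.3.1]{F} or the discussion of $\bgamma^d$ after \eqref{eq_gamma_poly_law} already yields that $\bgamma^d$ generates $\Gamma_R^d$ in the appropriate sense, and we simply postcompose with $-\otimes_{\Gamma_R^d(R')}R$. Either route is short, so I would present whichever makes the monograph's notation lightest.
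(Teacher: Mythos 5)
Your central claim---that for every affine $U$ the equality
\[
N_{T/S}(\cM)(U) = \Span_{\cO(U)}\{\bnu_{\cM}(m) \mid m\in f_*(\cM)(U)\}
\]
holds ``on the nose,'' with no sheafification needed at the affine level---is exactly the step the paper does \emph{not} prove, and the polarization argument you offer for it does not go through in general. Polarization isolates an individual monomial $\gamma^{a_1}(m'_1)\cdots\gamma^{a_k}(m'_k)$ from the values $\gamma^d(m')$ only after specializing the auxiliary variables $q_i$, and doing so over the base $R$ itself (rather than over a polynomial extension $Q$) requires inverting integers that need not be units in $R$. Concretely, over $R=\FF_2$ with $d=3$ and $M$ free of rank $2$, the module $\Gamma_R^3(M)$ has rank $4$ with basis $\gamma^3(e_1)$, $\gamma^2(e_1)\gamma^1(e_2)$, $\gamma^1(e_1)\gamma^2(e_2)$, $\gamma^3(e_2)$, but the $R$--span of $\{\gamma^3(m) : m\in M\}$ is only $3$--dimensional: you can produce $\gamma^3(e_1)$, $\gamma^3(e_2)$, and the \emph{sum} $\gamma^2(e_1)\gamma^1(e_2)+\gamma^1(e_1)\gamma^2(e_2)$, but not the individual cross terms. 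Passing to the quotient $\Gamma_R^d(M')\otimes_{\Gamma_R^d(R')}R$ introduces extra relations that \emph{might} save the day in specific cases (for instance the split case $R'=R^d$, where pure tensors span), but this is a genuine claim requiring a proof you have not supplied, and your proposed shortcut via the ``universality'' of $\bgamma^d$ conflates generation as an $R$--module with generation over faithfully flat extensions (which is what Roby/Ferrand actually give you).

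The paper's proof sidesteps this entirely. It does not assert the affine identity on the nose; instead, citing \cite[2.3.1]{F}, it observes that for each affine $U$ (of constant degree) there exists a finite free cover $\{V\to U\}$ over which the $\cO(V)$--span of $\bnu_{\cM}(f_*(\cM)(V))$ equals $N_{T/S}(\cM)(V)$, and then concludes since two presheaves whose sheafifications agree on a cover of every object have the same sheafification. Your equivalence-of-topoi bookkeeping in the second half of the proposal is fine, but it rests on the unsupported first half. To repair the argument, replace ``the presheaf equals the sheaf over every affine'' with ``the presheaf equals the sheaf over a cover of every affine,'' invoking Ferrand's lemma for the existence of such a cover, and the rest of your plan goes through unchanged.
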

\begin{proof}
This result follows from the analogous statement over rings that is used throughout \cite{F}, which in turn follows from \cite[2.3.1]{F}. In particular, for any affine scheme $U\in \Aff_S$ over which $T\times_S U \to U$ is of constant degree $d$, there exists a cover (depending on $d$) $\{V\to U\}$ such that $V$ is an affine scheme, $\cO(U) \to \cO(V)$ is a finite free ring extension, and $N_{T/S}(\cM)(V)$ is generated as an $\cO(V)$--module by the image under $\bnu_{\cM}$ of $f_*(\cM)(V)$. Therefore, any scheme in $\Sch_S$ has an affine cover on which the presheaf above and $N_{T/S}(\cM)$ agree, so the statement follows.
\end{proof}

Now we can argue that the norm functor preserves quasi-coherent algebras, thus globalizing \cite[3.2.5(a)]{F}.
\begin{lem} \label{lem_norm_az} Let $T\to S$ be a finite locally free morphism of schemes of degree $d$. Let $\cB$ be a quasi-coherent $\cO|_T$--algebra and let $\bnu_{\cB} \colon f_*(\cB) \to N_{T/S}(\cB)$ be the universal normic polynomial law of Proposition \ref{prop_universal_property}.
\begin{enumerate}[label={\rm (\roman*)}]
\item \label{lem_norm_az3} There is a unique morphism of $\cO$--modules
\[
\Phi \colon N_{T/S}(\cB)\otimes_{\cO} N_{T/S}(\cB) \to N_{T/S}(\cB\otimes_{\cO|_T}\cB)
\]
such that $\Phi(\bnu_{\cB}(b_1)\otimes \bnu_{\cB}(b_2)) = \bnu'(b_1\otimes b_2)$ where $\bnu'$ is the universal normic polynomial associated to $\cB\otimes_{\cO|_T}\cB$. In particular, since $\cB$ has an algebra structure morphism $\mu \colon \cB\otimes_{\cO|_T}\cB \to \cB$, the composition
\[
\begin{tikzcd}
N_{T/S}(\cB)\otimes_{\cO} N_{T/S}(\cB) \arrow{r}{\Phi} & N_{T/S}(\cB\otimes_{\cO|_T}\cB) \arrow{r}{N_{T/S}(\mu)} & N_{T/S}(\cB)
\end{tikzcd}
\]
gives $N_{T/S}(\cB)$ a natural algebra structure. It is associative or unital or commutative if $\cB$ is so. Further, the universal normic polynomial law $\bnu_{\cB}$ is multiplicative with respect to this natural structure. If $\cB$ is unital, then $\nu_\cB$ preserves the unit as well.

\item \label{lem_norm_az5} The norm preserves algebra homomorphisms. If $\varphi \colon \cB_1 \to \cB_2$ is an $\cO|_T$--algebra homomorphism between quasi-coherent $\cO|_T$--algebras, then $N_{T/S}(\varphi) \colon N_{T/S}(\cB_1) \to N_{T/S}(\cB_2)$ is an $\cO$--algebra homomorphism with respect to the natural algebra structures from \ref{lem_norm_az3}.

\item \label{lem_norm_az4} If $T\to S$ is finite \'etale, then $\Phi$ is an isomorphism.
\end{enumerate}
\end{lem}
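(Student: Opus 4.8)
The plan is to construct $\Phi$ affine-locally from Ferrand's ring-level construction, glue it using the spanning property of the universal normic law, and then read off everything else formally. Over an affine $U\in\Aff_S$ write $R=\cO(U)$, $R'=\cO(T\times_S U)$ and $B=\cB(T\times_S U)$, so that $N_{T/S}(\cB)(U)=N_{R'/R}(B)=\Gamma_R^d(B)\otimes_{\Gamma_R^d(R')}R$. Composing the map $\mu$ of \eqref{eq_Ferrand_mu} with $\Gamma_R^d$ applied to the canonical surjection $B\otimes_R B\to B\otimes_{R'}B$ produces an $R$-linear map $\Gamma_R^d(B)\otimes_R\Gamma_R^d(B)\to\Gamma_R^d(B\otimes_{R'}B)$ sending $\gamma^d(b_1)\otimes\gamma^d(b_2)$ to $\gamma^d(b_1\otimes b_2)$; one checks this descends to an $R$-linear map $\Phi_U\colon N_{R'/R}(B)\otimes_R N_{R'/R}(B)\to N_{R'/R}(B\otimes_{R'}B)$ with $\Phi_U(\nu_B(b_1)\otimes\nu_B(b_2))=\nu_{B\otimes_{R'}B}(b_1\otimes b_2)$ — this descent is precisely where multiplicativity of the norm $\bnorm$ enters, and it is the ring-level statement \cite[3.2.5]{F}. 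Since elements of the form $\nu_B(b)$ span $N_{R'/R}(B)$ over $R$ (by \cite[2.3.1]{F}, after a further cover of $U$ when $R\to R'$ is only locally free), the $\Phi_U$ are determined by this property, and the restriction formula \eqref{eq_norm_restriction} shows they are compatible with restriction; hence they assemble into a morphism of $\cO$-modules $\Phi$, unique by Lemma~\ref{lem_nu_generates}.

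Granting $\Phi$, part \ref{lem_norm_az3} is formal. The composite $N_{T/S}(\mu)\circ\Phi$ is $\cO$-bilinear, and Corollary~\ref{cor_morphism_from_universal} applied to $\mu$ gives $\bnu_\cB(b_1)\bnu_\cB(b_2)=N_{T/S}(\mu)(\bnu_{\cB\otimes_{\cO|_T}\cB}(b_1\otimes b_2))=\bnu_\cB(b_1b_2)$, so $\bnu_\cB$ is multiplicative. Associativity and commutativity of $N_{T/S}(\cB)$, and the fact that $\bnu_\cB(1_\cB)$ is a two-sided unit, are equalities of $\cO$-multilinear morphisms out of tensor powers of $N_{T/S}(\cB)$, so by Lemma~\ref{lem_nu_generates} they may be tested on tuples $(\bnu_\cB(b_i))_i$, where multiplicativity of $\bnu_\cB$ reduces them to the corresponding identities in $\cB$.

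Part \ref{lem_norm_az5} uses that $\Phi$ is natural in the algebra argument, since it is built from $\mu$ and from $\Gamma_R^d$ of canonical maps: for an $\cO|_T$-algebra homomorphism $\varphi\colon\cB_1\to\cB_2$ the square relating $\Phi$ for $\cB_1$ and for $\cB_2$ along $N_{T/S}(\varphi)\otimes N_{T/S}(\varphi)$ and $N_{T/S}(\varphi\otimes\varphi)$ commutes; combined with $\varphi\circ\mu_1=\mu_2\circ(\varphi\otimes\varphi)$ this shows $N_{T/S}(\varphi)$ is multiplicative, and it preserves the unit since by Corollary~\ref{cor_morphism_from_universal} it sends $\bnu_{\cB_1}(1_{\cB_1})$ to $\bnu_{\cB_2}(\varphi(1_{\cB_1}))=\bnu_{\cB_2}(1_{\cB_2})$. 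For part \ref{lem_norm_az4}, since being an isomorphism is fppf-local and a finite \'etale cover of degree $d$ becomes split after an fppf base change $V\to S$, I would reduce to $T\times_S V\cong V^{\sqcup d}$; writing $\cB$ over this split cover as $(\cB_1,\dots,\cB_d)$, Example~\ref{ex_split_norm} identifies $N_{\Sd/S}(\cB)$ with $\cB_1\otimes_\cO\cdots\otimes_\cO\cB_d$, identifies $\cB\otimes_{\cO|_{\Sd}}\cB$ with $(\cB_i\otimes_\cO\cB_i)_i$ and its norm with $\bigotimes_i(\cB_i\otimes_\cO\cB_i)$, and gives $\bnu_\cB(b)=b_1\otimes\cdots\otimes b_d$; under these identifications the defining property of $\Phi$ says precisely that $\Phi$ is the shuffle isomorphism $(\cB_1\otimes\cdots\otimes\cB_d)^{\otimes2}\iso\bigotimes_i(\cB_i\otimes\cB_i)$, which is an isomorphism (recovering the globalization of \cite[4.5]{KO75}).

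The main obstacle is the well-definedness of $\Phi$ at the ring level — verifying that $\gamma^d(b_1)\otimes\gamma^d(b_2)\mapsto\gamma^d(b_1\otimes b_2)$ factors through both tensor-over-$\Gamma_R^d(R')$ quotients — after which the rest is bookkeeping with the universal property of the norm.
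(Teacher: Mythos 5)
Your proposal is correct and follows essentially the same strategy as the paper's proof. The paper's version of part \ref{lem_norm_az3} is terser: it asserts that the defining formula on $\bnu_\cB$-generators suffices to determine a unique $\cO$-module map by Lemma~\ref{lem_nu_generates}, leaving both the existence (well-definedness) of $\Phi$ and the associativity/commutativity verifications to the reader, while your write-up makes the affine-local construction via $\mu$ of \eqref{eq_Ferrand_mu} and the descent through the two $\Gamma_R^d(R')$-tensor quotients explicit, which is where Ferrand \cite[3.2.5]{F} and multiplicativity of $\bnorm$ genuinely enter. The one substantive deviation is part \ref{lem_norm_az4}: the paper checks over affine schemes and cites \cite[3.2.5 (c)]{F} directly, whereas you make an fppf reduction to the split cover and identify $\Phi$ with the shuffle isomorphism via Example~\ref{ex_split_norm}. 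Both are valid; your route is more self-contained but requires invoking Example~\ref{ex_split_norm} over the covering scheme as a new base, while the paper's is a direct citation. For part \ref{lem_norm_az5} you argue through naturality of $\Phi$, while the paper again just says ``direct computation on generators''; your computation $N_{T/S}(\varphi)(\bnu_1(b_1)\bnu_1(b_2)) = \bnu_2(\varphi(b_1))\bnu_2(\varphi(b_2))$ is what that direct computation amounts to, so these agree in substance.
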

\begin{proof}
\noindent\ref{lem_norm_az3}: The property $\Phi(\bnu_{\cB}(b_1)\otimes \bnu_{\cB}(b_2)) = \bnu'(b_1\otimes b_2)$ is sufficient to define a unique $\cO$--module morphism by Lemma \ref{lem_nu_generates}. We leave the verification that the resulting algebra structure preserves being associative or commutative to the reader. The multiplicativity of $\bnu_{\cB}$ follows from the calculation
\begin{align*}
\bnu_{\cB}(b_1b_2) &= \bnu_{\cB} \circ f_*(\mu)(b_1\otimes b_2) = N_{T/S}(\mu)\circ \bnu'(b_1\otimes b_2)\\
&= N_{T/S}(\mu)\circ \Phi(\bnu_{\cB}(b_1)\otimes\bnu_{\cB}(b_2)) = \bnu_{\cB}(b_1)\bnu_{\cB}(b_2)
\end{align*}
for $b_1,b_2$ sections in $f_*(\cB)$. 

Now, assume $\cB$ is unital. To see that $N_{T/S}(\cB)$ is also unital and that $\nu_\cB$ preserves this unit, we argue that $\nu_\cB(1_\cB)$ is the identity in $N_{T/S}(\cB)$. Let $x\in N_{T/S}(\cB)(U)$ be a section over some $U \in \Sch_S$. By Lemma \ref{lem_nu_generates}, there is a cover $\{U_i \to U\}_{i\in I}$ over which $x|_{U_i} = \sum a_j \nu_{\cB}(b_j)$ for sections $a_j \in \cO(U_i)$ and $b_j\in f_*(\cB)(U_i)$. Then, the product $\nu_\cB(1_\cB)|_U \cdot x$ is locally of the form
\begin{align*}
\nu_\cB(1_\cB)|_{U_i}\cdot x|_{U_i} &= \nu_\cB(1_\cB)|_{U_i}\cdot \sum a_j\nu_\cB(b_j) \\
&= \sum a_j\nu_\cB(1_\cB|_{U_i}\cdot b_j) \\
&= \sum a_j \nu_\cB(b_j) \\
&= x|_{U_i}
\end{align*}
where we use the multiplicativity of $\nu_\cB$ established above. This implies that $\nu_\cB(1_\cB)|_U \cdot x = x$ and therefore $\nu_\cB(1_\cB)$ is the identity in $N_{T/S}(\cB)$ as claimed.

\noindent\ref{lem_norm_az5}: This can be verified via direct computation on the generators given by Lemma \ref{lem_nu_generates}.

\noindent\ref{lem_norm_az4}: This follows since, under the new assumptions, the map is an isomorphism over affine schemes by \cite[3.2.5 (c)]{F}.
\end{proof}

\begin{example}\label{ex_split_algebra_norm}
As in Example \ref{ex_split_norm}, let $f_i \co S_i \to S$ for $i=1, \ldots, m$ be finite locally free morphisms of constant degree and consider $f \co S'= S_1 \sqcup \cdots \sqcup S_m \to S$. Let $\cB$ be a quasi-coherent $\cO|_{S'}$--algebra defined by
\[
\cB(T_1\sqcup \ldots \sqcup T_m) = \cB_1(T_1)\times\ldots\times \cB_m(T_m)
\]
where the $\cB_i$ are quasi-coherent $\cO|_{S_i}$--algebras. We know from Example \ref{ex_split_norm} that
\[
N_{S'/S}(\cB) = N_{S_1/S}(\cB_1)\otimes_\cO \ldots \otimes_\cO N_{S_m/S}(\cB_m)
\]
as a module and that is has universal polynomial law given by
\begin{align*}
\bnu \colon f_{1 *}(\cB_1) \times \cdots \times f_{m*}(\cB_m) &\mapsto N_{S_1/S}(\cB_1)\otimes_{\cO} \ldots \otimes_{\cO} N_{S_m/S}(\cB_m) \\
(b_1,\ldots,b_m) &\mapsto \bnu_1(b_1)\otimes \ldots \otimes \bnu_m(b_m).
\end{align*}
Here, we identify the natural product given to this module by Lemma \ref{lem_norm_az}\ref{lem_norm_az3}. First, we have that
\[
\cB\otimes_{\cO|_{S'}} \cB = (\cB_1\otimes_{\cO|_{S_1}} \cB_1,\ldots,\cB_m\otimes_{\cO|_{S_m}} \cB_m)
\]
where $(b_1,\ldots,b_m)\otimes(b_1',\ldots,b_m') = (b_1\otimes b_1',\ldots,b_m\otimes b_m')$. Then,
\begin{align*}
N_{S'/S}(\cB)\otimes_\cO N_{S'/S}(\cB) &= \big(N_{S_1/S}(\cB_1)\otimes_\cO \ldots \otimes_\cO N_{S_m/S}(\cB_m)\big)^{\otimes 2}, \\
N_{S'/S}(\cB\otimes_{\cO|_{S'}}\cB) &= N_{S_1/S}(\cB_1\otimes_{\cO|_{S_1}} \cB_1)\otimes_\cO \ldots \otimes_\cO N_{S_m/S}(\cB_m\otimes_{\cO|_{S_m}} \cB_m).
\end{align*}
The map
\begin{align*}
N_{S'/S}(\cB)\otimes_\cO N_{S'/S}(\cB) &\xrightarrow{\Phi} N_{S'/S}(\cB\otimes_{\cO|_{S'}}\cB) \\
(b_1\otimes\ldots\otimes b_m)\otimes(b_1'\otimes\ldots\otimes b_m') &\mapsto \Phi_i(b_1\otimes b_1')\otimes\ldots\otimes\Phi_m(b_m\otimes b_m'),
\end{align*}
where $\Phi_i$ is the unique morphism of Lemma \ref{lem_norm_az}\ref{lem_norm_az3} for $N_{S_i/S}(\cB_i)$, is the unique morphism of the same lemma but for $N_{S'/S}(\cB)$. Thus, since the product on $\cB\otimes_{\cO|_{S'}}\cB$ is the tensor product $\mu_1\otimes\ldots\otimes \mu_m$ of the products for the algebras $\cB_i$, the product on $N_{S'/S}(\cB)$ is then given by the composition $N_{S'/S}(\mu_1\otimes\ldots\otimes \mu_m)\circ \Phi$, which will just be the usual tensor component wise multiplication on $N_{S'/S}(\cB) = N_{S_1/S}(\cB_1)\otimes_\cO \ldots \otimes_\cO N_{S_m/S}(\cB_m)$.

In particular, when we consider the split \'etale cover $f\colon S^{\sqcup d} \to S$ and an $\cO|_{S^{\sqcup d}}$--algebra $\cB$ given by
\[
\cB(T_1\sqcup \ldots \sqcup T_d) = \cB_1(T_1)\times\ldots\times\cB_d(T_d),
\]
we see that the canonical multiplication on
\[
N_{\Sd/S}(\cB) = \cB_1\otimes_\cO \ldots \otimes_\cO \cB_d
\]
is the regular multiplication on a tensor product of algebras. A special case is when $\cB=\Mat_n(\cO|_{\Sd}) = (\Mat_n(\cO),\ldots,\Mat_n(\cO))$ where we get
\[
N_{\Sd/S}(\Mat_n(\cO|_{\Sd})) = \Mat_n(\cO)\otimes_{\cO}\ldots \otimes_\cO \Mat_n(\cO).
\]
\end{example}

We now turn our attention to Azumaya algebras and assume that $T\to S$ is finite \'etale. The following is inspired by \cite[3.2.5]{F}, where Ferrand claims that for arbitrary $R'$--modules $M'_1$ and $M'_2$ there exists an $R$--linear map between $N_{R'/R}\big(\Hom_{R'}(M'_1,M'_2))$ and $\Hom_{R}\big( N_{R'/R}(M'_1),N_{R'/R}(M_2')\big)$ arising via the universal property from a normic polynomial law 
\[
\bW_R(\Hom_{R'}(M'_1,M'_2)) \to \bW_R\big(\Hom_{R}\big( N_{R'/R}(M'_1),N_{R'/R}(M_2')\big)\big).
\]

which he defines on $R$--points by $f' \mapsto N_{R'/R}(f')$ for an $R'$--linear map $f' \colon M'_1 \to M'_2$. However, a normic polynomial law must also be defined on $S$--points for each $S\in \Rings_R$. Our interpretation of how Ferrand intended this construction to proceed is the following. We consider the diagram
\[
\begin{tikzcd}
\Hom_{R'}(M'_1,M'_2)\otimes_R S \ar[r] \ar[d,dashed] & \Hom_{R'\otimes_R S}(M'_1\otimes_R S, M'_2\otimes_R S) \ar[d] \\
\Hom_R(N(M'_1),N(M'_2))\otimes_R S \ar[r] & \Hom_S(N(M'_1)\otimes_R S,N(M'_2)\otimes_R S) 
\end{tikzcd}
\]
where for brevity we use $N=N_{R'/R}$. We must define a suitable dashed morphism. Since we have that $N_{(R'\otimes_R S)/S}(M'_i \otimes_R S) \cong N_{R'/R}(M'_i)\otimes_R S$ by Remark \ref{rem_Ferrand_iso}, the downward map on the right may be taken to be $f \mapsto N_{(R'\otimes_R S)/S}(f)$ as Ferrand proposes. However, the canonical horizontal maps are not isomorphisms in general. When $M'_1$ is finitely generated projective, and thus $N_{R'/R}(M'_1)$ is also finitely generated projective, then these horizontal maps are isomorphisms and we may define the dashed morphism to be the obvious composition. In the case when $M'_1$ is not finitely generated projective, we have not be able to verify that Ferrand's construction of this normic polynomial law works as suggested.

In any case, we make an equivalent assumption in Lemma \ref{lem_Hom_normic} below in order to ensure that our $\cHom$ modules are quasi-coherent. A similar assumption is made by Knus-Ojanguren in \cite[Prop. 4.4]{KO75}, where the authors prove Lemma \ref{lem_Hom_normic} over rings. We recall that by \cite[5.3]{F}, in the setting of \cite{KO75}, Ferrand's norm functor and the one constructed by Knus-Ojanguren are isomorphic. 

\begin{lem}\label{lem_Hom_normic}
Assume that $T\to S$ is finite \'etale. Let $\cM_1, \cM_2 \in \QCoh(T)$ and assume that $\cM_1$ is finite locally free. There is a normic polynomial law
\[
\boeta \colon f_*(\cHom_{\cO|_T}(\cM_1,\cM_2)) \to \cHom_{\cO}(N_{T/S}(\cM_1),N_{T/S}(\cM_2))
\]
defined over $\Aff_S$ as follows. For $U\in \Aff_S$ we consider the fiber product diagram
\[
D = \begin{tikzcd}
T\times_S U \ar[r] \ar[d,"f'"] & T \ar[d,"f"] \\
U \ar[r] & S
\end{tikzcd}
\]
and then for
\[
\varphi \in f_*(\cHom_{\cO|_T}(\cM_1,\cM_2))(U) = \Hom_{\cO|_{T\times_S U}}(\cM_1|_{T\times_S U},\cM_2|_{T\times_S U})
\]
we set $\boeta(\varphi)$ to be the composition
\[
\begin{tikzcd}[column sep=6ex]
N_{T/S}(\cM_1)|_U \ar[drr,bend right=10,end anchor=west,"\boeta(\varphi)"] & N_{(T\times_S U)/U}(\cM_1|_{T\times_S U}) \ar[l,swap,"\phi_D(\cM_1)"] \ar[r,"N_{(T\times_S U)/U}(\varphi)" {yshift=1ex}] & N_{(T\times_S U)/U}(\cM_2|_{T\times_S U}) \ar[d,"\phi_D(\cM_2)"] \\
 & & N_{T/S}(\cM_2)|_U
\end{tikzcd}
\]
where $\phi_D$ is the isomorphism of functors of Lemma \ref{lem_phi_isomorphism}.
\end{lem}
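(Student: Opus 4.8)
The plan is to verify three things: that $\boeta$ is well-defined over $\Aff_S$ (i.e.\ the proposed components assemble into a natural transformation of sheaves on $\Aff_S$, which then extends uniquely to all of $\Sch_S$ by Lemma \ref{lem_equiv_affine_sheaves}), that the target sheaf is quasi-coherent, and that $\boeta$ is normic. For quasi-coherence of the target: since $T\to S$ is finite \'etale and $\cM_1$ is finite locally free, Lemma \ref{lem_norm_modules_etale} tells us $N_{T/S}(\cM_1)$ is finite locally free, hence $\cHom_{\cO}(N_{T/S}(\cM_1),N_{T/S}(\cM_2))$ is quasi-coherent by the standard fact that $\cHom$ out of a finite locally free module preserves quasi-coherence (this is exactly the reason for the hypothesis on $\cM_1$, as flagged in the remarks preceding the statement). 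This also explains why each $N_{(T\times_S U)/U}(\varphi)$ makes sense and why the composition defining $\boeta(\varphi)$ lands where claimed.

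First I would check naturality in $U$. Given a morphism $g\colon V\to U$ in $\Aff_S$, one must show that restricting $\boeta(\varphi)$ from $U$ to $V$ agrees with $\boeta(\varphi|_{T\times_S V})$. Unwinding the definitions, this reduces to the compatibility of the isomorphisms $\phi_D(\cM_i)$ of Lemma \ref{lem_phi_isomorphism} with base change along $g$ together with the naturality of Ferrand's norm functor $N_{(T\times_S U)/U}$ on morphisms (Theorem \ref{Ferrand_property}\ref{Ferrand_property_iv}); more precisely, one needs that the two $\phi$-isomorphisms associated to the nested fiber product diagrams (the one for $V\to S$ and the one obtained by base-changing $D$ along $g$) are compatible, which is precisely the cocycle-type coherence built into the construction of Appendix \ref{app_quasi_coh}. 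I would phrase this as a short diagram chase: stack the square expressing base-change of $\phi_D(\cM_1)$, the square from Theorem \ref{Ferrand_property}\ref{Ferrand_property_iv} applied to $N_{(T\times_S U)/U}(\varphi)\otimes\Id$, and the base-change square for $\phi_D(\cM_2)$, and observe the outer rectangle is the desired equality.

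Next I would verify the normic property, $\boeta(t\varphi)=\bnorm(t)\boeta(\varphi)$ for a section $t\in f_*(\cO|_T)$ and $\varphi\in f_*(\cHom_{\cO|_T}(\cM_1,\cM_2))$. Working over $U\in\Aff_S$ and writing $R=\cO(U)$, $R'=\cO(T\times_S U)$, this becomes a statement about Ferrand's construction over $R'/R$: one needs $N_{R'/R}(t\cdot\varphi)=\norm_{R'/R}(t)\cdot N_{R'/R}(\varphi)$ as elements of $\Hom_R(N_{R'/R}(\cM_1(T\times_S U)),N_{R'/R}(\cM_2(T\times_S U)))$, where $t\varphi$ denotes the morphism $m'\mapsto t\cdot\varphi(m')$. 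This is checked on the generators $\nu_{\cM_1(T\times_S U)}(m')$ supplied by Lemma \ref{lem_nu_generates}: applying $N_{R'/R}(t\varphi)$ to $\nu(m')$ gives $\nu((t\varphi)(m'))=\nu(t\cdot\varphi(m'))=\norm_{R'/R}(t)\,\nu(\varphi(m'))$ using Theorem \ref{Ferrand_property}\ref{Ferrand_property_iv} and the defining normic property of $\nu$ recalled in Section \ref{sec_The_Construction}, which equals $\norm_{R'/R}(t)\cdot N_{R'/R}(\varphi)(\nu(m'))$; since both sides are $R$-linear and agree on generators they coincide, and conjugating by the isomorphisms $\phi_D(\cM_i)$ (which convert $\norm_{R'/R}$ to the appropriate section of $\bnorm$) gives the claim for $\boeta$.

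The main obstacle will be the naturality/coherence verification in the first step, rather than the normic property: one must be careful that the family of isomorphisms $\phi_D(\cM_i)$ coming from Lemma \ref{lem_phi_isomorphism} is genuinely compatible with composing fiber-product diagrams, so that the definition of $\boeta$ does not secretly depend on auxiliary choices. Everything else is a routine translation of Ferrand's ring-level results, and the only real input beyond diagram-chasing is Lemma \ref{lem_nu_generates} (to reduce linear identities to their values on the image of $\nu$) and Lemma \ref{lem_norm_modules_etale} (to know the target is quasi-coherent). I would close by remarking that when $\cM_1=\cM_2=\cB$ is an Azumaya algebra this $\boeta$ is what will be used to transport the algebra structure, consistent with Lemma \ref{lem_norm_az}.
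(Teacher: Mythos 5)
Your proposal is correct and follows essentially the same route as the paper's proof: establish quasi-coherence of source and target using Lemmas~\ref{lem_hom_quasi_coh} and~\ref{lem_norm_modules_etale}, verify naturality by reducing to a compatibility of the $\phi_D$-isomorphisms under composed fiber-product diagrams, and then check the normic identity. The one place you diverge is in the normic step: the paper computes directly on the full generating set $\gamma^{\overline{a}}(\overline{m'})\otimes w$ of $N_{R'/R}(M')\otimes\cO(W)$, using the fact that the $\Gamma_R^d(R')$-action satisfies $\gamma^d(t)\cdot\gamma^{\overline{a}}(\overline{m'})=\gamma^{\overline{a}}(\overline{tm'})$; you instead reduce to the generators $\nu(m')=\gamma^d(m')\otimes 1$ supplied by Lemma~\ref{lem_nu_generates} and push through the universal-property square of Theorem~\ref{Ferrand_property}\ref{Ferrand_property_iv}. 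Both work; yours is a bit cleaner (it never needs the module structure of $\Gamma_R^d(R')$ on $\Gamma_R^d(M')$ beyond the defining relation on pure $\gamma^d$'s) but carries the small caveat that the image of $\nu$ only generates after a faithfully flat cover, so the equality of $R$-linear maps you deduce is really an equality of sheaf morphisms checked locally, which is of course enough. On the naturality step you are vaguer than the paper: the target is an internal $\cHom$ sheaf, so comparing $\boeta(\varphi)|_V$ with $\boeta(\varphi|_{T\times_S V})$ genuinely requires evaluating on a further $W\in\Aff_V$ and running a three-layer diagram chase; the paper spells this out, whereas you wave at ``cocycle-type coherence,'' which is the right idea but would need to be fleshed out into the explicit cube to close the argument.
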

\begin{proof}
Assuming $\cM_1$ is finite locally free ensures that $\cHom_{\cO|_T}(\cM_1,\cM_2)$ is quasi-coherent by Lemma \ref{lem_hom_quasi_coh}. It also ensures that $N_{T/S}(\cM_1)$ is finite locally free by Lemma \ref{lem_norm_modules_etale} since we are assuming $T\to S$ is \'etale. Therefore, we know $\cHom_{\cO}(N_{T/S}(\cM_1),N_{T/S}(\cM_2))$ is quasi-coherent as well by another application of Lemma \ref{lem_hom_quasi_coh}.

We verify that $\boeta$ defines a natural transformation. Let $g\colon V \to U$ be a morphism in $\Aff_S$ and let $W\in \Aff_V$. We will show that $\boeta(\varphi)|_V(W) = \boeta(\varphi|_{T\times_S V})(W)$ for a morphism $\varphi \in f_*(\cHom_{\cO|_T}(\cM_1,\cM_2))(U)$. Since $W$ is arbitrary, we will conclude that $\boeta(\varphi)|_V = \boeta(\varphi|_{T\times_S V})$ as is required.

We have the following commutative diagram
\[
\begin{tikzcd}[column sep=1ex]
(T\times_S V)\times_V W \ar[rr,"g''"] \ar[dr,near end,"h'_V"] \ar[dd] &[-8ex] & (T\times_S U)\times_U W \ar[rr,"g_0''"] \ar[dr,near end,"h'_U"] \ar[dd] &[-8ex] & T\times_S W \ar[dd] \ar[dr,near end,"h'_S"] & \\
 & T\times_S V \ar[rr,crossing over,"g'"] & & T\times_S U \ar[rr,crossing over,near start,"g_0'"] & & T  \\
W \ar[rr,equals] \ar[dr,swap,near start,"h_V"] & & W \ar[rr,equals] \ar[dr,swap,near start,"h_U"] & & W \ar[dr,swap,near start,"h_S"] & \\
& V \ar[rr,"g"] \ar[from=uu,crossing over,near start,"f''"] & & U \ar[from=uu,crossing over,near start,"f'"] \ar[rr,"g_0"] & & S \ar[from=uu,crossing over,near start,"f"]
\end{tikzcd}
\]
where the vertical faces are fiber product diagrams. This means that $g''$ and $g_0''$ are isomorphisms. We set
\[
D_U = \begin{tikzcd}
T\times_S U \ar[r,"g_0'"] \ar[d,"f'"] & T \ar[d,"f"] \\
U \ar[r,"g_0"] & S
\end{tikzcd} \text{ and }
D_V = \begin{tikzcd}
T\times_S V \ar[r,"g_0'\circ g'"] \ar[d,"f''"] & T \ar[d,"f"] \\
V \ar[r,"g_0\circ g"] & S.
\end{tikzcd}
\]
The morphism $\boeta(\varphi)|_V(W)$ is then the composition
\[
\begin{tikzcd}
N_{\cO(T\times_S W)/\cO(W)}(\cM_1(T\times_S W))  \\
N_{\cO((T\times_S U)\times_U W)/\cO(W)}(\cM_1((T\times_S U)\times_U W)) \ar[u,swap,"\phi_{D_U}(\cM_1)(W)"] \ar[d,"\Gamma_{\cO(W)}^d(\varphi((T\times_S U)\times_U W))\otimes \Id"] \\
N_{\cO((T\times_S U)\times_U W)/\cO(W)}(\cM_2((T\times_S U)\times_U W)) \ar[d,"\phi_{D_U}(\cM_2)(W)"] \\
N_{\cO(T\times_S W)/\cO(W)}(\cM_2(T\times_S W)).
\end{tikzcd}
\]
Since $g_0'$ is an isomorphism, the restriction map $\cM_1(g_0')\colon \cM_1(T\times_S W)\to \cM_1((T\times_S U)\times_U W)$ is also an isomorphism. Therefore, the isomorphism $\phi_{D_U}(\cM_1)(W)$ takes the form
\[
\gamma^{\overline{a}}(\overline{m''})\otimes w \mapsto \gamma^{\overline{a}}(\overline{\cM_1(g_0')^{-1}(m'')})\otimes w.
\]
for $\overline{a}=(a_1,\ldots,a_k)$ with $\sum a_i = d$ and $\overline{m''}=(m''_1,\ldots,m''_k) \in \cM_1((T\times_S U)\times_U W)^k$. So, denoting $\varphi((T\times_S U)\times_U W)=\varphi''$, the morphism $\boeta(\varphi)|_V(W)$ behaves as
\begin{align*}
\gamma^{\overline{a}}(\overline{m'})\otimes w &\mapsto \gamma^{\overline{a}}(\overline{\cM_1(g_0')(m')})\otimes w \\
&\mapsto \gamma^{\overline{a}}(\overline{(\varphi''\circ \cM_1(g_0'))(m')})\otimes w \\
&\mapsto \gamma^{\overline{a}}(\overline{(\cM_2(g_0')^{-1}\circ \varphi''\circ \cM_1(g_0'))(m')})\otimes w
\end{align*}
for $\overline{m'} \in \cM_1(T\times_S W)^k$. However, since $\varphi$ is a natural transformation, the diagram
\[
\begin{tikzcd}
\cM_1(T\times_S W) \ar[r,"\varphi(T\times_S W)"] \ar[d,"\cM_1(g_0')"] & \cM_2(T\times_S W) \ar[d,"\cM_2(g_0')"] \\
\cM_1((T\times_S U)\times_U W) \ar[r,"\varphi''"] & \cM_2((T\times_S U)\times_U W)
\end{tikzcd}
\]
commutes and therefore we have that
\[
\boeta(\varphi)|_V(W)(\gamma^{\overline{a}}(\overline{m'})\otimes w) = \gamma^{\overline{a}}(\overline{\varphi(T\times_S W)(m')})\otimes w.
\]

Next, we consider $\boeta(\varphi|_{T\times_S V})(W)$. This is the composition
\[
\begin{tikzcd}
N_{\cO(T\times_S W)/\cO(W)}(\cM_1(T\times_S W))  \\
N_{\cO((T\times_S V)\times_V W)/\cO(W)}(\cM_1((T\times_S V)\times_V W)) \ar[u,swap,"\phi_{D_V}(\cM_1)(W)"] \ar[d,"\Gamma_{\cO(W)}^d(\varphi((T\times_S V)\times_V W))\otimes \Id"] \\
N_{\cO((T\times_S V)\times_V W)/\cO(W)}(\cM_2((T\times_S V)\times_V W)) \ar[d,"\phi_{D_V}(\cM_2)(W)"] \\
N_{\cO(T\times_S W)/\cO(W)}(\cM_2(T\times_S W)).
\end{tikzcd}
\]
A symmetric argument, simply swapping $U$ for $V$, then yields that
\[
\boeta(\varphi|_{T\times_S V})(W)(\gamma^{\overline{a}}(\overline{m'})\otimes w) = \gamma^{\overline{a}}(\overline{\varphi(T\times_S W)(m')})\otimes w
\]
and hence $\boeta(\varphi)|_V(W)=\boeta(\varphi|_{T\times_S V})(W)$ as desired. Thus $\boeta$ is a well-defined natural transformation. 

Finally, it follows from the explicit descriptions of $\boeta(\varphi)(W)$ above, which also hold when $W\in \Aff_U$, that $\boeta$ is normic. For $t\in \cO(T\times_S W)$, the morphism $\boeta(t\varphi)$ will have the following formula over $W$.
\begin{align*}
\boeta(t\varphi)(\gamma^{\overline{a}}(\overline{m'})\otimes w) =& \gamma^{\overline{a}}(\overline{t\varphi(T\times_S W)(m')})\otimes w \\
=&\gamma^d(t)\cdot \gamma^{\overline{a}}(\overline{\varphi(T\times_S W)(m')})\otimes w \\
=& \gamma^{\overline{a}}(\overline{\varphi(T\times_S W)(m')})\otimes \norm_{\cO(T\times_S W)/\cO(W)}(t)\cdot w \\
=& \bnorm(t)\cdot \boeta(\varphi)(\gamma^{\overline{a}}(\overline{m'})\otimes w)
\end{align*}
Thus, we conclude $\boeta(t\varphi)=\bnorm(t)\cdot \boeta(\varphi)$, which finishes the proof.
\end{proof}

\begin{lem} \label{lem_norm_etale_neutral_azu} Let $T\to S$ be a finite \'etale morphism of schemes of degree $d$. Consider a neutral Azumaya $\cO|_T$--algebra $\cB=\cEnd_{\cO|_T}(\mathcal{Q})$ for a finitely locally free $\cO|_T$--module $\mathcal{Q}$. Let $\bnu_{\cB} \colon f_*(\cB) \to N_{T/S}(\cB)$ be the universal normic polynomial law of Proposition \ref{prop_universal_property}. Then, there is a unique isomorphism of $\cO$--algebras $\Psi$ making the following diagram commute
\[
\begin{tikzcd}
f_*(\cB) \arrow{r}{\bnu_{\cB}} \arrow[swap]{dr}{\boeta} & N_{T/S}(\cB) \arrow{d}{\Psi} \\
 & \cEnd_{\cO} \bigl( N_{T/S}(\mathcal{Q}) \bigr)
\end{tikzcd}
\]
where $\boeta$ is the normic polynomial law of Lemma \ref{lem_Hom_normic}.
\end{lem}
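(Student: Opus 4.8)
The plan is to exhibit $\Psi$ via the universal property of $(N_{T/S}(\cB), \bnu_{\cB})$ from Proposition~\ref{prop_universal_property}, then verify locally that it is an algebra isomorphism. First I would observe that $\boeta$, as constructed in Lemma~\ref{lem_Hom_normic}, is a normic polynomial law $f_*(\cB) \to \cEnd_{\cO}(N_{T/S}(\mathcal{Q}))$ (using that $\mathcal{Q}$, and hence $\cB = \cHom_{\cO|_T}(\mathcal{Q},\mathcal{Q})$ with $\cM_1 = \cM_2 = \mathcal{Q}$, satisfies the finite-local-freeness hypothesis of that lemma, and that $\cEnd_{\cO}(N_{T/S}(\mathcal{Q}))$ is quasi-coherent by Lemma~\ref{lem_norm_modules_etale} together with Lemma~\ref{lem_hom_quasi_coh}). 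Since $\cEnd_{\cO}(N_{T/S}(\mathcal{Q}))$ is a quasi-coherent $\cO$--module, the universal property of Proposition~\ref{prop_universal_property} produces a unique $\cO$--module morphism $\Psi \colon N_{T/S}(\cB) \to \cEnd_{\cO}(N_{T/S}(\mathcal{Q}))$ with $\Psi \circ \bnu_{\cB} = \boeta$, which gives existence and uniqueness of the diagram at the level of $\cO$--modules.

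Next I would check that $\Psi$ is an algebra homomorphism for the natural algebra structure on $N_{T/S}(\cB)$ from Lemma~\ref{lem_norm_az}\ref{lem_norm_az3}. By Lemma~\ref{lem_nu_generates}, $N_{T/S}(\cB)$ is generated as an $\cO$--module by the image of $\bnu_{\cB}$, so it suffices to check $\Psi(\bnu_{\cB}(b_1)\cdot \bnu_{\cB}(b_2)) = \Psi(\bnu_{\cB}(b_1))\circ \Psi(\bnu_{\cB}(b_2))$ for local sections $b_1, b_2$ of $f_*(\cB)$, and to check $\Psi(\bnu_{\cB}(1_{\cB})) = \id$. Using the multiplicativity of $\bnu_{\cB}$ (Lemma~\ref{lem_norm_az}\ref{lem_norm_az3}), the left side equals $\Psi(\bnu_{\cB}(b_1 b_2)) = \boeta(b_1 b_2)$; the right side equals $\boeta(b_1)\circ \boeta(b_2)$. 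So the claim reduces to multiplicativity of $\boeta$ (and unitality), which I would verify working over an affine $U \in \Aff_S$ using the explicit formula for $\boeta(\varphi)$ from the proof of Lemma~\ref{lem_Hom_normic} — namely $\boeta(\varphi)$ acts as $\gamma^{\overline a}(\overline m) \otimes u \mapsto \gamma^{\overline a}(\overline{\varphi(m)}) \otimes u$ via $\phi_D(\mathcal{Q})$ conjugation — together with functoriality of $\Gamma_{\cO(U)}^d$, which sends compositions to compositions and identities to identities. This shows $\boeta(\varphi_1 \circ \varphi_2) = \boeta(\varphi_1) \circ \boeta(\varphi_2)$ and $\boeta(\id) = \id$.

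Then I would show $\Psi$ is an isomorphism by a local argument. Both $N_{T/S}(\cB)$ and $\cEnd_{\cO}(N_{T/S}(\mathcal{Q}))$ are finite locally free (the former by Lemma~\ref{lem_norm_az} applied to the Azumaya algebra $\cB$, or by Theorem~\ref{intro_norm_thm}\ref{intro_norm_thm_viii}; the latter since $N_{T/S}(\mathcal{Q})$ is finite locally free by Lemma~\ref{lem_norm_modules_etale}), so it suffices to check $\Psi$ is an isomorphism over a well-chosen affine cover. Over an affine $U \in \Aff_S$ where $T \times_S U \to U$ is moreover split étale of degree $d$, i.e.\ $T \times_S U \cong U^{\sqcup d}$, we can invoke Example~\ref{ex_split_norm}: writing $\mathcal{Q}|_{U^{\sqcup d}}$ as $(\mathcal{Q}_1,\ldots,\mathcal{Q}_d)$ we get $N_{T/S}(\mathcal{Q})|_U \cong \mathcal{Q}_1 \otimes_{\cO|_U} \cdots \otimes_{\cO|_U} \mathcal{Q}_d$ and $N_{T/S}(\cB)|_U \cong \cEnd_{\cO|_U}(\mathcal{Q}_1) \otimes \cdots \otimes \cEnd_{\cO|_U}(\mathcal{Q}_d)$ with $\bnu$ given by the tensor product map; under these identifications $\Psi$ becomes the canonical isomorphism $\bigotimes_i \cEnd_{\cO|_U}(\mathcal{Q}_i) \iso \cEnd_{\cO|_U}(\bigotimes_i \mathcal{Q}_i)$, sending $\varphi_1 \otimes \cdots \otimes \varphi_d$ to $\varphi_1 \otimes \cdots \otimes \varphi_d$ acting on the tensor product, which is an isomorphism since each $\mathcal{Q}_i$ is finite locally free. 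The main obstacle is bookkeeping: matching the formula for $\boeta$ through the base-change isomorphisms $\phi_D$ of Lemma~\ref{lem_phi_isomorphism} with the split description of Example~\ref{ex_split_norm} and confirming the identification of $\Psi$ with the canonical $\cEnd$-tensor map is the one place where care is needed; everything else is formal use of the universal property and of Lemma~\ref{lem_nu_generates}. Since split étale covers form a cover of $S$ and $\Psi$ is the unique $\cO$--module morphism through which $\boeta$ factors, these local isomorphisms glue, giving that $\Psi$ is a global $\cO$--algebra isomorphism.
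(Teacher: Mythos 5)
Your proposal follows the paper's own structure almost exactly: obtain $\Psi$ as the unique $\cO$--module map via the universal property of $(N_{T/S}(\cB),\bnu_{\cB})$ from Proposition~\ref{prop_universal_property}, verify multiplicativity of $\boeta$ from functoriality, transfer it to $\Psi$ on the generators supplied by Lemma~\ref{lem_nu_generates}, and then check the isomorphism locally. The one place you diverge is the final isomorphism step: the paper simply cites Ferrand~\cite[3.2.5~(c)]{F} for the affine case, whereas you sketch a direct local verification over split covers, invoking Example~\ref{ex_split_norm} to identify $\Psi|_U$ with the canonical map $\bigotimes_i\cEnd_{\cO|_U}(\cQ_i)\to\cEnd_{\cO|_U}(\bigotimes_i\cQ_i)$. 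Both are valid; your version is self-contained but requires the base-change bookkeeping (matching $\boeta$ and $\bnu_{\cB}$ through $\phi_D$ and Corollary~\ref{cor_unviersal_base_change}) that you rightly flag, which is precisely what the Ferrand citation spares the paper. One small slip: Lemma~\ref{lem_norm_az} does not assert $N_{T/S}(\cB)$ is finite locally free; what you want is Lemma~\ref{lem_norm_modules_etale} applied to $\cB$ viewed as a finite locally free $\cO|_T$--module (but note also that being an isomorphism of sheaves is already local, so finite local freeness is not actually needed to reduce to the split cover).
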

\begin{proof}
Since $\boeta$ is a normic polynomial law, a unique such $\cO$--module map $\Psi$ exists by Proposition \ref{prop_universal_property}. Additionally, since $\boeta$ is induced by the norm functor, multiplication in endomorphism algebras is by composition, and functors respect composition, we see that $\boeta$ is multiplicative. Using this along with the fact that $\bnu_{\cB}$ is multiplicative by Lemma \ref{lem_norm_az}\ref{lem_norm_az3}, we compute
\begin{align*}
\Psi(\bnu_{\cB}(b_1)\bnu_{\cB}(b_2)) &= \Psi\circ N_{T/S}(\mu)\circ \Phi(\bnu_{\cB}(b_1)\otimes\bnu_{\cB}(b_2)) \\
&= \Psi\circ N_{T/S}(\mu)\circ \bnu'(b_1\otimes b_2) \\
&= \Psi\circ \bnu_{\cB} \circ f_*(\mu)(b_1\otimes b_2) = \Psi\circ \bnu_{\cB}(b_1b_2) \\
&= \boeta(b_1b_2) = \boeta(b_1)\boeta(b_2) \\
&= \Psi(\bnu_{\cB}(b_1))\Psi(\bnu_{\cB}(b_2))
\end{align*}
and so $\Psi$ is multiplicative on the image of $\bnu_{\cB}$. Therefore, by Lemma \ref{lem_nu_generates} and the linearity of $\Psi$, this means $\Psi$ is multiplicative in general and hence is an algebra homomorphism. The fact that $\Psi$ is an isomorphism follows since it is an isomorphism over affine schemes by \cite[3.2.5 (c)]{F}.
\end{proof}

The following is the Azumaya algebra analogue of Lemma \ref{lem_norm_modules_etale}.
\begin{lem}\label{lem_ferrand} Assume that $T \to S$ is finite \'etale of constant degree $d$. If $\cA$ is an Azumaya $\cO|_T$--algebra of constant degree $r$, then $N_{T/S}(\cA)$ is an Azumaya $\cO$--algebra of constant degree $r^d$.
\end{lem}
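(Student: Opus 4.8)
The plan is to reduce immediately to the local, affine situation and invoke Ferrand's theorem over rings together with the two preceding lemmas. First I would note that the properties "Azumaya of constant degree $r^d$" can be checked locally on $S$; more precisely, by Lemma~\ref{lem_equiv_affine_sheaves} it suffices to examine the restriction of $N_{T/S}(\cA)$ to $\Aff_S$, and the defining condition \ref{defn_Azumaya_ii} of an Azumaya $\cO$--algebra is precisely that $N_{T/S}(\cA)(U)$ be an Azumaya $\cO(U)$--algebra for every $U\in\Aff_S$. By the construction of $N_{T/S}$ recorded just after \eqref{eq_norm_functor}, we have $N_{T/S}(\cA)(U)=N_{\cO(T\times_S U)/\cO(U)}(\cA(T\times_S U))$, and by Lemma~\ref{lem_norm_az}\ref{lem_norm_az5} (together with \ref{lem_norm_az3}) this is the $\cO(U)$--algebra obtained by applying Ferrand's norm functor over rings to the $\cO(T\times_S U)$--algebra $\cA(T\times_S U)$.

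Now $\cO(T\times_S U)$ is a finite \'etale $\cO(U)$--algebra of constant rank $d$ (fppf-locally on $U$ even split), and $\cA(T\times_S U)$ is an Azumaya $\cO(T\times_S U)$--algebra of constant degree $r$ by definition~\ref{defn_Azumaya_ii}. The assertion that Ferrand's norm of an Azumaya algebra along a finite \'etale extension is again Azumaya, of degree $r^d$, is exactly \cite[3.2.5]{F}; alternatively one may cite \cite[Prop.~4.5]{KO75} in the \'etale setting, using that the two norm functors agree by \cite[5.3]{F}. Thus $N_{\cO(T\times_S U)/\cO(U)}(\cA(T\times_S U))$ is an Azumaya $\cO(U)$--algebra of constant degree $r^d$ for every $U\in\Aff_S$, which is the claim.

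Alternatively, and perhaps more self-containedly, one can argue as follows using only the earlier lemmas of this section. Since $\cA$ is Azumaya, by \ref{defn_Azumaya_iii} there is a cover $\{X_i\to S\}$ over which $\cA|_{T\times_S X_i}\cong \cEnd_{\cO|_{T\times_S X_i}}(\cQ_i)$ for a finite locally free $\cO|_{T\times_S X_i}$--module $\cQ_i$ of positive rank; refining, we may take this rank to be the constant $r$. By the base change isomorphism of functors $N_{T'/S'}\circ g'^*\iso g^*\circ N_{T/S}$ (Theorem~\ref{intro_norm_thm}\ref{intro_norm_thm_v}, equivalently Lemma~\ref{lem_phi_isomorphism}), $N_{T/S}(\cA)|_{X_i}\cong N_{(T\times_S X_i)/X_i}(\cA|_{T\times_S X_i})\cong N_{(T\times_S X_i)/X_i}(\cEnd_{\cO|_{T\times_S X_i}}(\cQ_i))$. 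By Lemma~\ref{lem_norm_etale_neutral_azu} this last algebra is isomorphic, as an $\cO|_{X_i}$--algebra, to $\cEnd_{\cO|_{X_i}}(N_{(T\times_S X_i)/X_i}(\cQ_i))$, and by Lemma~\ref{lem_norm_modules_etale} the module $N_{(T\times_S X_i)/X_i}(\cQ_i)$ is locally free of constant rank $r^d$. Hence $N_{T/S}(\cA)$ satisfies \ref{defn_Azumaya_iii} with modules of constant rank $r^d$, so it is an Azumaya $\cO$--algebra of constant degree $r^d$.

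The step I expect to be the genuine content rather than bookkeeping is the input \cite[3.2.5]{F} (or the passage through Lemma~\ref{lem_norm_etale_neutral_azu}), namely that the norm of an endomorphism algebra $\cEnd(\cQ)$ is the endomorphism algebra $\cEnd(N(\cQ))$ of the normed module; everything else is patching together already-established local statements via the base change compatibility. Since Lemma~\ref{lem_norm_etale_neutral_azu} has already been proved above, I would phrase the proof in the second (self-contained) style, as it keeps the argument internal to the paper.
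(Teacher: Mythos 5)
Your second, "self-contained" argument is exactly the paper's own proof: reduce to a cover trivializing $\cA$ as $\cEnd(\cQ_i)$, invoke base change, apply Lemma~\ref{lem_norm_etale_neutral_azu} to identify $N(\cEnd(\cQ_i))\cong\cEnd(N(\cQ_i))$, and finish with Lemma~\ref{lem_norm_modules_etale}. Your first alternative, passing directly to \cite[3.2.5]{F} (or \cite{KO75}) over affines, is a legitimate shortcut but is subsumed by the second and offers nothing the paper doesn't already establish; you chose the right one to write up.
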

\begin{proof}
This follows from Lemma \ref{lem_norm_az}\ref{lem_norm_az3} and Lemma \ref{lem_norm_etale_neutral_azu}. There will be a cover $\{U_i \to S\}_{i\in I}$ over which we have $\cA|_{T\times_S U_i} \cong \cEnd_{\cO|_{T\times_S U_i}}(\cQ_i)$ for a locally free $\cO|_{T\times_S U_i}$--module $\cQ_i$ of constant rank $r$, and so we have the isomorphism of Lemma \ref{lem_norm_etale_neutral_azu}
\[
N_{T/S}(\cA)|_{U_i} = N_{T\times_S U_i / U_i}(\cA|_{T\times_S U_i}) \xrightarrow{\Psi} \cEnd_{\cO|_{U_i}}(N_{T\times_S U_i / U_i}(\cQ_i))
\]
where $N_{T\times_S U_i / U_i}(\cQ_i)$ is a locally free $\cO|_{U_i}$--module of constant rank $r^d$ by Lemma \ref{lem_norm_modules_etale}. Therefore, $N_{T/S}(\cA)$ is an Azumaya $\cO$--algebra of degree $r^d$ as claimed.
\end{proof}

\begin{remark}\label{rem_QAlg}
Since Lemma \ref{lem_norm_az}\ref{lem_norm_az3} and \ref{lem_norm_az5} show that the norm respects quasi-coherent algebras, it is immediate that the morphism $N\colon \QCoh\flf^d \to \QCoh$ of \eqref{eq_norm_stack_morphism} restricts to a morphism 
\begin{equation}\label{eq_normalg_stack_morphism}
N\alg \colon \QAlg\flf^d \to \QAlg
\end{equation}
between stacks of quasi-coherent algebras. In detail, we define $\QAlg\flf^d$ to be the substack of $\QCoh\flf^d$ which has
\begin{enumerate}[label={\rm(\roman*)}]
\item objects $(T'\to T,\cB) \in \QCoh\flf^d$ where $\cB$ is a quasi-coherent $\cO|_{T'}$--algebra, and
\item morphisms $(f,g,\varphi)\colon (X'\to X,\cB_1) \to (T'\to T,\cB_2)$ of $\QCoh\flf^d$ where $\varphi \colon \cB_1 \to g^*(\cB_2)$ is an $\cO|_{X'}$--algebra isomorphism. 
\end{enumerate}
Similarly, the unadorned $\QAlg$ is the substack of $\QCoh$ which has
\begin{enumerate}[label={\rm(\roman*)}]
\item objects $(X,\cB)\in \QCoh$ where $\cB$ is a quasi-coherent $\cO|_X$--algebra, and
\item morphisms $(g,\varphi)\colon (X',\cB') \to (X,\cB)$ of $\QCoh$ where $\varphi \colon \cB' \to g^*(\cB)$ is an $\cO|_{X'}$--algebra morphism.
\end{enumerate}
\end{remark}

\section{Cohomological Description}\label{cohomological_description}
In this section we give a cohomological description of the norm functor over finite \'etale covers of degree $d$ by analyzing restrictions of the morphisms $N$ and $N\alg$ of \eqref{eq_norm_stack_morphism} and \eqref{eq_normalg_stack_morphism} to various substacks of the stacks $\QCoh\flf^d$ of Section \ref{the_construction} and $\QAlg\flf^d$ of Remark \ref{rem_QAlg} respectively. In particular, we will consider the following stacks.
\begin{enumerate}[label={\rm(\roman*)}]
\item Let $\fMod_r$ be the substack of $\QCoh$ whose objects are those $(X,\cM) \in \QCoh$ where $\cM$ is a finite locally free $\cO|_X$--module of constant rank $r$, and whose morphisms are the cartesian morphisms from $\QCoh$. This is equivalent to the split stack $\fVec_r$ considered in \cite[2.4.1.8]{CF}.
\item Let $\fAzu_r$ be the substack of $\QAlg$ whose objects are those $(X,\cA) \in \QAlg$ where $\cA$ is an Azumaya $\cO|_X$--algebra of constant degree $r$, and whose morphisms are the cartesian morphisms of $\QAlg$. This is equivalent to the split stack of \cite[2.5.3.10]{CF}.
\item Let $\fMod_r^{d\etale}$ be the substack of $\QCoh\flf^d$ whose objects are those $(T'\to T,\cM)\in \QCoh\flf^d$ where $T'\to T$ is an \'etale cover (of degree $d$) and $\cM$ is a locally free $\cO|_{T'}$--module of constant rank $r$, and whose morphisms are the cartesian morphisms of $\QCoh\flf^d$.
\item \label{defn_Azu_r^d} Let $\fAzu_r^{d\etale}$ be the substack of $\QAlg\flf^d$ whose objects are those $(T'\to T,\cA)\in \QAlg\flf^d$ where $T'\to T$ is an \'etale cover (of degree $d$) and $\cA$ is an Azumaya $\cO|_{T'}$--algebra of constant degree $r$, and whose morphisms are the cartesian morphisms of $\QAlg\flf^d$.
\end{enumerate}
Since all four of the above stacks only contain cartesian morphisms, they are fibered in groupoids by Lemma \ref{lem_cartesian_groupoids}. In fact, all four stacks are gerbes, which we will justify for the first two in \ref{sec_aut_sheaves}, for $\fMod_r^{d\etale}$ before Lemma \ref{lem_Azu_torsor_gerbe}, and for $\fAzu_r^{d\etale}$ after Lemma \ref{lem_Azu_torsor_gerbe}.

The results of Lemma \ref{lem_ferrand} imply that the norm morphism $N\colon \QCoh\flf^d \to \QCoh$ restricts to two morphisms of stacks,
\begin{equation}\label{eq_N_Mod_Azu}
N_{\fMod} \colon \fMod_r^{d\etale} \to \fMod_{r^d} \text{ and } N_{\fAzu}\colon \fAzu_r^{d\etale} \to \fAzu_{r^d}.
\end{equation}
Since all four of these stacks are gerbes, we will obtain a cohomological description of $N_\fMod$ and $N_\fAzu$ by applying Lemma \ref{lem_loos_cohom}. To do so, we first identify some of the automorphism sheaves of objects in these stacks.

\subsection{Automorphism Sheaves}\label{sec_aut_sheaves}
To begin, if we consider the object $(S,\cO^r) \in \fMod_r$, then it is clear that its automorphism sheaf is $\cAut(S,\cO^r) = \GL_r$. Any locally free module of rank $r$ is by definition locally isomorphic to $\cO^r$, the stack $\fMod_r$ is fibered in groupoids, and the fibers $\fMod_r(U)$ for $U\in \Sch_S$ are nonempty since they contain the free module, so we know $\fMod_r$ is a gerbe. We call $(S,\cO^r)$ the split object in $\fMod_r(S)$.

If we consider the object $(S,\Mat_r(\cO)) \in \fAzu_r$, then we have $\cAut(S,\Mat_r(\cO))=\PGL_r$. Any Azumaya algebra of degree $r$ is locally isomorphic to $\Mat_r(\cO)$ and so $\fAzu_r$ is a gerbe as well. We call $(S,\Mat_r(\cO))$ the split object of $\fAzu_r(S)$.

Next, we define some semi-direct products of groups which will appear later as automorphism sheaves. Let $f\colon T\to S$ be a degree $d$ \'etale cover of our base scheme.

First, we define the group $f_*(\GL_r|_T)\rtimes \cAut_S(T)$. Let $X\in \Sch_S$ and let $g\colon T\times_S X \iso T \times_S X$ be an isomorphism of $X$--schemes, i.e., $g\in \cAut_S(T)(X)$. We then have a pullback functor $g^*$ from the category of $\cO|_{T\times_S X}$--modules to itself and $g^*(\cO|_{T\times_S X}) = \cO|_{T\times_S X}$. This also means that $g^*(\cO|_{T\times_S X}^r) = \cO_{T\times_S X}^r$. Therefore, for a section $\varphi \in f_*(\GL_r|_T)(X) = \Aut_{\cO_{T\times_S X}}(\cO|_{T\times_S X}^r)$ we also have that $g^*(\varphi)$ is an automorphism of $\cO|_{T\times_S X}^r$. We use this to define the semidirect product structure on $f_*(\GL_r|_T)\rtimes\cAut_S(T)$ on by
\[
\varphi \cdot g = g\cdot g^*(\varphi)
\]
for appropriate sections.

When $T=\Sd$, this group becomes $(\GL_r)^d \rtimes \SS_d$ where $\SS_d$ acts on $(\GL_r)^d$ be permuting the factors as follows. To keep track of position, write $S^{\sqcup d}=S_1\sqcup \ldots \sqcup S_d$ where each $S_j=S$. Let $X\in \Sch_S$ be any scheme. Since $X$ is possibly disconnected, let $X=\bigsqcup_{i\in I}X_i$ be its decomposition into connected components. Then $S^{\sqcup d}\times_S X = \bigsqcup_{i\in I} (X_{1,i}\sqcup \ldots\sqcup X_{d,i})$ where each $X_{j,i}=X_i$. For $\sigma = (\sigma_i)_{i\in I} \in \SS_d(X) = \prod_{i\in I}\SS_d(\ZZ)$ (here $\SS_d(\ZZ)$ is simply the abstract group of permutations since $\Spec(\ZZ)$ is connected), we view it as the scheme isomorphism which sends component $X_{j,i} \to X_{\sigma_i(j),i}$ via $\Id_{X_i}$. 

Now, let $\cM$ be an $\cO|_{X^{\sqcup d}}$--module. A $X^{\sqcup d}$ scheme is of the form $\sqcup_{i\in I}(Y_{1,i}\sqcup \ldots \sqcup Y_{d,i})$ where each $Y_{j,i}$ are arbitrary $X$--schemes and the structure morphism sends $Y_{j,i} \to X_{j,i}$. The module $\cM$ will then evaluate as
\[
\cM\big(\sqcup_{i\in I}(Y_{1,i}\sqcup \ldots \sqcup Y_{d,i})\big) = \prod_{i\in I}(\cM_{1,i}(Y_{1,i})\times\ldots\times \cM_{d,i}(Y_{d,i}))
\]
for $\cO|_X$--modules $\cM_{j,i}$. We express this as $\cM=(\cM_{1,i},\ldots,\cM_{d,i})_{i\in I}$. The pullback module $\sigma^*(\cM)$ will evaluate the $X^{\sqcup d}$-scheme $\sqcup_{i\in I}(Y_{1,i}\sqcup \ldots \sqcup Y_{d,i}$ as if the structure morphism sends $Y_{j,i} \to X_{\sigma_i(j),i}$. Therefore,
\[
\sigma^*(\cM)\big(\sqcup_{i\in I}(Y_{1,i}\sqcup \ldots \sqcup Y_{d,i})\big) = \prod_{i\in I}(\cM_{\sigma_i(1),i}(Y_{1,i})\times\ldots\times \cM_{\sigma_i(d),i}(Y_{d,i})),
\]
i.e., $\sigma^*(\cM) = (\cM_{\sigma_i(1),i},\ldots,\cM_{\sigma_i(d),i})_{i\in I}$. So, for an automorphism $\varphi = (\varphi_{1,i},\ldots,\varphi_{d,i})_{i\in I}$ of $\cM$, we have 
\[
\sigma^*(\varphi) = (\varphi_{\sigma_i(1),i},\ldots,\varphi_{\sigma_i(d),i})_{i\in I}.
\]
In particular, this applies when all $\cM_{j,i} = \cO|_{X_i}^r$ to describe $\sigma^*(\varphi)$ for $\varphi \in (\GL_r)^d(X)$. As an example, if $X$ is connected, $d=3$, and $\sigma = (1\; 2\; 3)$ is a cycle, then
\[
\sigma^*(\varphi_1,\varphi_2,\varphi_3) = (\varphi_2,\varphi_3,\varphi_1).
\]

Likewise, we define the group $f_*(\PGL_r|_T)\rtimes \cAut_S(T)$. With $g$ still as above, we have $g^*(\cEnd_{\cO|_{T\times_S X}}(\cO|_{T\times_S X}^r)) = \cEnd_{\cO|_{T\times_S X}}(\cO|_{T\times_S X}^r)$ and so we also have a semidirect product structure on $f_*(\PGL_r|_T)\rtimes \cAut_S(T)$ using the same formula, just when $\varphi$ is an algebra automorphism. Once again, when $T=\Sd$, this group becomes $(\PGL_r)^d\rtimes \SS_d$.

\begin{lem}\label{lem_mod_torsor_gerbe}
Consider objects of the form $(f\colon T\to S,\cO|_{T}^r) \in \fMod_r^{d\etale}(S)$.
\begin{enumerate}[label={\rm(\roman*)}]
\item \label{lem_mod_torsor_gerbe_i} We have that
\[
\cAut(T\to S,\cO|_T^r) \cong f_*(\GL_r|_T)\rtimes \cAut_S(T).
\]
\item \label{lem_mod_torsor_gerbe_ii} In particular,
\[
\cAut(\Sd \to S,\cO_{\Sd}^r) \cong (\GL_r)^d \rtimes \SS_d.
\]
\end{enumerate}
\end{lem}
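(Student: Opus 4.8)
The plan is to identify the automorphism sheaf of the object $(f\colon T\to S,\cO|_T^r)$ in the gerbe $\fMod_r^{d\etale}$ by unwinding the definition of morphisms in $\QCoh\flf^d$. Recall from Section~\ref{the_construction} that a morphism $(j\colon X' \to X,\cN) \to (h\colon T' \to T,\cM)$ in $\QCoh\flf^d$ is a triple $(f_0,g_0,\varphi)$ consisting of a fiber product diagram of the finite locally free covers together with an isomorphism $\varphi\colon \cN \iso g_0^*(\cM)$. Specializing to an automorphism of $(f\colon T\to S,\cO|_T^r)$ over a scheme $X\in \Sch_S$, i.e.\ a section of $\cAut(T\to S,\cO|_T^r)(X)$, we get a pair $(g,\varphi)$ where $g\colon T\times_S X \iso T\times_S X$ is an $X$--scheme automorphism (so $g\in \cAut_S(T)(X)$), and $\varphi \colon \cO|_{T\times_S X}^r \iso g^*(\cO|_{T\times_S X}^r)$. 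Since $g$ is an automorphism of $T\times_S X$ over $X$, we have $g^*(\cO|_{T\times_S X}^r) = \cO|_{T\times_S X}^r$ canonically, so $\varphi$ is simply an element of $\Aut_{\cO|_{T\times_S X}}(\cO|_{T\times_S X}^r) = f_*(\GL_r|_T)(X)$. This sets up a bijection of sheaves $\cAut(T\to S,\cO|_T^r) \cong f_*(\GL_r|_T)\rtimes \cAut_S(T)$ as sets; the first step is to make this bijection precise and functorial in $X$.

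The second step is to check that this bijection is a group isomorphism when the target carries the semidirect product structure defined just before the lemma, namely $\varphi\cdot g = g\cdot g^*(\varphi)$. This is the heart of the computation: one composes two automorphisms $(g_1,\varphi_1)$ and $(g_2,\varphi_2)$ using the composition law in $\QCoh\flf^d$, which from Section~\ref{the_construction} is $(f_0,g_0,\varphi)\circ(h_0,k_0,\psi) = (f_0\circ h_0, g_0\circ k_0, k_0^*(\varphi)\circ\psi)$ — adapted to automorphisms this says the underlying scheme maps compose as $g_1\circ g_2$ (or $g_2\circ g_1$ depending on order convention) and the module isomorphisms compose with an intervening pullback. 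Tracking the pullback $g_2^*(\varphi_1)$ that appears reproduces exactly the twisted multiplication rule; one must be careful about the order of composition and whether $g^*$ is a left or right action, but this is a routine bookkeeping check once the conventions are fixed. I would also note that $g^*$ genuinely preserves $\cO|_{T\times_S X}^r$ (not just up to isomorphism), which is what makes the semidirect product well-defined — this was already observed in the paragraph defining $f_*(\GL_r|_T)\rtimes\cAut_S(T)$.

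For part~\ref{lem_mod_torsor_gerbe_ii}, the plan is to specialize part~\ref{lem_mod_torsor_gerbe_i} to the split cover $f\colon \Sd\to S$ and invoke the identifications already established in Section~\ref{sec_aut_sheaves}: there we computed that $\cAut_S(\Sd) \cong \SS_d$ (the constant group scheme of permutations, acting by permuting the $d$ copies of $S$) and that $f_*(\GL_r|_{\Sd}) \cong (\GL_r)^d$, with the pullback action $\sigma^*$ of a permutation on an element of $(\GL_r)^d$ being precisely the permutation of factors spelled out in the discussion preceding the lemma (e.g.\ $\sigma^*(\varphi_1,\varphi_2,\varphi_3) = (\varphi_2,\varphi_3,\varphi_1)$ for $\sigma=(1\,2\,3)$). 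Substituting these into part~\ref{lem_mod_torsor_gerbe_i} and matching the semidirect product rule $\varphi\cdot g = g\cdot g^*(\varphi)$ with the standard rule for $(\GL_r)^d\rtimes\SS_d$ gives the claim immediately.

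The main obstacle I anticipate is purely a matter of carefully aligning composition-order conventions: in $\QCoh\flf^d$ the morphisms go $(X'\to X,\cN)\to(T'\to T,\cM)$ with $\varphi\colon\cN\iso g^*(\cM)$, so when computing $\cHom$ and $\cAut$ via the induced restriction maps (as in the discussion of $\cHom$ functors for fibered categories in Section~\ref{Preliminaries}), one has to be vigilant that the semidirect product structure $\varphi\cdot g = g\cdot g^*(\varphi)$ comes out with the pullback on the correct side and with $g^*$ rather than $(g^{-1})^*$ or $(g^*)^{-1}$. Once the conventions are pinned down, everything else is a direct verification with no conceptual difficulty; in particular no descent or sheafification argument is needed since we are just describing sections of an automorphism sheaf over an arbitrary $X\in\Sch_S$.
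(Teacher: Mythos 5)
Your proposal is correct and follows essentially the same approach as the paper: unwind the definition of a section of $\cAut(T\to S,\cO|_T^r)$ over $X$ as a pair $(g,\varphi)$ with $g\in\cAut_S(T)(X)$ and $\varphi\in f_*(\GL_r|_T)(X)$, then verify the resulting bijection $(\Id_X,g,\varphi)\mapsto g\cdot\varphi$ is a group isomorphism by tracking how $g^*(\psi)$ appears under composition and matching it with the relation $\psi\cdot g = g\cdot g^*(\psi)$. Part~(ii) is, as you say, a direct specialization using $\cAut_S(\Sd)\cong\SS_d$ and $f_*(\GL_r|_{\Sd})\cong(\GL_r)^d$ (the paper also cross-references Lemma~\ref{lem_split_aut}, but your route is equivalent).
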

\begin{proof}
\noindent\ref{lem_mod_torsor_gerbe_i}: Let $X\in \Sch_S$. A section in $\cAut(T\to S,\cO|_T^r)(X)$ is a triple of the form $(\Id_X,g,\varphi)$ where $g\in \cAut_S(T)(X)$ and $\varphi \colon \cO|_{T\times_S X}^r \iso g^*(\cO|_{T\times_S X}^r) = \cO|_{T\times_S X}^r$, and so $\varphi \in f_*(\GL_r|_T)(X)$. Unsurprisingly, the map of sheaves
\begin{align*}
\cAut(T\to S,\cO|_T^r) &\to f_*(\GL_r|_T)\rtimes \cAut_S(T) \\
(\Id_S,g,\varphi) &\mapsto g\cdot \varphi
\end{align*}
will give our desired isomorphism. It is clearly bijective and it is a group isomorphism since we have
\begin{align*}
&(\Id_S,h,\psi)(\Id_S,g,\varphi) = (\Id_S,hg,g^*(\psi)\varphi) \\
\mapsto &h \cdot g \cdot g^*(\psi) \cdot \varphi = h \cdot \psi \cdot g \cdot \varphi =(h\cdot \psi)(g\cdot \varphi) 
\end{align*}

\noindent\ref{lem_mod_torsor_gerbe_ii}: This follows immediately from \ref{lem_mod_torsor_gerbe_i}.
\end{proof}

Every degree $d$ \'etale cover is locally isomorphic to $\Sd\to S$ and likewise every rank $r$ finite locally free module over such a cover is locally isomorphic to $\cO|_{\Sd}^r$. Therefore, $\fMod_r^{d\etale}$ is a gerbe. By choosing $(\Sd\to S,\cO_{\Sd}^r)$ as the split object, we view the groupoid $\fMod_r^{d\etale}(S)$ as the groupoid of twisted forms of $(\Sd,\cO_{\Sd}^r)$. By Lemma \ref{lem_mod_torsor_gerbe}\ref{lem_mod_torsor_gerbe_ii} this groupoid is equivalent to the category of $(\GL_r^d)\rtimes \SS_d$--torsors. The isomorphism classes in $\fMod_r^{d\etale}(S)$ are classified by $H^1(S,(\GL_r)^d\rtimes \SS_d)$. In the notation of \eqref{AppenB_gerbe_tors_equiv}, $\fMod_r^{d\etale}$ is equivalent to the stack $\fF(\GL_r)^{d\etale}$.

\begin{lem}\label{lem_Azu_torsor_gerbe}
Consider objects of the form $(f\colon T\to S,\Mat_r(\cO|_{T})) \in \fAzu_r^{d\etale}(S)$.
\begin{enumerate}[label={\rm(\roman*)}]
\item \label{lem_Azu_torsor_gerbe_i} We have that
\[
\cAut(T\to S,\Mat_r(\cO|_{T})) \cong f_*(\PGL_r|_T)\rtimes \cAut_S(T).
\]
\item \label{lem_Azu_torsor_gerbe_ii} In particular,
\[
\cAut(\Sd \to S,\Mat_r(\cO|_{\Sd})) \cong (\PGL_r)^d \rtimes \SS_d.
\]
\end{enumerate}
\end{lem}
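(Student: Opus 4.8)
The plan is to mirror the proof of Lemma \ref{lem_mod_torsor_gerbe} almost verbatim, replacing $\GL_r$ by $\PGL_r$, i.e. replacing module automorphisms by algebra automorphisms. So for part \ref{lem_Azu_torsor_gerbe_i}, let $X\in \Sch_S$. A section of $\cAut(T\to S,\Mat_r(\cO|_T))(X)$ is a morphism in $\fAzu_r^{d\etale}$ over $\Id_X$ from $(T\times_S X \to X,\Mat_r(\cO|_{T\times_S X}))$ to itself, which by the definition of morphisms in $\QAlg\flf^d$ (see Remark \ref{rem_QAlg}) is a triple $(\Id_X,g,\varphi)$ with $g\in \cAut_S(T)(X) = \cAut_X(T\times_S X)(X)$ and $\varphi \colon \Mat_r(\cO|_{T\times_S X}) \iso g^*(\Mat_r(\cO|_{T\times_S X}))$ an $\cO|_{T\times_S X}$--algebra isomorphism. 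As recalled just before the statement, $g^*(\cEnd_{\cO|_{T\times_S X}}(\cO|_{T\times_S X}^r)) = \cEnd_{\cO|_{T\times_S X}}(\cO|_{T\times_S X}^r)$, so $\varphi$ is an algebra automorphism of $\Mat_r(\cO|_{T\times_S X})$, i.e. a section of $f_*(\PGL_r|_T)(X)$. I would then define the map of sheaves
\[
\cAut(T\to S,\Mat_r(\cO|_T)) \to f_*(\PGL_r|_T)\rtimes \cAut_S(T), \qquad (\Id_X,g,\varphi) \mapsto g\cdot \varphi,
\]
check it is bijective on each $X$-section (clear, by the analysis above), and verify it is a group homomorphism exactly as in Lemma \ref{lem_mod_torsor_gerbe}: using the composition law $(\Id_X,h,\psi)\circ(\Id_X,g,\varphi) = (\Id_X, h\circ g, g^*(\psi)\circ\varphi)$ in $\QAlg\flf^d$ and the defining relation $\psi\cdot g = g\cdot g^*(\psi)$ of the semidirect product recalled above the statement, one computes $(\Id_X,h,\psi)\circ(\Id_X,g,\varphi) \mapsto h\cdot g\cdot g^*(\psi)\cdot\varphi = h\cdot\psi\cdot g\cdot\varphi = (h\cdot\psi)(g\cdot\varphi)$.

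For part \ref{lem_Azu_torsor_gerbe_ii}, I would specialize \ref{lem_Azu_torsor_gerbe_i} to $T = \Sd$: the group $f_*(\PGL_r|_{\Sd})\rtimes \cAut_S(\Sd)$ was identified above the statement with $(\PGL_r)^d\rtimes\SS_d$ (the pushforward along the split cover splits as a $d$-fold product by the discussion of $\cM = (\cM_{j,i})$, and $\cAut_S(\Sd) = \SS_d$ with the stated permutation action on the factors). As in the proof of Lemma \ref{lem_mod_torsor_gerbe}\ref{lem_mod_torsor_gerbe_ii}, this should also be an instance of Lemma \ref{lem_split_aut} in Appendix \ref{app_semi_direct}, so I would simply cite that together with \ref{lem_Azu_torsor_gerbe_i}.

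I do not expect any real obstacle here; the only point requiring a moment of care is to be sure the morphisms in $\fAzu_r^{d\etale}$ (being cartesian morphisms of $\QAlg\flf^d$, hence with invertible $\varphi$-component) are exactly triples whose $\varphi$-part is an algebra isomorphism, so that the target group is $f_*(\PGL_r|_T)\rtimes\cAut_S(T)$ and not something with $\GL_{1,\Mat_r}$ in it — but this is immediate from the definitions. The rest is a mechanical transcription of Lemma \ref{lem_mod_torsor_gerbe}, replacing ``$\cO|_T^r$'' by ``$\Mat_r(\cO|_T)$'', ``$\Aut_{\cO}$'' by ``$\Aut_{\cO\text{-alg}}$'', and ``$\GL_r$'' by ``$\PGL_r$'' throughout. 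Analogously to the remarks following Lemma \ref{lem_mod_torsor_gerbe}, part \ref{lem_Azu_torsor_gerbe_ii} shows $\fAzu_r^{d\etale}$ is a gerbe (every object is locally isomorphic to $(\Sd\to S,\Mat_r(\cO|_{\Sd}))$) whose fiber over $S$, with this split object as basepoint, is the groupoid of twisted forms of $(\Sd,\Mat_r(\cO|_{\Sd}))$, equivalent to $(\PGL_r)^d\rtimes\SS_d$--torsors, so that isomorphism classes are classified by $H^1(S,(\PGL_r)^d\rtimes\SS_d)$.
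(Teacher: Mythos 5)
Your proposal is correct and follows exactly the route the paper takes: the paper's own proof of part (i) is simply ``same as Lemma \ref{lem_mod_torsor_gerbe}\ref{lem_mod_torsor_gerbe_i}, with algebra automorphisms in place of module automorphisms,'' and part (ii) is cited as a special case of Lemma \ref{lem_split_aut} or as following from (i); you have merely written out the mechanical transcription that the paper leaves implicit.
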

\begin{proof}
\noindent\ref{lem_Azu_torsor_gerbe_i}: Because $g^*(\cEnd_{\cO|_{T\times_S Y}}(\cO|_{T\times_S Y}^r)) = \cEnd_{\cO|_{T\times_S Y}}(\cO|_{T\times_S Y}^r)$, this proof is the same as the proof of Lemma \ref{lem_mod_torsor_gerbe}\ref{lem_mod_torsor_gerbe_i} except $\psi, \varphi \in f_*(\PGL_r|_T)$ will instead be algebra automorphisms.

\noindent\ref{lem_Azu_torsor_gerbe_ii}: This follows from \ref{lem_Azu_torsor_gerbe_i}.
\end{proof}

Via a similar argument as above, any object of $\fAzu_r^{d\etale}$ is locally isomorphic to $(\Sd\to S,\Mat_r(\cO|_{\Sd}))$. Since $(\Sd\to S,\Mat_r(\cO|_{\Sd}))$ is a global object, every fiber is non-empty. Hence, $\fAzu_r^{d\etale}$ is a gerbe. We choose $(\Sd\to S,\Mat_r(\cO|_{\Sd}))$ as the split object. We view $\fAzu_r^{d\etale}(S)$ as the groupoid of its twisted forms, see Proposition \ref{lem_gerbes_and_torsors}\ref{lem_gerbes_and_torsors_iv}. By Lemma \ref{lem_Azu_torsor_gerbe}\ref{lem_Azu_torsor_gerbe_ii}, this category is equivalent to the category of $(\PGL_r)^d\rtimes\SS_d$--torsors. The isomorphism classes in $\fAzu_r^{d\etale}(S)$ are classified by $H^1(S,(\PGL_r)^d\rtimes\SS_d)$. This stack is equivalent to the stack $\fF(\PGL_r)^{d\etale}$ of \eqref{AppenB_gerbe_tors_equiv}.

\subsection{Cohomology Maps}
Since we know from Example \ref{ex_split_norm} that the morphism $N_{\fMod}$ maps $(\Sd\to S,\cO_{\Sd}^r) \in \fMod_{r}^{d\etale}$ to $(\cO^r)^{\otimes d} \cong \cO^{(r^d)} \in \fMod_{r^d}$, functoriality yields an associated group homomorphism between the automorphism groups
\begin{equation}\label{eq_Nmod_group_hom}
N_{{\fMod},(\Sd\to S,\cO|_{\Sd}^r)}\colon (\GL_r)^d\rtimes \SS_d \to \GL_{r^d}
\end{equation}
and we seek to describe the resulting map $\widetilde{N_{\fMod}} \colon H^1(S,(\GL_r)^d\rtimes \SS_d) \to H^1(S,\GL_{r^d})$ on isomorphism classes.

First, we consider the Segre homomorphism 
\begin{align}
\Seg \colon \GL_r \times_S \dots \times_S \GL_r &\to \GL_{r^d} \label{eq_Seg}\\
(A_1, \dots, A_d) &\mapsto A_1 \otimes \dots \otimes A_d \nonumber
\end{align}
which we extend slightly. We view the isomorphism $\cO^{(r^d)}\cong(\cO^r)^{\otimes d}$ as an identification. Let $X\in \Sch_S$ by any scheme. Since $X$ may be disconnected, let $X=\bigsqcup_{i\in I}X_i$ be its decomposition into connected components. The $\cO(X)$--module $\cO^{(r^d)}(X)$ is spanned by elements of the form $x_1\otimes\ldots\otimes x_d$ with each $x_j=(x_{j,i})_{i\in I} \in \cO^r(X)=\prod_{i\in I}\cO^r(X_i)$.

For each $\sigma = (\sigma_i)_{i\in I} \in \SS_d(X)=\prod_{i\in I}\SS_d(\ZZ)$, we obtain a linear transformation of $\cO^{(r^d)}(X)$ by sending
\[
(x_{1,i})_{i\in I}\otimes\ldots\otimes (x_{d,i})_{i\in I} \mapsto (x_{\sigma_i^{-1}(1),i})_{i\in I}\otimes\ldots\otimes (x_{\sigma_i^{-1}(d),i})_{i\in I}.
\]
Here, $x_{k,i}$ is ending up in the $\sigma_i(k),i$--position. For example, if $X$ is connected, $d=3$, and $\sigma = (1\; 2\; 3)$ is a cycle, then
\[
x_1\otimes x_2 \otimes x_3 \mapsto x_3 \otimes x_1 \otimes x_2. 
\]
This yields an injective group homomorphism $j(X)\colon \SS_d(X) \inj \GL_{r^d}(X)$ and together for all $X\in \Sch_S$ these yield an injective morphism of group sheaves $j\colon \SS_d \inj \GL_{r^d}$. For clarity in the following computation we assume $X$ is connected, however the computation in the general case is the same but with added indices as above. For $A_i \in \GL_r(X)$ and  $\sigma \in \SS_d(X)$, we have 
\begin{align*}
&(\Seg(A_1,\ldots,A_d)\circ j(\sigma))(x_1\otimes\ldots\otimes x_d) \\
=& (A_1\otimes\ldots\otimes A_d)(x_{\sigma^{-1}(1)}\otimes\ldots\otimes x_{\sigma^{-1}(d)}) \\
=& A_1(x_{\sigma^{-1}(1)})\otimes\ldots\otimes A_d(x_{\sigma^{-1}(d)}) \\
=& j(\sigma) \big(A_{\sigma(1)}(x_1)\otimes\ldots\otimes A_{\sigma(d)}(x_d)\big) \\ 
=& (j(\sigma)\circ (A_{\sigma(1)}\otimes\ldots\otimes A_{\sigma(d)}))(x_1\otimes\ldots\otimes x_d) \\
=& (j(\sigma)\circ \Seg(A_{\sigma(1)},\ldots,A_{\sigma(d)}))(x_1\otimes\ldots\otimes x_d)
\end{align*}
which shows that $\Seg(A_1,\ldots,A_d)\circ j(\sigma) = j(\sigma)\circ \Seg(A_{\sigma(1)},\ldots,A_{\sigma(d)})$. Therefore, combining $\Seg$ with $j$ we get a well defined group homomorphism
\begin{equation}\label{eq_segre}
\Seg'\colon (\GL_r)^d \rtimes \SS_d \to  \GL_{r^d}.
\end{equation}

\begin{thm}\label{thm_norm_group_hom}
The group homomorphism
\[
N_{{\fMod},(\Sd\to S,\cO|_{\Sd}^r)} \colon (\GL_r)^d\rtimes \SS_d \to \GL_{r^d}
\]
of \eqref{eq_Nmod_group_hom} is the homomorphism $\Seg'$.
\end{thm}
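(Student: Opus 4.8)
The plan is to prove the statement by unwinding the explicit construction of the norm functor from Section~\ref{the_construction} and comparing it, component by component, with the formula defining $\Seg'$ in \eqref{eq_segre}. First I would recall that by Lemma~\ref{lem_mod_torsor_gerbe}\ref{lem_mod_torsor_gerbe_ii} the automorphism sheaf $\cAut(\Sd\to S,\cO|_{\Sd}^r)$ is identified with $(\GL_r)^d\rtimes\SS_d$, where a section over $X\in\Sch_S$ is a pair $(g,\varphi)$ with $g\in\SS_d(X)=\cAut_S(\Sd)(X)$ and $\varphi\in f_*(\GL_r|_{\Sd})(X)=(\GL_r)^d(X)$. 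Applying the functor $N_{\fMod}$ to such an automorphism of $(\Sd\to S,\cO|_{\Sd}^r)$ produces an automorphism of $N_{\Sd/S}(\cO|_{\Sd}^r)=(\cO^r)^{\otimes d}\cong\cO^{(r^d)}$, using Example~\ref{ex_split_norm}\ref{ex_split_norm_i}; the content of the theorem is that this automorphism equals $\Seg'(\varphi,g)$.

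The key step is to compute the image of each automorphism on generators, using the universal normic polynomial law as the bookkeeping device. Since $N_{\Sd/S}(\cO|_{\Sd}^r)$ is generated by the image of $\bnu$ (Lemma~\ref{lem_nu_generates}), and by Example~\ref{ex_split_norm}\ref{ex_split_norm_ii} the law $\bnu$ on $f_*(\cO|_{\Sd}^r)=(\cO^r)\times\cdots\times(\cO^r)$ sends $(e_1,\dots,e_d)\mapsto e_1\otimes\cdots\otimes e_d$, it suffices to check how $N_{\fMod}(g,\varphi)$ acts on elementary tensors $e_1\otimes\cdots\otimes e_d$. I would handle the two types of automorphism separately. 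For a ``diagonal'' automorphism $\varphi=(\varphi_1,\dots,\varphi_d)\in(\GL_r)^d(X)$ with $g=\id$: by Corollary~\ref{cor_morphism_from_universal}, $N_{\fMod}(\varphi)$ is the unique map with $N_{\fMod}(\varphi)\circ\bnu=\bnu\circ f_*(\varphi)$, and $f_*(\varphi)(e_1,\dots,e_d)=(\varphi_1(e_1),\dots,\varphi_d(e_d))$, so $N_{\fMod}(\varphi)(e_1\otimes\cdots\otimes e_d)=\varphi_1(e_1)\otimes\cdots\otimes\varphi_d(e_d)=\Seg(\varphi_1,\dots,\varphi_d)(e_1\otimes\cdots\otimes e_d)$. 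For a permutation $g=\sigma\in\SS_d(X)$ with $\varphi=\id$: one uses that the norm functor respects the isomorphism $\sigma\colon \Sd\times_S X\iso\Sd\times_S X$ via base change (Corollary~\ref{cor_unviersal_base_change}), together with the explicit description of $\sigma^*$ on modules given in Section~\ref{sec_aut_sheaves}; tracing through one finds $N_{\fMod}(\sigma)$ permutes the tensor factors precisely as the map $j(\sigma)$ in \eqref{eq_segre}, i.e. $e_1\otimes\cdots\otimes e_d\mapsto e_{\sigma^{-1}(1)}\otimes\cdots\otimes e_{\sigma^{-1}(d)}$. I would then note that since the general section $(g,\varphi)$ factors as a product of one of each type and both $N_{\fMod}$ and $\Seg'$ are group homomorphisms agreeing on the two types, they agree in general.

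The main obstacle I expect is the bookkeeping for the permutation part: carefully matching the convention by which $\sigma^*$ acts on $\cO|_{\Sd}$--modules (Section~\ref{sec_aut_sheaves}, where $\sigma$ sends the component $X_{j,i}$ to $X_{\sigma_i(j),i}$) against the convention in the definition of $j$ in \eqref{eq_segre} (where $x_{k,i}$ ``ends up in the $\sigma_i(k),i$ position''), so that the inverses land on the correct side and the semidirect-product compatibility $\Seg(A_1,\dots,A_d)\circ j(\sigma)=j(\sigma)\circ\Seg(A_{\sigma(1)},\dots,A_{\sigma(d)})$ matches the semidirect-product law $\varphi\cdot g=g\cdot g^*(\varphi)$ of Lemma~\ref{lem_mod_torsor_gerbe}. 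Concretely, I would verify on $X$ connected (the disconnected case being identical with indices added, exactly as in the paragraph preceding \eqref{eq_segre}) that for $(g,\varphi)=(\sigma,\id)\cdot(\id,\varphi)=(\sigma,\varphi)$ one gets $N_{\fMod}(\sigma,\varphi)=j(\sigma)\circ\Seg(\varphi_1,\dots,\varphi_d)=\Seg'(\varphi,\sigma)$, which after rearranging via the compatibility identity above is exactly the definition of $\Seg'$ on $(\GL_r)^d\rtimes\SS_d$. Everything else is routine diagram-chasing through Corollary~\ref{cor_morphism_from_universal}, Corollary~\ref{cor_unviersal_base_change}, and the restriction formula \eqref{eq_norm_restriction}.
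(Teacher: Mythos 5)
Your proposal is correct and takes essentially the same approach as the paper: both use the universal normic polynomial law from Example \ref{ex_split_norm}, base-change stability (Corollary \ref{cor_unviersal_base_change}), and the uniqueness from Corollary \ref{cor_morphism_from_universal} to identify $N_{\fMod}$ on automorphisms with $\Seg'$. The only cosmetic difference is that you decompose a general section $(g,\varphi)\in(\GL_r)^d\rtimes\SS_d$ into its diagonal and permutation parts and handle each separately before invoking the group-homomorphism property, whereas the paper computes the action of the full section $(A_1,\ldots,A_d)\sigma$ on $(\cO^r)^d$ at once and verifies that $\Seg'(\varphi)\circ\bnu = \bnu\circ\varphi$ directly.
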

\begin{proof}
Let $f \colon S^{\sqcup d} \to S$ be the canonical projection. By Example \ref{ex_split_norm}, $N_{\fMod}(S^{\sqcup d}\to S,(\cO|_{S^{\sqcup d}})^r) = \cO^{(r^d)}$ and the universal normic polynomial law is 
\begin{align*}
\bnu \colon f_*((\cO|_{S^{\sqcup d}})^r) = (\cO^r)^d &\to (\cO^r)^{\otimes d} = \cO^{(r^d)} \\
(x_1,\ldots,x_d) &\mapsto x_1\otimes \ldots \otimes x_d.
\end{align*}
Furthermore, $\bnu$ is stable under base change by Corollary \ref{cor_unviersal_base_change}. Therefore, $\bnu|_X$ also has the same universal property as $\bnu$.

Now, let $\varphi = (A_1,\ldots,A_d)\sigma \in ((\GL_r)^d\rtimes \SS_d)(X)$ be a section over some $X\in \Sch_S$. Denote by $f'\colon X^{\sqcup d} \to X$ the standard cover which is the pullback of $f$. Here as well we write as if $X$ is connected, but indices may be added for the general case. The automorphism $\varphi$ acts on $(\cO|_X^r)^d$ by
\[
(x_1,\ldots,x_d) \mapsto (A_1 x_{\sigma^{-1}(1)},\ldots,A_d x_{\sigma^{-1}(d)}).
\]
The composition $\bnu|_X \circ \varphi \colon f_*((\cO|_{X^{\sqcup d}})^r) \to \cO|_X^{(r^d)}$ is also a normic law and it is described by
\[
(x_1,\ldots,x_d) \mapsto A_1 x_{\sigma^{-1}(1)}\otimes\ldots\otimes A_d x_{\sigma^{-1}(d)}.
\]
It is therefore clear that the map $\Seg'(\varphi)$ makes the diagram below commute
\[
\begin{tikzcd}
f'_*((\cO|_{X^{\sqcup d}})^r) \arrow{d}{\varphi} \arrow{r}{\bnu} & \cO|_X^{(r^d)} \arrow{d}{\Seg'(\varphi)} \\
f'_*((\cO|_{X^{\sqcup d}})^r) \arrow{r}{\bnu} & \cO|_X^{(r^d)}
\end{tikzcd}
\]
and therefore, by Corollary \ref{cor_morphism_from_universal}, it is the unique such $\cO|_X$--module isomorphism which does so. This means $N_{X^{\sqcup d}/X}(\varphi)=\Seg'(\varphi)$. Since we have that $N_{\fMod,(\Sd\to S,\cO|_{\Sd}^r)}(X) = N_{X^{\sqcup d}/X}$ on morphisms by definition, we conclude that $N_{\fMod,(\Sd\to S,\cO|_{\Sd}^r)}=\Seg'$ as natural transformations, as desired.
\end{proof}

\begin{cor}\label{cor_norm_twist}
The map on cohomology induced by the Segre homomorphism $\Seg'$ of \eqref{eq_segre} is
\begin{align*}
\widetilde{\Seg'} \colon H^1(S,(\GL_r)^d\rtimes \SS_d) &\to H^1(S,\GL_{r^d})\\
[(T\to S,\cM)] &\mapsto [N_{T/S}(\cM)].
\end{align*}
\end{cor}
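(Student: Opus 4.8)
The plan is to obtain this as a direct consequence of Theorem~\ref{thm_norm_group_hom} together with the general principle, Lemma~\ref{lem_loos_cohom}, describing the map on nonabelian cohomology induced by a morphism of gerbes. First I would observe that $\fMod_r^{d\etale}$ and $\fMod_{r^d}$ are both gerbes over $\Sch_S$ (as recorded after Lemma~\ref{lem_mod_torsor_gerbe} and in Section~\ref{sec_aut_sheaves}) and that the restriction $N_\fMod \colon \fMod_r^{d\etale} \to \fMod_{r^d}$ of \eqref{eq_N_Mod_Azu} is a morphism of gerbes. Take as basepoint the split object $s = (\Sd \to S, \cO|_{\Sd}^r) \in \fMod_r^{d\etale}(S)$. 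By Example~\ref{ex_split_norm}, $N_\fMod(s)$ is (canonically isomorphic to) the split object $(S, \cO^{(r^d)}) \in \fMod_{r^d}(S)$, so the morphism $N_\fMod$ at the basepoint induces a group sheaf homomorphism between the two split automorphism sheaves.

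Next I would invoke the identifications $\cAut_{\fMod_r^{d\etale}}(s) \cong (\GL_r)^d \rtimes \SS_d$ of Lemma~\ref{lem_mod_torsor_gerbe}\ref{lem_mod_torsor_gerbe_ii} and $\cAut_{\fMod_{r^d}}(N_\fMod(s)) = \GL_{r^d}$ from Section~\ref{sec_aut_sheaves}. Under these identifications Theorem~\ref{thm_norm_group_hom} tells us precisely that the induced homomorphism $N_{\fMod,s} \colon (\GL_r)^d \rtimes \SS_d \to \GL_{r^d}$ is the homomorphism $\Seg'$ of \eqref{eq_segre}. This is the only nontrivial input; everything else is formal.

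Now Lemma~\ref{lem_loos_cohom}, applied to the morphism of gerbes $N_\fMod$ with basepoint $s$, says that the map on first cohomology induced by $N_{\fMod,s} = \Seg'$ is
\[
H^1(S,(\GL_r)^d \rtimes \SS_d) \to H^1(S,\GL_{r^d}), \qquad [s'] \mapsto [N_\fMod(s')],
\]
where by Proposition~\ref{lem_gerbes_and_torsors}\ref{lem_gerbes_and_torsors_iii} and Remark~\ref{rem_gerbe_forms} we view $H^1(S,(\GL_r)^d \rtimes \SS_d)$ as the set of isomorphism classes of the groupoid $\fMod_r^{d\etale}(S)$, and likewise $H^1(S,\GL_{r^d})$ as the isomorphism classes of finite locally free $\cO$--modules of constant rank $r^d$. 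Unwinding these identifications, a class $[s']$ corresponds to $[(T\to S,\cM)]$ with $T\to S$ a degree $d$ \'etale cover and $\cM$ a locally free $\cO|_T$--module of constant rank $r$, and since $N_\fMod$ is by construction the functor $N_{T/S}$ on objects in the fiber over $S$, one has $[N_\fMod(s')] = [N_{T/S}(\cM)]$, which is the asserted formula. The only point demanding any care is the compatibility of the two descriptions of the relevant $H^1$ sets --- as torsor classes and as isomorphism classes in a gerbe --- but this is exactly Proposition~\ref{lem_gerbes_and_torsors}, so the corollary is a bookkeeping consequence of results already established, and I expect no genuine obstacle.
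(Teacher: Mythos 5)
Your argument is exactly the paper's proof, just written out in more detail: both use Theorem~\ref{thm_norm_group_hom} to identify the induced homomorphism on automorphism sheaves of the split objects with $\Seg'$, and then apply Lemma~\ref{lem_loos_cohom} to the morphism of gerbes $N_\fMod$ to read off the map on $H^1$. Correct and faithful to the paper's approach.
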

\begin{proof}
Since $\fMod_r^{d\etale}$ and $\fMod_{r^d}$ are gerbes and we know by Theorem \ref{thm_norm_group_hom} that $N_{\fMod,(\Sd\to S,\cO|_{\Sd}^r)} = \Seg'$, this follows by applying Lemma \ref{lem_loos_cohom}.
\end{proof}

Under the Segre homomorphism, the center of each $\GL_r$ maps into the center of $\GL_{r^d}$. The center of $\GL_r$ is also the kernel of the canonical projection $\GL_r \to \PGL_r$ and so there exists a group homomorphism $\PSeg'$ which makes the diagram
\begin{equation}\label{eq_seg_prime_und}
\begin{tikzcd}
(\GL_r)^d \rtimes \SS_d \ar[r,"\Seg'"] \ar[d] & \GL_{r^d} \ar[d]\\
(\PGL_r)^d \rtimes \SS_d \ar[r,"\PSeg'"] & \PGL_{r^d}.
\end{tikzcd}
\end{equation}
commute. By viewing an algebra isomorphism in $\PGL_r$ as simply a module isomorphism of a locally free $\cO$--module of rank $r^2$, we get a canonical inclusion $\PGL_r \inj \GL_{r^2}$ which fits into the commutative diagram
\begin{equation}\label{eq_PSeg_GLr2}
\begin{tikzcd}
(\PGL_r)^d \rtimes \SS_d \arrow[hookrightarrow]{r} \arrow{d}{\PSeg'} & (\GL_{r^2})^d \rtimes \SS_d \arrow{d}{\Seg'} \\
\PGL_{r^d} \arrow[hookrightarrow]{r} & \GL_{r^{2d}}.
\end{tikzcd}
\end{equation}

\begin{cor}\label{cor_norm_azu_group_hom}
Let $N_{\fAzu}$ be the morphism of \eqref{eq_N_Mod_Azu}. We know the automorphism group of $(\Sd\to S,\Mat_r(\cO|_{\Sd}))\in \fAzu_{r}^{d\etale}$ is $(\PGL_r)^d\rtimes \SS_d$ by Lemma \ref{lem_Azu_torsor_gerbe}\ref{lem_Azu_torsor_gerbe_ii} and we have that $N_{\fAzu}(\Sd\to S,\Mat_r(\cO|_{\Sd})) \cong \Mat_{(r^d)}(\cO)$. Therefore, we get a group homomorphism
\[
N_{\fAzu,(\Sd\to S,\Mat_r(\cO|_{\Sd}))} \colon (\PGL_r)^d\rtimes \SS_d \to \PGL_{r^d}.
\]
This homomorphism is $\PSeg'$.
\end{cor}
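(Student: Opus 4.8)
The plan is to reduce this to Theorem~\ref{thm_norm_group_hom} by exploiting the compatibility of the norm functor with the $\cEnd$--construction recorded in Lemma~\ref{lem_norm_etale_neutral_azu}. Write $s=(\Sd\to S,\cO|_{\Sd}^r)\in\fMod_r^{d\etale}(S)$ for the split object and $E(s)=(\Sd\to S,\Mat_r(\cO|_{\Sd}))\in\fAzu_r^{d\etale}(S)$. First I would introduce the morphisms of stacks
\[
E\colon \fMod_r^{d\etale}\to\fAzu_r^{d\etale},\qquad (T'\to T,\cM)\mapsto \bigl(T'\to T,\cEnd_{\cO|_{T'}}(\cM)\bigr),
\]
and $E'\colon \fMod_{r^d}\to\fAzu_{r^d}$, $(X,\cM)\mapsto(X,\cEnd_{\cO}(\cM))$, both acting on morphisms by conjugation; since all four stacks are fibered in groupoids, these are morphisms of fibered categories by the remark following Lemma~\ref{lem_cartesian_groupoids}. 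They send $s\mapsto E(s)$ and the split object $(S,\cO^{(r^d)})$ to $(S,\Mat_{(r^d)}(\cO))$, and, because an automorphism of a module induces conjugation on its endomorphism algebra while the $\SS_d$--factor is carried to itself, the induced maps on automorphism sheaves of the split objects are the canonical projections $\mathrm{proj}\colon (\GL_r)^d\rtimes\SS_d\to(\PGL_r)^d\rtimes\SS_d$ and $\mathrm{proj}'\colon \GL_{r^d}\to\PGL_{r^d}$.

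Next I would use Lemma~\ref{lem_norm_etale_neutral_azu} to assemble a natural isomorphism of functors $\fMod_r^{d\etale}\to\fAzu_{r^d}$,
\[
\Psi\colon N_{\fAzu}\circ E \iso E'\circ N_{\fMod},
\]
whose component at $(T'\to T,\cM)$ is the isomorphism $N_{T'/T}\bigl(\cEnd_{\cO|_{T'}}(\cM)\bigr)\iso\cEnd_{\cO}\bigl(N_{T'/T}(\cM)\bigr)$; naturality is automatic since $\Psi$ is determined by the universal normic polynomial law $\bnu$ and the normic law $\boeta$ of Lemma~\ref{lem_Hom_normic}, both functorial. Evaluating $\Psi$ at $s$ and using Example~\ref{ex_split_norm}, which identifies $N_{\Sd/S}(\cEnd_{\cO|_{\Sd}}(\cO|_{\Sd}^r))$ with $\bigl(\cEnd_{\cO}(\cO^r)\bigr)^{\otimes d}$ and its universal normic law with $(\varphi_1,\dots,\varphi_d)\mapsto\varphi_1\otimes\dots\otimes\varphi_d$, together with the explicit shape of $\boeta$ over the split cover, one finds that $\Psi_s$ is the canonical identification $\bigl(\cEnd_{\cO}(\cO^r)\bigr)^{\otimes d}\iso\cEnd_{\cO}\bigl((\cO^r)^{\otimes d}\bigr)$, that is, precisely the identification $\Mat_{(r^d)}(\cO)=\Mat_{r^d}(\cO)$ built into the definitions of $\Seg'$ and $\PSeg'$. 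Hence $\Psi_s=\id$ once those identifications are in force.

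Finally, naturality of $\Psi$ applied to a section $\alpha\in\cAut(s)(X)$ gives
\[
N_{\fAzu,E(s)}\bigl(E_s(\alpha)\bigr)=\Psi_s^{-1}\circ E'_{N_{\fMod}(s)}\bigl(N_{\fMod,s}(\alpha)\bigr)\circ\Psi_s .
\]
Substituting $E_s=\mathrm{proj}$, $N_{\fMod,s}=\Seg'$ from Theorem~\ref{thm_norm_group_hom}, $E'_{N_{\fMod}(s)}=\mathrm{proj}'$ and $\Psi_s=\id$ turns this into $N_{\fAzu,E(s)}\circ\mathrm{proj}=\mathrm{proj}'\circ\Seg'$, which in turn equals $\PSeg'\circ\mathrm{proj}$ by the commutative square~\eqref{eq_seg_prime_und}. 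Since $\GL_r\to\PGL_r$ is an fppf epimorphism of group sheaves, so is $\mathrm{proj}$, and cancelling it on the right yields $N_{\fAzu,(\Sd\to S,\Mat_r(\cO|_{\Sd}))}=\PSeg'$, as claimed. The step demanding the most care is the equality $\Psi_s=\id$: if one does not pin down that the isomorphism of Lemma~\ref{lem_norm_etale_neutral_azu} over the split cover coincides with the canonical tensor--factor identification, one obtains only $N_{\fAzu,E(s)}=\Inn_{\Psi_s^{-1}}\circ\PSeg'$, so bookkeeping of these identifications is the crux; alternatively one can bypass the stack--level argument and, exactly as in the proof of Theorem~\ref{thm_norm_group_hom}, verify over each $X\in\Sch_S$ via Corollary~\ref{cor_morphism_from_universal} and Lemma~\ref{lem_norm_etale_neutral_azu} that $N_{X^{\sqcup d}/X}$ carries the algebra automorphism attached to $(\bar A_1,\dots,\bar A_d)\sigma$ to $\PSeg'\bigl((\bar A_1,\dots,\bar A_d)\sigma\bigr)$.
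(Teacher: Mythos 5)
Your proof is correct, and it takes a genuinely different path from the paper. The paper exploits the fact that $N_{\fAzu}$ is \emph{literally} the restriction of $N_{\fMod}$ along the forgetful functor $\fAzu_r^{d\etale}\to\fMod_{r^2}^{d\etale}$ (cf. Remark \ref{rem_QAlg}): an automorphism $\varphi\in(\PGL_r)^d\rtimes\SS_d$ is pushed \emph{up} along the inclusion into $(\GL_{r^2})^d\rtimes\SS_d$ as a module automorphism $\varphi'$, Theorem \ref{thm_norm_group_hom} gives $N_{\fMod}(\varphi')=\Seg'_{r^2}(\varphi')$, and then the commuting square \eqref{eq_PSeg_GLr2} together with injectivity of $\PGL_{r^d}\hookrightarrow\GL_{r^{2d}}$ identifies this with $\PSeg'(\varphi)$. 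You go the opposite way: you push \emph{down} along the projection $\GL_r\to\PGL_r$ using the $\cEnd$-morphisms, which requires you to introduce the natural transformation $\Psi\colon N_{\fAzu}\circ E\iso E'\circ N_{\fMod}$ from Lemma \ref{lem_norm_etale_neutral_azu}, to verify its naturality, and to pin down $\Psi_s=\id$ under the standard identifications, and you finish by cancelling the fppf epimorphism $\mathrm{proj}$ using diagram \eqref{eq_seg_prime_und} rather than \eqref{eq_PSeg_GLr2}. The paper's route is shorter since it needs no natural isomorphism $\Psi$ to be checked (the forgetful functor commutes with $N$ on the nose), while yours makes the role of the $\cEnd$-construction and of Lemma \ref{lem_norm_etale_neutral_azu} more transparent and matches the pattern used later in the proof of Proposition \ref{prop_brauer_group}; you also correctly flag the bookkeeping point that one must verify $\Psi_s$ is the tensor-factor identification rather than merely some isomorphism, which is the same kind of implicit basis-bookkeeping the paper elides.
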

\begin{proof}
By Lemma \ref{lem_norm_etale_neutral_azu} and Example \ref{ex_split_norm}, we have that
\[
N_{\fAzu}\big((S^{\sqcup d}\to S,\Mat_r(\cO|_{S^{\sqcup d}})\big)\cong \Mat_{(r^d)}(\cO).
\]
Let $\varphi \in (\PGL_r)^d\rtimes \SS_d$. Let $\varphi'$ denote this isomorphism viewed as a morphism in $\fMod_{r^2}^{d\etale}$. We then have
\[
N_{\fAzu,(\Sd\to S,\Mat_r(\cO|_{\Sd}))}(\varphi) = N_{\fMod,(\Sd\to S,\Mat_r(\cO|_{\Sd}))}(\varphi') = \Seg'(\varphi').
\]
where the second equality is given by Theorem \ref{thm_norm_group_hom}. However, due to diagram \eqref{eq_PSeg_GLr2}, this is simply $\PSeg'(\varphi)$ viewed as a module morphism. Thus, $N_{\fAzu,(\Sd\to S,\Mat_r(\cO|_{\Sd}))}= \PSeg'$ as claimed.
\end{proof}

\begin{cor}\label{cor_norm_azu_cohomology}
The map on cohomology induced by the morphism $\PSeg'$ of \eqref{eq_seg_prime_und} is
\begin{align*}
\widetilde{\PSeg'} \colon H^1(S,(\PGL_r)^d \rtimes \SS_d) &\to H^1(S,\PGL_{r^d}) \\
[(T\to S,\cA)] &\mapsto [N_{T/S}(\cA)].
\end{align*}
\end{cor}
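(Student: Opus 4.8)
The plan is to obtain this corollary from Lemma~\ref{lem_loos_cohom}, in direct parallel with the module-level argument used for Corollary~\ref{cor_norm_twist}. Recall from Section~\ref{sec_aut_sheaves} that $\fAzu_r^{d\etale}$ and $\fAzu_{r^d}$ are both gerbes, with distinguished split objects $(\Sd\to S,\Mat_r(\cO|_{\Sd}))$ and $(S,\Mat_{r^d}(\cO))$ respectively. By Lemma~\ref{lem_Azu_torsor_gerbe}\ref{lem_Azu_torsor_gerbe_ii} we have $\cAut(\Sd\to S,\Mat_r(\cO|_{\Sd}))\cong (\PGL_r)^d\rtimes\SS_d$ and $\cAut(S,\Mat_{r^d}(\cO))\cong\PGL_{r^d}$, so Proposition~\ref{lem_gerbes_and_torsors}\ref{lem_gerbes_and_torsors_iii} identifies $H^1(S,(\PGL_r)^d\rtimes\SS_d)$ with the pointed set of isomorphism classes in $\fAzu_r^{d\etale}(S)$ and $H^1(S,\PGL_{r^d})$ with the isomorphism classes in $\fAzu_{r^d}(S)$, with the split objects as basepoints.

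Next I would invoke that, by \eqref{eq_N_Mod_Azu}, the norm restricts to a morphism of gerbes $N_{\fAzu}\colon\fAzu_r^{d\etale}\to\fAzu_{r^d}$ (this restriction is legitimate because $N\alg$ of \eqref{eq_normalg_stack_morphism} respects quasi-coherent algebras, and Lemma~\ref{lem_ferrand} guarantees that a degree-$r$ Azumaya algebra over a degree-$d$ \'etale cover is sent to a degree-$r^d$ Azumaya algebra). By Corollary~\ref{cor_norm_azu_group_hom}, the group-sheaf homomorphism $N_{\fAzu,(\Sd\to S,\Mat_r(\cO|_{\Sd}))}$ induced on the automorphism sheaf of the split object is exactly the homomorphism $\PSeg'$ of \eqref{eq_seg_prime_und}. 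Applying Lemma~\ref{lem_loos_cohom} with $\varphi=N_{\fAzu}$ and $s=(\Sd\to S,\Mat_r(\cO|_{\Sd}))$ then gives that the map on first cohomology induced by $\varphi_s=\PSeg'$ is $[(T\to S,\cA)]\mapsto[N_{\fAzu}(T\to S,\cA)]=[N_{T/S}(\cA)]$, under the identifications above; since $N_{\fAzu}$ sends the split object to the split object, this is moreover a map of pointed sets, consistent with the statement.

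There is essentially no serious obstacle: the work has all been done in Corollary~\ref{cor_norm_azu_group_hom} (which itself rests on Theorem~\ref{thm_norm_group_hom} and Lemma~\ref{lem_norm_etale_neutral_azu}) and in the gerbe machinery of Proposition~\ref{lem_gerbes_and_torsors} and Lemma~\ref{lem_loos_cohom}. The one point meriting a sentence of care is verifying that the hypotheses of Lemma~\ref{lem_loos_cohom} are met, namely that both source and target are genuinely gerbes and that $N_{\fAzu}$ is a morphism of fibered categories preserving cartesian arrows; both are immediate from the definitions of $\fAzu_r^{d\etale}$ and $\fAzu_{r^d}$ recalled at the start of Section~\ref{cohomological_description} together with the fact that $N\alg$ is a stack morphism. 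Everything else is bookkeeping with the equivalences of Proposition~\ref{lem_gerbes_and_torsors}\ref{lem_gerbes_and_torsors_iv}.
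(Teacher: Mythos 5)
Your proposal is correct and follows exactly the same route as the paper: apply Lemma~\ref{lem_loos_cohom} to the morphism of gerbes $N_{\fAzu}\colon \fAzu_r^{d\etale}\to\fAzu_{r^d}$, using Corollary~\ref{cor_norm_azu_group_hom} to identify the induced group homomorphism as $\PSeg'$. The paper's proof is a one-liner citing precisely these two results; your extra detail about the gerbe identifications and basepoints is correct bookkeeping but not a deviation.
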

\begin{proof}
This follows from Lemma \ref{lem_loos_cohom} because of the result of Corollary \ref{cor_norm_azu_group_hom}.
\end{proof}

The gerbes $\fAzu_r^{d\etale}$ and $\fAzu_{r^d}$ fit into the commutative diagram of stack morphisms
\[
\begin{tikzcd}
\fAzu_r^{d\etale} \arrow{r} \arrow{d}{N_{\fAzu}} & \fMod_{r^2}^{d\etale} \arrow{d}{N_{\fMod}} \\
\fAzu_{r^d} \arrow{r} & \fMod_{r^{2d}}
\end{tikzcd}
\]
where the horizontal maps are the canonical inclusions (equivalently, forgetful functors). By Corollary \ref{cor_norm_twist} and Corollary \ref{cor_norm_azu_cohomology}, both the above diagram and \eqref{eq_PSeg_GLr2} induce the same diagram on cohomology, namely
\[
\begin{tikzcd}
H^1(S,(\PGL_r)^d \rtimes \SS_d) \arrow{r} \arrow{d}{\widetilde{\PSeg'}} & H^1(S,(\GL_{r^2})^d \rtimes \SS_d) \arrow{d}{\widetilde{\Seg'}} \\
H^1(S,\PGL_{r^d}) \arrow{r} & H^1(S,\GL_{r^{2d}})
\end{tikzcd}
\]
where the horizontal maps send isomorphism classes of algebras to their isomorphism class simply as modules.

\subsection{The Norm and the Brauer Group}
In this section we fix a degree $d$ \'etale cover $f\colon T \to S$ of our base scheme and we describe how the functor $N_{T/S}$ acts on the Brauer classes of Azumaya algebras. We work with Brauer-Grothendieck groups as in \cite{CTS}, which are second cohomology groups. For example, these are denoted $\Br(S)= H^2\fppf(S,\GG_m)$ and $\Br(T) = H^2\fppf(T,\GG_m|_T)$. This is in contrast to \cite[3.6.1.1]{CF} where the notation ``$\Br(S)$" is used for the Brauer-Azumaya group consisting of classes of Azumaya algebras up to Brauer equivalence. These two notions are not isomorphic in general, but they are isomorphic over fields or more broadly in the case covered by Gabber's Theorem, see \cite[4.2.1]{CTS}.

We will show in Proposition \ref{prop_brauer_group}\ref{prop_brauer_group_i} that the norm functor is compatible with the \emph{trace map} $H^2\fppf(T,\GG_m|_T) \to H^2\fppf(S,\GG_m)$ of \cite[IX.5.1.3]{SGA4}. The work in \cite{SGA4} uses \'etale cohomology, but by \cite[2.2.5.15]{CF} or \cite[III.3.9]{M}, this agrees with flat cohomology since $\GG_m$ is smooth. The trace map is defined as follows. First, as noted in \cite[IX.5.1]{SGA4}, since $f\colon T\to S$ is finite \'etale, there is an isomorphism $H^2\fppf(T,\GG_m|_T) \cong H^2\fppf(S,f_*(\GG_m|_T))$. Then, the product map $\mu \colon \GG_m^d \to \GG_m$ can have its domain twisted by the $\SS_d$--torsor $\cIsom(S^{\sqcup d},T)$ as in Lemma \ref{lem_twist_sheaf}, which yields the trace map $\tr \colon f_*(\GG_m|_T) \to \GG_m$ of \cite[IX.5.1.2]{SGA4}. This trace in turn induces the desired trace map between cohomology.

Further, for any group sheaf $\bG$ over $S$, we have a restriction map, $\res \colon \bG \to f_*(\bG|_T)$, which is the diagonal embedding $\bG \inj \bG^d$ twisted by $\cIsom(S^{\sqcup d},T)$. Alternatively, for $X\in \Sch_S$ there are the restriction maps 
\begin{align*}
\bG(X) &\to \bG(T\times_S X)=f_*(\bG|_T)(X) \\
\varphi &\mapsto \varphi|_{T\times_S X}
\end{align*}
which are part of the definition of the sheaf. These homomorphisms assemble into the restriction map $\res \colon \bG \to f_*(\bG|_T)$. Since $\GG_m$ is abelian, we have by \cite[IX.5.1.4]{SGA4} that the composition
\[
\GG_m \xrightarrow{\res} f_*(\GG_m|_T) \xrightarrow{\tr} \GG_m
\]
is the ``multiplication" by $d$ map, i.e., $x \mapsto x^d$ since $\GG_m$ is written multiplicatively. In turn, the composition on cohomology
\begin{equation}\label{eq_comp_d}
H^2(S,\GG_m) \xrightarrow{\res} H^2(S,f_*(\GG_m|_T)) \xrightarrow{\tr} H^2(S,\GG_m)
\end{equation}
is also multiplication by $d$.

We now define two new stacks and compute some of their automorphism sheaves. First, let $\fTMod_r$ be the stack with
\begin{enumerate}[label={\rm(\roman*)}]
\item objects $(X,\cM)$ where $X\in \Sch_S$ and $\cM$ is a locally free $\cO|_{T\times_S X}$--module of constant rank $r$,
\item morphisms $(g,\varphi)\colon (Y,\cM_1)\to (X,\cM_2)$ where $g\colon Y\to X$ is an $S$--scheme morphism and $\varphi \colon \cM_1 \iso \cM_2|_{T\times_S Y}$ is a $\cO|_{T\times_S Y}$--module isomorphism, where $\cM_2$ is restricted along the map $T\times_S Y \to T\times_S X$ which is the pullback of $g$, and
\item structure functor $(X,\cM) \mapsto X$ and $(g,\varphi)\mapsto g$. 
\end{enumerate}
It is clear that $\fTMod_r$ is fibered in groupoids and since two locally free modules of the same rank are locally isomorphic, it is also a gerbe. The fiber $\fTMod_r(S)$ is the groupoid of locally free $\cO|_T$--modules of constant rank $r$. We designate $(S,\cO|_T^r) \in \fTMod_r(S)$ as the split object.
\begin{lem}\label{lem_TMod_aut}
Let $f\colon T \to S$ be a degree $d$ \'etale cover and let $\fTMod_r$ be defined as above. Consider an object $(S,\cM) \in \fTMod_r(S)$.
\begin{enumerate}[label={\rm(\roman*)}]
\item \label{lem_TMod_aut_i} We have that
\[
\cAut(S,\cM) \cong f_*(\GL(\cM)).
\]
\item \label{lem_TMod_aut_ii} In particular,
\[
\cAut(S,\cO|_T^r) \cong f_*(\GL_r|_T).
\]
\end{enumerate}
\end{lem}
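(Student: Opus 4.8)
The plan is to reduce the claim about $\cAut(S,\cM)$ inside the stack $\fTMod_r$ to the corresponding statement in the ordinary stack $\fMod_r$ over $T$, transported across $f_*$. First I would unwind the definitions: a section of $\cAut(S,\cM)$ over some $X\in\Sch_S$ is by construction a morphism $(g,\varphi)$ in $\fTMod_r$ with $g=\Id_X$, hence simply an $\cO|_{T\times_S X}$--module automorphism $\varphi\colon \cM|_{T\times_S X}\iso\cM|_{T\times_S X}$ (no restriction happens since $g=\Id_X$ pulls back to $\Id_{T\times_S X}$). On the other side, by the very definition of the pushforward of a sheaf on $\Sch_T$, we have
\[
f_*(\cAut_{\cO|_T}(\cM))(X) = \cAut_{\cO|_T}(\cM)(T\times_S X) = \Aut_{\cO|_{T\times_S X}}(\cM|_{T\times_S X}),
\]
so the two functors literally have the same sections over every $X$. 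I would then check that the bijection is natural in $X$ — i.e.\ that it intertwines the restriction maps on both sides — which is immediate because in both cases restriction along $g\colon Y\to X$ is just pullback of module automorphisms along the induced map $T\times_S Y\to T\times_S X$, and that it is a group homomorphism, which is clear since composition of automorphisms is preserved. This establishes \ref{lem_TMod_aut_i}. I should also note here that $\cAut_{\cO|_T}(\cM)=\GL_{1,\cEnd_{\cO|_T}(\cM)}$ in the notation of Section~\ref{sec_algebraic_groups}, which is what is abbreviated $\GL(\cM)$ in the statement.

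For part \ref{lem_TMod_aut_ii}, I would simply specialize \ref{lem_TMod_aut_i} to $\cM=\cO|_T^r$: then $\cAut_{\cO|_T}(\cO|_T^r)=\GL_r|_T$ by definition of $\GL_r$ as the automorphism sheaf of the free rank--$r$ module, so $\cAut(S,\cO|_T^r)\cong f_*(\GL_r|_T)$. There is a minor subtlety to address: $\fTMod_r$ and $\fMod_r$ deal with modules of \emph{constant} rank, but $f\colon T\to S$ being finite \'etale of degree $d$ means $T\times_S X\to X$ is again finite \'etale, and the locus where a finite locally free module has a given rank is open and closed, so requiring constant rank $r$ for $\cM$ on $\Sch_T$ is compatible with the pushforward description; no adjustment is needed for the automorphism sheaf identification, which is a purely sheaf-theoretic statement about sections over a fixed base.

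The argument is essentially formal, so there is no serious obstacle; the only thing that requires a little care is making sure the equality of sections is genuinely an equality of functors on $\Sch_S$ — that the restriction maps match — and that I have correctly identified what ``$\cAut(S,\cM)$'' means in $\fTMod_r$, namely the $\cIsom$-sheaf built from cartesian morphisms over $\Id$, which because $\fTMod_r$ is fibered in groupoids (every morphism cartesian, by Lemma~\ref{lem_cartesian_groupoids}) coincides with the sheaf of all automorphisms of $\cM$ in the fiber. Once that bookkeeping is in place the proof is a one-line unwinding, and it is directly parallel to the proof of Lemma~\ref{lem_mod_torsor_gerbe}\ref{lem_mod_torsor_gerbe_i} with the semidirect factor $\cAut_S(T)$ removed.
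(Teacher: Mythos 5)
Your proof is correct and takes essentially the same approach as the paper: unwind what a morphism in the fiber $\fTMod_r(X)$ is (the scheme component must be $\Id_X$, leaving just a module automorphism of $\cM|_{T\times_S X}$), match this against the definition of $f_*(\GL(\cM))(X)$, and check compatibility with restrictions and group structure. The paper's proof is a bit terser and doesn't dwell on the constant-rank point (which, as you observe, is not actually an issue), but the argument is the same.
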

\begin{proof}
\noindent\ref{lem_TMod_aut_i}: For a scheme $X\in \Sch_S$, a section $(g,\varphi)\in \cAut(S,\cM)(X)$ is an automorphism of $(X,\cM|_{T\times_S X})$ in $\fTMod_r(X)$. Since it is a morphism in the fiber $\fTMod_r(X)$, it must have $g=\Id_X$ and therefore $\varphi \colon \cM|_{T\times_S X} \iso \cM_{T\times_S X}$ is an automorphism in $\GL(\cM)(T\times_S X)=f_*(\GL(\cM))(X)$. It is clear this yields an isomorphism of groups $\cAut(S,\cM)(X) \cong f_*(\GL(\cM))(X)$ and that these assemble into an automorphism of sheaves as claimed.

\noindent\ref{lem_TMod_aut_ii}: This is immediate from \ref{lem_TMod_aut_i} since $\GL(\cO|_T^r)=\GL_r|_T$.
\end{proof}

Second, we define an analogous stack for Azumaya algebras. Let $\fTAzu_r$ be the substack of $\fTMod_{r^2}$ with
\begin{enumerate}[label={\rm(\roman*)}]
\item objects $(X,\cA)$ where $X\in \Sch_S$ and $\cA$ is an Azumaya $\cO|_{T\times_S X}$--algebra of constant degree $r$,
\item morphisms $(g,\varphi)\colon (Y,\cA_1)\to (X,\cA_2)$ where $g\colon Y\to X$ is an $S$--scheme morphism and $\varphi \colon \cA_1 \iso \cA_2|_{T\times_S Y}$ is an $\cO|_{T\times_S Y}$--algebra isomorphism.
\end{enumerate}
Since all such Azumaya algebras are locally isomorphic, $\fTAzu_r$ is a gerbe as well. The fiber $\fTAzu_r(S)$ is the groupoid of degree $r$ Azumaya $\cO|_T$--algebras.
\begin{lem}\label{lem_TAzu_aut}
Let $f\colon T \to S$ be a degree $d$ \'etale cover and let $\fTAzu_r$ be defined as above. Consider an object $(S,\cA) \in \fTAzu_r(S)$.
\begin{enumerate}[label={\rm(\roman*)}]
\item \label{lem_TAzu_aut_i} We have that
\[
\cAut(S,\cA) \cong f_*(\PGL(\cA)).
\]
\item \label{lem_TAzu_aut_ii} In particular,
\[
\cAut(S,\Mat_r(\cO|_T)) \cong f_*(\PGL_r|_T).
\]
\end{enumerate}
\end{lem}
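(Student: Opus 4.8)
The plan is to mirror the proof of Lemma \ref{lem_TMod_aut} essentially verbatim, replacing module automorphisms by algebra automorphisms throughout. First I would unwind the definition of $\cAut(S,\cA)$ as a sheaf on $\Sch_S$: for a scheme $X\in\Sch_S$, a section of $\cAut(S,\cA)$ over $X$ is by definition an automorphism of the object $(X,\cA|_{T\times_S X})$ in the fiber $\fTAzu_r(X)$. By the definition of the morphisms of $\fTAzu_r$, such a morphism is a pair $(g,\varphi)$ with $g$ an $S$--scheme endomorphism of $X$ and $\varphi$ an $\cO|_{T\times_S X}$--algebra isomorphism; since the morphism lies in the fiber over $X$, necessarily $g=\Id_X$, and hence it is of the form $(\Id_X,\varphi)$ with $\varphi\colon \cA|_{T\times_S X}\iso \cA|_{T\times_S X}$ an $\cO|_{T\times_S X}$--algebra automorphism. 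By the definition of the projective general linear group of an Azumaya algebra in Section \ref{sec_algebraic_groups}, this $\varphi$ is precisely a section of $\PGL(\cA)$ over $T\times_S X$, and by the definition of the pushforward we have $\PGL(\cA)(T\times_S X)=f_*(\PGL(\cA))(X)$.

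Next I would check that the assignment $(\Id_X,\varphi)\mapsto \varphi$ gives a group isomorphism $\cAut(S,\cA)(X)\iso f_*(\PGL(\cA))(X)$: it is a bijection by the discussion above, and it respects composition because composition of morphisms over $\Id_X$ in $\fTAzu_r(X)$ corresponds exactly to composition of algebra automorphisms. Finally I would observe that these bijections are compatible with restriction along an arbitrary morphism in $\Sch_S$ (the restriction of $\cA$ along $T\times_S Y\to T\times_S X$ matches the structure restriction of the sheaf $\PGL(\cA)$, exactly as in the proof of Lemma \ref{lem_TMod_aut}\ref{lem_TMod_aut_i}), so that they assemble into an isomorphism of group sheaves. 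This establishes part \ref{lem_TAzu_aut_i}. Part \ref{lem_TAzu_aut_ii} is then immediate by specializing to $\cA=\Mat_r(\cO|_T)$, since $\PGL(\Mat_r(\cO|_T))=\PGL_r|_T$ by the definitions in Section \ref{sec_algebraic_groups}.

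There is essentially no obstacle here; the statement is the direct Azumaya-algebra analogue of Lemma \ref{lem_TMod_aut} and the argument is formally identical. The only point requiring a moment's care is the bookkeeping of which restriction map is intended, but these coincide by construction, just as in the module case.
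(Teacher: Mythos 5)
Your proof is correct and follows exactly the same approach as the paper, which likewise states that the proof of Lemma \ref{lem_TMod_aut} may be replicated with module automorphisms replaced by algebra automorphisms and $\GL$ replaced by $\PGL$. Your unwinding of the definitions is the same argument spelled out in slightly more detail.
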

\begin{proof}
The proof of Lemma \ref{lem_TMod_aut} may be replicated here, replacing module automorphisms with algebra automorphisms and replacing $\GL$ with $\PGL$.
\end{proof}

Now, we define various stack morphism from which we will later extract a commutative diagram of group sheaves.
\begin{align}
\res \colon \fMod_r &\to \fTMod_r & \res \colon \fAzu_r &\to \fTAzu_r \nonumber \\
(X,\cM) &\mapsto (X,\cM|_{T\times_S X}) & (X,\cA) &\mapsto (X,\cA|_{T\times_S X}) \nonumber \\
(g,\varphi) &\mapsto (g,\varphi|_{T\times_S Y}) & (g,\varphi) &\mapsto (g,\varphi|_{T\times_S Y}) \nonumber \\[2pt]
\inc \colon \fTMod_r &\to \fMod_r^{d\etale} & \inc \colon \fTAzu_r &\to \fAzu_r^{d\etale} \nonumber \\
(X,\cM) &\mapsto (T\times_S X \to X,\cM) & (X,\cA) &\to (T\times_S X \to X,\cA) \nonumber \\
(g,\varphi) &\mapsto (g,g',\varphi) & (g,\varphi) &\mapsto (g,g',\varphi) \nonumber \\[2pt]
\cEnd \colon \fMod_r &\to \fAzu_r & \cTEnd \colon \fTMod_r &\to \fTAzu_r \nonumber \\
(X,\cM) &\mapsto (X,\cEnd_{\cO|_X}(\cM)) & (X,\cM) &\mapsto (X,\cEnd_{\cO|_{T\times_S X}}(\cM)) \nonumber \\
(g,\varphi) &\mapsto (g,\cEnd(\varphi)) & (g,\varphi) &\mapsto (g,\cEnd(\varphi)) \nonumber
\end{align}
\begin{align}
\cEnd^{d\etale} \colon \fMod_r^{d\etale} &\to \fAzu_r^{d\etale} \nonumber \\
(X'\to X,\cM) &\mapsto (X'\to X,\cEnd_{\cO|_Y}(\cM)) \nonumber \\
(f',g,\varphi) &\mapsto (f',g,\cEnd(\varphi)). \nonumber
\end{align}
where $g' \colon T\times_S Y \to T\times_S X$ denotes the pullback of $g\colon Y \to X$. We abuse notation by reusing $\res$ and $\inc$ for two different restriction and inclusion maps, however the second instance is the same map but on a substack. For the maps $\res \colon \fMod_r \to \fTMod_r$, if $(g,\varphi) \colon (Y,\cM_1) \to (X,\cM_2)$ is a morphism, then $\varphi \colon \cM_1 \iso \cM_2|_{Y}$ is an isomorphism. Since $(\cM_2|_{T\times_S X})|_{T\times_S Y} = (\cM_2|_Y)|_{T\times_S Y}$, the restricted morphism is
\[
\varphi|_{T\times_S Y} \colon \cM_1|_{T\times_S Y} \iso (\cM_2|_{T\times_S X})|_{T\times_S Y}
\]
as required and similarly for $\res\colon \fAzu_r \to \fTAzu_r$.

If $\varphi \colon \cM_1 \iso \cM_2|_Y$ is an $\cO|_Y$--module isomorphism (or similarly, over $\cO|_{T\times_S Y}$ or $\cO|_{Y'}$ as would be the case for $\cTEnd$ or $\cEnd^{d\etale}$), then $\cEnd(\varphi)$ is the algebra automorphism
\begin{align*}
\cEnd(\varphi) \colon \cEnd_{\cO|_{Y}}(\cM_1) &\iso \cEnd_{\cO|_{Y}}(\cM_2|_Y)=\cEnd_{\cO|_X}(\cM_2)|_Y \\
\alpha &\mapsto \varphi \circ \alpha \circ \varphi^{-1}.
\end{align*}

These morphisms fit into a commutative diagram
\begin{equation}\label{eq_functors_comm_i}
\begin{tikzcd}
\fMod_r \arrow{d}{\res} \arrow{r}{\cEnd} & \fAzu_r \arrow{d}{\res} \\
\fTMod_r \arrow{d}{\inc} \arrow{r}{\cTEnd} & \fTAzu_r \arrow{d}{\inc} \\
\fMod_r^{d\etale} \arrow{r}{\cEnd^{d\etale}} & \fAzu_r^{d\etale}.
\end{tikzcd}
\end{equation}

\begin{lem}\label{lem_functor_group_homs}
Tracing the image of $(S,\cO^r) \in \fMod_r$ through the diagram \eqref{eq_functors_comm_i} we obtain
\[
\begin{tikzcd}
(S,\cO^r) \arrow[mapsto]{r} \arrow[mapsto]{d} & (S,\cEnd_{\cO}(\cO^r)) \arrow[mapsto]{d} \\
(S,\cO|_T^r) \arrow[mapsto]{r} \arrow[mapsto]{d} & (S,\cEnd_{\cO}(\cO^r)|_T) = (S,\cEnd_{\cO|_T}(\cO|_T^r)) \arrow[mapsto]{d} \\
(T\to S,\cO|_T^r) \arrow[mapsto]{r} & (T\to S,\cEnd_{\cO|_T}(\cO|_T^r)).
\end{tikzcd}
\]
The corresponding induced homomorphisms between automorphism sheaves are given by the following diagram.
\[
\begin{tikzcd}
\GL_r \arrow{r}{\pi} \arrow{d}{\res} & \PGL_r \arrow{d}{\res} \\
f_*(\GL_r|_T) \arrow{r}{\pi'} \arrow[hookrightarrow]{d} & f_*(\PGL_r|_T) \arrow[hookrightarrow]{d} \\
f_*(\GL_r|_T)\rtimes \cAut_S(T) \arrow{r}{\pi'\times\Id} & f_*(\PGL_r|_T)\rtimes \cAut_S(T) 
\end{tikzcd}
\]
where $\pi$ and $\pi'$ are the canonical projections and the hooked arrows indicate the inclusions.
\end{lem}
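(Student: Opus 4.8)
The plan is to verify that each of the six functors in diagram \eqref{eq_functors_comm_i}, when restricted to the stabilizer of the relevant split object, induces exactly the claimed homomorphism on automorphism sheaves. Since all the stacks involved are gerbes and the automorphism sheaves have already been computed in Lemmas \ref{lem_mod_torsor_gerbe}, \ref{lem_Azu_torsor_gerbe}, \ref{lem_TMod_aut}, and \ref{lem_TAzu_aut}, the content of the statement is really just an unwinding of definitions: for a morphism of fibered categories $\varphi$ and an object $s$, the induced map $\varphi_s \colon \cAut(s) \to \cAut(\varphi(s))$ sends a section $\psi$ over $X$ to $\varphi(\psi)$, computed via the action of $\varphi$ on morphisms. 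So the proof is a box-by-box check.

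First I would trace the object $(S,\cO^r)$ through the diagram, which is immediate from the formulas defining $\cEnd$, $\res$, $\cTEnd$, $\inc$, and $\cEnd^{d\etale}$, using that restriction of sheaves commutes with forming $\cEnd$, i.e.\ $\cEnd_{\cO}(\cO^r)|_T = \cEnd_{\cO|_T}(\cO|_T^r)$. Then, for the top square: a section of $\cAut(S,\cO^r)(X) = \GL_r(X)$ is a pair $(\Id_X, \varphi)$ with $\varphi$ an $\cO|_X$-linear automorphism of $\cO|_X^r$; applying $\cEnd$ sends it to $(\Id_X, \cEnd(\varphi))$ where $\cEnd(\varphi)(\alpha) = \varphi\circ\alpha\circ\varphi^{-1}$, which is precisely the inner automorphism, i.e.\ the image of $\varphi$ under the canonical projection $\pi \colon \GL_r \to \PGL_r$. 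For the left vertical $\res \colon \fMod_r \to \fTMod_r$, the section $(\Id_X,\varphi)$ maps to $(\Id_X, \varphi|_{T\times_S X})$, which under the identifications $\cAut(S,\cO^r) = \GL_r$ and $\cAut(S,\cO|_T^r) = f_*(\GL_r|_T)$ (the latter from Lemma \ref{lem_TMod_aut}\ref{lem_TMod_aut_ii}) is exactly the sheaf-theoretic restriction map $\res \colon \GL_r \to f_*(\GL_r|_T)$; the same argument with $\PGL$ in place of $\GL$ handles the right vertical, and commutativity of the top square of automorphism sheaves then follows because $\pi$ and restriction visibly commute on sections. For the bottom square, $\inc \colon \fTMod_r \to \fMod_r^{d\etale}$ sends $(\Id_X,\psi)$ to $(\Id_X, \Id_{T\times_S X}, \psi)$; under the isomorphism of Lemma \ref{lem_mod_torsor_gerbe}\ref{lem_mod_torsor_gerbe_i}, $\cAut(T\to S,\cO|_T^r)(X) \cong (f_*(\GL_r|_T)\rtimes \cAut_S(T))(X)$, a triple $(\Id_X,g,\psi)$ corresponds to $g\cdot\psi$, so the image of $(\Id_X,\Id,\psi)$ is $1\cdot\psi = \psi$, exhibiting $\inc$ on automorphisms as the inclusion $f_*(\GL_r|_T) \hookrightarrow f_*(\GL_r|_T)\rtimes\cAut_S(T)$; likewise for $\fTAzu_r \to \fAzu_r^{d\etale}$. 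Finally, $\cEnd^{d\etale}$ sends $(\Id_X,\Id,\psi)$ to $(\Id_X,\Id,\cEnd(\psi))$, which under the same identifications is $\pi'(\psi)$ extended by the identity on $\cAut_S(T)$, i.e.\ the map $\pi'\times\Id$; commutativity of the bottom square of automorphism sheaves is then clear.

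I do not anticipate a genuine obstacle here — the statement is essentially bookkeeping — but the one point that needs a little care is making sure that the induced-homomorphism construction (the assignment $\psi \mapsto \varphi(\psi)$ for a morphism of fibered categories) is actually computed correctly through the isomorphisms of the cited automorphism-sheaf lemmas, since those isomorphisms involve the semidirect-product convention $\varphi\cdot g = g\cdot g^*(\varphi)$ fixed in Section~\ref{sec_aut_sheaves}. So the main thing to be careful about is matching conventions: one must check that the split objects $(S,\cO^r)$, $(S,\Mat_r(\cO))$, $(S,\cO|_T^r)$, $(S,\Mat_r(\cO|_T))$, $(T\to S,\cO|_T^r)$, $(T\to S,\Mat_r(\cO|_{T}))$ are the designated basepoints used in those lemmas, and that $g^*$ fixes the free module and the matrix algebra (which it does, as recorded in Section~\ref{sec_aut_sheaves}), so that $\cEnd^{d\etale}$ and $\inc$ really do act as $\pi'\times\Id$ and the inclusion and not by some twisted formula. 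Once these identifications are in place the diagram of group sheaves commutes on the nose because each small square commutes on $X$-sections for every $X\in\Sch_S$, which is exactly what was verified above. I would therefore write the proof as: (1) trace the object through, citing the compatibility $\cEnd_{\cO}(\cO^r)|_T = \cEnd_{\cO|_T}(\cO|_T^r)$; (2) compute each of the five induced maps on automorphism sheaves using the explicit formulas for the functors together with Lemmas \ref{lem_mod_torsor_gerbe}, \ref{lem_Azu_torsor_gerbe}, \ref{lem_TMod_aut}, \ref{lem_TAzu_aut}; (3) observe the resulting diagram commutes since $\pi$, $\pi'$, $\res$, and the inclusions visibly commute pairwise.
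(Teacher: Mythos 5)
Your proof is correct and takes essentially the same approach as the paper: unwind the definitions of the functors $\cEnd$, $\res$, $\inc$, $\cTEnd$, $\cEnd^{d\etale}$ on sections, identify the automorphism sheaves via the cited lemmas, and observe directly that the induced maps are $\pi$, $\pi'$, $\res$, the inclusions, and $\pi'\times\Id$. You are somewhat more explicit than the paper about the semidirect-product conventions, which is a reasonable point to flag; one small caveat is that for $\cEnd^{d\etale}$ you only display the computation on sections $(\Id_X,\Id,\psi)$, whereas identifying the map as $\pi'\times\Id$ on all of $f_*(\GL_r|_T)\rtimes\cAut_S(T)$ requires noting that $\cEnd^{d\etale}$ preserves the scheme component $g$ in a general triple $(\Id_X,g,\psi)$ — exactly the point the paper makes explicitly — but this is immediate and you clearly have it in mind.
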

\begin{proof}
The objects appearing in the first diagram have the corresponding automorphism sheaves in the second diagram either by definition or by Lemmas \ref{lem_mod_torsor_gerbe}\ref{lem_mod_torsor_gerbe_i}, \ref{lem_Azu_torsor_gerbe}\ref{lem_Azu_torsor_gerbe_i}, \ref{lem_TMod_aut}\ref{lem_TMod_aut_ii}, or  \ref{lem_TAzu_aut}\ref{lem_TAzu_aut_ii}.

Since the restriction maps send a morphism $\varphi$ over $X\in \Sch_S$ to $\varphi|_{T\times_S X}$, it is clear they induce the restriction map between groups.

The claim that the horizontal maps are the canonical projections is clear since, by definition, $\cEnd$, $\cTEnd$, and $\cEnd^{d\etale}$ send a module automorphism to its corresponding inner automorphism of the endomorphism algebra and additionally $\cEnd^{d\etale}$ acts as the identity on the scheme part of morphisms, i.e., it preserves $f'$ and $g$ in $(f',g,\varphi)$.

The fact that the hooked arrows are the inclusion is immediate since the inclusion maps sends morphisms of the form $(\Id_X,\varphi)$ to $(\Id_X,\Id_{T\times_S X},\varphi)$.
\end{proof}

Next, we extend the commutative diagram \eqref{eq_functors_comm_i} by appending the functors $N_{\fMod}$ and $N_{\fAzu}$ of \eqref{eq_N_Mod_Azu} on the bottom. This produces
\begin{equation}\label{eq_functors_comm_ii}
\begin{tikzcd}
\fMod_r \arrow{d}{\res} \arrow{r}{\cEnd} & \fAzu_r \arrow{d}{\res} \\
\fTMod_r \arrow{d}{\inc} \arrow{r}{\cTEnd} & \fTAzu_r \arrow{d}{\inc} \\
\fMod_r^{d\etale} \arrow{r}{\cEnd^{d\etale}} \arrow{d}{N_{\fMod}} & \fAzu_r^{d\etale} \arrow{d}{N_{\fAzu}}\\
\fMod_{r^d} \arrow{r}{\cEnd} & \fAzu_{r^d}.
\end{tikzcd}
\end{equation}
where the bottom square only commutes up to canonical isomorphism. In particular, for each $(X'\to X,\cM) \in \fMod_r^{d\etale}$ we have
\begin{align*}
(N_{\fAzu}\circ \cEnd^{d\etale})(X'\to X,\cM) &= \big(X,N_{X'/X}(\cEnd_{\cO|_{X'}}(\cM))\big)\\
(\cEnd\circ N_{\fMod})(X'\to X,\cM) &= \big(X,\cEnd_{\cO|_X}(N_{X'/X}(\cM))\big)
\end{align*}
and by Lemma \ref{lem_norm_etale_neutral_azu} there is a canonical isomorphism of $\cO|_X$--algebras
\[
\Psi_{(X'\to X,\cM)}\colon N_{X'/X}(\cEnd_{\cO|_{X'}}(\cM)) \iso \cEnd_{\cO|_X}(N_{X'/X}(\cM)).
\]

Tracing the object $(T\to S,\cO|_T^r)$ through the bottom square (and through its canonical isomorphism) produces
\[
\begin{tikzcd}
(T\to S,\cO|_T^r) \arrow[mapsto]{r} \arrow[mapsto]{d} & (T\to S,\cEnd_{\cO|_T}(\cO|_T^r)) \arrow[mapsto]{d} \\
(S,N_{T/S}(\cO|_T^r)) \arrow[mapsto]{r} & (S,\cEnd_{\cO}(N_{T/S}(\cO|_T^r)))
\end{tikzcd}
\]
with corresponding group sheaf homomorphisms
\begin{equation}\label{eq_Seg_twist}
\begin{tikzcd}
f_*(\GL_r|_T)\rtimes \cAut_S(T) \arrow{r}{\pi'\times \Id} \arrow{d}{\phi} & f_*(\PGL_r|_T)\rtimes \cAut_S(T) \arrow{d}{\phi'} \\
\GL(N_{T/S}(\cO|_T^r)) \arrow{r}{\pi''} & \PGL(N_{T/S}(\cO|_T^r))
\end{tikzcd}
\end{equation}
where $\pi''$ is the canonical projection. In fact, $\phi$ and $\phi'$ are twists of the modified Segre embeddings $\Seg'$ and $\PSeg'$ respectively. This can be seen by taking a sufficiently fine cover which splits $T$ and then applying Theorem \ref{thm_norm_group_hom} or Corollary \ref{cor_norm_azu_group_hom} respectively.

Combining the diagrams of Lemma \ref{lem_functor_group_homs} and \eqref{eq_Seg_twist} and extending the rows into their canonical short exact sequences, we obtain
\[
\begin{tikzcd}[column sep = 0.15in]
1 \arrow{r} & \GG_m \arrow{r} \arrow{d}{\res} & \GL_r \arrow{r}{\pi} \arrow{d}{\res} & \PGL_r \arrow{d}{\res} \arrow{r} & 1 \\
1 \arrow{r} & f_*(\GG_m|_T) \arrow{r} \arrow[equals]{d} & f_*(\GL_r|_T) \arrow{r}{\pi'} \arrow[hookrightarrow]{d} & f_*(\PGL_r|_T) \arrow[hookrightarrow]{d} \arrow{r} & 1 \\
1 \arrow{r} & f_*(\GG_m|_T) \arrow{r} \arrow[dashed]{d} & f_*(\GL_r|_T)\rtimes \cAut_S(T) \arrow{r}{\pi'\times\Id} \arrow{d}{\phi} & f_*(\PGL_r|_T)\rtimes \cAut_S(T) \arrow{d}{\phi'} \arrow{r} & 1 \\
1 \arrow{r} & \GG_m \arrow{r} & \GL(N_{T/S}(\cO|_T^r)) \arrow{r}{\pi''} & \PGL(N_{T/S}(\cO|_T^r)) \arrow{r} & 1.
\end{tikzcd}
\]
\begin{lem}
The dashed morphism in the diagram above is the trace homomorphism, $\tr \colon f_*(\GG_m|_T) \to \GG_m$.
\end{lem}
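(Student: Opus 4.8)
The plan is to show that the dashed map and $\tr$ agree after pulling back along any cover that trivializes the $\SS_d$--torsor $\cIsom(\Sd,T)$, and then to conclude by the sheaf property that they agree globally.

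First I would pin down what the dashed arrow is. Since the bottom two rows of the displayed diagram are short exact sequences and the vertical maps form a morphism of short exact sequences, the dashed arrow is forced: it is the restriction of $\phi\colon f_*(\GL_r|_T)\rtimes\cAut_S(T) \to \GL(N_{T/S}(\cO|_T^r))$ to the kernel $f_*(\GG_m|_T)$ of $\pi'\times\Id$, and commutativity of the square to its right shows this restriction lands in $\ker(\pi'') = \GG_m$. So it suffices to describe $\phi$ on $f_*(\GG_m|_T)$.

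Next I would use the description of $\phi$ recorded just after \eqref{eq_Seg_twist}: under the identification of $f_*(\GL_r|_T)\rtimes\cAut_S(T)$ as the twist of $(\GL_r)^d\rtimes\SS_d$ by $\cIsom(\Sd,T)$ (Appendix \ref{app_Weil}, together with the semi-direct product descriptions of Appendix \ref{app_semi_direct}), the morphism $\phi$ is the twist by $\cIsom(\Sd,T)$ of the modified Segre homomorphism $\Seg'$ of \eqref{eq_segre}, the $\SS_d$--action on the target $\GL_{r^d}$ being conjugation by the permutation matrices $j(\sigma)$; this is verified by pulling back along a cover splitting $T$, where $N_{T/S}(\cO|_T^r)$ becomes $\cO^{(r^d)}$ by Example \ref{ex_split_norm} and $\phi$ becomes $\Seg'$ by Theorem \ref{thm_norm_group_hom}. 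Over such a splitting cover $\{U_i \to S\}$ the kernel $f_*(\GG_m|_T)|_{U_i}$ is identified with $\GG_m^d = \ker\bigl((\GL_r)^d\rtimes\SS_d \to (\PGL_r)^d\rtimes\SS_d\bigr)$, the subgroup of scalar $d$--tuples with trivial $\SS_d$--component, and a direct computation gives
\[
\Seg'\big|_{\GG_m^d}(\lambda_1,\dots,\lambda_d) = \lambda_1\Id_r\otimes\cdots\otimes\lambda_d\Id_r = (\lambda_1\cdots\lambda_d)\,\Id_{r^d},
\]
so $\Seg'|_{\GG_m^d}$ is the multiplication homomorphism $\mu\colon \GG_m^d \to \GG_m$, which is $\SS_d$--equivariant for the trivial action on the target, so that $\GG_m^d$ twisted by $\cIsom(\Sd,T)$ recovers $f_*(\GG_m|_T)$ and $\GG_m$ twisted by it recovers $\GG_m$. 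Hence the dashed map is the twist of $\mu$ by $\cIsom(\Sd,T)$.

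Finally I would invoke the definition of the trace map recalled at the start of this subsection: by \cite[IX.5.1.2]{SGA4}, $\tr\colon f_*(\GG_m|_T)\to\GG_m$ is by construction precisely the twist of $\mu\colon\GG_m^d\to\GG_m$ by the $\SS_d$--torsor $\cIsom(\Sd,T)$. Therefore the dashed morphism equals $\tr$. Equivalently, and avoiding the twisting formalism, over any $U_i$ splitting $T$ both the dashed map and $\tr$ become $\mu$ under the same identification, and a morphism of group sheaves is determined by its restriction to a cover. The only delicate point is the bookkeeping needed to match the various identifications of $f_*(\GL_r|_T)\rtimes\cAut_S(T)$, $N_{T/S}(\cO|_T^r)$ and $f_*(\GG_m|_T)$ as twists by $\cIsom(\Sd,T)$, and to check that restricting $\phi$ to the scalars is compatible with restricting $\Seg'$ to $\GG_m^d$; this is routine functoriality of the contracted product.
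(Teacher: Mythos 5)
Your proof is correct and takes essentially the same route as the paper's: pin the dashed arrow down as the restriction of $\phi$ to the kernel $f_*(\GG_m|_T)$, compute locally that $\Seg'$ restricted to scalar tuples is the product $(\lambda_1,\dots,\lambda_d)\mapsto\lambda_1\cdots\lambda_d$, and conclude by the SGA4 characterization/construction of the trace. The paper's version is just terser, invoking ``points (i) and (ii) after [IX.5.1.3]'' where you spell out the twisting-by-$\cIsom(\Sd,T)$ bookkeeping explicitly; the mathematical content is the same.
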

\begin{proof}
If we instead consider the following diagram involving the Segre homomorphism,
\[
\begin{tikzcd}
\GG_m^d \arrow{r} \arrow{d}{\tr} & (\GL_r)^d \rtimes \SS_d \arrow{d}{\Seg'} \\
\GG_m \arrow{r} & \GL_{(r^d)}
\end{tikzcd}
\]
where the trace morphism coincides with the multiplication map, it is clear this commutes since
\[
\Seg(c_1I,\ldots,c_dI) = c_1I\otimes \ldots \otimes c_dI = (c_1\dots c_d)I.
\]
This describes the dashed morphism we are interested in locally and by points (i) and (ii) after \cite[IX.5.1.3]{SGA4}, this characterizes the trace map. Therefore, the dashed morphism is $\tr \colon f_*(\GG_m|_T) \to \GG_m$ as claimed.
\end{proof}

At this point, the third row of the large diagram above is no longer needed and we consider the compressed diagram
\[
\begin{tikzcd}[column sep = 0.15in]
1 \arrow{r} & \GG_m \arrow{r} \arrow{d}{\res} & \GL_r \arrow{r}{\pi} \arrow{d}{\res} & \PGL_r \arrow{d}{\res} \arrow{r} & 1 \\
1 \arrow{r} & f_*(\GG_m|_T) \arrow{r} \arrow{d}{\tr} & f_*(\GL_r|_T) \arrow{r}{\pi'} \arrow{d}{\rho} & f_*(\PGL_r|_T) \arrow{d}{\rho'} \arrow{r} & 1 \\
1 \arrow{r} & \GG_m \arrow{r} & \GL(N_{T/S}(\cO|_T^r)) \arrow{r}{\pi''} & \PGL(N_{T/S}(\cO|_T^r)) \arrow{r} & 1
\end{tikzcd}
\]
where $\rho=\phi \circ \inc$ and $\rho'=\phi'\circ \inc$. Finally, we may take the associated diagram of long exact cohomology sequences to obtain the following result.
\begin{prop}\label{prop_brauer_group}
Let $T \to S$ be a degree $d$ \'etale cover. Let $\cB$ be an Azumaya $\cO|_T$--algebra of constant degree and $\cA$ be an Azumaya $\cO$--algebra of constant degree.
\begin{enumerate}[label={\rm(\roman*)}]
\item \label{prop_brauer_group_i} $[N_{T/S}(\cB)] = \tr([\cB]) \in \Br(S)$.
\item \label{prop_brauer_group_ii} $[N_{T/S}(\cA|_T)] = d[\cA] \in \Br(S)$.
\end{enumerate}
\end{prop}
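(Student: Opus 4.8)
The plan is to apply cohomology to the commutative diagram of central extensions of group sheaves displayed just above, obtaining a ladder of long exact sequences, and then to chase the connecting maps of the three rows, which I write $\delta^{(1)}\colon H^1(S,\PGL_r)\to H^2(S,\GG_m)$, $\delta^{(2)}\colon H^1(S,f_*(\PGL_r|_T))\to H^2(S,f_*(\GG_m|_T))$ and $\delta^{(3)}\colon H^1(S,\PGL(N_{T/S}(\cO|_T^r)))\to H^2(S,\GG_m)$, in top-to-bottom order. The one external ingredient is the standard description of the Brauer class: for an Azumaya algebra $\cC$ over a scheme $X$ which is a twisted form of $\cEnd(\cM)$ for a locally free module $\cM$, the boundary map of $1\to\GG_m\to\GL(\cM)\to\PGL(\cM)\to 1$ sends the isomorphism class of $\cC$ (in the gerbe of such twisted forms) to the Brauer class $[\cC]\in\Br(X)$, and since endomorphism algebras are Brauer--trivial this does not depend on the base point. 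Used over $\Sch_S$ this says that $\delta^{(1)}$ and $\delta^{(3)}$ both compute ``class of an Azumaya algebra $\mapsto$ its Brauer class''; used over $\Sch_T$ --- noting that the middle row is the $f_*$ of the analogous sequence over $T$, that $f_*$ is exact for the finite \'etale $f$ so $H^i(S,f_*(-))\cong H^i(T,-)$ as recalled from \cite[IX.5.1]{SGA4}, and that the bottom-left vertical map is then the trace $\tr\colon\Br(T)\to\Br(S)$ --- it says the same for $\delta^{(2)}$, with target $H^2(S,f_*(\GG_m|_T))\cong\Br(T)$. Throughout I use the same bracket notation for a class in $H^1$ and the Brauer class it produces, as in Remark~\ref{rem_gerbe_forms}.

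Next I would identify the maps on $H^1$ induced by the vertical homomorphisms via Lemma~\ref{lem_loos_cohom}. By Lemma~\ref{lem_functor_group_homs}, $\res\colon\PGL_r\to f_*(\PGL_r|_T)$ is the automorphism homomorphism at the split object of the morphism of gerbes $\fAzu_r\to\fTAzu_r$, $(X,\cA)\mapsto(X,\cA|_{T\times_S X})$, so $\res_*$ carries the class of an Azumaya $\cO$--algebra $\cA$ of degree $r$ to the class of $\cA|_T$. By the construction of \eqref{eq_Seg_twist}, $\rho'\colon f_*(\PGL_r|_T)\to\PGL(N_{T/S}(\cO|_T^r))$ is the automorphism homomorphism at the split object $(S,\Mat_r(\cO|_T))$ of the composite morphism of gerbes $\fTAzu_r\xrightarrow{\inc}\fAzu_r^{d\etale}\xrightarrow{N_{\fAzu}}\fAzu_{r^d}$, which sends $(X,\cA)\mapsto(X,N_{(T\times_S X)/X}(\cA))$; hence $\rho'_*$ carries the class of $\cB$ to the class of $N_{T/S}(\cB)$. (Here $N_{T/S}(\cB)$ is Azumaya of degree $r^d$ and a twisted form of $\cEnd_{\cO}(N_{T/S}(\cO|_T^r))$ by Lemma~\ref{lem_ferrand} and Lemma~\ref{lem_norm_etale_neutral_azu}, using that $T\to S$ is \'etale.)

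Part \ref{prop_brauer_group_i} then falls out of the square built from the middle and bottom rows:
\[
\tr([\cB]) = \tr_*\bigl(\delta^{(2)}([\cB])\bigr) = \delta^{(3)}\bigl(\rho'_*([\cB])\bigr) = \delta^{(3)}\bigl([N_{T/S}(\cB)]\bigr) = [N_{T/S}(\cB)],
\]
where the first and last equalities are the Brauer-class description, the second is commutativity of the ladder square, and the third is the computation of $\rho'_*$. For part \ref{prop_brauer_group_ii} I take $\cB=\cA|_T$; then $[\cA|_T]=\res([\cA])$ (the top square, together with the Brauer-class description, says $\res_*$ on $H^1$ induces restriction of Brauer classes), so
\[
[N_{T/S}(\cA|_T)] = \tr([\cA|_T]) = \tr_*\bigl(\res_*([\cA])\bigr) = d\,[\cA]
\]
by \eqref{eq_comp_d}, which asserts that $\tr\circ\res$ on $H^2(S,\GG_m)$ is multiplication by $d$.

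The delicate step --- and the one I would write most carefully --- is the bookkeeping for the twisted bottom row $1\to\GG_m\to\GL(N_{T/S}(\cO|_T^r))\to\PGL(N_{T/S}(\cO|_T^r))\to 1$: one must confirm that $\delta^{(3)}$ genuinely returns the honest Brauer class (not one shifted by a base-point term) and, in the same breath, that $N_{T/S}(\cB)$ and $\cEnd_{\cO}(N_{T/S}(\cO|_T^r))$ really are twisted forms of one another, so that $\rho'_*$ lands in the group in which the chase is carried out. Both are settled by the vanishing of the Brauer class of an endomorphism algebra together with functoriality of the connecting maps along the given morphism of central extensions. The remaining verifications are formal.
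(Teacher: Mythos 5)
Your proposal is correct and follows essentially the same path as the paper's proof: take long exact cohomology sequences from the commutative ladder of central extensions, use Lemma~\ref{lem_loos_cohom} together with Lemma~\ref{lem_functor_group_homs} to recognize the induced maps on $H^1$ as ``restrict along $T\to S$'' and ``apply $N_{T/S}$'', and then chase $[\cB]$ (respectively $[\cA]$) through the diagram, invoking \eqref{eq_comp_d} for the factor of $d$. Your closing paragraph about base-point independence is a sensible extra check, but it does not change the argument; it is implicit in the paper's use of the boundary map as ``Brauer class''.
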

\begin{proof}
The diagram of long exact cohomology contains the following,
\[
\begin{tikzcd}
H^1(S,\PGL_r) \arrow{r} \arrow{d}{\res} & H^2\fppf(S,\GG_m) = \Br(S) \arrow{d}{\res} \\
H^1(S,f_*(\PGL_r|_T)) \arrow{d} \arrow{r} & H^2\fppf(S,f_*(\GG_m|_T)) = \Br(T) \arrow{d}{\tr} \\
H^1(S,\PGL(N_{T/S}(\cO|_T^r))) \arrow{r} & H^2\fppf(S,\GG_m) = \Br(S)
\end{tikzcd}
\]
where the horizontal maps are the natural boundary morphisms taking an isomorphism class of an algebra to its Brauer class.

By Lemma \ref{lem_loos_cohom} and Lemma \ref{lem_functor_group_homs}, the maps on first cohomology induced by the group homomorphisms
\[
\PGL_r \xrightarrow{\res} f_*(\PGL_r|_T) \hookrightarrow f_*(\PGL_r|_T)\rtimes \cAut_S(T) \xrightarrow{\phi'} \PGL(N_{T/S}(\cO|_T^r))
\]
are the same as the maps induced by the functors
\[
\fAzu_r \xrightarrow{\res} \fTAzu_r \xrightarrow{\inc} \fAzu_r^{d\etale} \xrightarrow{N_{\fAzu}} \fAzu_{r^d}.
\]
Therefore, tracing the image of the isomorphism class $[\cB] \in H^1(T,\PGL_r|_T)$ $= H^1(S,f_*(\PGL_r|_T))$, we obtain
\[
\begin{tikzcd}
{[\cB]} \arrow[mapsto]{r} \arrow[mapsto]{d} & {[\cB]} \in \Br(T) \arrow[mapsto]{d} \\
{[N_{T/S}(\cB)]} \arrow[mapsto]{r} & {[N_{T/S}(\cB)]} = \tr([\cB]) \in \Br(S)
\end{tikzcd}
\]
justifying claim \ref{prop_brauer_group_i}. Similarily, for $[\cA] \in H^1(S,\PGL_r)$, we can chase it through the diagram as follows
\[
\begin{tikzcd}
{[\cA]} \arrow[mapsto]{r} \arrow[mapsto]{d} & {[\cA]}\in \Br(S) \arrow[mapsto]{d} \\
{[\cA|_T]} \arrow[mapsto]{d} \arrow[mapsto]{r} & {[\cA|_T]} \in \Br(T) \arrow[mapsto]{d} \\
{[N_{T/S}(\cA|_T)]} \arrow[mapsto]{r} & {[N_{T/S}(\cA|_T)]=d[\cA]} \in \Br(S)
\end{tikzcd}
\]
where the factor of $d$ appears by \eqref{eq_comp_d}. This justifies claim \ref{prop_brauer_group_ii}.
\end{proof}

\section{An Equivalence $A_1^2 \equiv D_2$}\label{Equivalence}
In this section we show that the norm functor restricts to a functor from objects of type $C$ to objects of type $D$, i.e., from certain Azumaya algebras with symplectic involutions to quadratic triples. In particular, we consider the following two gerbes. First, let $\fC_m^{d\etale}$ be the gerbe with
\begin{enumerate}[label={\rm(\roman*)}]
\item objects which are pairs $(T' \to T, (\cA,\sigma))$ where $T'\to T \in \Sch_S$ is a degree $d$ \'etale cover of schemes and $(\cA,\sigma)$ is an Azumaya $\cO|_{T'}$--algebra of degree $2m$ with a symplectic involution, and with
\item morphisms which are triples $(f,g,\varphi)\colon (X'\to X,(\cA_1,\sigma_1)) \to (T'\to T,(\cA_2,\sigma_2))$ where $f,g$ are scheme morphisms making
\[
\begin{tikzcd}
X' \ar[r,"g"] \ar[d] & T' \ar[d] \\
X \ar[r,"f"] & T
\end{tikzcd}
\]
commute and where $\varphi \colon (\cA_1,\sigma_1) \iso g^*(\cA_2,\sigma_2)$ is an isomorphism of Azumaya $\cO|_{X'}$--algebras with symplectic involution,
\item and whose structure functor $p\colon \fC_m^{d\etale} \to \Sch_S$ is $(T'\to T,(\cA,\sigma)) \mapsto T$ and $(g,f,\varphi) \mapsto f$.
\end{enumerate}
By Remark \ref{rem_torsors_equiv_auto}, this is equivalent to the gerbe $\fF(\PSP_{2m})^{d\etale}$ of \eqref{AppenB_gerbe_tors_equiv}, which is equivalent to the stack of $(\PSP_{2m}^d)\rtimes \SS_d$--torsors. We use the notation $\fC_m^{d\etale}$, with a subscript $m$, since for an Azumaya algebra of degree $2m$ with a symplectic involution $(\cA,\sigma)$, the group $\PSP_{\cA,\sigma}$ is a group of type $C_m$.

For the second gerbe, we define $\fD_m$ to have
\begin{enumerate}[label={\rm(\roman*)}]
\item objects which are pairs $(X,(\cA,\si,f))$ where $X\in \Sch_S$ and $(\cA,\si,f)$ is a quadratic triple with $\cA$ a degree $2m$ Azumaya $\cO|_X$--algebra,
\item morphisms which are pairs $(g,\varphi)\colon (Y,(\cA_1,\sigma_1,f_1)) \to (X,(\cA_2,\sigma_2,f_2))$ where $g\colon Y\to X$ is an $S$--scheme morphism and the map $\varphi \colon (\cA_1,\si_1,f_1)\to g^*(\cA_2,\si_2,f_2)$ is an isomorphism of quadratic triples over $Y$, and
\item structure functor which sends $(X,(\cA,\sigma,f))\mapsto X$ and $(g,\varphi)\mapsto g$.
\end{enumerate}
The stack $\fD_m$ is also a gerbe since we know by \cite[4.6]{GNR} that all quadratic triples are isomorphic \'etale locally and thus also fppf locally. Alternatively, since it contains the object $(S,(\Mat_{2m}(\cO),\sigma_{2m},f_{2m}))$ whose automorphism group is $\bPGO_{2m}$, it is equivalent to the stack of $\bPGO_{2m}$--torsors by Proposition \ref{lem_gerbes_and_torsors}\ref{lem_gerbes_and_torsors_iv} and Remark \ref{rem_gerbe_forms}. Here as well the subscript $m$ is used since the groups $\PGO_{\cA,\sigma,f}$ are of type $D_m$ when $\cA$ is degree $2m$. In \cite[2.7.0.30]{CF}, the stack $\fD_m$ is denoted as $\cP\!\mathit{aires}\cQ\mathit{uad}_{2m}$.

We show that when $d$ is even, the norm functor $N\alg$ of \ref{eq_normalg_stack_morphism} restricts to a functor
\[
\Psi' \colon \fC_m^{d\etale} \to \fD_{2^{d-1}m^d},
\]
defined precisely in \eqref{eq_stack_Psi_prime} below.

In the case when $m=1$, the objects of $\fC_1^{d\etale}$ involve quaternion algebras with symplectic involution. However, every quaternion algebra $\cA$ has a unique (up to isomorphism) symplectic involution defined by $\sigma_\cA(a) = \Trd_\cA(a)\cdot 1 - a$ and this provides the classical equivalence of categories $A_1 \equiv C_1$ between quaterion algebras on their own and quaterion algebras with symplectic involutions. Define $\fA_m^{d\etale}$ to be the gerbe with
\begin{enumerate}[label={\rm(\roman*)}]
\item objects $(T' \to T,\cA)$ where $T'\to T \in \Sch_S$ is a degree $d$ \'etale cover and $\cA$ is an Azumaya $\cO|_{T'}$--algebra of degree $m+1$,
\end{enumerate}
and with morphisms and structure functor mirroring the definitions for $\fC_m^{d\etale}$. This is equivalent to the gerbe $\fF(\PGL_{m+1})^{d\etale}$ of \eqref{AppenB_gerbe_tors_equiv}. We then have an equivalence of gerbes $\fA_1^{d\etale} \equiv \fC_1^{d\etale}$.

As our main result, we show that when $m=1$ and $d=2$, the norm functor induces an equivalence of gerbes $\fC_1^{2\etale} \iso \fD_2$, which of course then also gives us an equivalence
\[
N\colon \fA_1^{2\etale} \iso \fD_2.
\]
This equivalence of gerbes is the analogue of the equivalence of groupoids given in \cite[15.B]{KMRT}. We also show how the Clifford algebra construction provides a quasi-inverse to the Norm functor on the level of fibers. We define a morphism of stacks
\[
\Cl \colon \fD_2 \to \fA_1^{2\etale}
\]
based on sending a quadratic triple to its Clifford algebra such that for all $T\in \Sch_S$, the functors $N_T \colon \fA_1^{2\etale}(T) \to \fD_2(T)$ and $\Cl_T \colon \fD_2(T) \to \fA_1^{2\etale}$ are quasi-inverse equivalences. This generalizes results from \cite[\S 15.B]{KMRT}.

\subsection{A Quadratic Triple over $\ZZ$}\label{triple_over_Z}
As preparation, we begin by constructing a quadratic triple over the integers from a tensor product of symplectic involutions. Let $n_1,\ldots,n_d$ be an even number of positive integers (so $d$ is even), and let $n$ be the integer such that $2n=(2 n_1) \dots (2 n_d)$.
We then have an isomorphism of $\ZZ$--algebras
\[
\Mat_{2 n_1}(\ZZ) \otimes_\ZZ \dots \otimes_\ZZ
\Mat_{2 n_d}(\ZZ) \, \simlgr \, 
\Mat_{2 n}(\ZZ)
\]
given by the tensor product of matrices. On each $\Mat_{2n_i}(\ZZ)$, we consider 
the standard symplectic involution 
$\sigma_{n_i}$ 
defined by 
\[
\sigma_{n_i}(B)= J_{n_i}^{-1} B^T J_{n_i}
= - J_{n_i} B^T J_{n_i}
\text{ with } J_{n_i}=\begin{bmatrix} 0 & -I_{n_i} \\ I_{n_i} & 0\end{bmatrix}.
\]
The involution $\sigma_{n_i}$ is adjoint to the skew-symmetric bilinear form $\psi_{n_i}$ on $\ZZ^{2 n_i}$
defined by $\psi_{n_i}(v,v') = v^T J_{n_i} v'$, considering $v$ and $v'$ as columns. The tensor product $\sigma= \sigma_{n_1} \otimes \dots \otimes
\sigma_{n_d}$ of these involutions is then an orthogonal involution on $\Mat_{2n}(\ZZ)$. Precisely, it is adjoint to the regular symmetric
bilinear form $b$ on $\ZZ^{2n}={ \ZZ^{2n_1} \otimes_\ZZ \dots \otimes_\ZZ \ZZ^{2n_d}}$
defined by
\[
b( v_1 \otimes \dots \otimes v_d, v_1' \otimes \dots \otimes v'_d)= \psi_{n_1}(v_1,v'_1)\dots \psi_{n_d}(v_d,v'_d).
\]
Since each $\psi_{n_i}$ is symplectic, if $v=v_1\otimes\dots \otimes v_d$ is a pure tensor, then $b(v,v)=0$. Writing a general vector $w=w_1+ \dots + w_k$ as a sum of pure tensors, this means that
\[
b(w,w)=\sum_{i=1}^k b(w_i,w_i) + \sum_{i,j=1 \atop i<j}^k (b(w_i,w_j) + b(w_j,w_i)) = \sum_{i,j=1 \atop i<j}^k 2b(w_i,w_j),
\]
i.e., $b(w,w) \in 2\ZZ$. Therefore, we may define a quadratic $\ZZ$--form
\[
q_\tens(x)=\frac{1}{2}b(x,x)
\]
whose polar will be $b$. If we wish to write this without the use of $\frac{1}{2}$, for example after a base change as we do in Section \ref{subsec_twisting_triples}, it appears as
\[
q_\tens\big(\sum_{i=1}^k v_{1,i}\otimes\ldots\otimes v_{d,i}\big) = \sum_{i,j=1 \atop i<j}^k b(v_{1,i}\otimes\ldots\otimes v_{d,i},v_{1,j}\otimes\ldots\otimes v_{d,j}).
\]
The form $q_\tens$ is regular since $b$ is and therefore it has an adjoint involution $\sigma_\tens$. It follows from \cite[4.4(i)]{GNR} or \cite[2.7.0.31]{CF} that $\sigma_\tens$ is part of a quadratic pair $(\sigma_\tens,f_\tens)$. Since $\frac{1}{2}\in \QQ^\times$, after extension to $\QQ$ we must have that
\begin{equation}\label{eq_triple_over_Z}
f_\tens(s)= \frac{1}{2} \Tr(s)
\end{equation}
and hence this also holds for each $s \in \Symm( \Mat_{2n}(\ZZ), \sigma_\tens)$. Thus, the quadratic pair $(\sigma_\tens, f_\tens)$ is unique and therefore $q_\tens$ is unique also.

\begin{remark} \label{rem_canonical} {\rm  
The involution $\sigma_\tens$ is isomorphic over $\ZZ$ to the split involution $\eta_0$ of \cite[4.5 (b)]{GNR} and so for uniqueness reasons as in \cite[4.3(b)]{GNR}, the isomorphism is also one of quadratic pairs, i.e., $(\sigma_\tens,f_\tens) \cong (\eta_0,f_0)$. Further, when $d=2$ and $\sigma_\tens=\sigma_{n_1}\otimes \sigma_{n_2}$, there is a quadratic pair $(\sigma_\tens,f_{\otimes})$ on $\Mat_{4n_1n_2}(\ZZ)$ arising from the construction in \cite[5.6]{GNR}. Uniqueness also implies that $f_\tens = f_\otimes$.
}
\end{remark}

\subsection{Restricting the Segre Homomorphism}\label{section_restricting_segre}
We consider the orthogonal groups reviewed in Section \ref{sec_algebraic_groups} with respect to the quadratic form and quadratic triple defined in Section \ref{triple_over_Z} above. In particular, for this subsection we will consider them over the base scheme $\Spec(\ZZ)$. The geometric fibers of $\mathbf{O}_{q_\tens}^+$ are split semisimple groups of type $D_n$ by \cite[25.12]{KMRT} because $n\geq 2$, so $\mathbf{O}_{q_\tens}^+$ is a semisimple $\ZZ$-group scheme of type $D_n$. Since $\mathbf{O}_{q_\tens}^+|_{\QQ}$ is a split semisimple group, $\mathbf{O}_{q_\tens}^+$ is  a Chevalley $\ZZ$--group scheme in view of the uniqueness of integral models, as in \cite[1.4]{Con2}.

We also consider the symplectic groups $\SP_{2n_i} = \SP_{\Mat_{2n_i}(\ZZ),\sigma_{n_i}}$ and $\PSP_{2n_i}$ $= \cAut(\Mat_{2n_i}(\ZZ),\sigma_{n_i})$ as defined in Section \ref{sec_algebraic_groups} associated to the symplectic involutions defined in Section \ref{triple_over_Z}. The group $\SP_{2n_i}$ is isomorphic to the symplectic group of the alternating form $\psi_{n_i}$ and so by \cite[25.11]{KMRT}, the geometric fibers of $\SP_{2n_i}$ are split semisimple and simply connected groups of type $C_{n_i}$. Again, $\SP_{2n_i}|_{\QQ}$ is a split semisimple group, so $\SP_{2n_i}$ is a Chevalley $\ZZ$--group scheme.

\begin{lem}\label{lem_segre}
Consider the Segre homomorphism \eqref{eq_Seg} and its extension $\Seg'$ of \eqref{eq_segre} as well as the orthogonal and symplectic groups as reviewed above.
\begin{enumerate}[label={\rm(\roman*)}]
 \item \label{lem_segre1} 
The mapping  $\Seg \colon \mathbf{GL}_{2 n_1}\times_\ZZ
\dots \times_\ZZ
 \mathbf{GL}_{2n_d} \to \mathbf{GL}_{2n}$ induces 
a closed immersion of $\ZZ$--group schemes  
\[
h\colon \Bigl( \mathbf{Sp}_{2n_1} \times_\ZZ 
\dots \times_\ZZ \mathbf{Sp}_{2n_d} \Bigr)\!/ \! (\bmu_2)^{d,0} \to \mathbf{O}_{q_\tens}^+
\]
where $(\bmu_2)^{d,0}= \ker\bigl( (\bmu_2)^{d} \xrightarrow{\Pi} \bmu_2 \bigr)$ and $\Pi$ is the product map.
\smallskip

 \item \label{lem_segre2} If $n_1= \dots=n_d=m$ (so that $2n=(2m)^d$), then $h$  extends to a closed  
 immersion of $\ZZ$--group schemes
 \[
 \widetilde h\colon  \Bigl( \bigl(\mathbf{Sp}_{2m}\bigr)^d \! / \!(\bmu_2)^{d,0}\Bigr)  \rtimes_\ZZ   \SS_d
 \to \mathbf{O}_{q_\tens}
 \]
 where the permutation groups acts as in \eqref{eq_segre}.
 Furthermore, recalling the Dickson homomorphism from \eqref{eq_Dickson}, the composition map
 \[
 \SS_d \to \mathbf{O}_{q_\tens} 
 \xrightarrow{\textnormal{Dickson}} \ZZ/2\ZZ
 \]
  is the signature homomorphism if $m$ is odd and is trivial
  if $m$ is even.
 
  \item \label{lem_segre3} If 
  $m=1$ and $d=2$, then $\widetilde h$ is an isomorphism.
 
\end{enumerate}

\end{lem}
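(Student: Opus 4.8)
\textbf{Proof proposal for Lemma \ref{lem_segre}\ref{lem_segre3}.} The plan is to deduce that $\widetilde h$ is an isomorphism from the fact that it is already known (by part \ref{lem_segre2}) to be a closed immersion of $\ZZ$--group schemes, by checking that source and target have the ``same size'' fibrewise and are both smooth and connected. When $m=1$ and $d=2$ we have $2n = (2\cdot 1)^2 = 4$, so the target is $\mathbf{O}_q$ with $q$ the split form on $\ZZ^4$, whose identity component $\mathbf{O}_q^+$ is the Chevalley group of type $D_2$, and the source is $\bigl((\mathbf{Sp}_2)^2/(\bmu_2)^{2,0}\bigr)\rtimes_\ZZ \SS_2$. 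Since $\mathbf{Sp}_2 = \mathbf{SL}_2$, the identity component of the source is $(\mathbf{SL}_2\times_\ZZ \mathbf{SL}_2)/(\bmu_2)^{2,0}$, and $(\bmu_2)^{2,0} = \ker\bigl((\bmu_2)^2 \xrightarrow{\Pi}\bmu_2\bigr)$ is the diagonal $\bmu_2$, so this quotient is $(\mathbf{SL}_2\times_\ZZ\mathbf{SL}_2)/\bmu_2^{\Delta}$, which is exactly the simply-connected-modulo-center form realizing type $A_1\times A_1 = D_2$. So heuristically both sides are ``$\PSO_4 \rtimes \ZZ/2\ZZ = \mathbf{O}_4$''; the work is to make this rigorous over $\ZZ$.

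First I would reduce to the identity components: by part \ref{lem_segre2}, for $m=1$ (odd) the composite $\SS_2 \to \mathbf{O}_q \xrightarrow{\mathrm{Dickson}} \ZZ/2\ZZ$ is the signature homomorphism, which is an isomorphism $\SS_2 \iso \ZZ/2\ZZ$; combined with the fact that $\widetilde h$ carries the identity component $(\mathbf{SL}_2\times\mathbf{SL}_2)/\bmu_2^{\Delta}$ into $\mathbf{O}_q^+ = \ker(\mathrm{Dickson})$, a diagram chase on the two short exact sequences $1\to \mathbf{O}_q^+\to\mathbf{O}_q\to\ZZ/2\ZZ\to 1$ and $1 \to (\mathbf{SL}_2\times\mathbf{SL}_2)/\bmu_2^{\Delta}\to \bigl((\mathbf{SL}_2)^2/\bmu_2^{\Delta}\bigr)\rtimes\SS_2 \to \SS_2 \to 1$ shows that $\widetilde h$ is an isomorphism if and only if its restriction $h\colon (\mathbf{SL}_2\times_\ZZ\mathbf{SL}_2)/\bmu_2^{\Delta} \to \mathbf{O}_q^+$ to identity components is an isomorphism. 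So the real content is that this $h$ is an isomorphism of $\ZZ$--group schemes.

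To prove $h$ is an isomorphism I would use that both $(\mathbf{SL}_2\times_\ZZ\mathbf{SL}_2)/\bmu_2^{\Delta}$ and $\mathbf{O}_q^+$ are semisimple $\ZZ$--group schemes which become split of type $D_2$ after base change to $\QQ$ (for the source this is classical, e.g.\ \cite[15.B]{KMRT} or \cite[25.12]{KMRT}; for the target it is noted just before Lemma \ref{lem_segre}). By the uniqueness of Chevalley group schemes over $\ZZ$ for a given root datum (as invoked already via \cite[1.4]{Con2}), both are the \emph{adjoint} Chevalley group of type $D_2$: the source because $\mathbf{SL}_2\times\mathbf{SL}_2$ is simply connected with center $\bmu_2\times\bmu_2$ and we are quotienting by the diagonal $\bmu_2$, which is precisely the kernel of the map to the adjoint form for the $D_2$ root datum; and $\mathbf{O}_q^+ = \PSO_4$ is visibly adjoint (its center is trivial, since the center of $\mathbf{O}_q^+$ was identified with $\bmu_2$ lying in $\mathbf{O}_q^+ \to \bPGO_q^+$ with kernel $\bmu_2$, and here the outer piece is absorbed by $\SS_2$). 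Hence $h$ is a morphism between two copies of the same Chevalley $\ZZ$--group scheme. Now I invoke that a closed immersion of smooth affine $\ZZ$--group schemes with connected fibres that is an isomorphism on every geometric fibre is an isomorphism: $h$ is a closed immersion by part \ref{lem_segre1}, both sides are $\ZZ$--smooth with geometrically connected fibres, and on each geometric fibre $h$ is a closed immersion of connected semisimple groups of the same dimension $\dim D_2 = 6$ (indeed $\dim(\mathbf{SL}_2\times\mathbf{SL}_2) = 6 = \dim\mathbf{O}_q^+$, and the finite central quotient does not change dimension), hence surjective, hence an isomorphism on fibres; a closed immersion which is a fibrewise isomorphism over a base is an isomorphism (one checks on affines that the surjection of structure sheaves is also injective fibrewise, then apply Nakayama / flatness).

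The main obstacle I anticipate is the fibrewise surjectivity in characteristic $2$: over a field $k$ of characteristic $2$ the groups $\mathbf{O}_q$ and $\mathbf{O}_q^+$ and the Dickson/Arf map behave subtly, and one must be sure that $h_{\bar{\FF}_2}$ is still an isomorphism rather than, say, a purely inseparable isogeny. I would handle this by checking directly that $h$ induces an isomorphism on Lie algebras at the identity in every characteristic — computing $\mathrm{Lie}(h)$ from the explicit Segre formula $(A_1,A_2)\mapsto A_1\otimes A_2$, so $\mathrm{Lie}(h)(X_1,X_2) = X_1\otimes I + I\otimes X_2$, and verifying this is a bijection $\lsl_2\oplus\lsl_2 \iso \mathrm{Lie}(\mathbf{O}_q^+) = \mathfrak{so}_q$ of $6$-dimensional Lie algebras over $\ZZ$ (equivalently that its determinant, with respect to suitable $\ZZ$-bases, is a unit) — together with the fibrewise surjectivity; a closed immersion of smooth group schemes that is bijective on tangent spaces and surjective on points over each geometric fibre is an isomorphism of fibres, and then of $\ZZ$--schemes. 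Alternatively, and perhaps more cleanly, one can cite the known exceptional isomorphism $\mathbf{Spin}_4 \cong \mathbf{SL}_2\times\mathbf{SL}_2$ of Chevalley $\ZZ$--group schemes (type $D_2 = A_1+A_1$) and pass to adjoint quotients on both sides, which avoids any characteristic-$2$ bookkeeping at the cost of invoking that standard fact; I would present the proof this way, noting that $h$ is then forced to be the induced isomorphism on adjoint groups because it is a closed immersion lifting it.
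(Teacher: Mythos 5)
Your main line of argument is correct and is essentially the paper's proof: pass to identity components via the Dickson map, apply the fiberwise isomorphism criterion (EGA IV\(_4\).17.9.5) to reduce to an algebraically closed field, and there use that a closed immersion of smooth connected groups of the same dimension ($6$) is an isomorphism ([GW, cor.\ 5.8]).

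However, the obstacle you anticipate in characteristic $2$ is not a real one, and both of your proposed workarounds are problematic. A closed immersion is a monomorphism, so $h_{\bar{\FF}_2}$ cannot be a purely inseparable isogeny of degree $>1$; the dimension argument over a field is completely characteristic-free and needs no supplement. Worse, your fallback Lie-algebra check would actually \emph{fail} exactly in characteristic $2$: the source of $h$ is the \emph{quotient} $(\mathbf{SL}_2\times\mathbf{SL}_2)/\bmu_2^{\Delta}$, whose Lie algebra is not $\lsl_2\oplus\lsl_2$ when $\bmu_2$ is infinitesimal. Indeed the map $(X_1,X_2)\mapsto X_1\otimes I + I\otimes X_2$ from $\lsl_2\oplus\lsl_2$ has $(I,I)$ in its kernel over $\bar{\FF}_2$ (since $I\in\lsl_2$ and $2I\otimes I = 0$), so it is not a bijection onto a $6$-dimensional Lie algebra; the map $\mathrm{Lie}(G)\to\mathrm{Lie}(G/N)$ for $N$ finite flat has kernel $\mathrm{Lie}(N)$ and is not surjective in general. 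Finally, the Chevalley-uniqueness alternative has a genuine gap: knowing that source and target are both abstractly the adjoint Chevalley group of type $D_2$ does not force the particular closed immersion $h$ to be an isomorphism — ruling out a proper closed subgroup is precisely what the fiberwise dimension count accomplishes, so that step cannot be bypassed.
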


\begin{proof}  \ref{lem_segre1}: The Segre mapping, $\Seg\colon \mathbf{GL}_{2 n_1}\times_\ZZ
\dots \times_\ZZ
 \mathbf{GL}_{2n_d} \to \mathbf{GL}_{2n}$, induces 
a homomorphism  of $\ZZ$--group schemes  
\[
h'\colon \mathbf{Sp}_{2n_1} \times_\ZZ 
\dots \times_\ZZ \mathbf{Sp}_{2n_d}  \to \mathbf{O}_{q_\tens}.
\]
Since the symplectic groups have connected geometric fibers and $\mathbf{O}_{q_\tens}^+$ is the identity component of $\mathbf{O}_{q_\tens}$,
the map $h'$ factors through $\mathbf{O}_{q_\tens}^+$.
The kernel of $h'$ is the intersection of 
${\mathbf{Sp}_{2n_1} \times_\ZZ 
\dots \times_\ZZ \mathbf{Sp}_{2n_d}}$
with  the kernel of the Segre mapping, which is 
$\ker( \GG_m^d \xrightarrow{\Pi}  \GG_m)$. It follows
that  $\ker(h')=(\bmu_2)^{d,0}$.
According to \cite[VIII.5]{SGA3}, we can quotient out by the 
diagonalizable $\ZZ$--group $(\bmu_2)^{d,0}$
and get a monomorphism
\[
h\colon \Bigl( \mathbf{Sp}_{2n_1} \times_\ZZ 
\dots \times_\ZZ \mathbf{Sp}_{2n_d} \Bigr)\!/ \! (\bmu_2)^{d,0} \to \mathbf{O}_{q_\tens}^+.
\]
This is a closed immersion according to \cite[5.3.5]{Con1}.
\smallskip

\noindent \ref{lem_segre2}: This follows from the fact that 
the construction of $(\sigma_\tens, f_\tens)$ is equivariant
with respect to the action of the symmetric group $\SS_d$.
It  remains to deal with  the composition map
$\SS_d \to \mathbf{O}_{q_\tens} \to \ZZ/2\ZZ$. It is enough to check it over  the 
$\QQ$-points and, in this case, 
the Dickson map $\mathbf{O}_{q_\tens}(\QQ) \to (\ZZ/2\ZZ)(\QQ) \cong \bmu_2(\QQ)$
is nothing but the determinant by \cite[IV.5.1.2]{K}.
To prove our claim, it is then enough to compute the
 image of the transposition $(1\; 2)$   
 by the morphism $j\colon \SS_d \to \SS_{(2m)^d}$. Without loss of generality, we can assume that $d=2$
so that  $2n=(2m)^2$ and 
\[
j\bigl( (1\; 2) \bigr)= \prod_{1 \leq i < j \leq 2m} ((i,j)\; (j,i)),
\]
which is a product of $m(2m-1)$ transpositions.
It follows that we have $\det\Bigl(\Seg'\bigl( (1\; 2) \bigr) \Bigr)
=1$ if and only $m$ is even, which justifies the claim.
\smallskip

\noindent \ref{lem_segre3}: Since $(\mathbf{Sp}_{2})^2 \! / \!\bmu_2$ is smooth according to \cite[VI$_B$.9.2]{SGA3}, it is both flat and locally of finite presentation. Since $\bO_{q_\tens}^+$ is also smooth, the fiberwise isomorphism criterion of \cite[IV$_4$.17.9.5]{EGA}
allows us to reduce to the case of an algebraically closed field $k$.
The map  $(\mathbf{Sp}_{2,k})^2 /\bmu_2 \to 
(\mathbf{O}_{q_\tens}^+)_k$ 
 is a closed embedding between two smooth connected 
 algebraic groups of the same dimension (i.e., $6$), which
 is an isomorphism according to \cite[cor. 5.8]{GW}.
 Using  that the composition  $\ZZ/2\ZZ \to (\mathbf{O}_{q_\tens})_k \to \ZZ/2\ZZ$ is an isomorphism, we conclude that $\widetilde h_k$ is an isomorphism.
\end{proof}

\begin{remark} Another way to see that the map $\widetilde h$ in Lemma \ref{lem_segre}\ref{lem_segre3} is an isomorphism is 
to use the map $f\colon \mathbf{SL}_2 \times_\ZZ \mathbf{SL}_2 \to \mathbf{O}_{\det}^+$
defined by $f(B_1,B_2)(A)= B_1 A B_2^{-1}$. Here $\det \colon \Mat_2(\ZZ) \to \ZZ$ is a quadratic form where we view $\Mat_2(\ZZ)$ simply as a rank $4$ $\ZZ$--module. See \cite[C.6.3]{Con1}.
\end{remark}

To get maps of adjoint groups, we can quotient out the maps of Lemma \ref{lem_segre} by the center $\bmu_2$
of each $\mathbf{Sp}_{2n_i}$ on the left and the center $\bmu_2$ of $\mathbf{O}_{q_\tens}^+$ on the right. This yields a closed immersion
\begin{equation}\label{eq_h_underline}
\underline{h}\colon  \mathbf{PSp}_{2n_1} \times_\ZZ 
\dots \times_\ZZ \mathbf{PSp}_{2n_d}  \to \mathbf{PGO}_{q_\tens}^+.
\end{equation}
Of course, we may instead construct $\underline{h}$ from the restricted Segre homomorphism $h'\colon \mathbf{Sp}_{2n_1} \times_\ZZ \dots \times_\ZZ \mathbf{Sp}_{2n_d}  \to \mathbf{O}_{q_\tens}^+$ directly by quotienting out the centers on both sides at once, and this makes it clear that $\underline{h}$ will simply be a restriction of the projective Segre homomorphism
\begin{align*}
\PSeg \colon \PGL_{2n_1}\times_\ZZ \ldots\times_\ZZ \PGL_{2n_d} &\to \PGL_{2n} \\
(\varphi_1,\ldots,\varphi_d) &\mapsto \varphi_1\otimes\ldots\otimes\varphi_d.
\end{align*}
In the second case, we get a closed immersion
\begin{equation}\label{eq_h_tilde_underline}
\underline{h}'\colon (\mathbf{PSp}_{2m})^d
  \rtimes_\ZZ  \SS_d
 \to \mathbf{PGO}_{q_\tens},
\end{equation}
which is the restriction of the map $\PSeg'$ of \eqref{eq_seg_prime_und}. In particular, if $m=1$ and $d=2$, the group homomorphism $\underline{h}'$ is an isomorphism since the map $\tilde{h}$ of Lemma \ref{lem_segre}\ref{lem_segre3} is an isomorphism in this case.

\subsection{Twisting Quadratic Triples}\label{subsec_twisting_triples}
We now shift our attention from working over $\Spec(\ZZ)$ to working over our base scheme $S$. By abuse of notation, when we refer to objects and morphisms constructed in Sections \ref{triple_over_Z} and \ref{section_restricting_segre} we mean to consider their base change to $S$.

We will define a morphism of stacks in which the morphism $\underline{h}$ of \eqref{eq_h_underline} appears as an induced homomorphism between automorphism sheaves, see Lemma \ref{lem_twist_un}\ref{lem_twist_un_2}. Let the numbers $d$ and $n_1,\ldots,n_d$ be as in the beginning of Section \ref{triple_over_Z}. The first stack, which we denote by $\fC_{(n_1,\ldots,n_d)}$, will be the product of the $d$ stacks respectively consisting of Azumaya algebras of degree $n_i$ with symplectic involution. Concretely, this product stack has
\begin{enumerate}[label={\rm(\roman*)}]
\item objects $(X,(\cA_1,\sigma_1),\ldots,(\cA_d,\sigma_d))$ where $X\in \Sch_S$ and $(\cA_i,\sigma_i)$ is an Azumaya $\cO|_X$--algebra of degree $2n_i$ with symplectic involution,
\item morphisms
\[
(g,\varphi_1,\ldots,\varphi_d) \colon (Y,(\cA_{1,i},\sigma_{1,i})_{i=1}^d) \to (X,(\cA_{2,i},\sigma_{2,i})_{i=1}^d)
\]
where $g\colon Y\to X$ is an $S$--scheme morphism and $\varphi_i \colon (\cA_{1,i},\sigma_{1,i})\iso (\cA_{2,i},\sigma_{2,i})|_Y$ is an isomorphism of Azumaya $\cO|_Y$--algebras with involution, and
\item structure functor which sends $(X,(\cA_1,\sigma_1),\ldots,(\cA_d,\sigma_d))\mapsto X$ and $(g,\varphi_1,\ldots,\varphi_d)\mapsto g$.
\end{enumerate}
Since each of the factors of the product stack is a gerbe, $\fC_{(n_1,\ldots,n_d)}$ is also a gerbe. We take the split object to be
\[
\cM=(S,(\Mat_{2n_1}(\cO),\sigma_{n_1}),\ldots,(\Mat_{2n_d}(\cO),\sigma_{n_d}))
\]
and so $\cAut(\cM) = \PSP_{2n_1}\times \ldots \times \PSP_{2n_d}$. Hence, $\fC_{(n_1,\ldots,n_d)}$ is equivalent to the stack of $(\PSP_{2n_1}\times \ldots \times \PSP_{2n_d})$--torsors by Lemma \ref{lem_gerbes_and_torsors}\ref{lem_gerbes_and_torsors_iv} and Remark \ref{rem_gerbe_forms}.

The second stack is $\fD_n$, for $n\geq 2$ as defined at the start of Section \ref{Equivalence}. We take $(S,(\Mat_{2n}(\cO),\sigma_\tens,f_\tens))$ to be the split object where $(\Mat_{2n}(\cO),\sigma_\tens,f_\tens)$ is the quadratic triple defined in \eqref{eq_triple_over_Z} base changed to $S$. Then,
\[
\cAut(S,(\Mat_{2n}(\cO),\sigma_\tens,f_\tens)) = \bPGO_{q_\tens}.
\]
We are aiming to define a morphism of stacks $\fC_{(n_1,\ldots,n_d)} \to \fD_n$ which sends the $d$--tuple of Azumaya algebras with symplectic involutions to their tensor product, see \eqref{eq_stack_Psi}. In order to do so, we must define a choice of $f$ so that we have a quadratic triple on the resulting Azumaya algebra. We do so with the following straightforward generalization of \cite[5.6]{GNR}, which first requires a generalization of \cite[5.1]{GNR}. 

We remind ourselves of the definitions of the symmetric, skew-symmetric, and symmetrized elements from Section \ref{sec_quad_pairs}. Given Azumaya $\cO$--algebras with involution $(\cA_1,\si_1), \ldots, (\cA_d,\si_d)$ and setting $(\cA, \si) = ( \cA_1 \ot_\cO \cdots \ot_\cO A_d, \si_1 \ot \cdots \ot \si_d)$, we want to talk about
\begin{align*}
&\cSym_{\cA_1,\si_1}\otimes_{\cO} \cdots \ot_{\cO} \cSym_{\cA_d,\si_d}, \text{ or}\\
&\cSkew_{\cA_1,\si_1}\otimes_{\cO} \cdots \ot_{\cO} \cSkew_{\cA_d,\si_d}
\end{align*}
as submodules of $(\cA,\sigma)$. This is justified in the following lemma.
\begin{lem}\label{tensor_submodules}
Let $(\cA_1,\si_1), \ldots, (\cA_d,\si_d)$ be Azumaya $\cO$--algebras with involution. Set $(\cA, \si) = ( \cA_1 \ot_\cO \cdots \ot_\cO A_d, \si_1 \ot \cdots \ot \si_d)$.
\begin{enumerate}[label={\rm (\roman*)}]
\item\label{tensor_submodules_i} If all $\sigma_i$ are orthogonal, then the canonical map
\[
\cSym_{\cA_1,\si_1}\otimes_{\cO} \cdots \ot_{\cO} \cSym_{\cA_d,\si_d} \to \cA
\]
is injective.
\item\label{tensor_submodules_ii} If all $\sigma_i$ are weakly symplectic, then the canonical map
\[
\cSkew_{\cA_1,\si_1}\otimes_{\cO} \cdots \ot_{\cO} \cSkew_{\cA_d,\si_d} \to \cA
\]
is injective.
\end{enumerate}
\end{lem}
\begin{proof}
\noindent\ref{tensor_submodules_i}: Each $\cA_i$ is an Azumaya algebra, hence finite locally free, hence flat. Similarly, by \cite[3.3(ii)]{GNR} since $\sigma_i$ is orthogonal, each $\cSym_{\cA_i,\sigma_i}$ is finite locally free and hence flat. It is clear that our map is injective for one tensor factor, so arguing inductively we have that
\begin{align*}
&\cSym_{\cA_1,\sigma_1}\otimes_\cO \ldots\otimes_\cO \cSym_{\cA_{d-1},\sigma_{d-1}} \otimes_\cO \cSym_{\cA_d,\sigma_d} \\
\inj &\cA_1\otimes_\cO \ldots\otimes_\cO \cA_{d-1} \otimes_\cO \cSym_{\cA_d,\sigma_d} \\
\inj &\cA_1\otimes_\cO \ldots\otimes_\cO \cA_{d-1} \otimes_\cO \cA_d = \cA
\end{align*}
is injective, using flatness of $\cSym_{\cA_d,\sigma_d}$ in the first step and flatness of $\cA_1\otimes_\cO \ldots\otimes_\cO \cA_{d-1}$ in the second. This proves the claim.
\noindent\ref{tensor_submodules_ii}: The argument in \ref{tensor_submodules_i} can be repeated replacing $\cSym$ with $\cSkew$ since we know that when each $\sigma_i$ is weakly symplectic, then each $\cSkew_{\cA_i,\sigma_i}$ is finite locally free and hence flat by \cite[3.4(ii)]{GNR}.
\end{proof}
We will implicitly use the result of Lemma \ref{tensor_submodules} in the statements of results below.

\begin{lem}\label{lem_tensor_Symdecomp}
Let $(\cA_1,\si_1), \ldots, (\cA_d,\si_d)$ be Azumaya $\cO$--algebras with involution. Set $(\cA, \si) = ( \cA_1 \ot_\cO \cdots \ot_\cO A_d, \si_1 \ot \cdots \ot \si_d)$. 

\begin{enumerate}[label={\rm (\roman*)}]
\item \label{lem_tensor_Symdecomp_i} 
If $\si_i$ are all orthogonal, then
\[ \cSym_{\cA,\si} = \big(\cSymd_{\cA,\si} + (\cSym_{\cA_1,\si_1}\otimes_{\cO} 
\cdots \ot_{\cO} \cSym_{\cA_d,\si_d})\big)^\sharp, \]
\item \label{lem_tensor_Symdecomp_ii} If instead all $\si_i$ are weakly symplectic and $d$ is even, then 
    \[ 
    \cSym_{\cA,\si} = \big(\cSymd_{\cA,\si} + (\cSkew_{\cA_1,\si_1}\otimes_{\cO}
    \cdots \ot_{\cO} \cSkew_{\cA_d,\si_d})\big)^\sharp \]
\end{enumerate}
where $\sharp$ denotes sheafification.
\end{lem}
\begin{proof}
\noindent\ref{lem_tensor_Symdecomp_i}: The statement is clear for $d=1$ and is \cite[5.1(i)]{GNR} for $d=2$. We can therefore proceed by induction, assuming that the result holds for $d-1$ factors. Set $(\cC, \ga) = (\cA_2,\si_2)\ot_\cO \cdots \ot_\cO (\cA_d, \si_d)$. Then $(\cA,\sigma)=(\cA_1\otimes_\cO \cC,\sigma_1\otimes\gamma)$ and we can apply the result for the two factor case to obtain
\[ \cSym_{\cA, \si} = \big( \cSymd_{\cA, \si} + (\cSym_{\cA_1, \si_1} \ot_\cO \cSym_{\cC, \ga})\big)^\sharp.
\]
Now, applying the $d-1$ factor case to $\cSym_{\cC, \ga}$ in the right hand side of the equation above produces the expression
\begin{align*}
&\big(\cSymd_{\cA, \si} + (\cSym_{\cA_1, \si_1} \ot_\cO (\cSymd_{\cC,\gamma}+(\cSym_{\cA_2,\sigma_2}\otimes_\cO \ldots \otimes_\cO \cSym_{\cA_d,\sigma_d}))\big)^\sharp \\
= &\big(\cSymd_{\cA, \si} + (\cSym_{\cA_1, \si_1} \ot_\cO \cSymd_{\cC,\gamma})+(\cSym_{\cA_1,\sigma_1}\otimes_\cO \ldots \otimes_\cO \cSym_{\cA_d,\sigma_d}))\big)^\sharp
\end{align*}
However, the module $\cSym_{\cA_1, \si_1} \ot_\cO \cSymd_{\cC,\gamma}$ is already a submodule of $\cSymd_{\cA, \si}$ since $\sigma = \sigma_1\otimes \gamma$ and any $x \in \cSym_{\cA_1, \si_1} \ot_\cO \cSymd_{\cC,\gamma}$ is locally of the form $s\otimes(a+\gamma(a))$, which can be written as
\[
s\otimes a + (\sigma_1\otimes \gamma)(s\otimes a)
\]
because $\sigma_1(s)=s$. Therefore we obtain
\[ \cSym_{\cA,\si} = \big(\cSymd_{\cA,\si} + (\cSym_{\cA_1,\si_1}\otimes_{\cO} 
\cdots \ot_{\cO} \cSym_{\cA_d,\si_d})\big)^\sharp, \]
as claimed.

\noindent\ref{lem_tensor_Symdecomp_ii}: Since we have an even number of algebras with symplectic involutions, set $(\cB_i,\sigma_i') = (\cA_{2i-1}\otimes_\cO \cA_{2i},\sigma_{2i-1}\otimes\sigma_{2i})$, each of which are algebras with orthogonal involutions. Then $(\cA,\sigma) = (\cB_1,\sigma_1')\otimes\ldots\otimes(\cB_{\frac{d}{2}},\sigma_{\frac{d}{2}}')$. We can now apply \ref{lem_tensor_Symdecomp_i} to obtain that
\[
\cSym_{\cA,\sigma} = \big(\cSymd_{\cA,\sigma} + (\cSym_{\cB_1,\sigma_1'}\otimes_\cO \ldots \otimes_\cO \cSym_{\cB_{\frac{d}{2}},\sigma_{\frac{d}{2}}'})\big)^\sharp.
\]
Now, we apply \cite[5.1(ii)]{GNR} to $(\cB_1,\sigma_1')=(\cA_1\otimes_\cO \cA_2,\sigma_1\otimes \sigma_2)$ to obtain
\[
\cSym_{\cB_1,\sigma_1'} = \big(\cSymd_{\cB_1,\sigma_1'} + (\cSkew_{\cA_1,\sigma_1}\otimes_\cO \cSkew_{\cA_2,\sigma_2})\big)^\sharp.
\]
Similarly, the term with a $\cSymd$ factor will already be a submodule of $\cSymd_{\cA,\sigma}$, and so we obtain
\[
\cSym_{\cA,\sigma} = \big(\cSymd_{\cA,\sigma} + (\cSkew_{\cA_1,\sigma_1}\otimes_\cO \cSkew_{\cA_2,\sigma_2}\otimes_\cO \cSym_{\cB_2,\sigma_2'} \otimes_\cO \ldots \otimes_\cO \cSym_{\cB_{\frac{d}{2}},\sigma_{\frac{d}{2}}'})\big)^\sharp.
\]
Continuing to apply \cite[5.1(ii)]{GNR} down the line, we ultimately arrive at
\[
\cSym_{\cA,\sigma} = \big(\cSymd_{\cA,\sigma} + (\cSkew_{\cA_1,\sigma_1}\otimes_\cO \ldots \otimes_\cO \cSkew_{\cA_d,\sigma_d})\big)^\sharp,
\]
which finishes the proof.
\end{proof}

\begin{lem}\label{even_symp_tensor}
Let $(\cA_1,\sigma_1),\ldots,(\cA_d,\sigma_d)$ be an even number of Azumaya algebras with symplectic involutions. Set $(\cA,\sigma)=(\cA_1\otimes_\cO \ldots \otimes_\cO \cA_d,\sigma_1\otimes\ldots\otimes\sigma_d)$, which is an Azumaya algebra with orthogonal involution. Then, there exists a unique $f_\otimes \colon \cSym_{\cA,\sigma} \to \cO$ which makes $(\cA,\sigma,f_\otimes)$ a quadratic triple and such that
\[
f_\otimes(s_1\otimes\ldots\otimes s_d) = 0
\]
for all $s_i \in \cSkew_{\cA_i,\sigma_i}$.
\end{lem}
\begin{proof}
Since any semi-trace on $(\cA,\sigma)$ has predetermined values on $\cSymd_{\cA,\sigma}$, by Lemma \ref{lem_tensor_Symdecomp}\ref{lem_tensor_Symdecomp_ii} it will be uniquely determined by its values on $\cSkew_{\cA_1,\sigma_1}\otimes_\cO \ldots \otimes_\cO \cSkew_{\cA_d,\sigma_d}$. We demonstrate the existence of a semitrace $f_\otimes$ which is zero on the tensor product of skew elements, as claimed in the statement. To do so, we partition our tensor product as
\[
(\cA,\sigma) = (\cA_1,\sigma_1)\otimes_\cO (\cA_2\otimes_\cO \ldots \otimes_\cO \cA_d,\sigma_2\otimes\ldots\otimes \sigma_d)
\]
which is now the tensor product of two algebras with symplectic involutions, since the second factor has an odd number of terms. Thus, we apply \cite[5.6]{GNR} to this to obtain a semi-trace $f_\otimes$ which is zero on all elements of $\cSkew_{\cA_1,\sigma_1}\otimes_\cO \cSkew_{(\cA_2\otimes_\cO \ldots \otimes_\cO \cA_d,\sigma_2\otimes\ldots\otimes \sigma_d)}$. Due to the odd number of factors, it is clear that
\[
\cSkew_{\cA_2,\sigma_2}\otimes_\cO \ldots \otimes_\cO \cSkew_{\cA_d,\sigma_d} \subseteq \cSkew_{(\cA_2\otimes_\cO \ldots \otimes_\cO \cA_d,\sigma_2\otimes\ldots\otimes \sigma_d)}
\]
and so this $f_\otimes$ satisfies our requirements and we are done.
\end{proof}

Consider a morphism
\[
(g,\varphi_1,\ldots,\varphi_d)\colon (Y,(\cA_{1,i},\sigma_{1,i})_{i=1}^d) \to (X,(\cA_{2,i},\sigma_{2,i})_{i=1}^d)
\]
in $\fC_{(n_1,\ldots,n_d)}$. Set $(\cB_1,\sigma_1) = (\cA_{1,1}\otimes_{\cO|_Y}\ldots\otimes_{\cO|_Y}\cA_{1,d},\sigma_{1,1}\otimes\ldots\otimes\sigma_{1,d})$ and likewise for $(\cB_2,\sigma_2)$. We equip these with the semi-traces from Lemma \ref{even_symp_tensor} to produce quadratic triples $(\cB_1,\sigma_1,f_1)$ and $(\cB_2,\sigma_2,f_2)$. Since
\[
g^*(\cA_{2,1})\otimes_{\cO|_Y} \ldots \otimes_{\cO|_Y} g^*(\cA_{2,d}) \cong g^*(\cA_{2,1}\otimes_{\cO|_X}\ldots \otimes_{\cO|_X} \cA_{2,d})
\]
and similarly for the involutions, it is clear that after taking the tensor product of all the component algebras, we get an isomorphism
\[
\varphi_1\otimes\ldots\otimes\varphi_d \colon (\cB_1,\sigma_1) \to g^*(\cB_2,\sigma_2).
\]
We would like to see that this isomorphism also respects the semi-traces. This is straightforward though, since $g^*(f_2)\circ (\varphi_1\otimes\ldots\otimes\varphi_d)$ is a semi-trace on $(\cB_1,\sigma_1)$ which is zero on the submodule $\cSkew_{\cA_{1,1},\sigma_{1,1}}\otimes_{\cO|_Y} \ldots \otimes_{\cO|_Y} \cSkew_{\cA_{1,d},\sigma_{1,d}}$ because for a pure tensor $s_1\otimes\ldots\otimes s_d$ in this submodule,
\[
\varphi_1(s_1)\otimes\ldots\otimes\varphi_d(s_d)
\]
is also a tensor product of skew elements and is thus sent to zero by $g^*(f_2)$. Hence
\[
g^*(f_2)\circ (\varphi_1\otimes\ldots\otimes\varphi_d) = f_1
\]
by the uniqueness in Lemma \ref{even_symp_tensor}. This means that we are able to define the following functor,
\begin{align}
\Psi \colon \fC_{(n_1,\ldots,n_d)} &\to \fD_n \label{eq_stack_Psi} \\
(X,(\cA_1,\sigma_1),\ldots,(\cA_d,\sigma_d)) &\mapsto (X,(\cA_1\otimes_{\cO|_X}\ldots\otimes_{\cO|_X}\cA_d,\sigma_1\otimes\ldots\otimes\sigma_d,f_\otimes)) \nonumber \\
(g,\varphi_1,\ldots,\varphi_d) &\mapsto (g,\varphi_1\otimes\ldots\otimes \varphi_d) \nonumber
\end{align}
which is a morphism of stacks since $\fD_n$ is a gerbe.

\begin{lem} \label{lem_twist_un} 
We use numbers $d$ and $n_1,\ldots,n_d$ as in the beginning of Section {\rm \ref{triple_over_Z}}, thus $n_1,\ldots,n_d$ are an even number of positive integers, and likewise set $n=n_1(2n_2)\ldots(2n_d)$. Consider the morphism $\Psi$ of \eqref{eq_stack_Psi} defined above.
\begin{enumerate}[label={\rm(\roman*)}]
\item \label{lem_twist_un_1} The image of the split object in $\fC_{(n_1,\ldots,n_d)}$ under $\Psi$ is isomorphic to the split object in $\fD_n$, i.e.,
\[
\Psi\big((\Mat_{2n_1}(\cO),\sigma_{n_1}),\ldots,(\Mat_{2n_d}(\cO),\sigma_{n_d})\big) \cong (\Mat_{2n}(\cO),\sigma_\tens,f_\tens).
\]
\item \label{lem_twist_un_2} The group sheaf homomorphism induced by $\Psi$ between the automorphism sheaves of the split objects,
\[
\Psi_{\cM} \colon \PSP_{2n_1}\times\ldots\times \PSP_{2n_d} \to \bPGO_{q_\tens},
\]
is the composition
\[
\PSP_{2n_1}\times\ldots\times \PSP_{2n_d} \xrightarrow{\underline{h}} \bPGO_{q_\tens}^+ \hookrightarrow \bPGO_{q_\tens}
\]
where $\underline{h}$ is the map from \eqref{eq_h_underline}.
\end{enumerate}
\end{lem}
\begin{proof}
\noindent\ref{lem_twist_un_1}: Recall from Section \ref{triple_over_Z} that, by construction, the involution $\sigma_{n_1}\otimes\ldots\otimes \sigma_{n_d}$ is adjoint to the polar bilinear form of $q_\tens$, i.e., $\sigma_\tens = \sigma_{n_1}\otimes\ldots\otimes \sigma_{n_d}$. Thus, we only need to argue that $f_\tens$ vanishes on all elements of $\cSkew_{(\Mat_{2n_1}(\cO),\sigma_{n_1})}\otimes_\cO \ldots \otimes_\cO \cSkew_{(\Mat_{2n_d}(\cO),\sigma_{n_d})}$ and then Lemma \ref{even_symp_tensor} will guarantee that $f_\tens$ is the semi-trace produced by $\Psi$. However, if we instead reframe our situation as being of the form $d'=2$, $n_1' = n_1$ and $n_2' = n_2(2n_3)\ldots(2n_d)$ with symplectic involutions $(\Mat_{2n_1}(\cO),\sigma_{n_1})$ and
\[
(\Mat_{2n_2'}(\cO),\sigma_{n_2'})=(\Mat_{2n_2}(\cO)\otimes_\cO \ldots \otimes_\cO \Mat_{2n_d}(\cO), \sigma_{n_2}\otimes\ldots\otimes \sigma_{n_d}),
\]
then $\sigma_{n_1}\otimes \sigma_{n_2'} = \sigma_{n_1}\otimes\ldots\otimes \sigma_{n_d}$. Hence, by uniqueness from Remark \ref{rem_canonical}, the semi-trace $f_\tens$ agrees with the semi-trace $f_\otimes$ from \cite[5.6]{GNR} over $\ZZ$, and thus they also agree after being base changed to $S$. The defining property of $f_\otimes$ is that it vanishes on all elements of
\[
\cSkew_{(\Mat_{2n_1}(\cO),\sigma_{n_1})}\otimes_\cO \cSkew_{(\Mat_{2n_2'}(\cO),\sigma_{n_2'})}
\]
which contains the module $$\cSkew_{(\Mat_{2n_1}(\cO),\sigma_{n_1})}\otimes_\cO \ldots \otimes_\cO \cSkew_{(\Mat_{2n_d}(\cO),\sigma_{n_d})}$$ we are interested in. Hence, $f_\tens$ also vanishes on these elements and so we are done.

\noindent\ref{lem_twist_un_2}: Since $\underline{h}$ is a restriction of $\PSeg$ by the comments under \eqref{eq_h_underline}, it is clear that
 $\underline{h}$ agrees with the definition of how $\Psi_\cM$ acts on morphisms. Composing $\underline{h}$ with the inclusion into $\PGO_{q_\tens}$ to get the correct codomain produces the claimed description of $\Psi_\cM$, finishing the proof.
\end{proof}

As in Lemma \ref{lem_segre}\ref{lem_segre2}, we now consider the case $2n=(2m)^d$. We consider the object $\cM'=(\Sd \to S,(\Mat_{2m}(\cO),\sigma_m)|_{\Sd}) \in \fC_m^{d\etale}(S)$ to be the split object of $\fC_m^{d\etale}$. It has automorphism sheaf $(\PSP_{2m}^d)\rtimes \SS_d$. For an object $(X'\to X,(\cA,\sigma)) \in \fC_m^{d\etale}$, we may simply consider $\cA$ as an Azumaya algebra of degree $2m$ over a degree $d$ \'etale extension of $X$, and consider its norm $N_{X'/X}(\cA)$. By Lemma \ref{lem_ferrand} this is a degree $(2m)^d$ Azumaya algebra over $X$. Assuming that $d$ is even, we now equip this algebra with a quadratic pair using the information from $\sigma$.
\begin{lem}\label{lem_quad_pair_on_norm}
Let $X' \to X$ be a degree $d$ \'etale cover of schemes with $d$ even and let $(\cA,\sigma)$ be a degree $2m$ Azumaya algebra over $X'$ with a symplectic involution. Since $\sigma \colon \cA \to \cA$ is an $\cO|_{X'}$--module morphism, we obtain a $\cO|_X$--module morphism $N_{X'/X}(\sigma) \colon N_{X'/X}(\cA) \to N_{X'/X}(\cA)$. Then, we have the following.
\begin{enumerate}[label={\rm(\roman*)}]
\item \label{lem_quad_pair_on_norm_i} The map $N_{X'/X}(\sigma)$ is an orthogonal involution on $N_{X'/X}(\cA)$.
\item \label{lem_quad_pair_on_norm_ii} The $\cO|_X$--module $N_{X'/X}(\cSkew_{\cA,\sigma})$ is a submodule of $N_{X'/X}(\cA)$, i.e., the injection $\cSkew_{\cA,\sigma} \inj \cA$ is mapped by the norm to an injection
\[
N_{X'/X}(\cSkew_{\cA,\sigma}) \inj N_{X'/X}(\cA).
\]
\item \label{lem_quad_pair_on_norm_iii} $\cSym_{(N_{X'/X}(\cA),N_{X'/X}(\sigma))} = \big(\cSymd_{(N_{X'/X}(\cA),N_{X'/X}(\sigma))} + N_{X'/X}(\cSkew_{\cA,\sigma})\big)^\sharp$, where $\sharp$ denotes sheafification.
\item \label{lem_quad_pair_on_norm_iv} There exists a unique semi-trace $f_N \colon \cSym_{(N_{X'/X}(\cA),N_{X'/X}(\sigma))} \to \cO|_X$ which makes $(N_{X'/X}(\cA),N_{X'/X}(\sigma),f_N)$ a quadratic triple and such that $f_N(s)=0$ for all $s\in N_{X'/X}(\cSkew_{\cA,\sigma})$.
\end{enumerate}
\end{lem}
\begin{proof}
The justification of all four claims will follow from local considerations. Since $X' \to X$ is \'etale of degree $d$, we can fix a cover $\{T_i \to X\}_{i\in I}$ over which $X'\times_X T_i \cong T_i^{\sqcup d}$. Pulling the algebra with symplectic involution $(\cA,\sigma)$ back to one of these $X'\times_X T_i$ then yields
\[
(\cA,\sigma)|_{X'\times_X T_i} = \big((\cA_{i,1},\sigma_{i,1}),\ldots,(\cA_{i,d},\sigma_{i,d})\big)
\]
where we have $d$ algebras with symplectic involution over $T_i$, one over each component of the disjoint union $T_i^{\sqcup d}$. Since the norm $N_{\fMod}$ is a stack morphism, it is compatible with base change, and thus
\[
N_{X'/X}(\cA)|_{T_i} \cong N_{(X'\times_X T_i)/T_i}(\cA|_{X'\times_X T_i}) \cong \cA_{i,1}\otimes_{\cO|_{T_i}}\ldots\otimes_{\cO|_{T_i}}\cA_{i,d}
\]
where we know by Example \ref{ex_split_algebra_norm} that over the split \'etale cover the norm produces the tensor product as above with the usual tensor component-wise multiplication for a tensor product of algebras. We now proceed with proving our claims.

\noindent\ref{lem_quad_pair_on_norm_i}: The morphism $N_{X'/X}(\sigma)$ locally restricts to
\[
N_{T_i^{\sqcup d}/T_i}(\sigma_{i,1},\ldots,\sigma_{i,d}) = \sigma_{i,1}\otimes\ldots\otimes \sigma_{i,d}
\]
which is an orthogonal involution since it is the tensor product of an even number of symplectic involutions. Hence $N_{X'/X}(\sigma)$ is an orthogonal involution.

\noindent\ref{lem_quad_pair_on_norm_ii}: The induced morphism $N_{X'/X}(\cSkew_{\cA,\sigma}) \to N_{X'/X}(\cA)$ locally takes the form
\[
N_{T_i^{\sqcup d}/T_i}(\cSkew_{(\cB_{i,1},\ldots,\cB_{i,d}),(\sigma_{i,1},\ldots,\sigma_{i,d})}) \to \cB_{i,1}\otimes_\cO \ldots\otimes_\cO \cB_{i,d}.
\]
The $\cSkew$ submodule takes the form
\[
\cSkew_{(\cA_{i,1},\ldots,\cA_{i,d}),(\sigma_{i,1},\ldots,\sigma_{i,d})} = (\cSkew_{\cA_{i,1},\sigma_{i,1}},\ldots,\cSkew_{\cA_{i,d},\sigma_{i,d}})
\]
and thus our morphism is the canonical map
\[
\cSkew_{\cA_{i,1},\sigma_{i,1}}\otimes_\cO \ldots\otimes_\cO \cSkew_{\cA_{i,d},\sigma_{i,d}} \to \cA_{i,1}\otimes_\cO \ldots\otimes_\cO \cA_{i,d}.
\]
This is injective by Lemma \ref{tensor_submodules}\ref{tensor_submodules_ii} and so $N_{X'/X}(\cSkew_{\cA,\sigma}) \inj N_{X'/X}(\cA)$ is injective since it is locally injective.

\noindent\ref{lem_quad_pair_on_norm_iii}: From the argument in \ref{lem_quad_pair_on_norm_ii}, we know that locally
\[
N_{X'/X}(\cSkew_{\cA,\sigma})|_{X'\times_X T_i} = \cSkew_{\cA_{i,1},\sigma_{i,1}}\otimes_\cO \ldots\otimes_\cO \cSkew_{\cA_{i,d},\sigma_{i,d}}.
\]
By Lemma \ref{lem_tensor_Symdecomp}\ref{lem_tensor_Symdecomp_ii} we know that
\begin{align*}
&\cSym_{(\cA_{i,1}\otimes\ldots\otimes\cA_{i,d},\sigma_{i,1}\otimes\ldots\otimes\sigma_{i,d})} \\
= &\big(\cSymd_{(\cA_{i,1}\otimes\ldots\otimes\cA_{i,d},\sigma_{i,1}\otimes\ldots\otimes\sigma_{i,d})} + \cSkew_{\cA_{i,1},\sigma_{i,1}}\otimes_\cO \ldots\otimes_\cO \cSkew_{\cA_{i,d},\sigma_{i,d}}\big)^\sharp
\end{align*}
which we may write as
\begin{align*}
&\cSym_{(N_{X'/X}(\cSkew_{\cA,\sigma}),N_{X'/X}(\sigma))}|_{X'\times_X T_i} \\
=& \big(\cSymd_{(N_{X'/X}(\cSkew_{\cA,\sigma}),N_{X'/X}(\sigma))}|_{X'\times_X T_i} + N_{X'/X}(\cSkew_{\cA,\sigma})|_{X'\times_X T_i}\big)^\sharp.
\end{align*}
Thus, our claimed equation holds locally so we may conclude that globally we have
\[
\cSym_{(N_{X'/X}(\cA),N_{X'/X}(\sigma))} = \big(\cSymd_{(N_{X'/X}(\cA),N_{X'/X}(\sigma))} + N_{X'/X}(\cSkew_{\cA,\sigma})\big)^\sharp
\]
as desired.

\noindent\ref{lem_quad_pair_on_norm_iv}: It is clear from \ref{lem_quad_pair_on_norm_iii} that if such a semi-trace $f$ exists, then it is unique. Thus, we only need to show that a semi-trace exists which vanishes on $N_{X'/X}(\cSkew_{\cA,\sigma})$. We construct $f_N$ by defining it to be locally given by the semi-traces $f_{\otimes,i}$ guaranteed by Lemma \ref{even_symp_tensor}. It is clear that if this is well defined, then it will have the property we desire. So, we check that the local $f_{\otimes,i}$ agree on overlaps. However this is clear since both $f_{\otimes,i}|_{T_{ij}}$ and $f_{\otimes,j}|_{T_{ij}}$ will be semi-traces which vanish on $N_{X'/X}(\cSkew_{\cA,\sigma})|_{T_{ij}}$, which by \ref{lem_quad_pair_on_norm_iii} means that they are equal. Hence we obtain the claimed semi-trace $f_N$ and we are done.
\end{proof}

If we are given a morphism
\[
(g',g,\varphi)\colon (X_1'\to X_1,(\cA_1,\sigma_1)) \to (X_2'\to X_2,(\cA_2,\sigma_2)
\]
in $\fC_m^{d\etale}$, we know that the norm yields a morphism in the gerbe of Azumaya algebras $\fAzu_{(2m)^d}$,
\[
N_\fAzu(g',g,\varphi) \colon (X_1,N_{X_1'/X_1}(\cA_1)) \to (X_2,N_{X_2'/X_2}(\cA_2)).
\]
Viewing these simply as modules, we may consider $\sigma_i$ as a morphism of the form $(\Id_{X_i'},\Id_{X_i},\sigma_i)$ in $\fMod_{(2m)^2}^{d \etale}$. Then, since we have that
\[
(g',g,\varphi)\circ(\Id_{X_1'},\Id_{X_1},\sigma_1)=(\Id_{X_2'},\Id_{X_2},\sigma_2) \circ (g',g,\varphi),
\]
we get that
\[
N_\fAzu(g',g,\varphi) \circ (\Id_{X_1},N_{X_1'/X_1}(\sigma_1)) = (\Id_{X_2},N_{X_2'/X_2}(\sigma_2))\circ N_\fAzu(g',g,\varphi).
\]
In other words, this means that the morphism $N_\fAzu(g',g,\varphi)$ respects the orthogonal involutions. In a similar way, since the original morphism $(g',g,\varphi)$ restricts to a map $(X_1'\to X_1,\cSkew_{\cA_1,\sigma_1}) \to (X_2'\to X_2,\cSkew_{\cA_2,\sigma_2})$ in the stack of modules, the morphism $N_\fAzu(g',g,\varphi)$ will map $N_{X_1'/X_1}(\cSkew_{\cA_1,\sigma_1})$ into $N_{X_2'/X_2}(\cSkew_{\cA_2,\sigma_2})$ and therefore also respect the semi-traces $f_{N,1}$ and $f_{N,2}$ provided by Lemma \ref{lem_quad_pair_on_norm}\ref{lem_quad_pair_on_norm_iv}.

Therefore, we may use the norm functor to define the morphism of stacks
\begin{align}
\Psi' \colon \fC_m^{d\etale} &\to \fD_n \label{eq_stack_Psi_prime}\\
(X'\to X,(\cA,\sigma)) &\mapsto (X,(N_{X'/X}(\cA),N_{X'/X}(\sigma),f_N)) \nonumber \\
(g',g,\varphi) &\mapsto N_\fAzu(g',g,\varphi). \nonumber
\end{align}

\begin{lem}\label{prop_norm_triples}
Let $m\in \ZZ$ and let $n\in \ZZ$ such that $2n=(2m)^d$. Consider the morphism $\Psi'$ of \eqref{eq_stack_Psi_prime} defined above. Then $\Psi'$ sends the split object $\cM'=(\Sd \to S,(\Mat_{2m}(\cO),\sigma_m)|_{\Sd})$ to $(S,(\Mat_{2n}(\cO),\sigma_\tens,f_\tens))$ and the induced group sheaf homomorphism
\[
\Psi'_{\cM'} \colon (\PSP_{2m})^d\rtimes \SS_d \to \bPGO_{q_\tens}
\]
is $\underline{h}'$ of \eqref{eq_h_tilde_underline}.
\end{lem}
\begin{proof}
We know that for the split object $\cM'$, the norm simply tensors together the components, and so the image $\Psi'(\cM')$ will agree with the image of
\[
\cM = (S,(\Mat_{2m}(\cO),\sigma_m),\ldots,(\Mat_{2m}(\cO),\sigma_m))
\]
under the morphism $\Psi \colon \fC_{(m,m,\ldots,m)} \to \fD_n$ of \eqref{eq_stack_Psi}. Then, we know from Lemma \ref{lem_twist_un}\ref{lem_twist_un_1} that $\Psi(\cM) = (S,(\Mat_{2n}(\cO),\sigma_\tens,f_\tens))$.

To identify the induced group sheaf homomorphism, we point out that $\Psi'$ is defined to behave the same on morphisms as the norm functor $N_\fAzu$. By Corollary \ref{cor_norm_azu_group_hom} the group sheaf homomorphism induced by $N_\fAzu$ is
\[
\PSeg' \colon (\PGL_{2m})^d \rtimes \SS_d \to \PGL_{(2m)^d}.
\]
Therefore, $\Psi'_{\cM'}$ will simply be obtained from $\PSeg'$ by appropriately restricting the domain and codomain. However, this is exactly how $\underline{h}'$ was defined, and so we are done. 
\end{proof}

In the special case when $\underline{h}'$ is an isomorphism, we obtain an equivalence of stacks by Theorem \ref{lem_gerbe_equivalence}. We compose this with the equivalence of stacks
\begin{align*}
\rho \colon \fA_1^{2\etale} &\to \fC_1^{2\etale} \\
(X'\to X,\cA)&\mapsto (X'\to X,(\cA,\sigma_{\cA})),
\end{align*}
where $\fA_1^{2\etale}$ is the stack of quaternion algebras over a degree $2$ \'etale extension as defined at the beginning of Section \ref{Equivalence}, that equips a quaternion algebra $\cA$ with its canonical symplectic involution $\sigma_{\cA}$.
\begin{thm}\label{thm_norm_eq} Assume that $m=1$ and $d=2$. The morphism
\[
\begin{tikzcd}
\fA_1^{2\etale} \arrow{r}{\rho} & \fC_1^{2\etale} \arrow{r}{\Psi'} & \fD_2 \\[-4ex]
(X'\to X,\cA) \arrow[mapsto]{rr} & & (X,(N_{X'/X}(\cA),N_{X'/X}(\sigma),f_N))
\end{tikzcd}
\]
is an equivalence of gerbes. Furthermore, those stacks are equivalent to the following stacks over $\Sch_S$.
 \sm
\begin{enumerate}[label={\rm (\roman*)}]

 \item\label{thm_norm_eq_i} The stack of  $(\mathbf{PGL}_2 \times \mathbf{PGL}_2) \rtimes \ZZ/2\ZZ$-torsors.

\item\label{thm_norm_eq_ii} The stack of  
$\mathbf{PGO}_{q_\tens} \cong \mathbf{PGO}_{\Mat_4(\cO), \sigma_\tens,f_\tens}$-torsors.

\sm
 
\item\label{thm_norm_eq_iii} The stack of adjoint semisimple  group schemes
of type $A_1 \times A_1$.

\sm
 
\item\label{thm_norm_eq_iv} The stack of adjoint semisimple  group schemes
of type $D_2$.

\end{enumerate}
\end{thm}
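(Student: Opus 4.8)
The plan is to derive the equivalence $\fA_1^2 \simeq \fD_2$ by showing separately that the two morphisms $\rho$ and $\Psi'$ are equivalences of gerbes, and then to recognize the stacks in \ref{thm_norm_eq_i}--\ref{thm_norm_eq_iv} as gerbes of torsors for one and the same group sheaf, namely $(\PGL_2\times\PGL_2)\rtimes\ZZ/2\ZZ$.

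First I would check that $\rho\colon \fA_1^2 \to \fC_2^{2\etale}$ is an equivalence of gerbes. Every object of $\fA_1^2 = \fAzu_2^{2\etale}$ is a quaternion algebra $\cA$ over a degree $2$ \'etale extension, and such an $\cA$ carries the canonical involution $a\mapsto \Trd_\cA(a)-a$, which is symplectic and functorial in $\cA$; conversely a degree $2$ Azumaya algebra has this as its unique symplectic involution (this is local on the base, so it reduces to the known split case, cf.\ \cite{GNR} and \cite{K}). Since any algebra isomorphism preserves reduced traces, it automatically preserves the canonical involution, so $\rho$ is fully faithful, and the previous sentence shows that it is essentially surjective. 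Hence $\rho$ is an equivalence.

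Next I would prove that $\Psi'\colon \fC_2^{2\etale} \to \fD_2$ is an equivalence, where $\fD_2 = \fD_n$ with $2n = (2m)^d = 4$, i.e.\ $n = 2$. Since $\SP_2 = \SL_2$ one has $\PSP_2 = \PGL_2$, so the split object $\cM'$ of $\fC_2^{2\etale}$ has automorphism sheaf $(\PGL_2)^2\rtimes\SS_2 = (\PGL_2\times\PGL_2)\rtimes\ZZ/2\ZZ$. By Proposition~\ref{prop_norm_triples}\ref{prop_norm_triples_i}, the group sheaf homomorphism induced by $\Psi'$ on automorphism sheaves of $\cM'$ is $\underline h'$ of \eqref{eq_h_tilde_underline}, and for $m = 1$, $d = 2$ this is an isomorphism, because $\widetilde h$ of Lemma~\ref{lem_segre}\ref{lem_segre3} is an isomorphism (as noted at the end of Section~\ref{section_restricting_segre}). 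Theorem~\ref{lem_gerbe_equivalence} then gives that $\Psi'$ is an equivalence of gerbes. Consequently $\Psi'\circ\rho$ is an equivalence, and on objects it sends $(X'\to X,\cA)$ first to $(X'\to X,(\cA,\sigma_\cA))$ and then, by Proposition~\ref{prop_norm_triples}\ref{prop_norm_triples_ii}, to $(X,(N_{X'/X}(\cA),\sigma_N,f_N))$, the quadratic pair being the one produced by the twisting construction, which by Lemma~\ref{lem_twist_un}\ref{lem_twist_un_2} agrees with the one of \cite[5.6]{GNR} and hence with the functor $N$ of Theorem~\ref{thm_D}; this proves the first assertion. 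It then remains to identify \ref{thm_norm_eq_i}--\ref{thm_norm_eq_iv} with these gerbes: by Proposition~\ref{lem_gerbes_and_torsors}\ref{lem_gerbes_and_torsors_iv} and Remark~\ref{rem_gerbe_forms}, each gerbe here is equivalent to the gerbe of torsors for the automorphism sheaf of any of its global objects, so it suffices to see that this sheaf is $(\PGL_2\times\PGL_2)\rtimes\ZZ/2\ZZ$ in every case. For $\fA_1^2$ the split object $(\Sd\to S,\Mat_2(\cO|_{\Sd}))$ has automorphism sheaf $(\PGL_2)^2\rtimes\SS_2$ by Lemma~\ref{lem_Azu_torsor_gerbe}\ref{lem_Azu_torsor_gerbe_ii}, giving \ref{thm_norm_eq_i}; for $\fD_2$ the split object $(S,(\Mat_4(\cO),\sigma_q,f_q))$ has automorphism sheaf $\bPGO_q \cong \bPGO_{\Mat_4(\cO),\sigma_q,f_q}$, identified with $(\PGL_2\times\PGL_2)\rtimes\ZZ/2\ZZ$ via $\underline h'$, giving \ref{thm_norm_eq_ii}; and for \ref{thm_norm_eq_iii}--\ref{thm_norm_eq_iv}, an adjoint semisimple $S$--group scheme of type $A_1\times A_1$ is the same as one of type $D_2$ (the root data coincide in the adjoint case), such schemes are \'etale-locally split by \cite{SGA3}, so these form a gerbe whose split object $(\PGL_2)_S\times(\PGL_2)_S$ has automorphism group scheme $(\PGL_2\times\PGL_2)\rtimes\ZZ/2\ZZ$, the component group being the diagram automorphism interchanging the two $A_1$--factors (again by the structure theory of \cite{SGA3}). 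This establishes all the claimed equivalences.

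I do not expect the gerbe-theoretic bookkeeping to be the hard part, since it is routine once Proposition~\ref{prop_norm_triples}, Theorem~\ref{lem_gerbe_equivalence}, and the fact that $\underline h'$ is an isomorphism for $m=1$, $d=2$ are in hand. The main obstacle is pinning down two group-scheme facts cleanly over an arbitrary base: the uniqueness of the symplectic involution on a degree $2$ Azumaya algebra (needed so that $\rho$ is essentially surjective), and the identification of the automorphism group scheme of the split adjoint group of type $D_2$ with $(\PGL_2\times\PGL_2)\rtimes\ZZ/2\ZZ$, with the outer part realized by the non-identity component of $\bPGO_q$ (needed for \ref{thm_norm_eq_iv}). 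Both are standard but should be sourced with care, and one could instead circumvent the second by deducing $\bPGO_q\cong(\PGL_2\times\PGL_2)\rtimes\ZZ/2\ZZ$ purely from $\underline h'$ and then quoting \cite{CF} for $\fD_n\simeq\fTors(\bPGO_q)$.
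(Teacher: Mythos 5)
Your proposal is correct and follows essentially the same route as the paper. The paper proves that $\Psi'$ is an equivalence by appealing to Theorem~\ref{lem_gerbe_equivalence} and the fact that $\underline{h}'$ of \eqref{eq_h_tilde_underline} is an isomorphism when $m=1$, $d=2$ (via Lemma~\ref{lem_segre}\ref{lem_segre3}), takes the equivalence $\rho$ to be "canonical," and then identifies each of the four stacks with a gerbe of torsors for the same group sheaf $(\mathbf{PGL}_2\times\mathbf{PGL}_2)\rtimes\ZZ/2\ZZ$ by exhibiting a split object and using Lemma~\ref{lem_mod_torsor_gerbe}/\ref{lem_Azu_torsor_gerbe}, Corollary~\ref{lem_categ}, Theorem~\ref{lem_gerbe_equivalence}, and the structure theory of split adjoint groups. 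You do exactly this, and in addition spell out the underlying reason that $\rho$ is an equivalence (uniqueness of the symplectic involution on a degree $2$ Azumaya algebra), which the paper leaves implicit. Two tiny points worth tightening: Lemma~\ref{lem_twist_un}\ref{lem_twist_un_2} is stated for $\Psi$ rather than $\Psi'$, so if you invoke it to pin down the quadratic pair produced by $\Psi'$ you should explicitly reduce to the split cover; and your side remark that the resulting quadratic pair agrees "with the functor $N$ of Theorem~\ref{thm_D}" is circular since Theorem~\ref{thm_D} is a restatement of the theorem being proved, so it is better to cite \cite[5.6]{GNR} directly as the target of comparison.
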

\begin{proof} As noted above, by Theorem \ref{lem_gerbe_equivalence}, $\Psi'$ is an equivalence of stacks because $\underline{h}'$ of \eqref{eq_h_tilde_underline} is an isomorphism. The morphism $\rho$ is the canonical equivalence and so their composition is an equivalence as well.

All adjoint semisimple group schemes of type $A_1\times A_1$ are twisted forms of the split adjoint Chevalley group scheme of the same type, namely $\PGL_2\times \PGL_2$, which has automorphism group $(\mathbf{PGL}_2 \times \mathbf{PGL}_2) \rtimes \ZZ/2\ZZ$. This provides the equivalence between \ref{thm_norm_eq_i} and \ref{thm_norm_eq_iii} since both are gerbes. In turn, the category $\fA_1^{2\etale}$ is equivalent to \ref{thm_norm_eq_i} since $\fA_1^{2\etale} \cong \fF(\PGL_2)^{2\etale}$ by Remark \ref{rem_torsors_equiv_auto} and then $\fF(\PGL_2)^{2\etale} \cong \fTors((\PGL_2\times\PGL_2)\rtimes\SS_2)$ by construction in Section \ref{permutation_semi_direct}.

Similarly, the adjoint semisimple groups of type $D_2$ are twisted forms of the split adjoint Chevalley group of type $D_2$, which is $\bPGO_{q_\tens}^+$, and its automorphism group is $\bPGO_q$. As above, using again Theorem \ref{lem_gerbe_equivalence}, this provides the equivalence between \ref{thm_norm_eq_ii}, \ref{thm_norm_eq_iv}, and $\fD_2$.
\end{proof}

\begin{remarks}\label{rem_KMRT_generalized}
\begin{enumerate}[label={\rm (\roman*)}]
\item[]
\item\label{rem_KMRT_generalized_iii} We give a fiber-wise quasi-inverse to this equivalence in Section \ref{Clifford_Morphism} below. In particular, see Theorem \ref{thm_quasi_inverse}.
\item\label{rem_KMRT_generalized_i} The equivalence \ref{thm_norm_eq_iii} $\Leftrightarrow$ \ref{thm_norm_eq_iv} in Theorem \ref{thm_norm_eq} implies that there is an isomorphism of $S$--group schemes $(\mathbf{SL}_2\times\mathbf{SL}_2)/M \cong \bO_4^+$ where $M$ is the diagonal copy of $\bmu_2$. Such an isomorphism is constructed in \cite[C.6.3]{Con1}.
\item\label{rem_KMRT_generalized_ii} Let $S=\Spec(\ZZ)$ and let $\FF$ be a field. Then, the fiber over $\Spec(\FF)$ of the gerbes $\fA_1^{2\etale}$ and $\fD_2$ are the groupoids $A_1^2$ and $D_2$ of \cite[\S15]{KMRT} and the morphism between these fibers is the functor of \cite[\S15.B]{KMRT}. Since any equivalence of gerbes gives rise to an equivalence of the fibers, Theorem \ref{thm_norm_eq} gives a proof of \cite[15.7]{KMRT} which is different from the one in loc. cit. The analogous remark applies to Auel's result \cite[3.1]{A}, where it is assumed that 2 is invertible over $S$.  
\end{enumerate}
\end{remarks}

\subsection{The Clifford Morphism}\label{Clifford_Morphism}
As mentioned directly above in Remark \ref{rem_KMRT_generalized}\ref{rem_KMRT_generalized_ii}, our equivalence of stacks $N \colon \fA_1^{2\etale} \to \fD_2$ given by the norm agrees over certain fibers with the equivalence of categories appearing in \cite[\S15.B]{KMRT}. However, in \cite{KMRT} they elaborate by showing that their norm functor has an inverse equivalence given by the Clifford algebra functor. This also generalizes to our setting, which we explain now.

Given a quadratic triple $(\cA,\sigma,f)$ of degree $2n$ over a scheme $T\in \Sch_S$, we denote its Clifford algebra by $\Cl(\cA,\sigma,f)$ as constructed in \cite[4.2.0.13]{CF} (and denoted $\bC_{0,\cA,\sigma,f}$ there), or see also \cite{Rue23}. The Clifford algebra is compatible with base change. If we have a scheme morphism $X\to T$, then
\[
\Cl(\cA,\sigma,f)|_X = \Cl\big((\cA,\sigma,f)|_X\big).
\]
By \cite[4.2.0.15]{CF}, the Clifford algebra is an Azumaya algebra over its center $Z(\Cl(\cA,\sigma,f))$, which is a quadratic \'etale extension of $\cO|_T$. In particular, locally over some cover $\{T_i \to T\}_{i\in I}$ we will have that
\[
Z(\Cl(\cA,\sigma,f))|_{T_i} \cong \cO|_{T_i} \times \cO|_{T_i}
\]
and thus the Clifford algebra is locally the product of two Azumaya $\cO|_{T_i}$--algebras
\[
\Cl(\cA,\sigma,f)|_{T_i} \cong \cB_{i,1}\times \cB_{i,2}.
\]
If necessary, we refine our cover to assume $(\cA,\sigma,f)|_{T_i} \cong (\cEnd_{\cO|_{T_i}}(\cM_i),\sigma_{q_i},f_{q_i})$ for some regular quadratic $\cO|_{T_i}$--module $(\cM_i,q_i)$ of rank $2n$. Then \cite[4.1.0.14]{CF} tells us that
\[
\Cl(\cA,\sigma,f)|_{T_i} \cong \Cl_0(\cM_i,q_i)
\]
where $\Cl_0(\cM_i,q_i)$ is the even part of the Clifford algebra of a quadratic form, which is constructed in \cite[4.2.0.6]{CF} essentially the same way as is done for quadratic forms over rings in \cite[IV.1.1.2]{K}. Hence we know that each $\Cl_0(\cM_i,q_i)$ is rank $2^{2n-1}$ and so $\Cl(\cA,\sigma,f)$ is also rank $2^{2n-1}$. This means that each of the Azumaya algebras $\cB_{i,1}$ and $\cB_{i,2}$ appearing above are of rank $2^{2n-2}$, i.e., they are degree $2^{n-1}$ Azumaya $\cO|_{T_i}$--algebras. In particular, by \cite[(5)]{Rue23}, when we consider the split quadratic triple $(\cA,\sigma,f)=(\Mat_{2n}(\cO),\sigma_{2n},f_{2n})$, we have
\begin{equation}\label{eq_Clifford_of_split}
\Cl(\Mat_{2n}(\cO),\sigma_{2n},f_{2n}) \cong \Mat_{2^{n-1}}(\cO)\times\Mat_{2^{n-1}}(\cO).
\end{equation}

Now we recall both an anti-equivalence of categories and an equivalence of categories which we will use. The first is \cite[Tag 01SA]{Stacks} which states there is an anti-equivalence of categories
\begin{equation}\label{eq_affine_qc_anti_equiv}
\left\{ \begin{array}{c} \text{Schemes with affine} \\ \text{morphism to } T \end{array}\right\} \leftrightarrow \left\{\begin{array}{c}\text{Quasi-coherent sheaves} \\ \text{of commutative } \cO|_T\text{--algebras} \end{array}\right\}
\end{equation}
which associates $g_*(\cO|_X)$ to $g\colon X\to T$. Under this correspondence, finite locally free morphisms of schemes correspond to finite locally free $\cO|_T$--algebras, \cite[Tag 02KA]{Stacks}. Further, finite \'etale morphisms of schemes correspond to finite \'etale $\cO|_T$--algebras, which are locally isomorphic as algebras to $\cO|_T^n$ for some $n$. This correspondence follows from the discussion in \cite[2.5]{CF}.

Second, \cite[01SB]{Stacks} says that given an affine morphism $g\colon X \to T$, there is an equivalence of categories
\begin{equation}\label{eq_qc_equiv}
\left\{ \begin{array}{c} \text{Quasi-coherent} \\ \cO|_X\text{--modules over} X \end{array}\right\} \leftrightarrow \left\{ \begin{array}{c} \text{Quasi-coherent} \\ g_*(\cO|_X)\text{--modules over } T \end{array}\right\}
\end{equation}
which sends $\cM$ to $g_*(\cM)$. This correspondence also restricts to the respective categories of quasi-coherent algebras. Technically, \cite[01SB]{Stacks} is only stated for quasi-coherent modules on the ringed spaces $X$ and $T$, however by \cite[Tag 03DX]{Stacks}, there is an equivalence between the categories of quasi-coherent modules on the ringed space $T$ and the site wide notion of quasi-coherent modules on $\Sch_T$. Further, Lemma \ref{lem_star} ensures that this equivalence commutes with pushforwards when we are dealing with affine morphisms. Hence we may use \cite[01SB]{Stacks} as stated above.

Using the anti-equivalence \eqref{eq_affine_qc_anti_equiv}, the center of the Clifford algebra corresponds to a degree $2$ \'etale covering $g_{(\cA,\sigma,f)}\colon E_{(\cA,\sigma,f)} \to T$ such that the pushforward
\[
g_{(\cA,\sigma,f)*}(\cO|_{E_{(\cA,\sigma,f)}}) \cong Z(\Cl(\cA,\sigma,f))
\]
is the center of the Clifford algebra. Then, using the second equivalence above, because the Clifford algebra is a quasi-coherent algebra over its center, there exists a quasi-coherent $\cO|_{E_{(\cA,\sigma,f)}}$--algebra $\cB_{(\cA,\sigma,f)}$ over $E_{(\cA,\sigma,f)}$ such that
\[
g_{(\cA,\sigma,f)*}(\cB_{(\cA,\sigma,f)}) \cong \Cl(\cA,\sigma,f).
\]
Since $\Cl(\cA,\sigma,f)$ is a degree $2^{n-1}$ Azumaya algebra over its center, $\cB_{(\cA,\sigma,f)}$ is a degree $2^{n-1}$ Azumaya $\cO|_{E_{(\cA,\sigma,f)}}$--algebra.

To ultimately upgrade this construction into a morphism of stacks, we will need to use the following lemma.
\begin{lem}[{\cite[Tag 02KG]{Stacks}}]\label{lem_fiber_switch_pull_push}
Given a fiber product diagram of schemes
\[
\begin{tikzcd}
E_1 \ar[r,"h'"] \ar[d,"g'"] & E_2 \ar[d,"g"] \\
T_1 \ar[r,"h"]& T_2
\end{tikzcd}
\]
where $g$, and hence also $g'$, is an affine morphism, then for any quasi-coherent $\cO|_{E_2}$--module $\cM$ we have
\[
h^*(g_*(\cM)) = g'_*(h'^*(\cM)).
\]
\end{lem}
Once again, Lemma \ref{lem_fiber_switch_pull_push} is stated for quasi-coherent modules over ringed spaces, but holds in our context by \cite[Tag 03DX]{Stacks}. Of course, the lemma also holds equally well for quasi-coherent algebras. 

We recall the gerbes $\fD_n$ and $\fA_{2^{n-1}}^{2\etale}$ from the beginning of Section \ref{Equivalence}. Assume we have a morphism
\[
(h,\varphi) \colon (T_1,(\cA_1,\sigma_1,f_1))\to (T_2,(\cA_2,\sigma_2,f_2))
\]
in $\fD_n$. So $h\colon T_1 \to T_2$ is any morphism of schemes and $\varphi \colon (\cA_1,\sigma_1,f_1) \iso h^*(\cA_2,\sigma_2,f_2)$ is an isomorphism of quadratic triples over $T_1$. We will define a morphism between the objects $(E_{(\cA_1,\sigma_1,f_1)}\to T_1,\cB_{(\cA_1,\sigma_1,f_1)})$ and $(E_{(\cA_2,\sigma_2,f_2)}\to T_2,\cB_{(\cA_2,\sigma_2,f_2)})$ in the stack $\fA_{2^{n-1}}^{2\etale}$. The Clifford algebra construction is functorial and respects base change so we obtain an isomorphism
\[
\Cl(\varphi)\colon \Cl(\cA_1,\sigma_1,f_1) \iso h^*(\Cl(\cA_2,\sigma_2,f_2)).
\]
To shorten notation, let $E_{(\cA_i,\sigma_i,f_i)}=E_i$, $g_i\colon E_i \to T$ be the \'etale cover, and $\cB_{(\cA_i,\sigma_i,f_i)}=\cB_i$. If we consider the fiber product diagram
\[
\begin{tikzcd}
T_1\times_{T_2} E_2 \ar[r,"h'"] \ar[d,"g_2'"] & E_2 \ar[d,"g_2"] \\
T_1 \ar[r,"h"] & T_2,
\end{tikzcd}
\]
Lemma \ref{lem_fiber_switch_pull_push} says that $h^*(g_{2*}(\cO|_{E_2})) = g_{2*}'(h^*(\cO|_{E_2}))$. However, we also have that
\[
h^*(g_{2*}(\cO|_{E_2})) = h^*(Z(\Cl(\cA_2,\sigma_2,f_2)))
\]
is the center of $h^*(\Cl(\cA_2,\sigma_2,f_2))$. This shows that
\[
E_{h^*(\cA_2,\sigma_2,f_2)} = T_1\times_{T_2} E_2.
\]
Now, the restricted isomorphism
\[
\Cl(\varphi)\colon Z(\Cl(\cA_1,\sigma_1,f_1))\iso Z(h^*(\Cl(\cA_2,\sigma_2,f_2)))
\]
between the centers gives us an isomorphism $E_{h^*(\cA_2,\sigma_2,f_2)} \iso E_1$ in the opposite direction between \'etale covers of $T_1$. Since this is an isomorphism, if we define $\widetilde{\varphi}$ to be the composition
\[
\widetilde{\varphi} \colon E_1 \iso E_{h^*(\cA_2,\sigma_2,f_2)} = T_1\times_{T_2} E_2 \xrightarrow{h'} E_2
\] 
of $h'$ with the inverse isomorphism, then we obtain a fiber product diagram
\[
\begin{tikzcd}
E_1 \ar[r,"\widetilde{\varphi}"] \ar[d,"g_1"] & E_2 \ar[d,"g_2"] \\
T_1 \ar[r,"h"] & T_2.
\end{tikzcd}
\]
Because this is another fiber product diagram, Lemma \ref{lem_fiber_switch_pull_push} gives us that
\[
h^*(g_{2*}(\cB_2)) = g_{1*}(\widetilde{\varphi}^*(\cB_2))
\]
where $h^*(g_{2*}(\cB_2)) = h^*(\Cl(\cA_2,\sigma_2,f_2))$. Therefore the isomorphism
\[
\Cl(\varphi) \colon \Cl(\cA_1,\sigma_1,f_1) \to h^*(\Cl(\cA_2,\sigma_2,f_2)),
\]
which we can write as an isomorphism
\[
\Cl(\varphi) \colon g_{1*}(\cB_1) \to g_{1*}(\widetilde{\varphi}^*(\cB_2))
\]
lifts to an isomorphism $\widetilde{\Cl(\varphi)} \colon \cB_1 \iso \widetilde{\varphi}^*(\cB_2)$ of Azumaya $\cO|_{E_1}$--algebras.

Thus, we have obtained a morphism of stacks which we call the \emph{Clifford morphism},
\begin{align}
\Cl \colon \fD_n &\to \fA_{2^{n-1}-1}^{2\etale} \label{eq_clifford_stack_morphism} \\
(T,(\cA,\sigma,f)) &\mapsto (g_{(\cA,\sigma,f)}\colon E_{(\cA,\sigma,f)}\to T, \cB_{(\cA,\sigma,f)}) \nonumber \\
(h,\varphi) &\mapsto (h,\widetilde{\varphi},\widetilde{\Cl(\varphi)}). \nonumber
\end{align}
Because both stacks are fibered in groupoids, a morphism of stacks, in fact of gerbes, is simply a functor between the two categories preserving the canonical projections. The latter is clear by construction. We leave the straightforward verification of functoriality of the construction to the reader.

\begin{remark}
The Clifford algebra also comes with a canonical involution analogous to what occurs in the story over rings. By \cite[1.9]{Rue23}, if $n\equiv 0 \pmod{4}$ then the canonical involution is orthogonal and it can be equipped with a canonical semitrace as in \cite[2.7]{Rue23}. In this case there is a natural factorization of the Clifford functor $\fD_n \to \fD_{2^{n-2}}^{2\etale} \to \fA_{2^{n-1}-1}^{2\etale}$. If instead $n\equiv 2 \pmod{4}$, then the canonical involution is symplectic and thus the Clifford functor naturally factors as $\fD_n \to \fC_{2^{n-2}}^{2\etale} \to \fA_{2^{n-1}-1}^{2\etale}$.
\end{remark}

\begin{thm}\label{thm_quasi_inverse}
When $n=2$ and thus $2^{n-1}-1=1$, the Clifford morphism of \eqref{eq_clifford_stack_morphism} becomes a morphism $\Cl \colon \fD_2 \to \fA_1^{2\etale}$. This morphism is an equivalence of gerbes which is quasi-inverse on fibers to $N\colon \fA_1^{2\etale} \to \fD_2$ provided by the norm in Theorem \ref{thm_norm_eq}.
\end{thm}
\begin{proof}
Our strategy is to invoke Corollary \ref{cor_identity_equiv} with $\frF = \frA_1^{2\etale}$, $\frG = \frD_2$, $\vphi = N \co \frF \to \frG$ the norm functor, $\psi =\Cl $ the Clifford functor, and $x=\big(S\sqcup S \to S, \Mat_2(\cO|_{S\sqcup S})\big)$ the split object of $\frA_1^{2\etale}$. Thus, we  will define a concrete isomorphism $g\co (\Cl \circ N)\, (x) \to x$ in $\frA_1^{2\etale}$ satisfying the assumptions of the corollary.  
 
To begin, we know from Lemma \ref{prop_norm_triples} that 
\[ N \big( (S\sqcup S \to S, \Mat_2(\cO|_{S\sqcup S})\big) = 
     (S, \Mat_2(\cO) \ot_{\cO} \Mat_2(\cO), \si_{\rm tens}, f_{\rm tens})
\] \rm 
where the involution $\si_{\rm tens}$ and the semitrace $f_{\rm tens}$ are constructed with respect to the quadratic form $q_{\rm tens}\co \cO^2 \ot_\cO \cO^2 \to \cO$ of Section \ref{triple_over_Z}, base changed to $S$. Moreover, by \cite[4.2.0.14]{CF} there is a canonical isomorphism 
\[ 
  \Cl(\Mat_2(\cO)\ot_\cO \Mat_2(\cO), \si_{\rm tens}, f_{\rm tens})
   \cong \Cl_0(\cO^2\ot_\cO \cO^2, q_{\rm tens}),
\]
which we take as an identification. The center of $\Cl_0 (\cO^2\ot_\cO \cO^2, q_{\rm tens})$ is $\cO \times \cO$, which by \eqref{eq_affine_qc_anti_equiv} corresponds to the trivial degree $2$ \'etale cover $f\co S\sqcup S \to S$, and we know that $f_*(\Mat_2(\cO|_{S\sqcup S})) = \Mat_2(\cO) \times \Mat_2(\cO)$ by \eqref{eq_Clifford_of_split}. Thus, we aim to find a natural isomorphism of $\cO$--algebras
\[ \Phi \co \Cl_0(\cO^2 \ot_\cO \cO^2, q_{\rm tens} ) \simlgr \Mat_2(\cO) \times  \Mat_2(\cO).
\]
To this end, let us recall the construction of $q_{\rm tens}$ from Section \ref{triple_over_Z}. It uses the regular alternating form $\psi \co \cO^2 \times \cO^2 \to \cO$, given on sections $x$ and $y$ of $\cO^2$ by 
$\psi(x,y) = x_1 y_2 - x_2 y_1$. The associated regular symmetric form 
\[
   b_{\rm tens} \co (\cO^2\ot_\cO \cO^2) \times (\cO^2 \ot_\cO \cO^2) \to \cO 
\] 
is defined on appropriate sections as $b_{\rm tens}(u \ot v, \, x \ot y) = \psi(u,x) \, \psi(v,y)$. Finally, one puts
\[
q_{\rm tens}(\sum_{i=1}^k u_i\otimes v_i) = \sum_{i,j=1 \atop i< j}^k b_\tens(u_i\otimes v_i, u_j\otimes v_j)
\]
for a general section $\sum_{i=1}^k u_i\otimes v_i$ of $\cO^2\ot_\cO \cO^2$. A helpful observation for proving $i(a)^2 = q_{\rm tens}(a) 1$ below is that  $q_{\rm tens}(u\ot v) = 0$ for pure tensors.  

We use the canonical map
\[ \vphi \co \cO^2 \ot_\cO \cO^2 \to \cEnd_{\cO}(\cO^2) = \Mat_2(\cO) \]
associated with the regular form $\psi$, i.e., $\vphi(u \ot v)(x) = \psi(v,x)\cdot u$ over appropriate sections, to define an $\cO$--module map 
\begin{align*}
i \co \cO^2\ot_\cO \cO^2 &\to \Mat_2\big(\cEnd_\cO(\cO^2)\big) \\ 
u\ot v &\mapsto \begin{bmatrix} 0 & \vphi(u \ot v) \\ -\vphi(v\ot u) & 0 \end{bmatrix}.
\end{align*} 
Using the relations 
\begin{align*} 
  \vphi(u \ot v) \, \vphi(x \ot y) &= \psi(v,x)\, \vphi(u,y),\text{ and} \\  
 \vphi(u\ot v) - \vphi(v\otimes u) &= \psi(v,u) 1_{\Mat_2(\cO)}, 
\end{align*}   
one can compute that for $a=\sum_{i=1}^k u_i\otimes v_i$, we have that $i(a)^2$ is
\begin{align*}
&\; \sum_{i,j=1}^k \begin{bmatrix} -\psi(v_i,v_j)\varphi(u_i\otimes u_j) & 0 \\ 0 & -\psi(u_i,u_j)\varphi(v_i\otimes v_j) \end{bmatrix} \\
=&\; \sum_{i,j=1 \atop i<j}^k \left[ \begin{tikzcd}[ampersand replacement=\&,row sep = 0ex,column sep=-13ex] -\psi(v_i,v_j)(\varphi(u_i\otimes u_j)-\varphi(u_j\otimes u_i)) \& 0 \\ 0 \& -\psi(u_i,u_j)(\varphi(v_i\otimes v_j)-\varphi(v_j\otimes v_i)) \end{tikzcd} \right] \\
=&\; \sum_{i,j=1 \atop i<j}^k \begin{bmatrix} -\psi(v_i,v_j)\psi(u_j,u_i)1_{\Mat_2(\cO)} & 0 \\ 0 & -\psi(u_i,u_j)\psi(v_j,v_i)1_{\Mat_2(\cO)} \end{bmatrix} \\
=&\; \big(\sum_{i,j=1 \atop i<j}^k \psi(u_i,u_j)\psi(v_i,v_j)\big)\Id \\
=&\; \big(\sum_{i,j=1 \atop i<j}^k b(u_i\otimes u_j,v_i\otimes v_j)\big)\Id \\
=&\; q_\tens(a)\Id,
\end{align*}
so that $i$ extends to a graded $\cO$--algebra homomorphism 
 \[ \Cl(i) \co \Cl(\cO^2 \ot_\cO \cO^2, q_{\rm tens}) \to \Mat_2\big(\cEnd_\cO(\cO^2)\big).
 \]
We claim that $\Cl(i)$ is an isomorphism. Indeed, this can be checked by passing to fields, where it is easily seen in \cite[p.~99]{KMRT}. Consequently, by restriction we have an algebra isomorphism
\[ 
\Phi  \co \Cl_0(\cO^2 \ot_\cO \cO^2, q_{\rm tens} ) \simlgr \Mat_2(\cO) \times  \Mat_2(\cO).
\]    
uniquely determined  by 
\[ \Phi\big( (u\ot v) \cdot (x\ot y) \big) =  \big( -\psi(v,y)\vphi(u\ot x), \, - \psi(u,x)\vphi(v\ot y) \big)
\] 
where $(u\ot v) \cdot (x\ot y)$ on the left-hand side is the product in the Clifford algebra. As noted before, we can use $\Phi$ to define our desired isomorphism 
\[ g\co (\Cl \circ N)\,\big(S\sqcup S \to S, \Mat_2(\cO|_{S\sqcup S})\big)  \simlgr 
    \big(S\sqcup S \to S, \Mat_2(\cO|_{S\sqcup S})\big).
\]
The concrete construction of $\Phi$ makes it clear that $g$ is in fact an isomorphism in $\frA_1^{2\etale}$. It will satisfy the assumptions of Corollary \ref{cor_identity_equiv} if the composition 
\begin{align*} 
(\PGL_2 \times \PGL_2) \rtimes \SS_2 &\xrightarrow{\; N\;} {\mathbf {PGO}}_{q_{\rm tens}} \\ 
&\xrightarrow{\; \Cl\;} \cAut\big(\Cl_0(\cO^2 \ot\cO^2, q_{\rm tens} )\big) \\
&\xrightarrow{\Inn(\Phi)} (\PGL_2 \times \PGL_2) \rtimes \SS_2
\end{align*} 
is the identity. Note that the norm functor $N\co \frA_1^{2\etale} \to \frD_2$ induces the isomorphism $\underline{h}'$ of \eqref{eq_h_tilde_underline}, which in this case is
\[ \underline{h}' \co (\PGL_2 \times \PGL_2) \rtimes \SS_2 \simlgr {\mathbf{PGO}}_{q_{\rm tens}} 
\]
since $\PGL_2 \cong \mathbf{PSp}_2$. Thus, we are concerned with proving 
$\Inn(\Phi) \circ \Cl \circ \underline{h}' = \Id$. 

A final important fact about Clifford algebras that we will use below is \cite[1.14]{Rue23} or \cite[13.1]{KMRT} over fields: given an inner automorphism $\ze = \Inn( Z)$ of the quadratic triple $(\Mat_2\big(\cEnd_\cO(\cO^2)\big), \si_{\rm tens}, f_{\rm tens})$ where $Z$ is an orthogonal transformation of $q_{\rm tens}$, the induced isomorphism of the Clifford algebra acts by applying $Z$ to the tensor factors. In particular, 
\[ \Cl_0(\ze) (a \cdot b) = (Za) \cdot (Zb)\]
for $a$, $b\in \cO^2\ot_\cO \cO^2$.

Now, towards showing that the assumptions of Corollary \ref{cor_identity_equiv} are satisfied, consider a section $\big((\al_1, \al_2), \tau\big)$ of $(\PGL_2 \times \PGL_2) \rtimes \SS_2$. We may work sufficiently locally such that $\al_1$ and $\al_2$ become inner automorphisms associated with matrices in $\SL_2$ and $\tau$ becomes a diagonal section, i.e., either the identity or the switch. Therefore, it is sufficient to show that 
\begin{equation}\label{eq1} 
\Phi \circ \Cl_0\big(\Inn(A\ot B)\big)\circ \Phi\me  = \big( \Inn(A), \, \Inn(B)\big) 
\end{equation} 
for $A$, $B\in \uSL_2$, and that for the switch morphism $\sw_\ot \co A \ot B \mapsto B \ot A$ of tensors we have
\begin{equation}\label{eq2} 
  \Phi \circ \Cl_0(\sw_\ot) \circ \Phi\me = \sw_\times
\end{equation}  
where $\sw_\times \co (X, Y) \mapsto (Y,X)$ is the obvious switch of $\Mat_2(\cO)\times \Mat_2(\cO)$. 

{\em Proof of\/} \eqref{eq1}:  Since $A$ is orthogonal for $\psi$, it is immediate that we have the formula $A \vphi(u \ot v) A\me = \vphi(Au, Av)$ and therefore
\begin{align*} 
&\;\big( (\Inn(A), \Inn(B)) \circ \Phi\big)\, \big( (u\ot v) \cdot (x \ot y)\big)
\\ =&\; \big( -\psi(v,y)A \vphi(u\ot x)A\me, \, - \psi(u,x)B\vphi(v\ot y)B\me \big)
\\ =&\; \big( -\psi(Bv,By)\vphi(Au\ot Ax), \, - \psi(Au,Ax)\vphi(Bv\ot By) \big)
\\ =&\; \Phi\big((Au\ot Bv) \cdot (Ax \ot By)\big)
\\ =&\; \big(\Phi \circ \Cl_0(\Inn (A \ot B))\big)\, \big( (u\ot v) \cdot (x \ot y)\big),
\end{align*}
which implies \eqref{eq1} because $(u\ot v) \cdot (x \ot y)$ for the various sections of $\cO^2\otimes_\cO \cO^2$ is a generating set of the algebra $\Cl_0(\cO^2\ot_\cO \cO^2, q_{\rm tens})$.  

{\em Proof of\/} \eqref{eq2}: The morphism $\sw_\ot$ is the inner automorphism associated with the tensor module switch 
\begin{align*}
\sw'_\ot \co \cO^2\ot_\cO \cO^2 &\to \cO^2 \ot_\cO \cO^2 \\
u \ot v &\mapsto v \ot u 
\end{align*}
Hence,
\begin{align*}
  &\; (\sw_\times \circ \Phi)\, \big( (u\ot v) \cdot (x\ot y) \big) 
\\=&\; \big(  - \psi(u,x)\vphi(v\ot y) , \, -\psi(v,y)\vphi(u\ot x)\big)
\\=&\; \Phi\big( (v\ot u)\cdot (y \ot x)\big) 
\\=&\; \Phi\big( \sw'_\ot(u\ot v) \cdot \sw'_\ot (x\ot y)\big) 
\\=&\; \big(\Phi \circ \Cl_0(\sw_\ot)\big)\,  \big( (u\ot v) \cdot (x\ot y) \big)
\end{align*}
which implies \eqref{eq2}. Thus, Corollary \ref{cor_identity_equiv} applies and we are done.
\end{proof}

\appendix\section{Twisted Sheaves and Weil Restriction}\label{app_Weil}
Here we present a variation of \cite[III,2.3.2]{Gir} about how we can use twisting, i.e., contracted products as in Section \ref{sec_contracted_products}, to transition between different pushforwards of pullbacks of sheaves on $\Sch_S$. In particular, let $T,S' \in \Sch_S$ with structure morphisms $f\colon T \to S$ and $g\colon S' \to S$, and let $\cF$ be a sheaf of sets on $\Sch_S$. In some cases, the sheaves $f_*(f^*(\cF))$ and $g_*(g^*(\cF))$ are twisted forms of one another, and thus one can be obtained by twisting the other. As an application, we will use our general result when $T\to S$ is a finite \'etale cover of degree $d$. As a corollary, this describes the Weil restriction, when it exists, of $Y\times_S T$ for an $S$--scheme $Y$. We refer to \cite[\S 7.6]{BLR} for details on the Weil restriction.

\begin{lem}\label{u_group_hom}
Let $S' \in \Sch_S$ with structure morphism $g\colon S' \to S$ and let $\cF$ be a sheaf on $\Sch_S$. Then, there exists a homomorphism of group sheaves
\[
u \colon \cAut_S(S') \to \cAut(g_*(g^*(\cF))
\]
defined by the property that for all $X\in \Sch_S$, $a \in \cAut_S(S')(X)$, and $e\in g_*(g^*(\cF))(X)$ we have
\[
u(a)(e)= \cF(a)^{-1}(e).
\]
Here, $\cAut_S(S')$ is the sheaf of $S$--automorphisms of $S'$ as in Example \ref{hom_sheaf_of_schemes} and $\cAut(g_*(g^*(\cF))$ is the sheaf of internal automorphisms of the sheaf $g_*(g^*(\cF))$ on $\Sch_S$.
\end{lem}
\begin{proof}
For any $X\in \Sch_S$ and section $a\in \cAut_S(S')(X)$, because $a\colon S'\times_S X \iso S'\times_S X$ is an automorphism, we obtain a set bijection
\[
\cF(a) \colon \cF(S'\times_S X) \iso \cF(S'\times_S X)
\]
when we consider the restriction map for $\cF$ along $a$. Because we may write $\cF(S'\times_S X) = g_*(g^*(\cF))(X)$, this yields a function $\cAut_S(S')(X) \to \cAut(g_*(g^*(\cF)))(X)$. However, because $\cF$ is contravariant, i.e., we have that $\cF(a\circ b) = \cF(b)\circ \cF(a)$, this is not a group homomorphism. If instead we consider the function defined by $a \mapsto \cF(a)^{-1}$ we do obtain a group homomorphism. This construction is compatible with restrictions and base change, and so we obtain a sheaf automorphism $u(a) \colon g_*(g^*(\cF))|_X \iso g_*(g^*(\cF))|_X$ defined by
\begin{align*}
u(a)(X') \colon g_*(g^*(\cF))(X') &\iso g_*(g^*(\cF))(X') \\
e &\mapsto \cF(a|_{X'})^{-1}(e)
\end{align*}
over a scheme $X' \in \Sch_X$. In this way we obtain the homomorphism of group sheaves
\[
u \colon \cAut_S(S') \to \cAut(g_*(g^*(\cF)))
\]
with the desired property.
\end{proof}

\begin{lem}\label{lem_twist_sheaf-gen} Let $g\co S' \to S$ and $f\co T \to S$ be schemes over $S$ and abbreviate $\cA = \cAut_S(S')$. Assume that $\cIsom(S', T)$ is an $\cA$--torsor. Then, for any sheaf of sets $\cF$ on $\Sch_S$ there exists a canonical isomorphism of sheaves of sets
\[
\cIsom(S',T)\wedge^{\cA} g_* (g^*(\cF)) \iso f_*(f^*(\cF))
\]
where the contracted product is defined using the map $u \colon \cA \to \cAut(g_*(g^*(\cF)))$ of Lemma \ref{u_group_hom}.
\end{lem}
\begin{proof} Let $\cE = g_* (g^*(\cF))$. Then $\cE(X) = \cF(S'\times_S X)$ and the  contracted product $\cIsom(S',T)\wedge^\cA \cE$ is the sheaf associated with the presheaf on $\Sch_S$ 
\[
X \mapsto \big(\Isom(S' \times_S X, T\times_S X)\times \cE(X)\big)/\sim
\]
where the equivalence relation $\sim$ is given by $(\varphi\cdot a, \,  e) \sim (\varphi, \, a \cdot e)$ for all $\varphi \in \Isom(S'\times_S X,T\times_S X)$, $e\in \cE(X)= \cF(S' \times_S X)$, and $a \in \cA(X)$. We will show that there is an injection from this presheaf into $f_*(f^*(\cF))$ which is locally surjective, and therefore will induce the desired isomorphism of sheaves.

Consider the canonical map of presheaves defined over $X$ by
\begin{align*}
\big( \Isom(S' \times_S X, T\times_S X)\times \cF(S' \times X)\big) /\sim \;&\to \; \cF(T\times_S X)  \\
(\varphi, e) \quad &\mapsto \quad \cF(\varphi^{-1})(e)
\end{align*}
Observe that this is well-defined since $\cF(\varphi^{-1}) \co \cF(S' \times_S X) \to \cF(T \times_S X)$ because $\cF$ is contravariant. If $\cF(\varphi_1^{-1})(e_1) = \cF(\varphi_2^{-1})(e_2)$, then 
\[
e_2 = (\cF(\varphi_2)\circ \cF(\varphi_1^{-1}))(e_1) = \cF(\varphi_1^{-1}\varphi_2)(e_1),
\]
but since $\varphi_1^{-1}\varphi_2 \in \cA(X)$, we get
\[
e_2 = u(\varphi_1^{-1}\varphi_2)^{-1} \cdot e_1 = u(\varphi_2^{-1}\varphi_1)\cdot e_1
\]
by Lemma \ref{u_group_hom}, and therefore, under the relation $\sim$,
\[
(\varphi_2,e_2) = (\varphi_2,u(\varphi_2^{-1}\varphi_1)\cdot e_1) = (\varphi_2\varphi_2^{-1}\varphi_1, e_1) = (\varphi_1,e_1),
\]
and so the map of presheaves is injective. 

If there is an element $\varphi \in \Isom(S'\times_S X,T\times_S X)$, i.e., if $\cIsom(S',T)(X)\neq \O$, then for all $f\in \cF(T\times_S X)$ we have
\[
(\varphi, \cF(\varphi)(f)) \mapsto f,
\]
hence the map is surjective wherever $\cIsom(S',T)$ has a point. Finally, since $\cIsom(S',T)$ is an $\cA$--torsor, there is a cover of $S$ over which it has points and so the map of presheaves is locally surjective. Hence, it induces an isomorphism of sheaves as claimed.
\end{proof}

\begin{remarks}
\begin{enumerate}[label={(\roman*)}]
\item[]
\item The sheaf $\cIsom(S',T)$ is an $\cA=\cAut_S(S')$--torsor precisely when $S'$ and $T$ are fppf locally isomorphic to one another. I.e., when there exists an fppf cover $\{X_i \to S\}_{i\in I}$ such that $S'\times_S X' \cong S'\times_S T$ for all $i\in I$.
\item Lemma~\ref{lem_twist_sheaf-gen}  produces an isomorphism of sheaves of groups, abelian groups, rings, etc., whenever $\cF$ has such a structure and the structure on $g_*(g^*(\cF))$ is given canonically. 
\item The $\cA$--torsor $\cIsom(S',T)$ corresponds to a cohomology class $[T] \in H^1(S,\cA)$. The homomorphism $\cAut(S') \to \cAut(\cE)$ gives rise to a map 
   $ H^1(S, \cA) \to  H^1(S, \cAut(\cF))$ in cohomology. Under this map, the image of $[T]$ corresponds to the class $[\cIsom(S',T)\we^\cA g_*(g^*(\cF))]$.
\end{enumerate}
\end{remarks}

For easier reference, we explicitly state Lemma~\ref{lem_twist_sheaf-gen} for the case of finite \'etale covers.

\begin{cor}\label{lem_twist_sheaf} 
Let $f\colon T \to S$ be a degree $d$ \'etale cover. Let $\cF \colon \Sch_S \to \Sets$ be any sheaf and equip $\cF^d$ with the left action of $\SS_d$ by permutations. Then, there is a canonical isomorphism of sheaves of sets
\[
 \cIsom(S^{\sqcup d},T)\wedge^{\SS_d} \cF^d \iso f_*(\cF|_T).
\]
\end{cor}

\begin{proof} We apply Lemma~\ref{lem_twist_sheaf-gen} with $g \colon S^{\sqcup d} \to S$ the standard degree $d$ \'etale cover. In this case $\cA = \SS_d$, $g_*(g^*(\cF))=\cF^d$, and $u\colon \cA \to \cAut(g_*(g^*(\cF)))$ causes $\SS_d$ to act on $\cF^d$ on the left by permutations. 
\end{proof}
\sm

We now fix a finite \'etale cover $f\colon T\to S$ of degree $d$ and recall \cite[III, Thm.~4.3(a)]{M}: Since $\SS_d$ is an affine group scheme, the $\SS_d$--torsor $\cIsom(S^{\sqcup d},T)$ is representable by an $S$--scheme $\widetilde{T}\to S$ which is therefore also an $\SS_d$--torsor. For any scheme $Z$ which has a left action of $\SS_d$, the associated sheaf $h_Z = \Hom_S(\und,Z)$ also has a left action of $\SS_d$. If the sheaf  $\cIsom(S^{\sqcup d},T)\wedge^{\SS_d} \Hom_S(\und,Z)$ is representable, we denote the representing scheme by $\widetilde{T}\wedge_S^{\SS_d} Z$. For example, for $X\in \Sch_S$ we have $\widetilde{T} \wedge_S^{\SS_d} X^{\sqcup d} \cong X\times_S T$.

\begin{cor}\label{lem_weil}
Let $Y\to S$ be an $S$--scheme such that the Weil restriction $R_{T/S}(Y\times_S T)$ exists as a scheme. Then, using the notation above, there is an isomorphism
\[
\widetilde{T}\wedge_S^{\SS_d} Y^d \iso R_{T/S}(Y\times_S T),
\]
where $\SS_d$ permutes the factors of $Y^d=Y\times_S \ldots \times_S Y$.
\end{cor}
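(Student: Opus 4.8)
The plan is to deduce the statement from Lemma~\ref{lem_twist_sheaf} by passing through the Yoneda lemma, applied to the sheaf of points of $Y$.

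Write $h_Y = \Hom_S(\und, Y)$ for the functor of points of $Y$, a sheaf on $\Sch_S$, and give $(h_Y)^d$ the $\SS_d$-action by permutation of factors. First I would record the elementary fact that the $d$-fold fiber product $Y^d = Y\times_S\cdots\times_S Y$ represents $(h_Y)^d$, i.e.\ $h_{Y^d}\cong (h_Y)^d$, and that this identification is $\SS_d$-equivariant for the factor-permuting actions on both sides. Then Lemma~\ref{lem_twist_sheaf}, applied with $\cF = h_Y$, gives a canonical isomorphism of sheaves of sets
\[
\cIsom(\Sd,T)\wedge^{\SS_d} h_{Y^d}\;\iso\;f_*(h_Y|_T).
\]

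The second step is to identify the right-hand side with the functor of points of $R_{T/S}(Y\times_S T)$. On the big fppf site the pushforward along $f\colon T\to S$ satisfies $(f_*\cG)(X) = \cG(X\times_S T)$, with $X\times_S T$ viewed as a $T$-scheme via the second projection; hence $f_*(h_Y|_T)(X) = h_Y(X\times_S T) = \Hom_S(X\times_S T, Y)$. On the other hand the defining universal property of the Weil restriction yields $\Hom_S(X, R_{T/S}(Y\times_S T)) = \Hom_T(X\times_S T, Y\times_S T)$, and a $T$-morphism $X\times_S T\to Y\times_S T$ is the same datum as an $S$-morphism $X\times_S T\to Y$ (compose with the projection $Y\times_S T\to Y$ one way, pair with the structure morphism to $T$ the other way). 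These identifications are natural in $X$, so $h_{R_{T/S}(Y\times_S T)}\cong f_*(h_Y|_T)$.

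Combining the two isomorphisms shows that the sheaf $\cIsom(\Sd,T)\wedge^{\SS_d} h_{Y^d}$ is represented by $R_{T/S}(Y\times_S T)$; in particular the contracted product of schemes $\widetilde T\wedge_S^{\SS_d} Y^d$ is defined, and by Yoneda the isomorphism of functors of points descends to the asserted isomorphism of $S$-schemes $\widetilde T\wedge_S^{\SS_d} Y^d\iso R_{T/S}(Y\times_S T)$. All of this is bookkeeping; the only points deserving care are the precise form of the big-site pushforward formula, the fact that the identification $\Hom_T(X\times_S T, Y\times_S T)\cong\Hom_S(X\times_S T, Y)$ is canonical and natural in $X$, and the observation that the very definition of $\widetilde T\wedge_S^{\SS_d} Y^d$ already presupposes representability, so that part of the content is that the hypothesis ``$R_{T/S}(Y\times_S T)$ exists as a scheme'' supplies exactly that. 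I do not anticipate a genuine obstacle.
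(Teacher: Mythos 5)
Your proof is correct and follows essentially the same route as the paper's: apply Lemma~\ref{lem_twist_sheaf} to $\cF = h_Y$, identify $f_*(h_Y|_T)$ with the Weil restriction functor of points, and conclude via Yoneda. The only difference is that you spell out the natural identification $\Hom_T(X\times_S T, Y\times_S T)\cong\Hom_S(X\times_S T, Y)$, which the paper leaves implicit in writing $f_*(h_Y|_T)=\Hom_T(\und\times_S T,Y\times_S T)$; this is a harmless (and slightly clarifying) elaboration, not a genuinely different argument.
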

\begin{proof}
We apply Lemma \ref{lem_twist_sheaf} to $h_Y=\Hom_{S}(\und,Y)$. The sheaf $h_Y^d$ is represented by the scheme $Y^d$. Applying the lemma we get an isomorphism
\[
\cIsom(S^{\sqcup d},T)\wedge^{\SS_d} h_Y^d \iso f_*(h_Y|_T).
\]
We have $f_*(h_Y|_T) = \Hom_T(\und\times_S T, Y\times_S T)$ and by definition the Weil restriction $R_{T/S}(Y\times_S T)$ exists if and only if this sheaf is representable, in which case it is represented by the Weil restriction. Hence by assumption these sheaves are representable, and so the left hand sheaf is represented by $\widetilde{T}\wedge_S^{\SS_d} Y^d$. The claimed isomorphism then follows from the Yoneda Lemma.
\end{proof}
\begin{remark}
Since $T\to S$ is a finite \'etale cover, equivalently since $\widetilde{T} \to S$ is a Galois cover, the discussion at the end of \cite[6.2 B]{BLR} says that a sufficient condition for $R_{T/S}(Y\times_S T)$ to exist is that the morphism $Y\times_S T \to T$ be quasi-projective. This will occur if $Y\to S$ is quasi-projective by \cite[Tag 0B3G]{Stacks}.
\end{remark}

\section{Cohomology of Semi-direct Products}\label{app_semi_direct}
In this appendix we present a general construction for the gerbe of $\bH \rtimes \bK$--torsors for the semi-direct product of two sheaves of groups $\bH$ and $\bK$ on $\Sch_S$. Our construction presents $\bH\rtimes\bK$--torsors as a gerbe over the intermediate stack of $\bK$--torsors. This point of view then recovers the fiberwise analysis of $\bH\rtimes\bK$--torsors appearing in \cite{MS} and also recovers cohomological statements about semi-direct products, for example the statement \cite[2.6.3(ii)]{Gi}, which we discuss in Remark \ref{remark_gille_cohom}.

In the main portion of this paper, we are concerned with semi-direct products of groups as they appear in the Segre homomorphism and related maps surrounding the norm functor. For this reason, we end this appendix by focusing on semi-direct products with the group $\SS_d$, the constant group sheaf associated to the abstract permutation group on $d$ letters, and we provide a more concrete description of the stack of $\bG^d \rtimes \SS_d$--torsors using \'etale covers which lends itself to our work on the norm functor.

\subsection{The General Construction}\label{general_semi_direct_products}
We consider a sheaf of groups of the form $\bH\rtimes\bK$ on $\Sch_S$. We write $\alpha \colon \bK \to \bAut(\bH)$ for the homomorphism such that
\[
k\cdot h \cdot k^{-1} = \alpha_k(h)
\]
inside $\bH\rtimes\bK$ for appropriate sections $k\in \bK$ and $h\in \bH$.

We have the gerbe of $\bK$--torsors, $\fTors(\bK)\to \Sch_S$, which we recall has
\begin{enumerate}[label={\rm (\roman*)}]
\item objects $(T,\cP)$ where $T\in \Sch_S$ and $\cP$ is a $\bK|_T$--torsor,
\item morphisms $(g,\varphi)\colon (T_1,\cP_1) \to (T_2,\cP_2)$ where $g\colon T_1 \to T_2$ is a morphism in $\Sch_S$ and $\varphi \colon \cP_1 \iso \cP_2|_{T_1}$ is a morphism of $\bK|_{T_1}$--torsors, and
\item structure functor $\fTors(\bK) \to \Sch_S$ given by $(T,\cP)\mapsto T$ and $(g,\varphi)\mapsto g$.
\end{enumerate}
We denote the trivial torsor by $\overline{\bK}$ and likewise for other groups below. The stack $\fTors(\bK)\to \Sch_S$ inherits the structure of a site as in \cite[Tag \href{https://stacks.math.columbia.edu/tag/06NU}{06NU}]{Stacks}. In particular, the coverings are families of the form
\[
\{(g_i,\varphi_i)\colon (T_i,\cP_i) \to (T,\cP)\}_{i\in I}
\]
where $\{g_i\colon T_i \to T\}_{i\in I}$ is a covering in $\Sch_S$.

We define a functor of groups $\bH'\colon \fTors(\bK)\to \Grp$ on this new site $\fTors(\bK)$. On objects, we define
\[
\bH'(T,\cP) = (\cP \wedge^{\bK|_T} \bH|_T)(T)
\]
where $\bK|_T$ acts on $\bH|_T$ through $\alpha\colon \bK \to \bAut(\bH)$. Since this means that $\bK|_T$ acts on $\bH|_T$ by group automorphisms, the twisted object $\cP\wedge^{\bK|_T} \bH|_T$ is also a sheaf of groups over $T$ and $\bH'(T,\cP)$ are simply the global sections of this twisted group. Next, let $(g,\varphi)\colon (T_1,\cP_1)\to (T_2,\cP_2)$ be a morphism in $\fTors(\bK)$. We note that
\[
(\cP_2\wedge^{\bK|_{T_2}} \bH|_{T_2})|_{T_1} = \cP_2|_{T_1} \wedge^{\bK|_{T_1}} \bH|_{T_1}
\]
and thus
\[
(\cP_2\wedge^{\bK|_{T_2}} \bH|_{T_2})(T_1) = (\cP_2|_{T_1} \wedge^{\bK_{T_1}} \bH|_{T_1})(T_1).
\]
We use this to define $\bH'(g,\varphi)$ to be the composition
\begin{align*}
\bH'(T_2,\cP_2)&=(\cP_2\wedge^{\bK|_{T_2}} \bH|_{T_2})(T_2) \\
&\xrightarrow{\textrm{res}_g} (\cP_2\wedge^{\bK|_{T_2}} \bH|_{T_2})(T_1) \\
&= (\cP_2|_{T_1} \wedge^{\bK|_{T_1}} \bH|_{T_1})(T_1) \\
&\xrightarrow{\varphi^{-1} \wedge \Id} (\cP_1 \wedge^{\bK|_{T_1}} \bH|_{T_1})(T_1) \\
&= \bH'(T_1,\cP_1).
\end{align*}

\begin{lem}
$\bH' \colon \fTors(\bK) \to \Grp$ is a sheaf.
\end{lem}
\begin{proof}
For any cover $\{(T_i,\cP_i) \to (T,\cP)\}_{i\in I}$, the sheaf property for $\bH'|_{(T,\cP)}$ follows from the sheaf property for $\cP\wedge^{\bK|_T}\bH|_T$ over the cover $\{T_i \to T\}_{i\in I}$ of $\Sch_S$.
\end{proof}

We now briefly use the general notion of a torsor on a site as in \cite[Tag 03AH]{Stacks} to consider the gerbe of $\bH'$--torsors over $\fTors(\bK)$, denoted $\fTors(\bH') \to \fTors(\bK)$. It has
\begin{enumerate}[label={\rm (\roman*)}]
\item objects $((T,\cP),\cQ)$, where $(T,\cP)\in \fTors(\bK)$ and $\cQ$ is an $\bH'|_{(T,\cP)}$--torsor,
\item morphisms $((g,\varphi),\psi) \colon ((T_1,\cP_1),\cQ_1) \to ((T_2,\cP_2),\cQ_2)$ where the pair $(g,\varphi) \colon (T_1,\cP_1) \to (\cT_2,\cP_2)$ is a morphism in $\fTors(\bK)$ and $\psi \colon \cQ_1 \iso \cQ_2|_{(T_1,\cP_1)}$ is a morphism of $\bH'|_{(T_1,\cP_1)}$--torsors, and
\item structure functor $((T,\cP),\cQ) \mapsto (T,\cP)$ and $((g,\varphi),\psi) \mapsto (g,\varphi)$.
\end{enumerate}

Now that we have a gerbe over $\fTors(\bK)$, which is itself a gerbe over $\Sch_S$, it is natural to consider their composite. It turns out that this composition is the gerbe we are after.
\begin{lem}\label{composite_gerbe}
The composite functor $\fTors(\bH') \to \fTors(\bK) \to \Sch_S$ is a gerbe.
\end{lem}
\begin{proof}
This is a special case of \cite[Tag 06R3]{Stacks}. We include a proof for the reader's convenience. Both of the stacks $\fTors(\bH') \to \fTors(\bK)$ and $\fTors(\bK) \to \Sch_S$ are gerbes, so the composite $\fTors(\bH') \to \Sch_S$ is a stack fibered in groupoids by \cite[Tag 09WX]{Stacks}. To see it is a gerbe, we check the remaining two conditions. First, the fiber over any $T\in \Sch_S$ is non-empty since it contains at least the trivial object $((T,\overline{\bK|_T}),\overline{\bH'}|_{(T,\overline{\bK|_T})})$. Second, any two objects in the same fiber are also locally isomorphic, since if we are given $((T,\cP_1),\cQ_1)$ and $((T,\cP_2),\cQ_2)$, we may first find a cover $\{T_i\to T\}_{i\in I}$ over which $\cP_1|_{T_i} \cong \cP_2|_{T_i}$, and then because $\fTors(\bH')\to\fTors(\bK)$ is a gerbe, we may find refined covers $\{X_{ij} \to T_i\}_{j\in J_i}$ over which
\[
((T_i,\cP_1|_{T_i}),\cQ_1|_{(T_i,\cP_1|_{T_i})})|_{(X_{ij},\cP_1|_{X_{ij}})} \cong ((T_i,\cP_2|_{T_i}),\cQ_2|_{(T_i,\cP_2|_{T_i})})|_{(X_{ij},\cP_2|_{X_{ij}})}.
\]
Cleaning up notation, this means that $((T,\cP_1),\cQ_1)|_{X_{ij}} \cong ((T,\cP_2),\cQ_2)|_{X_{ij}}$ over the cover $\{X_{ij}\to T\}_{i\in I,j\in J_i}$. Thus, $\fTors(\bH')\to \Sch_S$ is a gerbe.
\end{proof}

\begin{lem}\label{Hprime_gerbe_equiv}
The gerbe $\fTors(\bH') \to \Sch_S$ is equivalent to the gerbe $\fTors(\bH\rtimes \bK) \to \Sch_S$ of $\bH\rtimes \bK$--torsors over $\Sch_S$.
\end{lem}
\begin{proof}
Our desired equivalence will come from Proposition \ref{lem_gerbes_and_torsors}\ref{lem_gerbes_and_torsors_iv} after we identify that the automorphism sheaf of the split object
\[
\cP_0 = ((S,\overline{\bK}),\overline{\bH'|_{(S,\overline{\bK})}}) \in \fTors(\bH')(S)
\]
is isomorphic to $\bH\rtimes \bK$. This is because $\fTors(\bH')$ is a gerbe and so $\fForms(\cP_0) = \fTors(\bH')$ by Remark \ref{rem_gerbe_forms}. So, now we identify $\bAut(\cP_0)$.

Let $T\in \Sch_S$ and consider a section $((\Id_T,\varphi),\psi) \in \bAut(\cP_0)(T)$. This means that $\varphi \colon \overline{\bK}|_T \to \overline{\bK}|_T$ is a $\bK|_T$--torsor automorphism of the trivial torsor and is therefore left multiplication by some element $k\in \bK(T)$. The morphism $\psi$ is an isomorphism
\[
\psi \colon \overline{\bH'|_{(T,\overline{\bK}|_T)}} \iso (\Id_T,\varphi)^*(\overline{\bH'|_{(T,\overline{\bK}|_T)}}),
\]
but any pullback of the trivial torsor is isomorphic to the trivial torsor, so without loss of generality we may assume that $\psi$ is an automorphism of $\overline{\bH'|_{(T,\overline{\bK}|_T)}}$ and is therefore left multiplication by an element of $\bH'(T,\overline{\bK}|_T)$. However, we have that
\[
\bH'(T,\overline{\bK}|_T) = (\overline{\bK}|_T \wedge^{\bK|_T} \bH|_T)(T) = \bH(T).
\]
Any such pair of choices $k\in \bK(T)$ and $h\in \bH(T)$ can be used to define a distinct section of $\bAut(\cP_0)$ and so it is clear that
\[
\bAut(\cP_0) \cong \bH \times \bK \colon \Sch_S \to \Sets
\]
as sheaves of sets. We are now left to identify the group structure. Let $k_1,k_2 \in \bK(T)$, $h_1,h_2\in \bH(T)$, write $\varphi_1,\varphi_2$ for the respective automorphisms of $\overline{\bK}|_T$, and write $\psi_1,\psi_2$ for the respective automorphisms of $\overline{\bH'|_{(T,\overline{\bK}|_T)}}$. The group product is given by the morphism composition
\[
((\Id_T,\varphi_1),\psi_1)\circ ((\Id_T,\varphi_2),\psi_2) = ((\Id_T,\varphi_1\circ \varphi_2),(\Id_T,\varphi_2)^*(\psi_1)\circ \psi_2)
\]
so we need to identify the element of $\bH(T)$ corresponding to the automorphism $(\Id_T,\varphi_2)^*(\psi_1)$. Since $\psi_1$ is left multiplication by $h_1$, the pullback of $\psi_1$ will be left multiplication by the image of $h_1$ along the restriction map
\[
\bH'(\Id_T,\varphi_2) \colon \bH'(T,\overline{\bK}|_T) \to \bH'(T,\overline{\bK}|_T).
\]
Since $\overline{\bK}|_T$ is the trivial torsor, the presheaf underlying $(\overline{\bK}|_T \wedge^{\bK|_T} \bH|_T)$ is already a sheaf, so we may use the commutative diagram
\[
\begin{tikzcd}
(\overline{\bK(T)}\times \bH(T))/\sim \ar[r] \ar[d,"\varphi_2^{-1}\times\Id"] & \bH(T) \ar[d,"{\bH'(\Id_T,\varphi_2)}"] \\
(\overline{\bK(T)}\times \bH(T))/\sim \ar[r] & \bH(T)
\end{tikzcd}
\]
where the symbols $\sim$ denote the equivalence relation defining the contracted product and the horizontal maps are bijections. The inverse of $\phi_2$ appears in this diagram since the restriction maps in $\bH'$ were defined to use $\varphi_2^{-1}\wedge \Id$. Starting with $h_1$ in the top right, we obtain
\[
\begin{tikzcd}
{[(1,h_1)]} \ar[d,mapsto] & h_1 \ar[l,mapsto] \ar[d,mapsto] \\
{[(k_2^{-1},h_1)]} \ar[r,mapsto] & \alpha_{k_2^{-1}}(h_1)
\end{tikzcd}
\]
because the equivalence relation causes $\bK$ to act on $\bH$ via $\alpha \colon \bK \to \bAut(\bH)$. Therefore, writing the product in terms of the sections of $\bK$ and $\bH$, we have
\[
(k_1,h_1)\cdot (k_2,h_2) = (k_1k_2,\alpha_{k_2^{-1}}(h_1)h_2)
\]
However, $\alpha$ defines the structure of the semidirect product $\bH\rtimes \bK$, so it is clear that we have a group isomorphism $\bAut(\cP_0) \cong \bH\rtimes \bK$ as desired, finishing the proof.
\end{proof}

\subsection{A Concrete Equivalence $\fTors(\bH')\equiv \fTors(\bH\rtimes\bK)$}\label{remark_gille_cohom}
We may also view an object $((T,\cP),\cQ) \in \fTors(\bH')$ as a pair of torsors over $T$. The $\bH'|_{(T,\cP)}$--torsor $\cQ$ defines a $(\cP\wedge^{\bK|_T} \bH|_T)$--torsor $\cW$ over $T$ by setting
\[
\cW(X) = \cQ(X,\cP|_X)
\]
for schemes $X\in \Sch_T$. 

Conversely, given a $(\cP\wedge^{\bK|_T} \bH|_T)$--torsor $\cW$ over $T$, we may define a functor
\begin{align*}
\cQ \colon \fTors(\bK)_{(T,\cP)} &\to \Sets \\
((X,\cP') \to (T,\cP)) &\mapsto \cW(X)
\end{align*}
whose restriction map along a morphism $(g,\varphi)\colon (X_1,\cP_1')\to (X_2,\cP_2')$ is simply the restriction $\cW(g)\colon \cW(X_2)\to \cW(X_1)$ in $\cW$. Since the covers in the site $\fTors(\bK)$ are of the form $\{(X_i,\cP_i') \to (X,\cP)\}_{i\in I}$ where $\{X_i \to X\}_{i\in I}$ is a cover in $\Sch_S$ and otherwise there are no additional conditions on the $\cP_i'$, this naive definition of $\cQ$ will be a sheaf on $\fTors(\bK)_{(T,\cP)}$. To give it an action of $\bH'|_{(T,\cP)}$, we do the following. Given $(g,\varphi) \colon (X,\cP') \to (T,\cP)$ in $\fTors(\bK)$, the isomorphism $\varphi \colon \cP' \to \cP|_X$ induces an isomorphism
\[
(\cP' \wedge^{\bK|_X} \bH|_X)(X) \iso (\cP|_X \wedge^{\bK|_X} \bH|_X)(X) = (\cP\wedge^{\bK|_T}\bH|_T)(X).
\]
The final group acts on $\cW(X)$ through its torsor structure, and so we define the $\bH'|_{(T,\cP)}(X,\cP') = (\cP' \wedge^{\bK|_X} \bH|_X)(X)$ action on $\cW(X)$ through the above isomorphism. This produces an $\bH'|_{(T,\cP)}$--torsor.

Up to canonical isomorphism, these two constructions are inverses to each other. Therefore, $\fTors(\bH')$ is equivalent to the gerbe consisting of
\begin{enumerate}[label={\rm (\roman*)}]
\item objects of the form $(T,\cP,\cW)$ where $T\in \Sch_S$, $\cP$ is a $\bK|_T$--torsor, and $\cW$ is a $(\cP\wedge^{\bK|_T}\bH|_T)$--torsor, also over $T$.
\end{enumerate}
To discuss morphisms, when given a $(\cP\wedge^{\bK|_T}\bH|_T)$--torsor $\cW$ and a morphism $\varphi \colon \cP' \to \cP$ of $\bK|_T$--torsors, we write $\varphi^*(\cW)$ for the $(\cP'\wedge^{\bK|_T}\bH|_T)$--torsor obtained from $\cW$ by acting on it through the induced isomorphism
\[
\varphi\wedge \Id \colon \cP'\wedge^{\bK|_T}\bH|_T \iso \cP\wedge^{\bK|_T}\bH|_T.
\]
Thus, this gerbe has
\begin{enumerate}[label={\rm (\roman*)}]
\setcounter{enumi}{1}
\item morphisms of the form $(g,\varphi,\psi)\colon (T',\cP',\cW')\to (T,\cP,\cW)$ where $g \colon T' \to T$ is in $\Sch_S$ and $\varphi \colon \cP' \iso g^*(\cP)$ is a $\bK|_{T'}$--torsor isomorphism as usual, and $\psi$ is a $(\cP'\wedge^{\bK|_{T'}} \bH|_{T'})$--torsor isomorphism
\[
\psi \colon \cW' \to \varphi^*(g^*(\cW)).
\]
\end{enumerate}

We may now use this description to give a more concrete equivalence of gerbes $\fTors(\bH\rtimes \bK) \to \fTors(\bH')$. We fix the notation
\begin{align*}
i \colon \bH &\inj \bH\rtimes \bK \\
s \colon \bK &\inj \bH\rtimes \bK \\
p \colon \bH\rtimes\bK &\surj \bK
\end{align*}
for the two canonical inclusions and the canonical projection onto $\bK$, respectively. Consider an object $(T,\cE) \in \fTors(\bH\rtimes\bK)$, so $T\in \Sch_S$ and $\cE$ is a $(\bH\rtimes\bK)|_T$--torsor. We want to associate this with an object $(T,\cP,\cW) \in \fTors(\bH')(T)$. We set
\[
\cP = p_*(\cE) = \cE\wedge^{(\bH\rtimes\bK)|_T}\overline{\bK}|_T,
\]
which is a $\bK|_T$--torsor as required. Now, we have three different left actions of $\bK$, namely
\begin{enumerate}[label={\rm(\roman*)}]
\item $\bK$ on itself via conjugation,
\item $\bK$ on $\bH\rtimes\bK$ via conjugation (after the inclusion $s\colon \bK \to \bH\rtimes \bK)$, and
\item $\bK$ on $\bH$ via $\alpha \colon \bK \to \bAut(\bH)$,
\end{enumerate}
and the split short exact sequence
\[
\begin{tikzcd}
1 \ar[r] & \bH \ar[r,"i"] & \bH\rtimes\bK \ar[r,"p"] & \bK \ar[r] \ar[l,swap,bend right=50,"s"] & 1
\end{tikzcd}
\]
is equivariant with respect to these actions. Therefore, we have a twisted split exact sequence
\begin{equation}\label{eq_twisted_ses}
\begin{tikzcd}[column sep=2.5ex]
1 \ar[r] & \cP\wedge^{\bK|_T}\bH|_T \ar[r,"i'"] & \cP\wedge^{\bK|_T}(\bH\rtimes\bK)|_T \ar[r,"p'"] & \cP\wedge^{\bK|_T}\bK|_T \ar[r] \ar[l,swap,bend right=30,"s'"] & 1.
\end{tikzcd}
\end{equation}
Here we note that
\begin{align*}
\cP\wedge^{\bK|_T}\bK|_T &\cong \bAut_{\fTors(\bK|_T)}(\cP)\text{, and} \\
\cP\wedge^{\bK|_T}(\bH\rtimes\bK)|_T &\cong \bAut_{\fTors((\bH\rtimes\bK)|_T)}(s_*(\cP)).
\end{align*}
Now we consider the sheaf $\cIsom_{\fTors((\bH\rtimes\bK)|_T)}(s_*(\cP),\cE)$, which is a torsor for the group $\bAut_{\fTors((\bH\rtimes\bK)|_T)}(s_*(\cP))$. Since $p\circ s = \Id_{\bK}$ we know that $p_*(s_*(\cP))=\cP$, and we defined $\cP = p_*(\cE)$, therefore we have a map
\begin{align*}
f\colon \cIsom_{\fTors((\bH\rtimes\bK)|_T)}(s_*(\cP),\cE) &\to \bAut_{\fTors(\bK|_T)}(\cP) \\
\varphi &\mapsto p_*(\varphi).
\end{align*}
Let $\cW$ be the kernel of $f$, i.e.,
\[
\cW(T') = \{\varphi \in \cIsom_{\fTors((\bH\rtimes\bK)|_T)}(s_*(\cP),\cE)(T') \mid p_*(\varphi)=\Id_\cP\}
\]
for $T'\in \Sch_T$. We claim that $\cW$ is a $(\cP\wedge^{\bK|_T}\bH|_T)$--torsor. Because $\cP\wedge^{\bK|_T}\bH|_T$ is the kernel of the map $p'$, it is clear that it has a right action on $\cW$ via the right action of $\cP\wedge^{\bK|_T}(\bH\rtimes\bK)|_T$ on $\cIsom_{\fTors((\bH\rtimes\bK)|_T)}(s_*(\cP),\cE)$. Furthermore, the sequence
\[
\begin{tikzcd}
1 \ar[r] & \cW \ar[r] & \cIsom_{\fTors((\bH\rtimes\bK)|_T)}(s_*(\cP),\cE) \ar[r] & \bAut_{\fTors(\cK)}(\cP) \ar[r] & 1
\end{tikzcd}
\]
is locally isomorphic to \eqref{eq_twisted_ses} (as sheaves of sets) and therefore $\cW$ is locally isomorphic to $\overline{\cP\wedge^{\bK|_T}\bH|_T}$. Thus $\cW$ is a $(\cP\wedge^{\bK|_T}\bH|_T)$--torsor as required and we have an object
\[
(T,\cP,\cW) \in \fTors(\bH')(T).
\]
Conversely, say we start with an object $(T,\cP,\cW) \in \fTors(\bH')$. Using $\cP$ we still have the sequence \eqref{eq_twisted_ses}. Therefore, $i'_*(\cW)$ is a $\cP\wedge^{\bK|_T}(\bH\rtimes\bK)|_T$--torsor. Since $\cP\wedge^{\bK|_T}(\bH\rtimes\bK)|_T \cong \bAut_{\fTors((\bH\rtimes\bK)|_T)}(s_*(\cP))$ we can construct
\[
\cE = i'_*(\cW) \wedge^{\bAut(s_*(\cP))}s_*(\cP)
\]
which is an $(\bH\rtimes\bK)|_T$--torsor since $s_*(\cP)$ is. Thus we have an object $(T,\cE)\in \fTors(\bH\rtimes\bK)$.

We claim that these two constructions are quasi-inverse to each other. Indeed, take $(T,\cE)\in \fTors(\bH\rtimes\bK)$ as we started with and construct $(T,\cP,\cW)$. We can construct a map
\[
i'_*(\cW) = \cW \wedge^{i'} \bAut(s_*(\cP)) \to \cIsom_{\fTors((\bH\rtimes\bK)|_T)}(s_*(\cP),\cE)
\]
by defining it (on the presheaf determining the contracted product) to simply be $(w,\varphi) \mapsto w\cdot \varphi$ whenever $\cW$ has a point. We may do so since $\cW$ is a subsheaf of $\cIsom_{\fTors((\bH\rtimes\bK)|_T)}(s_*(\cP),\cE)$ which has a right action of $\bAut(s_*(\cP))$. It is then clear that it is an isomorphism. In turn, it is clear that
\begin{align*}
&\; i'_*(\cW) \wedge^{\bAut(s_*(\cP))}s_*(\cP) \\
\cong&\; \cIsom_{\fTors((\bH\rtimes\bK)|_T)}(s_*(\cP),\cE) \wedge^{\bAut(s_*(\cP))}s_*(\cP) \\
\cong&\; \cE.
\end{align*}

Conversely, say we have started with $(T,\cP,\cW)\in \fTors(\bH')$ and we have constructed $\cE = i'_*(\cW)\wedge^{\bAut(s_*(\cP))} s_*(\cP)$. We first check that $p_*(\cE)\cong \cP$. We use that contracted products are associative, see \cite[III, 1.3.5]{Gir}, to write
\begin{align*}
p_*(\cE) &= \big(i'_*(\cW)\wedge^{\bAut(s_*(\cP))} s_*(\cP)\big)\wedge^{(\bH\rtimes\bK)|_T} \overline{\bK}|_T \\
&\cong i'_*(\cW)\wedge^{\bAut(s_*(\cP))} \big(s_*(\cP)\wedge^{(\bH\rtimes\bK)|_T} \overline{\bK}|_T\big) \\
&= i'_*(\cW)\wedge^{\bAut(s_*(\cP))} p_*(s_*(\cP)) \\
&\cong i'_*(\cW)\wedge^{\bAut(s_*(\cP))} \cP
\end{align*}
where $\bAut_{\fTors((\bH\rtimes\bK)|_T)}(s_*(\cP))$ acts on $\cP$ on the left via the homomorphism
\[
p' \colon \bAut_{\fTors((\bH\rtimes\bK)|_T)}(s_*(\cP)) \to \bAut_{\fTors(\bK|_T)}(\cP)
\]
appearing in \eqref{eq_twisted_ses}. However, we then have that
\begin{align*}
i'_*(\cW)\wedge^{\bAut(s_*(\cP))} \cP &= \big(\cW \wedge^{(\cP\wedge^{\bK|_T}\bH|_T)} \overline{\bAut(s_*(\cP))}\big)\wedge^{\bAut(s_*(\cP))} \cP \\
&\cong \cW \wedge^{(\cP\wedge^{\bK|_T}\bH|_T)} \cP
\end{align*}
where the left action on $\cP$ is through the map
\[
p'\circ i' \colon \cP\wedge^{\bK|_T}\bH|_T \to \bAut_{\fTors(\bK|_T)}(\cP),
\]
which is the identity. Hence, this twisting does not modify $\cP$ and we obtain that
\[
p_*(\cE) \cong \cP.
\]
Next, we can consider the map of sheaves of sets
\[
\psi \colon \cW \to \cIsom(s_*(\cP),\cE)
\]
defined as follows. For $X\in \Sch_T$ such that $\cW(X)\neq \O$, we have that
\[
i_*'(\cW)(X) = \big(\cW(X)\times \overline{\bAut_{\fTors((\bH\rtimes\bK)|_T)}(s_*(\cP))}(X)\big)/\sim
\]
and this has a point, so in turn
\[
\cE(X) = \big(i_*'(\cW)(X)\times s_*(\cP)(X)\big)/\sim.
\]
Thus, for such $X$ we can consider elements of $\cE(X)$ as equivalence classes of the form $[[w,g],x]$ with $w\in \cW(X)$, $g\in \bAut_{\fTors((\bH\rtimes\bK)|_T)}(s_*(\cP))(X)$, and $x \in s_*(\cP)(X)$. Now, for $w\in \cW(X)$, we set
\[
\psi(w) \colon s_*(\cP)|_X \to \cE|_X
\]
to be the map which behaves over $X'\in \Sch_X$ by
\begin{align*}
\psi(w)(X') \colon s_*(\cP)(X') &\to \cE(X') \\
x &\mapsto [[w|_{X'},1],x].
\end{align*}
Thus, $\psi(w) \in \cIsom(s_*(\cP),\cE)(X)$ and we have defined a map $\psi\colon \cW \to \cIsom(s_*(\cP),\cE)$.

We claim that $\psi$ maps into the kernel of $p_*\colon \cIsom(s_*(\cP),\cE) \to \bAut_{\fTors(\bK|_T)}(\cP)$. Elements of $s_*(\cP) = \cP\wedge^{\bK|_T} \overline{\bH\rtimes\bK}|_T$ are locally equivalence classes $[x,g]$ where $x\in\cP$ and $g\in \bH\rtimes\bK$. The isomorphism
\[
p_*(s_*(\cP)) = s_*(\cP)\wedge^{(\bH\rtimes\bK)|_T} \bK|_T \iso \cP
\]
is locally of the form $[[x,g],k] \mapsto x\cdot p(g)k$. Likewise, the isomorphism
\[
p_*(\cE) = \cE\wedge^{(\bH\rtimes\bK)|_T} \bK|_T \iso \cP
\]
is locally of the form
\[
[[[w,\gamma],[x,g]],k] \mapsto p'(i'(\gamma))(x\cdot p(g)k),
\]
where $\gamma \in \bAut_{\fTors((\bH\rtimes\bK)|_T)}(s_*(\cP))$ and all other sections belong where their notation suggests. Note that this is well-defined and that the $w\in \cW$ does not play a role. This is because any two $w_1,w_2\in \cW$ differ by some $h\in \cP\wedge^{\bK|_T}\bH|_T$, and so we have
\begin{align*}
[[[w_1,\gamma],[x,g]],k] &= [[[w_2h,\gamma],[x,g]],k] \\
&= [[[w_2,h\gamma],[x,g]],k]
\end{align*}
but then $p'(i'(h\gamma))=p'(i'(\gamma))$ since $i'(h)$ is the kernel of $p'$. Taking these isomorphisms into account, the pushforward $p_*(\psi(w))$ of one of the maps we constructed above behaves locally as
\begin{align*}
x\cdot p(g)k \cong [[x,g],k] \mapsto& [[[w|_{X'},1],[x,g]],k] \\
&\cong p'(i'(1))\big(x\cdot p(g)k\big) \\
&= x\cdot p(g)k.
\end{align*}
This shows that $p_*(\psi(w)) = \Id_\cP$ globally as well, as claimed.

Finally, we claim that $\psi$ is equivariant with respect to the right $\cP\wedge^{\bK|_T}\bH|_T$--action on $\cW$ and the left $\cP\wedge^{\bK|_T}\bH|_T$--action on $\cIsom(s_*(\cP),\cE)$ via $i'$. Indeed, for $\varphi \in (\cP\wedge^{\bK|_T}\bH|_T)(X)$ and $p\in s_*(\cP)(X')$ we have that
\begin{align*}
\psi(w\cdot \varphi)(p) &= [[(w\cdot\varphi)|_{X'},1],p] \\
&= [[w|_{X'},\varphi|_{X'}],p] \\
&= [[w|_{X'},1],i'(\varphi)(p)] \\
&= (\psi(w)\circ i'(\varphi)\big)(p)
\end{align*}
and so $\psi(w\cdot \varphi) = \psi(w)\cdot \varphi$, as claimed. Hence $\psi \colon \cW \to \Ker(p_*)$ is a morphism of $\cP\wedge^{\bK|_T}\bH|_T$--torsors, and thus an isomorphism. In this was, we also recover $\cW$ from $\cE$, and so we have recovered the original triple $(T,\cP,\cW)$. We leave it to the reader to trace the journey of a morphism through this story.

From this point of view we also recover some cohomological descriptions. Viewing cohomology sets as isomorphism classes as in Proposition \ref{lem_gerbes_and_torsors}\ref{lem_gerbes_and_torsors_iii}, for a fixed $(S,\cP)\in \fTors(\bK)$ we have maps
\[
\begin{tikzcd}[row sep=2pt]
H^1(S,\cP\wedge^\bK \bH) \ar[r] & H^1(S,\bH\rtimes \bK) \ar[r] & H^1(S,\bK) \\
{[(S,\cW)]} \ar[r,mapsto] & {[(S,\cP,\cW)]} \ar[r,mapsto] & {[(S,\cP)]}.
\end{tikzcd}
\]
The first map is need not be injective, since we may have non-isomorphic $(\cP\wedge^\bK \bH)$--torsors $\cW_1$ and $\cW_2$ such that $\cW_1 \cong \varphi^*(\cW_2)$ for some automorphism of $\cP$, and so $(S,\cP,\cW_1)\cong (S,\cP,\cW_2)$ in $\fTors(\bH\rtimes \bK)$. However, this is the only way that two classes have isomorphic images and thus by letting the group $\bAut(\cP)(S)$ act on the set of isomorphism classes $H^1(S,\cP\wedge^\bK \bH)$ by $\varphi\cdot [(S,\cW)] = [(S,\varphi^*(\cW))]$, we obtain that
\[
H^1(S,\cP\wedge^\bK \bH)/\bAut(\cP)(S) \inj H^1(S,\bH\rtimes \bK).
\]
This describes the fiber of $H^1(S,\bH\rtimes \bK)$ over $[(S,\cP)] \in H^1(S,\bK)$, and thus we have a decomposition
\[
H^1(S,\bH\rtimes \bK) = \bigsqcup_{[(S,\cP)] \in H^1(S,\bK)} H^1(S,\cP\wedge^\bK \bH)/\bAut(\cP)(S).
\]
Using the notation
\begin{align*}
\bK^\cP &= \cP\wedge^\bK \bK = \bAut(\cP) \\
\bH^\cP &= \cP \wedge^\bK \bH\text{, and}\\
H^0(S,\bK^\cP)&=\bAut(\cP)(S), 
\end{align*}
this is the decomposition appearing in \cite[2.6.3(2)]{Gi}.

\subsection{$\bG^d \rtimes \SS_d$ and \'Etale Covers}\label{permutation_semi_direct}
Let $\bG \colon \Sch_S \to \Grp$ be a sheaf of groups. Consider the semi-direct product $\bG^d \rtimes \SS_d$ defined by
\[
((g_1,\ldots,g_d),1)(1,\sigma) = (1,\sigma)((g_{\sigma(1)},\ldots,g_{\sigma(d)}),1)
\]
for $\sigma \in \SS_d$ and $(g_1,\ldots,g_d) \in\bG^d$. By the results of Lemmas \ref{composite_gerbe} and \ref{Hprime_gerbe_equiv}, we know that the gerbe $\fTors(\bG^d \rtimes \SS_d)$ factors as
\[
\fTors(\bG^d \rtimes \SS_d) \to \fTors(\SS_d) \to \Sch_S.
\]
Our goal here is to provide a more concrete description $\fTors(\bG^d\rtimes \SS_d)$. The intermediate gerbe, $\fTors(\SS_d)$, is equivalent to the stack of degree $d$ \'etale extensions. In detail, consider the fibered category $F\acute{E}t_d$ where
\begin{enumerate}[label={\rm (\roman*)}]
\item objects are degree $d$ \'etale covers $T \to X$ for some $X\in \Sch_S$,
\item morphisms are pairs $(f,g)\colon (T'\to X')\to (T\to X)$ where $f$ and $g$ are scheme morphisms making
\[
\begin{tikzcd}
T' \ar[r,"g"] \ar[d] & T \ar[d] \\
X' \ar[r,"f"] & X
\end{tikzcd}
\]
a pullback diagram of schemes, and
\item the structure functor simply sends $(T\to X) \mapsto X$ and $(f,g)\mapsto f$.
\end{enumerate}
We argue that $F\acute{E}t_d$ is a gerbe. It is immediate that it is a stack since it is clearly a substack of the stack defined in \cite[Tag 0BLY]{Stacks}. Next, $F\acute{E}t_d$ is fibered in groupoids since if $f=\Id_X$, then $g$ must be an isomorphism in order to have a pullback diagram. Second, since finite \'etale covers of degree $d$ are all locally isomorphic to $S^{\sqcup d}\to S$, we see $F\acute{E}t_d$ is a gerbe. Then since $\bAut_S(S^{\sqcup d}) \cong \SS_d$, Proposition \ref{lem_gerbes_and_torsors}\ref{lem_gerbes_and_torsors_iv} tells us that $F\acute{E}t_d$ is equivalent to $\fTors(\SS_d)$. In particular, the equivalence comes from the functor
\[
(T \to X) \mapsto \cIsom_X(X^{\sqcup d},T).
\]

Replacing $\fTors(\SS_d)$ by $F\acute{E}t_d$ and considering the gerbe $\fTors(\bG^d\rtimes \SS_d)$ as consisting of triples as in Remark \ref{remark_gille_cohom}, the objects of $\fTors(\bG^d\rtimes\SS_d)$ are of the form $(X,T\to X,\cW)$ where $\cW$ is a $(\cIsom(X^{\sqcup d},T)\wedge^{\SS_d|_X}\bG^d|_X)$--torsor ($\cP = \cIsom(X^{\sqcup d},T)$ is the $\SS_d$--torsor). We will cease to write the redundant first $X$ and simply discuss objects $(T\to X,\cW) \in \fTors(\bG^d\rtimes\SS_d)$. Letting the degree $d$ \'etale cover be $f\colon T \to X$, we use Lemma \ref{lem_twist_sheaf} to see that
\[
(\cIsom(X^{\sqcup d},T)\wedge^{\SS_d|_X}\bG^d|_X) \cong f_*(\bG|_T).
\]
Additionally, since $T\to X$ is finite \'etale, we have by \cite[XXIV, 8.4]{SGA3} that
\[
H^1(X,f_*(\bG|_T)) = H^1(T,\bG|_T).
\]
Therefore, we may take $\cW$ to be a $\bG|_T$--torsor. In summary, we consider the gerbe $\fF(\bG)^{d\etale}$ which we define as follows.
\begin{enumerate}[label={\rm(\roman*)}]
\item The objects are  pairs $(T\to X,\cW)$ where $X\in \Sch_S$, $T \to X$ is a finite \'etale cover of degree $d$,
and $\cW$ is a $\bG|_T$-torsor over $T$.
\item The morphisms are triples $(f,g,\varphi)\colon (T'\to X',\cW') \to (T\to X,\cW)$ where $f$ and $g$ are scheme morphisms such that
\[
\begin{tikzcd}
T' \ar[r,"g"] \ar[d] & T \ar[d] \\
X' \ar[r,"f"] & X
\end{tikzcd}
\]
is a pullback diagram of schemes and $\varphi \colon \cW' \iso g^*(\cW)$ is a $\bG|_{T'}$--torsor isomorphism. Composition of morphisms is given by
\[
(f,g,\varphi)\circ (h,j,\psi) = (f\circ h, g\circ j, j^*(\varphi)\circ \psi).
\]
\item The structure functor sends $(T\to X,\cW) \mapsto X$ and $(f,g,\varphi) \mapsto f$.
\end{enumerate}

From the preceding discussion, we have that
\begin{equation}\label{AppenB_gerbe_tors_equiv}
\fF(\bG)^{d\etale} \cong \fTors(\bG^d\rtimes \SS_d).
\end{equation}

\begin{remark}
The condition that $T\to X$ be finite \'etale of degree $d$ can be replaced by any condition $(p)$ on affine morphisms which satisfies base change and descent, i.e., the conditions (BC) and (DESC) in \cite[App. C]{GW}. These morphisms will then also form an intermediate stack and the above argument will show that the corresponding fibered category $\fF(\bG)^{(p)}$ is a stack. This is done with quasi-coherent modules in place of $\bG$--torsors in Appendix \ref{app_stack_morphism}.
\end{remark}

The object $(S^{\sqcup d}\to S, \overline{\bG|_{S^{\sqcup d}}})$ is the split object of $\fF(\bG)^{d\etale}(S)$. The trivial torsor $\overline{\bG|_{S^{\sqcup d}}}$ is described as follows. A morphism $X \to S^{\sqcup d}$ induces a decomposition $X = \sqcup_{i=1}^d X_i$ for $S$--schemes $X_i$ by taking preimages of the factors of $S^{\sqcup d}$, and then $\overline{\bG|_{S^{\sqcup d}}}(X) = \overline{\bG}(X_1)\times \ldots \times \overline{\bG}(X_d)$. In particular, if $\pi \colon S^{\sqcup d} \to S$ is the canonical morphism, then $\overline{\bG^d} = \pi_*(\overline{\bG|_{S^{\sqcup d}}})$. We know from the equivalence of gerbes $\fF(\bG)^{d\etale} \cong \fTors(\bG^d\rtimes\SS_d)$ that
\[
\cAut(\Sd \to S, \overline{\bG|_{\Sd}}) \simlgr \bG^d \rtimes \SS_d.
\]

\begin{remark}\label{rem_torsors_equiv_auto}
The gerbe $\fF(\bG)^{d\etale}$ can equivalently be defined to have objects which are pairs $(T\to X,\cE)$ where $X\in \Sch_S$ and $T\to X$ is a degree $d$ \'etale cover as before, but where $\cE$ is any twisted form of a designated split sheaf $\cE_0$ (of rings, modules, algebras, etc.) on $\Sch_T$ whose automorphism group is $\bG|_T$.
\end{remark}

\section{Quasi-coherent sheaves on $\Sch_S$}\label{app_quasi_coh}
Following \cite[Tag 03DK]{Stacks}, an $\cO$--module $\cE$ is called \emph{quasi-coherent} if for all $X\in \Sch_S$ there is a covering $\{X_i \to X\}_{i\in I}$ such that for each $i\in I$ there is an exact sequence of $\cO|_{X_i}$--modules
\[
\bigoplus_{j\in J_i} \cO|_{X_i} \to \bigoplus_{k\in K_i} \cO|_{X_i} \to \cE|_{X_i} \to 0
\]
for some index sets $J_i$ and $K_i$. If all $K_i$ can be taken to be finite sets, we say $\cE$ is \emph{finitely generated}. If both $J_i$ and $K_i$ can be finite for all $i\in I$, then we say $\cE$ is \emph{finitely presented}. In particular, finite locally free $\cO$--modules are quasi-coherent. Since quasi-coherence is a local condition, an $\cO$--module $\cE$ is quasi-coherent if and only if $\cE|_T$ is a quasi-coherent $\cO|_T$--module for all $T\in \Sch_S$.

By \cite[Tag 03DX]{Stacks}, there is an equivalence between this (site wide) notion of quasi-coherent $\cO$--module and the classical notion of a quasi-coherent $\cO_S$--module on the locally ringed space $S$. Given a classical quasi-coherent sheaf $E$ on $S$, it can be extended to a quasi-coherent $\cO$--module by setting $\cE(T)= g^*(E)(T)$. Conversely, for any quasi-coherent $\cO$--module $\cE$, there exists a classical quasi-coherent sheaf $E$ on $S$ such that for $X \in \Sch_S$ with structure morphism $g\co X \to S$, we have $\cE(T)= g^*(E)(T)$. Then, $E$ is simply the restriction of $\cE$ to the small Zariski site consisting of open subschemes of $S$. For such a pair, we use the notation $E\fppf = \cE$ and $\cE_{\mathrm{small}} = E$ (in \cite{Stacks} they write $E^a$ for $E\fppf$). These of course satisfy $(E\fppf)_{\mathrm{small}} = E$ and $(\cE_{\mathrm{small}})\fppf = \cE$.
 
The following is a key characterization of quasi-coherent $\cO$--modules in terms of their restriction to $\Aff_S$ under the equivalence of Lemma \ref{lem_equiv_affine_sheaves}.
\begin{lem}[{\cite[Tag 0GZV $(1)\Leftrightarrow (7)$]{Stacks}}]\label{lem_quasi_coh_characterization}
Let $\cM \colon \Aff_S \to \Ab$ be a presheaf of $\cO$--modules. Then, $\cM$ is a quasi-coherent $\cO$--module (in particular it is a sheaf) if and only if for every morphism $f \colon V\to U$ in $\Aff_S$, the morphism
\begin{align*}
\rho_f \colon \cM(U)\otimes_{\cO(U)} \cO(V) &\to \cM(V) \\
m\otimes s &\mapsto s\cdot m|_V
\end{align*}
is an isomorphism of $\cO(V)$--modules.
\end{lem}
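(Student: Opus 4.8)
The plan is to establish both implications, the reverse one being the substantive direction and resting on faithfully flat descent for modules.

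For the forward direction, I would use the equivalence between quasi-coherent $\cO$--modules on $\Sch_S$ and quasi-coherent sheaves on the scheme $S$ (\cite[Tag 03DX]{Stacks}): write $\cM = E\fppf$ for a quasi-coherent $\cO_S$--module $E$, so that $\cM(X) = \Gamma(X,g^*E)$ for any structure morphism $g\colon X\to S$. Given $f\colon V\to U$ in $\Aff_S$ with structure morphism $g\colon U\to S$, the quasi-coherent sheaf $g^*E$ on the affine scheme $U$ is of the form $\widetilde{M}$ for $M := \cM(U) = \Gamma(U,g^*E)$; then $\cM(V) = \Gamma(V,f^*\widetilde{M}) = M\otimes_{\cO(U)}\cO(V)$ because pullback of a quasi-coherent sheaf along a morphism of affine schemes is computed by extension of scalars. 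Unwinding the identifications shows this isomorphism is exactly $\rho_f$.

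For the reverse direction, suppose every $\rho_f$ is an isomorphism. I would first record that $f\mapsto\rho_f$ is compatible with composition of morphisms in $\Aff_S$, a routine check with the formula $m\otimes s\mapsto s\cdot m|_V$, so that all the identifications below cohere. Next I would verify that $\cM$ is an fppf sheaf on $\Aff_S$: since $\Aff_S$ has fibre products, it suffices to check the equalizer condition for a cover $\{U_i\to U\}_{i=1}^m$ of affines. With $A := \cO(U)$ and $B := \prod_i\cO(U_i)$, the map $A\to B$ is faithfully flat, and $B\otimes_A B\cong\prod_{i,j}\cO(U_{ij})$ since $U$ is affine; applying the isomorphisms $\rho$ for $U_i\to U$ and $U_{ij}\to U$, the equalizer diagram $\cM(U)\to\prod_i\cM(U_i)\rightrightarrows\prod_{i,j}\cM(U_{ij})$ is identified with the descent sequence $M\to M\otimes_A B\rightrightarrows M\otimes_A B\otimes_A B$ for $M := \cM(U)$, which is exact by Grothendieck's faithfully flat descent for modules. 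By Lemma \ref{lem_equiv_affine_sheaves}, $\cM$ then extends to a sheaf on $\Sch_S$.

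It remains to see $\cM$ is quasi-coherent. Fix an affine $U\in\Sch_S$, put $A := \cO(U)$, $M := \cM(U)$, and pick a presentation $A^{(J)}\xrightarrow{\phi}A^{(K)}\to M\to 0$. By right exactness of $\otimes$ and another application of $\rho$, for every affine $V\to U$ the module $\cM(V) = M\otimes_A\cO(V)$ is the cokernel of $\cO(V)^{(J)}\to\cO(V)^{(K)}$, compatibly with restriction; hence the presheaf cokernel of the induced map $\bigoplus_J\cO|_U\xrightarrow{\Phi}\bigoplus_K\cO|_U$ coincides on $\Aff_U$ with the sheaf $\cM|_U$, and therefore equals its sheaf cokernel. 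This yields the exact presentation $\bigoplus_J\cO|_U\to\bigoplus_K\cO|_U\to\cM|_U\to 0$, and since every $S$--scheme admits an affine cover we conclude $\cM$ is quasi-coherent. The main obstacle is the reverse direction, and within it the sheaf property: that step is precisely faithfully flat descent for modules, with everything else being bookkeeping to reduce the site-theoretic statement to that classical fact.
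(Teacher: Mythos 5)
Your proof is correct, but note that the paper does not actually prove this lemma: it is stated as a citation to \cite[Tag 0GZV]{Stacks}, so there is no argument in the text to compare against. Your write-up is essentially the argument one finds there. Both directions are sound: the forward direction is the standard identification $g^*E = \widetilde{M}$ on affines together with the computation of pullbacks of quasi-coherent sheaves between affines by base change; the reverse direction first gets the sheaf property from the Amitsur complex exactness for the faithfully flat map $\cO(U) \to \prod_i \cO(U_i)$ (finiteness of fppf covers in $\Aff_S$ is what lets you identify $\prod_i \cM(U_i)$ with $\cM(U) \otimes_{\cO(U)} \prod_i \cO(U_i)$, so it is worth flagging that you are using the convention that covers in $\Aff_S$ are finite), and then produces the local free presentation by right exactness of $\otimes$.

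The one step that deserves an explicit word is the identification of the presheaf cokernel of $\Phi\colon \bigoplus_J \cO|_U \to \bigoplus_K \cO|_U$ with $\cM|_U$ on $\Aff_U$. This requires that $(\bigoplus_J \cO|_U)(V) = \cO(V)^{(J)}$ for affine $V$, i.e., that sections over affines commute with possibly infinite direct sums of copies of $\cO$. This is true (a finite product of direct sums is a direct sum of finite products, and direct sums in $\Ab$ are exact, so the equalizer diagrams stay equalizer diagrams), but it is a point that a careless reader might worry is circular, since it is itself an instance of a quasi-coherence statement. Spelling it out as a preliminary observation, or citing the relevant Stacks tag, would tighten the argument. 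Apart from that, the reduction to faithfully flat descent is exactly the right move and everything else is indeed bookkeeping.
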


\begin{lem}[{\cite[0GNC(6)]{Stacks}}]\label{lem_hom_quasi_coh}
Let $\cM_1$ and $\cM_2$ be quasi-coherent $\cO$--modules. If $\cM_1$ is finite locally free, then $\cHom_{\cO}(\cM_1,\cM_2)$ is quasi-coherent as well.
\end{lem}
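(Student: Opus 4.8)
The plan is to reduce to the case in which $\cM_1$ is free of finite rank by passing to a covering, and then to invoke the fact that quasi-coherence is a local condition on $(\Sch_S,\cO)$, as recalled at the start of this appendix. First I would use that $S$ is a final object of $\Sch_S$: by \cite[Tag 03DN]{Stacks} the hypothesis that $\cM_1$ is finite locally free supplies a single fppf covering $\{X_i \to S\}_{i\in I}$ with $\cM_1|_{X_i} \cong \cO|_{X_i}^{n_i}$ for suitable $n_i \in \NN$. It then suffices to show that $\cHom_{\cO}(\cM_1,\cM_2)$ is quasi-coherent after restriction to each $\Sch_{X_i}$, since quasi-coherence can be checked on the members of a covering of $S$ by \cite[Tag 03DM, Tag 03DN]{Stacks}.

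For the local computation I would record two elementary identities. First, internal hom commutes with restriction: for $Y\in \Sch_{X_i}$ both $\cHom_{\cO}(\cM_1,\cM_2)|_{X_i}(Y)$ and $\cHom_{\cO|_{X_i}}(\cM_1|_{X_i},\cM_2|_{X_i})(Y)$ are equal to $\Hom_{\cO|_Y}(\cM_1|_Y,\cM_2|_Y)$, so these two internal homs agree. Second, $\cHom_{\cO|_{X_i}}(\cO|_{X_i}^{n_i},\und)$ evaluated on $\cM_2|_{X_i}$ is $(\cM_2|_{X_i})^{n_i}$. Combining these,
\[
\cHom_{\cO}(\cM_1,\cM_2)|_{X_i} \;\cong\; \cHom_{\cO|_{X_i}}(\cO|_{X_i}^{n_i},\cM_2|_{X_i}) \;\cong\; (\cM_2|_{X_i})^{n_i}.
\]
Now $\cM_2|_{X_i}$ is quasi-coherent (restriction of a quasi-coherent module is quasi-coherent), and a finite direct sum of quasi-coherent $\cO|_{X_i}$--modules is again quasi-coherent, since one concatenates the defining local presentations. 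Hence $\cHom_{\cO}(\cM_1,\cM_2)|_{X_i}$ is quasi-coherent for every $i$, and therefore so is $\cHom_{\cO}(\cM_1,\cM_2)$.

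There is no genuine obstacle here; the only point requiring care is the bookkeeping behind the principle that quasi-coherence may be verified on a covering of the final object $S$, which is exactly the locality of the condition already used in Section \ref{sec_O_modules}. One could instead avoid coverings entirely and check the base-change criterion of Lemma \ref{lem_quasi_coh_characterization} directly: for $f\colon V\to U$ in $\Aff_S$ one first identifies $\cHom_{\cO}(\cM_1,\cM_2)(U)$ with $\Hom_{\cO(U)}(\cM_1(U),\cM_2(U))$ (a small local computation on $U$, using that $\cM_1$ is finite locally free), and then shows that the natural map
\[
\Hom_{\cO(U)}(\cM_1(U),\cM_2(U))\otimes_{\cO(U)}\cO(V) \;\longrightarrow\; \Hom_{\cO(V)}(\cM_1(V),\cM_2(V))
\]
is an isomorphism, which holds because $\cM_1(U)$ is a finitely generated projective $\cO(U)$--module and because $\cM_i(U)\otimes_{\cO(U)}\cO(V)\cong\cM_i(V)$ by quasi-coherence of $\cM_1$ and $\cM_2$. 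Since the covering argument is shorter, that is the route I would carry out.
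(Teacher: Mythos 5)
The paper does not include its own proof of this lemma; it simply cites \cite[Tag 0GNC(6)]{Stacks}. Your argument is correct and fills in a standard proof. The first (covering) route is sound: restriction commutes with internal hom, a free module's internal hom is a finite power of the target, restriction preserves quasi-coherence, and finite direct sums of quasi-coherent modules are quasi-coherent; the remaining point you rightly flag -- that it suffices to verify quasi-coherence after restricting to the members of an fppf covering of the final object $S$ -- follows exactly as you say, since for any $X\in\Sch_S$ one pulls back the covering to $\{X\times_S X_i\to X\}$, finds a presentation for $\cHom_\cO(\cM_1,\cM_2)$ over a further covering of each $X\times_S X_i$ using quasi-coherence over $X_i$, and composes. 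Your alternative via the base-change criterion of Lemma~\ref{lem_quasi_coh_characterization} is also valid: for $U$ affine one has $\cHom_{\cO}(\cM_1,\cM_2)(U)=\Hom_{\cO(U)}(\cM_1(U),\cM_2(U))$ by the equivalence between quasi-coherent $\cO|_U$-modules and $\cO(U)$-modules, and the natural map $\Hom_{\cO(U)}(\cM_1(U),\cM_2(U))\otimes_{\cO(U)}\cO(V)\to\Hom_{\cO(V)}(\cM_1(V),\cM_2(V))$ is an isomorphism precisely because $\cM_1(U)$ is finitely generated projective, with the targets identified via quasi-coherence of $\cM_1,\cM_2$. Either route proves the lemma; since the paper leans on the Stacks project citation, there is no internal proof to contrast with yours.
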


When working on the small Zariski site or small \'etale site, it is sufficient to assume $\cM_1$ is finitely presented in Lemma \ref{lem_hom_quasi_coh}, see \cite[Tag 01I8(3)]{Stacks} and \cite[Tag 0GNB(6)]{Stacks}. However, this is not sufficient when working on a big site. The key difference is that all structure morphisms in these small sites are flat, while structure morphisms in a big site are arbitrary scheme maps.

Quasi-coherence interacts well with pullbacks. 
\begin{lem}[{\cite[Tag 03LC (1)]{Stacks}}]\label{lem_star0}
Let $g \colon X \to S$ be a morphism of schemes and let $E$ be a quasi-coherent sheaf on $S$. Then, we have an equality $(g^*(E))\fppf = g^*(E\fppf)$. 
\end{lem}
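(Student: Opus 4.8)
The plan is to unwind both constructions on sections over objects of $\Sch_X$ and to match them using the pseudo-functoriality isomorphisms of the ordinary pullback. Two facts are needed at the outset. First, the big-site pullback $g^*$ of an $\cO$--module along $g\colon X\to S$ is simply restriction: for every $\cO$--module $\cF$ on $\Sch_S$ one has $g^*(\cF)=\cF|_X$, because under the functor $\Sch_X\to\Sch_S$, $(T'\xrightarrow{h}X)\mapsto(T'\xrightarrow{h}X\xrightarrow{g}S)$, the structure sheaf $\cO$ restricts to $\cO|_X$, so no tensor correction occurs; this is consistent with the adjunction recorded in Section~\ref{sec_O_modules}, under which $(g_*\cE)(U)=\cE(U\times_S X)$. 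Second, for a quasi-coherent sheaf $F$ on a scheme $Y$ and any $Y$--scheme $p\colon T'\to Y$, one has $(F\fppf)(T')=\Gamma(T',\,p^*F)$ with $p^*$ the ordinary pullback of sheaves on locally ringed spaces; this presheaf is already an fppf sheaf because quasi-coherent sheaves satisfy fppf descent, so no further sheafification intervenes.

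Granting these, I would argue as follows. Fix $T'\in\Sch_X$ with structure morphism $h\colon T'\to X$, so that its structure morphism to $S$ is $g\circ h$. Since $g^*E$ is quasi-coherent (as $E$ is), the first formula gives
\[
\bigl((g^*E)\fppf\bigr)(T')=\Gamma\bigl(T',\,h^*(g^*E)\bigr).
\]
On the other hand, using $g^*(E\fppf)=(E\fppf)|_X$ and then the section formula for $E\fppf$,
\[
\bigl(g^*(E\fppf)\bigr)(T')=(E\fppf)(T')=\Gamma\bigl(T',\,(g\circ h)^*E\bigr).
\]
The canonical comparison isomorphism $c_{h,g}\colon h^*(g^*E)\iso(g\circ h)^*E$ furnished by the pseudo-functoriality of the ordinary pullback induces, after applying $\Gamma(T',\und)$, an isomorphism of $\cO(T')$--modules between the two right-hand sides. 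The isomorphisms $c_{h,g}$ are natural in $T'$ and satisfy the usual cocycle identities for composites of structure morphisms, so this family is compatible with all restriction maps and is $\cO|_X$--linear; hence it is an isomorphism of presheaves of $\cO|_X$--modules on $\Sch_X$. Both presheaves being sheaves (indeed quasi-coherent $\cO|_X$--modules), this is an isomorphism $(g^*E)\fppf\iso g^*(E\fppf)$ of $\cO|_X$--modules. By Lemma~\ref{lem_equiv_affine_sheaves} one could in fact restrict attention to $U\in\Aff_X$, where on each affine the comparison reduces to the elementary identity $M\ot_B A\cong M\ot_B A$ of base-changed modules.

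The only non-formal ingredient is the identification of the big-site $g^*$ with restriction to $\Sch_X$, together with the compatibility of the section formula $(F\fppf)(T')=\Gamma(T',\,p^*F)$ with base change along the structure morphisms; once these are recorded, the rest is bookkeeping with the pseudo-functoriality isomorphisms $c_{h,g}$, namely checking naturality and $\cO|_X$--linearity. No geometry beyond fppf descent for quasi-coherent sheaves enters: quasi-coherence is used only to make $(\und)\fppf$ well defined and, if one wishes, to reduce everything to affine schemes, where the statement is transparent. This is the content of the cited \cite[Tag 03LC (1)]{Stacks}.
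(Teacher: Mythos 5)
Your proof is correct. The paper provides no proof of this lemma, citing \cite[Tag 03LC (1)]{Stacks} instead; your argument is precisely the one underlying that citation: the big-site pullback $g^*$ along a morphism of schemes is restriction to $\Sch_X$, the functor $(\und)\fppf$ is defined by the section formula $\Gamma(T',p^*F)$ (an fppf sheaf because quasi-coherent modules satisfy fppf descent), and the pseudofunctoriality isomorphism $h^*(g^*E)\cong(g\circ h)^*E$ supplies the comparison compatibly with restriction maps.
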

In particular, if $\cE$ is a quasi-coherent $\cO$--module, then $g^*\cE = (g^*(\cE_{\mathrm{small}}))\fppf$ and so pullbacks of quasi-coherent $\cO$--modules are quasi-coherent. In order to have a similar result for pushforwards we assume that $g$ is affine.
\begin{lem}\label{lem_star}
Let $g\colon X \to S$ be an affine morphism of schemes and let $\cF$ be a quasi-coherent $\cO|_X$--module. Then, $g_*\cF = (g_*(\cF_{\mathrm{small}}))\fppf$, which in particular means that $g_*\cF$ is a quasi-coherent $\cO$--module.

Equivalently, if $F$ is a quasi-coherent sheaf on $X$, then we have $g_*(F\fppf)=(g_*(F))\fppf$.
\end{lem}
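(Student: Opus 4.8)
The plan is to verify the statement by restricting to the big affine site and applying the characterization of quasi-coherence from Lemma \ref{lem_quasi_coh_characterization}. By Lemma \ref{lem_equiv_affine_sheaves} it suffices to work with the restriction of $g_*\cF$ to $\Aff_S$. For $U\in \Aff_S$ we have $(g_*\cF)(U) = \cF(X\times_S U)$, where $X\times_S U$ carries its projection to $X$ as structure morphism. Since $g$ is affine, the morphism $X\times_S U \to U$ is affine, and as $U$ is affine this forces $X\times_S U$ to be an affine scheme, hence an object of $\Aff_X$. So the presheaf $U \mapsto \cF(X\times_S U)$ on $\Aff_S$ takes values that can be computed from the quasi-coherent module $\cF$ evaluated at affine $X$-schemes, and it is a sheaf of $\cO$--modules because $g_*$ preserves sheaves and turns $\cO|_X$--modules into $\cO$--modules.

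Next I would check the criterion of Lemma \ref{lem_quasi_coh_characterization} for this presheaf. Given a morphism $f\colon V\to U$ in $\Aff_S$, form the fiber product, which can be rewritten as $X\times_S V = (X\times_S U)\times_U V$; since all three of $U$, $V$, $X\times_S U$ are affine, this gives $\cO(X\times_S V) \cong \cO(X\times_S U)\otimes_{\cO(U)}\cO(V)$. The map whose bijectivity we must verify, $\cF(X\times_S U)\otimes_{\cO(U)}\cO(V) \to \cF(X\times_S V)$, then factors through the canonical identification $\cF(X\times_S U)\otimes_{\cO(U)}\cO(V) \cong \cF(X\times_S U)\otimes_{\cO(X\times_S U)}\cO(X\times_S V)$ followed by the base change map $\cF(X\times_S U)\otimes_{\cO(X\times_S U)}\cO(X\times_S V) \to \cF(X\times_S V)$ associated to the morphism $X\times_S V \to X\times_S U$ of $\Aff_X$. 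Since $\cF$ is quasi-coherent, this last map is an isomorphism by Lemma \ref{lem_quasi_coh_characterization}, and a short diagram chase on elementary tensors $m\otimes s$ identifies the composite with the map in question. Hence $g_*\cF$ is a quasi-coherent $\cO$--module.

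Finally I would identify $g_*\cF$ with $(g_*(\cF_{\mathrm{small}}))\fppf$. Since both are quasi-coherent $\cO$--modules and the assignments $\cE \mapsto \cE_{\mathrm{small}}$ and $E\mapsto E\fppf$ are mutually inverse equivalences, it is enough to compare their restrictions to the small Zariski site of $S$. For an open subscheme $W\subseteq S$ we have $(g_*\cF)(W) = \cF(X\times_S W) = \cF(g^{-1}(W)) = \cF_{\mathrm{small}}(g^{-1}(W)) = (g_*(\cF_{\mathrm{small}}))(W)$, the third equality holding because $\cF_{\mathrm{small}}$ is by definition the restriction of $\cF$ to opens of $X$. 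Thus $(g_*\cF)_{\mathrm{small}} = g_*(\cF_{\mathrm{small}})$, and applying $(-)\fppf$ while using that $g_*\cF$ is quasi-coherent gives $g_*\cF = (g_*(\cF_{\mathrm{small}}))\fppf$. Setting $F = \cF_{\mathrm{small}}$, so that $\cF = F\fppf$, yields the reformulation $g_*(F\fppf) = (g_*F)\fppf$.

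The argument is essentially bookkeeping; the only place requiring genuine care, and the one place where the affineness of $g$ is used, is ensuring that $X\times_S U$ is an affine scheme for every affine $U\to S$, so that Lemma \ref{lem_quasi_coh_characterization} applies to $\cF$ along the morphism $X\times_S V\to X\times_S U$ and the identification $\cO(X\times_S V)\cong \cO(X\times_S U)\otimes_{\cO(U)}\cO(V)$ is available. One could alternatively deduce the quasi-coherence of $g_*\cF$ from the base change theorem for pushforward along an affine morphism together with Lemma \ref{lem_star0}, but the route above stays internal to the affine site and to the tools already set up in this appendix.
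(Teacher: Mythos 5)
Your proof is correct, and it takes a genuinely different route from the paper's. The paper dispatches the lemma in one stroke by citing the affine base change theorem for (classical) quasi-coherent sheaves, namely \cite[Tag 02KG]{Stacks}: since $g$ is affine, for every $h\colon S'\to S$ the natural map $h^*(g_*(\cF_{\mathrm{small}})) \to g'_*(h'^*(\cF_{\mathrm{small}}))$ is an isomorphism, and comparing global sections immediately yields $g_*\cF = (g_*(\cF_{\mathrm{small}}))\fppf$. You instead stay internal to the big affine site: you observe that affineness of $g$ forces $X\times_S U$ to be affine for $U\in\Aff_S$, so the pushforward evaluates on affine $S$--schemes to values of $\cF$ on affine $X$--schemes, and you then verify the tensor-product criterion of Lemma \ref{lem_quasi_coh_characterization} directly, reducing it (via the identification $\cO(X\times_S V)\cong\cO(X\times_S U)\otimes_{\cO(U)}\cO(V)$) to quasi-coherence of $\cF$ along the morphism $X\times_S V\to X\times_S U$ of $\Aff_X$. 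The identification with $(g_*(\cF_{\mathrm{small}}))\fppf$ is then a separate, routine comparison over the small Zariski site. Your approach is more elementary and self-contained, relying only on the appendix's own toolkit; the paper's is shorter but imports a nontrivial external result. (You even flag the paper's route as an alternative in your closing paragraph.) One small point worth stating explicitly in a writeup: the factored map $\cF(X\times_S U)\otimes_{\cO(U)}\cO(V)\to\cF(X\times_S U)\otimes_{\cO(X\times_S U)}\cO(X\times_S V)\to\cF(X\times_S V)$ agrees with the $\rho_f$ of Lemma \ref{lem_quasi_coh_characterization} because the image of $s\in\cO(V)$ in $\cO(X\times_S V)$ is exactly the scalar by which the module structure acts; you gesture at this with the ``diagram chase,'' and it does check out.
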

\begin{proof}
This follows immediately from \cite[Tag 02KG]{Stacks} which shows that, since $g$ is affine, for any other morphism $h\colon S'\to S$ there is a commutative diagram
\[
\begin{tikzcd}
X\times_S S' \arrow{r}{h'} \arrow{d}{g'} & X \arrow{d}{g} \\
S' \arrow{r}{h} & S
\end{tikzcd}
\]
and $h^*(g_*(\cF_{\mathrm{small}})) = g'_*(h'^*(\cF_{\mathrm{small}}))$. The global sections of these sheaves are 
\begin{align*}
h^*(g_*(\cF_{\mathrm{small}}))(S') &= (g_*(\cF_{\mathrm{small}}))\fppf (S'), \text{ and}\\
g'_*(h'^*(\cF_{\mathrm{small}}))(S') &= (h'^*(\cF_{\mathrm{small}}))(X\times_S S') = \cF(X\times_S S') = (g_*\cF)(S')
\end{align*}
and so $g_*\cF = (g_*(\cF_{\mathrm{small}}))\fppf$ as claimed.
\end{proof}
\begin{remark}
Since our $\cO|_X$--modules are sheaves with respect to the fppf topology, it is not sufficient in Lemma \ref{lem_star} to only assume that $g$ is quasi-compact and quasi-separated as is done in \cite[Tag 01LC]{Stacks} for classical (small) quasi-coherent modules on $X$. An example demonstrating this is given in \cite[Tag 03LC (2)]{Stacks}.
\end{remark}

We may construct a quasi-coherent $\cO$--module from a collection of $\cO(U)$--modules for every $U\in \Aff_S$, given that the $\cO(U)$--modules satisfy compatibility conditions with respect to tensor products similar to those in Lemma \ref{lem_quasi_coh_characterization}.
\begin{lem}\label{lem_quasi_coh_affine_defn}
Assume that for each $U\in \Aff_S$ we are given an $\cO(U)$--module $M_U$ and for each morphism $f\colon V \to U$ in $\Aff_S$ we are given an isomorphism of $\cO(V)$--modules $\rho_f \colon M_U \otimes_{\cO(U)} \cO(V) \iso M_V$ such that the following conditions hold.
\begin{enumerate}[label={\rm(\roman*)}]
\item\label{lem_quasi_coh_affine_defn_i} $\rho_{\Id_U}\colon M_U \otimes_{\cO(U)} \cO(U) \iso M_U$ is the canonical isomorphism, and
\item\label{lem_quasi_coh_affine_defn_ii}  for morphisms $g\colon V' \to V$ and $f\colon V \to U$ in $\Aff_S$, the following diagram commutes
\[
\begin{tikzcd}
\big(M_U \otimes_{\cO(U)}\cO(V)\big)\otimes_{\cO(V)}\cO(V') \arrow{r}{\mathrm{can}} \arrow{d}{\rho_f \otimes \Id} & M_U \otimes_{\cO(U)} \cO(V') \arrow{d}{\rho_{f\circ g}} \\
M_V \otimes_{\cO(V)} \cO(V') \arrow{r}{\rho_g} & M_{V'}
\end{tikzcd}
\]
where $\mathrm{can}$ is the canonical isomorphism.
\end{enumerate}
Then, there exists a unique quasi-coherent $\cO$--module $\cM\colon \Sch_S \to \Ab$ such that for $U\in \Aff_S$, we have $\cM(U) = M_U$ and for each morphism $f\colon V \to U$ in $\Aff_S$, the restriction morphism is $\cM(f) = \rho_f \circ (\Id\otimes 1_{\cO(V)})$.
\end{lem}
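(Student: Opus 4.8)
The strategy is to build $\cM$ first as a presheaf on $\Aff_S$, verify it is quasi-coherent via Lemma~\ref{lem_quasi_coh_characterization}, and then transport it to $\Sch_S$ using the equivalence of topoi in Lemma~\ref{lem_equiv_affine_sheaves}. Concretely, define a presheaf $\cM_0 \colon \Aff_S \to \Ab$ by $\cM_0(U) = M_U$ on objects, and for a morphism $f\colon V\to U$ in $\Aff_S$ set the restriction map to be the composite $M_U \xrightarrow{\Id\otimes 1_{\cO(V)}} M_U\otimes_{\cO(U)}\cO(V) \xrightarrow{\rho_f} M_V$, where $1_{\cO(V)}$ denotes the image of $1$ under $\cO(U)\to\cO(V)$. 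One must check this is a functor: identities are preserved by hypothesis~\ref{lem_quasi_coh_affine_defn_i}, and functoriality under composition $V'\xrightarrow{g}V\xrightarrow{f}U$ reduces, after inserting the canonical isomorphism $M_U\otimes_{\cO(U)}\cO(V')\cong (M_U\otimes_{\cO(U)}\cO(V))\otimes_{\cO(V)}\cO(V')$, exactly to the commuting square in hypothesis~\ref{lem_quasi_coh_affine_defn_ii}. This is a short diagram chase.

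Next I would verify that $\cM_0$ satisfies the hypothesis of Lemma~\ref{lem_quasi_coh_characterization}, i.e.\ that for each $f\colon V\to U$ in $\Aff_S$ the natural map $\rho_f^{\cM_0}\colon \cM_0(U)\otimes_{\cO(U)}\cO(V)\to \cM_0(V)$, $m\otimes s\mapsto s\cdot m|_V$, is an isomorphism. By the very definition of the restriction map $m\mapsto \rho_f(m\otimes 1)$ and $\cO(V)$-linearity, this natural map sends $m\otimes s$ to $s\cdot\rho_f(m\otimes 1) = \rho_f(m\otimes s)$, so $\rho_f^{\cM_0} = \rho_f$, which is an isomorphism by assumption. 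Hence by Lemma~\ref{lem_quasi_coh_characterization}, $\cM_0$ is (a sheaf and) a quasi-coherent $\cO$-module on $\Aff_S$. Under the equivalence of Lemma~\ref{lem_equiv_affine_sheaves}, it extends uniquely to a sheaf $\cM$ on $\Sch_S$, and since quasi-coherence is an intrinsic property preserved by that equivalence, $\cM$ is a quasi-coherent $\cO$-module on $\Sch_S$ with $\cM(U)=M_U$ for $U\in\Aff_S$ and the prescribed restriction maps.

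For uniqueness: if $\cM'$ is another quasi-coherent $\cO$-module with $\cM'(U)=M_U$ and restriction $\rho_f\circ(\Id\otimes 1_{\cO(V)})$ for every $f\colon V\to U$ in $\Aff_S$, then $\cM'|_{\Aff_S}$ and $\cM|_{\Aff_S}$ agree as presheaves on $\Aff_S$, hence as sheaves; by the equivalence of categories in Lemma~\ref{lem_equiv_affine_sheaves} (a sheaf on $\Sch_S$ is determined by its restriction to $\Aff_S$), we get $\cM'\cong\cM$, and the isomorphism is the identity on $\Aff_S$, hence is the unique one compatible with the data.

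I do not expect a serious obstacle here; the content is entirely bookkeeping. The one point that needs a little care is phrasing the functoriality check so that the canonical associativity/base-change isomorphism for triple tensor products is inserted in precisely the spot where hypothesis~\ref{lem_quasi_coh_affine_defn_ii} applies — that is where a sloppy argument could hide a genuine gap — but it is the same cocycle-type identity that underlies all descent-of-modules statements and causes no real difficulty. The appeal to Lemma~\ref{lem_quasi_coh_characterization} is what makes the verification of the sheaf property and quasi-coherence simultaneous and painless.
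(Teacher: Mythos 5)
Your proof follows exactly the paper's strategy: define the presheaf on $\Aff_S$ with restrictions $\rho_f\circ(\Id\otimes 1_{\cO(V)})$, verify functoriality from (i) and (ii), observe that the induced comparison map coincides with $\rho_f$ so Lemma~\ref{lem_quasi_coh_characterization} applies, and extend uniquely to $\Sch_S$ via Lemma~\ref{lem_equiv_affine_sheaves}. The argument is correct and complete; the only cosmetic difference is that you spell out the uniqueness claim a bit more explicitly than the paper does.
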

\begin{proof}
We begin by defining a presheaf on $\Aff_S$. Namely, we define
\begin{align*}
\cM' \colon \Aff_S &\to \Ab \\
U &\mapsto M_U \\
(f\colon V \to U) &\mapsto \rho_f \circ (\Id \otimes 1_{\cO(V)}).
\end{align*}
Conditions \ref{lem_quasi_coh_affine_defn_i} and \ref{lem_quasi_coh_affine_defn_ii} certify that this is a well-defined presheaf. Furthermore, using the $\cO(U)$--action on each $M_U$ gives $\cM'$ the structure of a presheaf of $\cO$--modules. By construction, for each morphism $f\colon V \to U$ the map
\begin{align*}
\cM'(U)\otimes_{\cO(U)} \cO(V) &\to \cM'(V) \\
m\otimes s &\mapsto s \cdot \cM'(f)(m)
\end{align*}
is simply $\rho_f$ since
\[
s \cdot \cM'(f)(m) = s\cdot \rho_f(m\otimes 1) = \rho_f(m\otimes s)
\]
where we use the $\cO(V)$--linearity of $\rho_f$. Hence, these maps are isomorphisms and so Lemma \ref{lem_quasi_coh_characterization} says that $\cM'$ is a quasi-coherent $\cO$--module. Applying Lemma \ref{lem_equiv_affine_sheaves}, we obtain a unique quasi-coherent $\cO$--module $\cM \colon \Sch_S \to \Ab$ whose restriction to $\Aff_S$ is $\cM'$. This finishes the proof.
\end{proof}

In the remainder of the paper, in order to mirror the notation appearing, for example, in Lemma \ref{lem_quasi_coh_affine_defn}, we write the following. Given a map of rings $f\colon R \to Q$, there is a base change functor $b_f \colon \fMod_R \to \fMod_Q$ defined by the tensor product, $b_f(M)=M\otimes_R Q$. If we have another map of rings $f'\colon R' \to Q'$ as well as functors $\cF_{R'/R}\colon \fMod_{R'} \to \fMod_R$ and $\cF_{Q'/Q} \colon \fMod_{Q'}\to \fMod_Q$, we write
\begin{align*}
\cF_{R'/R}(\und)\otimes_R Q &= b_f \circ \cF_{R'/R}\\
\cF_{Q'/Q}(\und \otimes_{R'} Q') &= \cF_{Q'/Q}\circ b_{f'}
\end{align*}
for the two functors $\fMod_{R'} \to \fMod_Q$. Additionally, we will write
\[
\theta \colon \cF_{R'/R}(\und)\otimes_R Q \iso \cF_{Q'/Q}(\und \otimes_{R'} Q')
\]
to denote that $\theta$ is a natural isomorphism between the functors $b_f \circ \cF_{R'/R}$ and $\cF_{Q'/Q}\circ b_{f'}$.

The results above allow us to assemble certain families of functors between module categories into a functor between categories of quasi-coherent sheaves in the following technical lemma. We will use the concept of an affine morphism for which we refer to \cite[Tag 01S6]{Stacks}. In particular, affine morphisms are stable under base change by \cite[Tag 01SD]{Stacks} and so for an affine morphism $f\colon T\to S$ and any affine scheme $U\in \Sch_S$, the fiber product $U\times_S T$ is an affine scheme. Recall as well that $\QCoh(S)$ denotes the category of quasi-coherent $\cO$--modules on $\Sch_S$. The category $\QCoh(T)$ is then the category of quasi-coherent $\cO|_T$--modules.

\begin{lem}\label{lem_quasi_coh_functor_construction}
Let $T \to S$ be an affine morphism (as in \cite[Tag 01S6]{Stacks}) of schemes such that the following holds. Assume that we have a family of covariant functors
\[
\cF_U \colon \fMod_{\cO(T\times_S U)} \to \fMod_{\cO(U)}
\]
for each $U\in \Aff_S$. Further, suppose that for each morphism $f\colon V\to U$ in $\Aff_S$, we have an isomorphism of functors
\[
\theta_f \colon \cF_U(\und) \otimes_{\cO(U)} \cO(V) \iso \cF_V\big(\und \otimes_{\cO(T\times_S U)} \cO(T\times_S V)\big)
\]
satisfying the following conditions. For each pair of morphisms $g\colon V' \to V$ and $f\colon V \to U$ in $\Aff_S$, the following diagrams commute.
\begin{equation}\label{eq_lem_quasi_construction_i}
\begin{tikzcd}[column sep = 1.1in]
\cF_U(\und)\otimes_{\cO(U)}\cO(U) \arrow{dr}{\mathrm{can}} \arrow{d}{\theta_{\Id_U}} &   \\
\cF_U\big(\und \otimes_{\cO(T\times_S U)}\cO(T\times_S U)\big) \arrow[near start]{r}{\cF_U(\mathrm{can})} & \cF_U(\und)
\end{tikzcd}
\end{equation}
and \refstepcounter{equation}\label{eq_lem_quasi_construction_ii}\\
\noindent \textnormal{(\theequation)} \hspace{-2ex} \begin{tabular}{@{}c}
$\begin{tikzcd}[ampersand replacement=\&]
\big(\cF_U(\und)\otimes_{\cO(U)}\cO(V)\big)\otimes_{\cO(V)}\cO(V') \arrow{r}{\mathrm{can}} \arrow{d}{\theta_f\otimes\Id} \& \cF_U(\und)\otimes_{\cO(U)}\cO(V') \arrow{dd}{\theta_{f\circ g}} \\
\cF_V\big(\und \otimes_{\cO(U_T)}\cO(V_T)\big)\otimes_{\cO(V)} \cO(V') \arrow{d}{\theta_g} \& \\
\cF_{V'}\big((\und\otimes_{\cO(U_T)}\cO(V_T))\otimes_{\cO(V_T)}\cO(V'_T)\big) \arrow{r}{\cF_{V'}(\mathrm{can})} \& \cF_{V'}(\und\otimes_{\cO(U_T)}\cO(V'_T)).
\end{tikzcd}$
\end{tabular}\\
For brevity in the diagrams, we abuse notation by using $\mathrm{can}$ to denote various canonical isomorphisms. We also denote $T\times_S U = U_T$ and likewise for $V_T$ and $V'_T$. Then, there is a functor $\cF_{T/S} \colon \QCoh(T) \to \QCoh(S)$ defined as follows. For each quasi-coherent $\cO|_T$--module $\cM$, the $\cO$--module $\cF_{T/S}(\cM)$ has the following properties:
\begin{enumerate}[label={\rm(\roman*)}]
\item \label{lem_quasi_construction_i} $\cF_{T/S}(\cM)(U) = \cF_U(\cM(T\times_S U))$ for all $U\in \Aff_S$, and
\item \label{lem_quasi_construction_ii} for each morphism $f\colon V \to U$ in $\Aff_S$, the associated restriction map $\cF_{T/S}(\cM)(f)$ is defined by
\[
\begin{tikzcd}
\cF_U(\cM(U_T)) \arrow{r}{\Id\otimes 1} \arrow[swap, bend right=20]{ddr}{\cF_{T/S}(\cM)(f)} & \cF_U(\cM(U_T))\otimes_{\cO(U)}\cO(V) \arrow{d}{\theta_f(\cM(U_T))} \\
 & \cF_V\big(\cM(U_T)\otimes_{\cO(U_T)} \cO(V_T)\big) \arrow{d}{\cF_V(\rho_{f'})} \\
 & \cF_V(\cM(V_T))
\end{tikzcd}
\]
where $f' \colon T\times_S V \to T\times_S U$ is the pullback of $f$ and
\[
\rho_{f'} \colon \cM(T\times_S U)\otimes_{\cO(T\times_S U)} \cO(T\times_S V) \to \cM(T\times_S V)
\]
is the canonical isomorphism as in Lemma \ref{lem_quasi_coh_characterization} arising from $\cM$ being quasi-coherent and both $T\times_S U$ and $T\times_S V$ being affine schemes.
\item \label{lem_quasi_construction_iii}
For each morphism $\varphi \colon \cM_1 \to \cM_2$ of quasi-coherent $\cO|_T$--modules, the morphism $\cF_{T/S}(\varphi) \colon \cF(\cM_1) \to \cF(\cM_2)$ is defined over $U \in \Aff_S$ by
\[
\cF_{T/S}(\varphi)(U) = \cF_U(\varphi(T\times_S U)) \colon \cF_U(\cM_1(T\times_S U)) \to \cF_U(\cM_2(T\times_S U)).
\]
\end{enumerate}
\end{lem}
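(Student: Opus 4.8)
The plan is to construct $\cF_{T/S}(\cM)$ first as a presheaf of $\cO$--modules on $\Aff_S$, verify it satisfies the tensor-product criterion of Lemma \ref{lem_quasi_coh_characterization}, and then transport it to a quasi-coherent $\cO$--module on all of $\Sch_S$ via Lemma \ref{lem_equiv_affine_sheaves}; this is essentially the same pattern used in the proof of Lemma \ref{lem_quasi_coh_affine_defn}, so I would model the argument on that. Concretely, given a quasi-coherent $\cO|_T$--module $\cM$, set $M_U = \cM(T\times_S U)$ for each $U\in \Aff_S$. Since $\cM$ is quasi-coherent and $T\times_S U$, $T\times_S V$ are affine whenever $U,V$ are (affineness of $T\to S$ is stable under base change), Lemma \ref{lem_quasi_coh_characterization} gives canonical isomorphisms $\rho_{f'}\colon M_U\otimes_{\cO(T\times_S U)}\cO(T\times_S V)\iso M_V$ for each $f\colon V\to U$, where $f'$ is the pullback. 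Composing with the functor-level isomorphism $\theta_f$, I get isomorphisms
\[
\tilde\rho_f \co \cF_U(M_U)\otimes_{\cO(U)}\cO(V) \xrightarrow{\theta_f(M_U)} \cF_V\bigl(M_U\otimes_{\cO(T\times_S U)}\cO(T\times_S V)\bigr) \xrightarrow{\cF_V(\rho_{f'})} \cF_V(M_V),
\]
and I would define $\cF_{T/S}(\cM)$ on $\Aff_S$ by $U\mapsto \cF_U(M_U)$ with restriction along $f$ given by $\tilde\rho_f\circ(\Id\otimes 1)$, matching clause \ref{lem_quasi_construction_ii}.

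The work then splits into two bookkeeping checks. First, to see this is a well-defined presheaf I must verify $\tilde\rho_{\Id_U}$ is the canonical isomorphism (for which I use diagram \eqref{eq_lem_quasi_construction_i} together with the fact that $\rho_{(\Id_U)'}=\rho_{\Id_{T\times_S U}}$ is canonical by clause \ref{lem_quasi_coh_affine_defn_i}-type normalization of the $\rho$'s in Lemma \ref{lem_quasi_coh_characterization}) and that $\tilde\rho_{f\circ g}$ agrees with the composite built from $\tilde\rho_f$ and $\tilde\rho_g$. The latter is the cocycle identity: it follows by pasting diagram \eqref{eq_lem_quasi_construction_ii} for the $\theta$'s on top of the analogous cocycle diagram for the $\rho_{f'}$'s — the latter being exactly condition \ref{lem_quasi_coh_affine_defn_ii} of Lemma \ref{lem_quasi_coh_affine_defn} applied to $\cM$ restricted to affines over $T$, valid because $(g\circ f)' = g'\circ f'$ as pullbacks. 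The $\cO(U)$--module structure on $\cF_U(M_U)$ makes this a presheaf of $\cO$--modules, and since each $\tilde\rho_f$ is an isomorphism and $\tilde\rho_f$ is $\cO(V)$--linear, the map $m\otimes s\mapsto s\cdot \cF_{T/S}(\cM)(f)(m)$ is exactly $\tilde\rho_f$, hence an isomorphism; Lemma \ref{lem_quasi_coh_characterization} then upgrades $\cF_{T/S}(\cM)$ to a quasi-coherent $\cO$--module on $\Aff_S$, and Lemma \ref{lem_equiv_affine_sheaves} extends it uniquely to $\Sch_S$, giving properties \ref{lem_quasi_construction_i} and \ref{lem_quasi_construction_ii}.

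Second, I must check functoriality: for $\varphi\colon \cM_1\to\cM_2$ in $\QCoh(T)$, the maps $\cF_U(\varphi(T\times_S U))$ must assemble into an $\cO$--module morphism, i.e. they must commute with the restriction maps $\tilde\rho_f$. This reduces to naturality of $\theta_f$ (which is a natural isomorphism of functors, so it commutes with $\cF_U(\varphi(T\times_S U))\otimes\Id$ versus $\cF_V$ of the base-changed map) combined with naturality of $\rho_{f'}$ in $\cM$ (a standard property of the quasi-coherence isomorphisms — $\rho_{f'}$ is natural in the module argument). Composing these two naturality squares gives the required commutativity over each $f$ in $\Aff_S$, hence a morphism of presheaves on $\Aff_S$, which under Lemma \ref{lem_equiv_affine_sheaves} gives $\cF_{T/S}(\varphi)$; that $\cF_{T/S}$ respects identities and composition is immediate since each $\cF_U$ does and composition/identities are computed sectionwise on $\Aff_S$. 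The main obstacle is purely organizational: correctly assembling the two nested cocycle/pasting diagrams (the $\theta$-cocycle \eqref{eq_lem_quasi_construction_ii} stacked on the $\rho$-cocycle) while keeping track of which canonical isomorphism identifies $(g\circ f)'$-base change with iterated $f'$- then $g'$-base change, and confirming that all the ``can'' arrows in \eqref{eq_lem_quasi_construction_i}--\eqref{eq_lem_quasi_construction_ii} match the canonical maps supplied by quasi-coherence of $\cM$; none of it is deep, but it is where an error would hide.
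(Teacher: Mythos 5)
Your proposal matches the paper's proof: the paper likewise applies Lemma~\ref{lem_quasi_coh_affine_defn} with the module $\cF_U(\cM(T\times_S U))$ over each $U\in\Aff_S$ and transition isomorphism $\cF_V(\rho_{f'})\circ\theta_f(\cM(U_T))$, verifies conditions \ref{lem_quasi_coh_affine_defn_i} and \ref{lem_quasi_coh_affine_defn_ii} of that lemma using diagram \eqref{eq_lem_quasi_construction_i} and a pasted diagram built from \eqref{eq_lem_quasi_construction_ii}, and checks functoriality on morphisms via naturality of $\theta_f$ and of the quasi-coherence isomorphisms $\rho_{f'}$. The only ingredient your cocycle sketch leaves tacit is that the $\theta$-cocycle and $\rho$-cocycle do not paste directly but need a naturality square for $\theta_g$ in between (to move $\cF_V(\rho_{f'})\otimes\Id$ past $\theta_g$); since you already invoke naturality of $\theta$ for the functoriality step, this is evidently within the scope of your ``organizational'' caveat, so the proposal is sound.
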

\begin{proof}
There are various compatibility conditions that need to be checked. Let $\cM$ be a quasi-coherent $\cO|_T$--module. We begin by using Lemma \ref{lem_quasi_coh_affine_defn} to define the sheaf $\cF_{T/S}(\cM)$. Because $\cM$ is quasi-coherent, by Lemma \ref{lem_quasi_coh_characterization} it comes with standard isomorphisms $\rho_f \colon \cM(U)\otimes_{\cO(U)}\cO(V) \iso \cM(V)$ for each morphism $f\colon V \to U$ in $\Aff_S$.

Now, for each $U\in \Aff_S$, we set $M_U = \cF_U(\cM(T\times_S U))$. For a morphism $f\colon V \to U$ in $\Aff_S$, we define the isomorphism $\phi_f \colon M_U \otimes_{\cO(U)}\cO(V) \iso M_V$ to be
\[
\rho_f = \cF_V(\rho_{f'})\circ \theta_f(\cM(U_T))
\]
where $f' \colon T\times_S V \to T\times_S U$ is the pullback of $f$. Because the map $\rho_{\Id_{U_T}} \colon \cM(U_T)\otimes_{\cO(U_T)} \cO(U_T) \to \cM(U_T)$ is the canonical isomorphism, the commutativity of \eqref{eq_lem_quasi_construction_i} implies that $\phi_{\Id_U}$ is the canonical isomorphism, as required in Lemma \ref{lem_quasi_coh_affine_defn}.

For morphisms $g\colon V' \to V$ and $f\colon V \to U$ in $\Aff_S$, passing $\cM(T\times_S U)$ into diagram \eqref{eq_lem_quasi_construction_ii} yields
\[
\begin{tikzcd}[column sep=-3ex]
 & \cF_U(\cM(U_T))\otimes_{\cO(U)}\cO(V') \arrow{dddd}{\theta_{f\circ g}(\cM(U_T))} \\
\big(\cF_U(\cM(U_T))\otimes_{\cO(U)}\cO(V)\big)\otimes_{\cO(V)}\cO(V') \arrow{ur}{\mathrm{can}} \arrow{d}{\theta_f(\cM(U_T))\otimes\Id} & \\
\cF_V\big(\cM(U_T) \otimes_{\cO(U_T)}\cO(V_T)\big)\otimes_{\cO(V)} \cO(V') \arrow{d}{\theta_g(\cM(U_T)\otimes_{\cO(U_T)}\cO(V_T))} & \\
\cF_{V'}\big((\cM(U_T)\otimes_{\cO(U_T)}\cO(V_T))\otimes_{\cO(V_T)}\cO(V'_T)\big) \arrow[swap]{dr}{\cF_{V'}(\mathrm{can})} & \\
 & \cF_{V'}(\cM(U_T)\otimes_{\cO(U_T)}\cO(V'_T)).\\
\end{tikzcd}
\]
This diagram can be extended to
\[
\begin{tikzcd}
 &[-30pt] \bullet \arrow{r}{\mathrm{can}} \arrow{d}{\theta_f(\cM(U_T))\otimes\Id} \arrow[swap]{dl}{\phi_f\otimes\Id} & \bullet \arrow[swap]{dd}{\theta_{f\circ g}(\cM(U_T))} \arrow[bend left]{ddd}{\phi_{f\circ g}} \\
\cF_V(\cM(V_T))\otimes_{\cO(V)}\cO(V') \arrow{ddr}{\theta_g(\cM(V_T))} \arrow[bend right=50]{ddrr}{\phi_g} & \bullet \arrow{d}{\theta_g(\cM(U_T)\otimes_{\cO(U_T)}\cO(V_T))} \arrow[swap]{l}{\cF_V(\rho_f')\otimes\Id} & \\
 & \bullet \arrow{r}{\cF_{V'}(\mathrm{can})} \arrow{d}{\cF_{V'}(\rho_{f'}\otimes\Id)} & \bullet \arrow[swap]{d}{\cF_{V'}(\rho_{(f'\circ g')})} \\
 & \cF_{V'}\big(\cM(V_T)\otimes_{\cO(V_T)}\cO(V'_T)\big) \arrow{r}{\cF_{V'}(\rho_{g'})} & \cF_{V'}(\cM(V'_T))
\end{tikzcd}
\]
where the bullets represent the entries in the previous diagram. The triangle in the top left of the diagram commutes by definition of $\phi_f$. The triangle below it commutes because $\theta_g$ is a natural transformation. The bottom most square commutes because $\rho_{(f'\circ g')}\circ \mathrm{can} = \rho_{g'}\circ (\rho_{f'}\otimes\Id)$ since $\cM$ is quasi-coherent, and this has simply been passed through $\cF_{V'}$. Finally, the portions of the diagram involving the curved arrows commute by definition of $\phi_g$ and $\phi_{f\circ g}$. Ultimately, this shows that $\phi_{f\circ g} \circ \mathrm{can} = \phi_g \circ (\phi_f\otimes\Id)$ as required by Lemma \ref{lem_quasi_coh_affine_defn}. Hence, applying Lemma \ref{lem_quasi_coh_affine_defn} produces a quasi-coherent $\cO$--module $\cF_{T/S}(\cM)$ with properties \ref{lem_quasi_construction_i} and \ref{lem_quasi_construction_ii} of the statement.

To finish constructing the functor $\cF_{T/S}$, we need to define the image of morphisms. Let $\varphi \colon \cM_1 \to \cM_2$ be a morphism of quasi-coherent $\cO|_T$--modules. Due to Lemma \ref{lem_equiv_affine_sheaves}, it is sufficient to only define $\cF_{T/S}(\varphi)$ over the schemes in $\Aff_S$. For $U\in \Aff_S$, we define $\cF_{T/S}(\varphi)(U)=\cF_U(\varphi(T\times_S U))$ as in condition \ref{lem_quasi_construction_iii} of the statement. It is clear that if $\cF_{T/S}(\varphi)$ is a well-defined morphism, then $\cF_{T/S}$ will preserve identities and compositions and hence will be a functor. Therefore, we just need to verify that $\cF_{T/S}(\varphi)$ is a well-defined natural transformation of functors. Let $f\colon V \to U$ be a morphism in $\Aff_S$. Once again, we consider a large diagram
\[
\begin{tikzcd}[column sep=0.8in]
\cF_U(\cM_1(U_T)) \arrow{r}{\cF_U(\varphi(U_T))} \arrow{d}{\Id\otimes 1} & \cF_U(\cM_2(U_T)) \arrow{d}{\Id\otimes 1} \\
\cF_U(\cM_1(U_T))\otimes_{\cO(U)}\cO(V) \arrow{r}{\cF_U(\varphi(U_T))\otimes \Id} \arrow{d}{\theta_f(\cM_1(U_T))} & \cF_U(\cM_2(U_T))\otimes_{\cO(U)}\cO(V) \arrow{d}{\theta_f(\cM_2(U_T))} \\
\cF_V\big(\cM_1(U_T)\otimes_{\cO(U_T)}\cO(V_T)\big) \arrow{r}{\cF_V(\varphi(U_T)\otimes\Id)} \arrow{d}{\cF_V(\rho_{1,f'})} & \cF_V\big(\cM_2(U_T)\otimes_{\cO(U_T)}\cO(V_T)\big) \arrow{d}{\cF_V(\rho_{2,f'})} \\
\cF_V(\cM_1(V_T)) \arrow{r}{\cF_V(\varphi(V_T))} & \cF_V(\cM_2(V_T)).
\end{tikzcd}
\]
Here, $\rho_{i,f}$ is the standard isomorphism from $\cM_i$ being quasi-coherent. The top square is trivially commutative. The middle square commutes because $\theta_f$ is a natural transformation. The bottom square commutes because $\rho_{2,f'} \circ (\varphi(U_T)\otimes\Id) = \varphi(V_T)\circ \rho_{1,f'}$ due to $\varphi$ being a morphism of $\cO$--modules and this has been passed through $\cF_V$. The compositions down each column are the restriction morphisms $\cF_{T/S}(\cM_i)(f)$ respectively, so the commutativity of the outermost paths shows that $\cF_{T/S}(\varphi)$ is well-defined. This finishes the construction of $\cF_{T/S}$ and hence concludes the proof.
\end{proof}

\subsection{Stack Morphism}\label{app_stack_morphism}
If we have compatible functors between module categories for an even wider ranger of morphisms, we can construct a morphism of stacks of quasi-coherent sheaves. The codomain of the stack morphism will be the substack $\QCoh$ of $\fSh$ consisting of quasi-coherent modules.
\begin{enumerate}[label={\rm(\roman*)}]
\item The objects of $\QCoh$ are pairs $(X,\cF)$ with $X\in \Sch_S$ and $\cF$ a quasi-coherent $\cO|_X$--module on $\Sch_X$.
\item The morphisms of $\QCoh$ are pairs $(g,\varphi)\colon (X',\cF') \to (X,\cF)$ where $g\colon X' \to X$ is a morphism of $S$--schemes and $\varphi \colon \cF' \to g^*(\cF)$ is a morphism of $\cO|_{X'}$--modules. Composition is given by $(g,\varphi)\circ (h,\psi) = (g\circ h,h^*(\varphi)\circ \psi)$.
\end{enumerate}

Second, consider the stack of affine morphisms $p\colon \fAff \to \Sch_S$ whose
\begin{enumerate}[label={\rm(\roman*)}]
\item objects are affine morphisms $T'\to T$ where $T\in \Sch_S$,
\item morphisms are pairs $(f,g)\colon (X'\to X)\to (T'\to T)$ where $f$ and $g$ are scheme morphisms making
\[
\begin{tikzcd}
X' \arrow{d}{j} \arrow{r}{g} & T' \arrow{d}{h} \\\
X \arrow{r}{f} & T
\end{tikzcd}
\]
a fiber product diagram in $\Sch_S$, and
\item the structure functor sends $(T'\to T)\mapsto T$ and $(f,g)\mapsto f$.
\end{enumerate}
The pullback diagram condition on morphisms makes this fibered in groupoids. It is indeed a stack since $\cHom(T'_1\to T,T'_2\to T) = \cIsom_T(T'_1,T'_2)$, which is a sheaf, and affine morphisms satisfy gluing by \cite[4.4.7]{Olsson}.

Let $\fI \subseteq \fAff$ be any substack as defined in \cite[4.1.6]{Vis}. If $\fI$ is a full subcategory, this is equivalent to choosing a family of affine morphisms which are stable under base change and allow descent. The substack $\fI \to \Sch_S$ gives $\fI$ an inherited site structure as in \cite[Tag 06NU]{Stacks}. In detail, the covers will be families of morphisms of the form $\{(f_i,g_i)\colon (T'_i \to T_i) \to (T'\to T)\}_{i\in I}$ where $\{f_i \colon T_i \to T\}_{i\in I}$ is an fppf cover (and every $(f_i,g_i)$ is cartesian, but this holds by default since $\fAff$ is fibered in groupoids). Since morphisms in $\fAff$ must define pullback diagrams, this means that $T'_i \iso T'\times_T T_i$ for each $i\in I$ and so $\{g_i \colon T'_i \to T'\}_{i\in I}$ is also an fppf cover.

Next, we define the stack of quasi-coherent sheaves over $\fI$, denoted $\QCoh_{\fI}$, as follows.
\begin{enumerate}[label={\rm(\roman*)}]
\item Objects are pairs $(h\colon T' \to T,\cM)$ with $h\in \fI$ and $\cM$ a quasi-coherent $\cO|_{T'}$--module.
\item Morphisms are triples $(f,g,\varphi)\colon (j\colon X' \to X,\cN) \to (h\colon T' \to T,\cM)$ where $(f,g)\colon j \to h$ is a morphism in $\fI$ and $\varphi \colon \cN \to g^*(\cM)$ is an isomorphism of $\cO|_{X'}$--modules. Composition is given by
\[
(f_1,g_1,\varphi_1)\circ (f_2,g_2,\varphi_2) = (f_1\circ f_2,g_1\circ g_2,g_2^*(\varphi_1)\circ \varphi_2).
\]
\item The structure functor is $p\colon \QCoh_{\fI} \to \fI$ which behaves as $(h\colon T' \to T,\cM) \mapsto (h\colon T' \to T)$ and $(f,g,\varphi)\mapsto (f,g)$ on objects and morphisms respectively.
\end{enumerate}
It is clear this is a stack since, for two objects $(T'\to T,\cM_1)$ and $(T'\to T,\cM_2)$ in the same fiber, $\cIsom_{\cO|_{T'}}(\cM_1,\cM_2)$ is a sheaf and quasi-coherent modules permit gluing along fppf covers. Since $\fI$ is fibered in groupoids and we require that $\varphi$ in a morphism of $\QCoh_\fI$ be an isomorphism, the stack $\QCoh_\fI \to \fI$ is also fibered in groupoids.

However, we ultimately want to consider $\QCoh_\fI$ as a stack over $\Sch_S$. By \cite[09WX]{Stacks}, the composition $\QCoh_{\fI} \to \fI \to \Sch_S$ does produce a stack and it is also fibered in groupoids.

Throughout the remainder of this section, we let $\fI$ be a substack of $\fAff$ and assume we are given the following data:
\begin{enumerate}[label={\rm(\thesubsection.\alph*)}]
\item a functor $\cF_h \colon \fMod_{\cO(U')} \to \fMod_{\cO(U)}$ for every object $h\colon U' \to U$ in $\fI$ for which $U,U' \in \Aff_S$, and
\item for every fiber product diagram
\[
D=
\begin{tikzcd}
V' \arrow{d}{h'} \arrow{r}{f'} & U' \arrow{d}{h} \\
V \arrow{r}{f} & U
\end{tikzcd}
\]
where $U,U',V$ and hence $V'$ are affine, $h,h' \in \fI$, and $f\in \Aff_S$ is an arbitrary morphism, we are given an isomorphism of functors
\[
\theta_D \colon \cF_h(\und)\otimes_{\cO(U)}\cO(V) \iso \cF_{h'}\big(\und \otimes_{\cO(U')}\cO(V')\big).
\] 
\end{enumerate}
Further, we assume that these functors satisfy the following compatibility conditions.
\begin{enumerate}[resume,label={\rm(\thesubsection.\alph*)}]
\item \label{assumption_1} For every fiber product diagram $D$ of the form below, the diagram on the right commutes
\[
D=
\begin{tikzcd}
U' \arrow{d}{h} \arrow[equals]{r} & U' \arrow{d}{h} \\
U \arrow[equals]{r} & U,
\end{tikzcd}
\hspace{0.2in}
\begin{tikzcd}[column sep=0.8in]
\cF_h(\und)\otimes_{\cO(U)}\cO(U) \arrow{d}{\theta_D} \arrow{dr}{\mathrm{can}} & \\
\cF_h\big(\und\otimes_{\cO(U')}\cO(U')\big) \arrow[near start]{r}{\cF_h(\mathrm{can})} & \cF_h(\und)
\end{tikzcd}
\]
where $h\in \fI$ and $U,U'\in \Aff_S$.
\item \label{assumption_2} For all fiber product diagrams
\[
D_f = \begin{tikzcd}
V' \arrow{d}{h'} \arrow{r}{f'} & U' \arrow{d}{h} \\
V \arrow{r}{f} & U
\end{tikzcd},\;
D_g = \begin{tikzcd}
W' \arrow{d}{h''} \arrow{r}{g'} & V' \arrow{d}{h'} \\
W \arrow{r}{g} & V
\end{tikzcd},
\text{ and }
D_{f\circ g} = \begin{tikzcd}
W' \arrow{d}{h''} \arrow{r}{f'\circ g'} & U' \arrow{d}{h} \\
W \arrow{r}{f \circ g} & U
\end{tikzcd}
\]
with $h\in \fI$ and $f,g\in \Aff_S$ the diagram
\[
\begin{tikzcd}
\big(\cF_h(\und)\otimes_{\cO(U)}\cO(V)\big)\otimes_{\cO(V)}\cO(W) \arrow{r}{\mathrm{can}} \arrow{d}{\theta_{D_f}\otimes\Id} & \cF_h(\und)\otimes_{\cO(U)}\cO(W) \arrow{dd}{\theta_{D_{f\circ g}}} \\
\cF_{h'}\big(\und\otimes_{\cO(U')}\cO(V')\big)\otimes_{\cO(V)}\cO(W) \arrow{d}{\theta_{D_g}} & \\
\cF_{h''}\big((\und\otimes_{\cO(U')}\cO(V'))\otimes_{\cO(V')}\cO(W')\big) \arrow{r}{\cF_{h''}(\mathrm{can})} & \cF_{h''}(\und \otimes_{\cO(U')}\cO(W'))
\end{tikzcd}
\]
commutes.
\end{enumerate}

With these assumptions, we work up to a stack version of Lemma \ref{lem_quasi_coh_functor_construction}. First, let $h\colon T' \to T$ be any morphism in $\fI$. For each $U\in \Aff_T$ and each morphism $f\colon V \to U$ in $\Aff_T$, we have fiber product diagrams
\[
D_U = \begin{tikzcd}
T'\times_T U \arrow{r} \arrow{d}{h'} & T' \arrow{d}{h} \\
U \arrow{r} & T
\end{tikzcd}
\text{ and }
D_f = \begin{tikzcd}
T'\times_T V \arrow{r}{f'} \arrow{d}{h''} & T'\times_T U \arrow{d}{h'} \\
V \arrow{r}{f} & U.
\end{tikzcd}
\]
If we set $\cF_U = \cF_{h'}$ and $\theta_f = \theta_{D_f}$, then it is clear that the compatibility conditions assumed above specialize into the requirements of Lemma \ref{lem_quasi_coh_functor_construction}. Therefore, we may apply the lemma to obtain a functor denoted $\cF_{T'/T}\colon \QCoh(T') \to \QCoh(T)$. These functors are related to one another in the following way.

\begin{lem}\label{lem_phi_isomorphism}
Let $h'\colon X' \to X$ and $h\colon T' \to T$ be two morphisms in $\fI$ and let $\cF_{X'/X}$ and $\cF_{T'/T}$ be the associated functors defined above. Then, for every fiber product diagram
\[
D = \begin{tikzcd}
X' \arrow{d}{h'} \arrow{r}{g} & T' \arrow{d}{h} \\
X \arrow{r}{f} & T
\end{tikzcd}
\]
with $f,g$ morphisms of $\Sch_S$, we have an isomorphism of functors
\[
\phi_D \colon \cF_{X'/X}\circ g^* \iso f^* \circ \cF_{T'/T}.
\]
\end{lem}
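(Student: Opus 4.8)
The plan is to check the isomorphism of functors $\phi_D$ affine-locally on $X$, using the characterization of quasi-coherent modules from Lemma \ref{lem_quasi_coh_characterization} together with the explicit description of $\cF_{X'/X}$ and $\cF_{T'/T}$ provided by Lemma \ref{lem_quasi_coh_functor_construction}. Since $\cF_{X'/X}\circ g^*$ and $f^*\circ \cF_{T'/T}$ are both functors $\QCoh(T')\to\QCoh(X)$ landing in quasi-coherent modules, by Lemma \ref{lem_equiv_affine_sheaves} it suffices to construct a natural isomorphism between their restrictions to $\Aff_X$ and check the compatibility with the restriction maps along morphisms in $\Aff_X$.

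First I would fix $U\in \Aff_X$ and evaluate both functors there. For a quasi-coherent $\cM \in \QCoh(T')$, on one hand $(\cF_{X'/X}\circ g^*)(\cM)(U) = \cF_{X'/X}(\cM|_{X'})(U) = \cF_{h'_U}\big((\cM|_{X'})(X'\times_X U)\big)$ where $h'_U$ is the base change of $h'$ to $U$, while $(\cM|_{X'})(X'\times_X U) = \cM(X'\times_X U)$ with $X'\times_X U$ viewed as a $T'$-scheme via the composite $X'\times_X U \to X' \xrightarrow{g} T'$. On the other hand $(f^*\circ \cF_{T'/T})(\cM)(U) = \cF_{T'/T}(\cM)(U)$, where now $U$ is a $T$-scheme via $U \to X \xrightarrow{f} T$; since $U$ is affine and the morphism $U \to T$ factors through $X$, writing $V = T'\times_T U$ we get $\cF_{T'/T}(\cM)(U) = \cF_{h''}\big(\cM(V)\big)$ where $h'' \colon V \to U$ is the base change of $h$. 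The key point is that, because $D$ is a fiber product diagram, there is a canonical isomorphism of $U$-schemes $X'\times_X U \iso T'\times_T U = V$ compatible with the maps down to $T'$, hence $h'_U$ and $h''$ are canonically identified and the corresponding modules $\cM(X'\times_X U)$ and $\cM(V)$ are canonically identified. This identification furnishes the component $\phi_D(\cM)(U)$ as (the application of $\cF_{h''}$ to) this canonical module isomorphism; it is visibly an isomorphism and natural in $\cM$.

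Next I would verify that these components $\phi_D(\cM)(U)$ are compatible with the restriction maps, i.e. that for a morphism $k\colon V_0 \to U$ in $\Aff_X$ the evident square commutes. This is where one must unwind the description of the restriction maps of $\cF_{X'/X}(\cM|_{X'})$ and $\cF_{T'/T}(\cM)$ given in Lemma \ref{lem_quasi_coh_functor_construction}\ref{lem_quasi_construction_ii}: each is built from a $\theta$-isomorphism (attached to the appropriate fiber product diagram) followed by $\cF$ applied to a canonical isomorphism coming from quasi-coherence of $\cM$. The commutativity then follows by combining the naturality of the $\theta$'s, assumption \ref{assumption_2} (which controls how the $\theta$'s behave under composition of base changes, and hence under passing from the diagram $D_U$ to $D_{V_0}$ via $D_k$), and the standard cocycle identity $\rho_{f\circ g} \circ \mathrm{can} = \rho_g \circ (\rho_f \otimes \Id)$ for the structure isomorphisms of the quasi-coherent module $\cM$. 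Once the square commutes for all $k$, the components assemble into a morphism of $\cO|_X$-modules $\phi_D(\cM)\colon (\cF_{X'/X}\circ g^*)(\cM) \to (f^*\circ \cF_{T'/T})(\cM)$, and since it is an isomorphism on each $U\in \Aff_X$ it is an isomorphism of $\cO|_X$-modules; naturality in $\cM$ (reducing to the description in \ref{lem_quasi_construction_iii} and naturality of the canonical identification) then gives the isomorphism of functors.

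The main obstacle I anticipate is purely bookkeeping: tracking the several nested fiber product diagrams (the diagram $D$, its base changes $D_U$ and $D_{V_0}$ over $\Aff_X$, and the diagram $D_k$ relating them) and checking that the canonical identifications of $T'$-schemes are the \emph{same} ones that feed into the $\theta$-isomorphisms, so that assumption \ref{assumption_2} applies on the nose rather than up to some unchecked coherence. There is no conceptual difficulty — everything is forced by universal properties of fiber products and by the hypotheses on the $\theta$'s — but care is needed to ensure all the canonical isomorphisms are genuinely canonical and mutually compatible. A clean way to organize this is to observe that $\cF_{X'/X}$ and $\cF_{T'/T}$ are, by construction, both instances of the single construction of Lemma \ref{lem_quasi_coh_functor_construction} applied to base changes of $h$, and that $\phi_D$ is exactly the comparison isomorphism between two such instances related by the base change $f$; one can then phrase the compatibility check as a diagram chase over $\Aff_X$ that is formally identical to the one already carried out in the proof of Lemma \ref{lem_quasi_coh_functor_construction}.
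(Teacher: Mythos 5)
Your overall strategy is the same one the paper uses: work over $\Aff_X$, use the quasi-coherence characterization, build the comparison iso from the $\theta$'s, and verify compatibility with restriction maps by a diagram chase invoking assumption \ref{assumption_2} together with the cocycle identity for the $\rho$'s. The verification paragraph is essentially right in outline.

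However, there is a real gap in how you construct the component $\phi_D(\cM)(U)$ itself. You assert that because $D$ is a fiber product diagram the morphisms $h'_U$ and $h_U$ (your $h''$) are ``canonically identified'' and conclude that $\phi_D(\cM)(U)$ is simply ``$\cF_{h''}$ applied to the canonical module isomorphism.'' But the hypotheses do not include any statement that the functors $\cF_{h_1}$ and $\cF_{h_2}$ are canonically isomorphic when $h_1$ and $h_2$ are isomorphic objects of $\fI$: we are only given the individual $\cF_h$ together with the natural isomorphisms $\theta_D$ attached to fiber-product diagrams with affine corners. Moreover, $\cF_{h'_U}$ and $\cF_{h_U}$ literally have different source categories ($\fMod_{\cO(X'\times_X U)}$ and $\fMod_{\cO(T'\times_T U)}$ respectively), so one cannot apply a single one of them to a semilinear comparison iso between $\cM(X'\times_X U)$ and $\cM(T'\times_T U)$. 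The missing ingredient is precisely the isomorphism $\theta_{D_U}$ for the fiber-product diagram with bottom edge $\Id_U$ and top edge the canonical isomorphism $g_U\colon X'\times_X U \iso T'\times_T U$; this is what relates the two functors. The component must be the composite
\[
\cF_{h'_U}\bigl(\cM(X'\times_X U)\bigr) \xrightarrow{\cF_{h'_U}(\rho_{g_U}^{-1})} \cF_{h'_U}\bigl(\cM(T'\times_T U)\otimes_{\cO(T'\times_T U)}\cO(X'\times_X U)\bigr) \xrightarrow{\theta_{D_U}^{-1}} \cF_{h_U}\bigl(\cM(T'\times_T U)\bigr)\otimes_{\cO(U)}\cO(U) \xrightarrow{\mathrm{can}} \cF_{h_U}\bigl(\cM(T'\times_T U)\bigr),
\]
where $\rho_{g_U}$ is the quasi-coherence isomorphism of $\cM$ for the scheme isomorphism $g_U$. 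Once you insert $\theta_{D_U}$ here, the compatibility check is indeed the one you describe, and the factorization $D' = D_1 \circ D_U = D_V \circ D_2$ of the relevant squares is what lets assumption \ref{assumption_2} be applied; assumption \ref{assumption_1} is also used to control the trivial diagram $D_U$. Without the explicit use of $\theta_{D_U}$, the construction of $\phi_D$ is ill-defined.
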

\begin{proof}
Since $D$ is a fiber product diagram, for $U\in \Aff_X$ we get another fiber product diagram
\[
D_U = \begin{tikzcd}
X'\times_X U \arrow{r}{g_U} \arrow{d}{h'_U} & T'\times_T U \arrow{d}{h_U} \\
U \arrow[equals]{r} & U
\end{tikzcd}
\]
where $\tilde{g}$ is an isomorphism. Now, let $\cM \in \QCoh(T')$ with its canonical isomorphisms $\rho_f$ for morphisms $f\in \Aff_{T'}$. We have an isomorphism
\[
\phi_D(\cM)(U) \colon \cF_{X'/X}(g^*(\cM))(U) \iso \cF_{T'/T}(\cM)(U) = f^*(\cF_{T'/T}(\cM))(U)
\]
defined by
\[
\begin{tikzcd}
\cF_{h'_U}(\cM(X'\times_X U)) \arrow{r}{\cF_{h'_U}(\rho_{g_U}^{-1})} \arrow[swap,bend right=10]{ddr}{\phi_D(\cM)(U)} & \cF_{h'_U}\big(\cM(T'\times_T U)\otimes_{\cO(T'\times_T U)}\cO(X'\times_X U)\big) \arrow{d}{\theta_{D_U}^{-1}} \\
& \cF_{h_U}(\cM(T'\times_T U))\otimes_{\cO(U)}\cO(U) \arrow{d}{\mathrm{can}} \\
& \cF_{h_U}(\cM(T'\times_T U)).
\end{tikzcd}
\]
We check that these isomorphisms are compatible with the restriction along a morphism $V\to U$ of affine schemes in $\Aff_X$, and hence via Lemma \ref{lem_equiv_affine_sheaves}, give a well-defined isomorphism of sheaves
\[
\phi_D(\cM) \colon \cF_{X'/X}(g^*(\cM)) \iso f^*(\cF_{T'/T}(\cM)).
\] 
This also follows from constructing a large commutative diagram. First, we have a commutative diagram
\[
\begin{tikzcd}
 &[-5ex] T'\times_T V \arrow{rr} \arrow[near start]{dd}{h_V} &[-5ex] &[-5ex] T'\times_T U \arrow{rr} \arrow[near start]{dd}{h_U} &[-5ex] & T' \arrow{dd}{h} \\
X'\times_X V \arrow{ur}{g_V} \arrow[crossing over]{rr} \arrow{dd}{h'_V} & & X'\times_X U \arrow{ur}{g_U} \arrow[crossing over]{rr}  & & X' \arrow{ur}{g} & \\ 
 & V \arrow{rr} & & U \arrow{rr} & & T \\
V \arrow{rr} \arrow[equals]{ur} & & U \arrow{rr} \arrow[equals]{ur} \arrow[from=uu, near start, crossing over, "h'_U"] & & X \arrow{ur}{f} \arrow[crossing over, near start, from=uu, "h'"] &
\end{tikzcd}
\]
where the vertical faces are all pullback diagrams. The commutativity of the left cube means that we have an equality of pullback diagrams
\begin{align*}
D' &= \begin{tikzcd}[ampersand replacement=\&]
X'\times_X V \ar[r] \ar[d,"h'_V"] \ar[rd,phantom,"D_1"] \& X'\times_X U \ar[r,"g_U"] \ar[d,"h'_U"] \ar[rd,phantom,"D_U"] \& T'\times_T U \ar[d,"h_U"] \\
V \ar[r] \& U \ar[r,equals] \& U
\end{tikzcd} \\
&= \begin{tikzcd}[ampersand replacement=\&]
X'\times_X V \ar[r,"g_V"] \ar[d,"h'_V"] \ar[rd,phantom,"D_V"] \& T'\times_T V \ar[r] \ar[d,"h_V"] \ar[rd,phantom,"D_2"] \& T'\times_T U \ar[d,"h_U"] \\
V \ar[r,equals] \& V \ar[r] \& U.
\end{tikzcd}
\end{align*}
Therefore, our compatibility assumptions produce the following commutative diagram. To save space, we use the abbreviations $T'\times_T U = U_{T'}$,  $X'\times_X U = U_{X'}$, and similarly for $V_{T'}$ and $V_{X'}$. We also use the abuse of notation $\und \otimes_{\cO(U)}\cO(V) = \und \otimes_U V$.
\[
\begin{tikzcd}[column sep=-12ex]
 & \cF_{h_U}(\cM(U_{T'}))\otimes_U V \arrow[dddd,"\theta_{D'}" description] & \\
\big(\cF_{h_U}(\cM(U_{T'}))\otimes_U U \big)\otimes_U V \ar[ur,"\mathrm{can}"] \ar[d,swap,"\theta_{D_U}\otimes\Id"] & & \big(\cF_{h_U}(\cM(U_{T'}))\otimes_U V \big)\otimes_V V \ar[ul,swap,"\mathrm{can}"] \ar[d,"\theta_{D_2}\otimes \Id"] \\
\cF_{h'_U}\big(\cM(U_{T'})\otimes_{U_{T'}} U_{X'}\big)\otimes_U V \ar[d,swap,"\theta_{D_1}"] & & \cF_{h_V}\big(\cM(U_{T'})\otimes_{U_{T'}} V_{T'}\big)\otimes_V V \ar[d,"\theta_{D_V}"] \\
\cF_{h'_V}\big((\cM(U_{T'})\otimes_{U_{T'}} U_{X'})\otimes_{U_{X'}} V_{X'}\big) \ar[dr,swap,near start,"\cF_{h'_V}(\mathrm{can})"] & & \cF_{h'_V}\big((\cM(U_{T'})\otimes_{U_{T'}}V_{T'}) \otimes_{V_{T'}} V_{X'}\big) \ar[dl,near start,"\cF_{h'_V}(\mathrm{can})"]\\
 & \cF_{h'_V}\big(\cM(U_{T'})\otimes_{U_{T'}} V_{X'}\big) & 
\end{tikzcd}
\]
we extend the left side of this diagram (denoting its entries with $\bullet$ as before) to
\[
\begin{tikzcd}
\cF_{h'_U}(\cM(U_{X'})) \ar[rr,"\phi_D(\cM)(U)"] \ar[dd,swap,"\Id\otimes 1"] \ar[ddddrr,swap,"\cF_{X'/X}(V\to U)",rounded corners,to path={-- ([xshift=-5ex]\tikztostart.west) -- ([xshift=-5ex]\tikztostart.west|-\tikztotarget)\tikztonodes -- (\tikztotarget)}] & & \cF_{h_U}(\cM(U_{T'})) \ar[d,"\Id\otimes 1"] & & \\
 & \bullet \ar[r] \ar[d] & \bullet \ar[dd] & \bullet \ar[l] \ar[d] & \\
\cF_{h'_U}(\cM(U_{X'}))\otimes_U V \ar[dr,phantom,near start,"A"] \ar[urr,bend left=35,"\phi_D(\cM)(U)\otimes\Id"] \ar[d,"\theta_{D_1}"] & \bullet \ar[d,"\theta_{D_1}"] \ar[l,swap,"*"] & & \bullet \ar[d] & \\
\cF_{h'_V}\big(\cM(U_{X'})\otimes_{U_{X'}} V_{X'}\big) \ar[drr,"*"]  & \bullet \ar[dr,phantom,"B"] \ar[r] \ar[l,swap,"*"] & \bullet \ar[d,"*"] & \bullet \ar[l] & \\
 & & \cF_{h'_V}(\cM(V_{X'})) & &
\end{tikzcd}
\]
where arrows labelled ``$*$" are the appropriate isomorphisms induced by the quasi-coherence of $\cM$. Square $A$ commutes since $\theta_{D_1}$ is a natural transformation and square $B$ commutes because $\cM$ is quasi-coherent. The remaining new squares commute by definition. We also extend the right side of the original diagram to
\[
\begin{tikzcd}
 & \cF_{h_U}(\cM(U_{T'})) \ar[d,"\Id\otimes 1"] \ar[ddrr,bend left=20,"\cF_{T'/T}(V\to U)\otimes 1" description] \ar[rr,"\cF_{T'/T}(V\to U)"] & & \cF_{h_V}(\cM(V_{T'})) \ar[dd,"\Id\otimes 1"] \\
\bullet \ar[r] \ar[d] & \bullet \ar[dd] & \bullet \ar[l] \ar[d] &  \\
\bullet \ar[d] & & \bullet \ar[d,"\theta_{D_V}"] \ar[r,"*"] & \cF_{h_V}(\cM(V_{T'}))\otimes_V V \ar[d,"\theta_{D_V}"] \\
\bullet \ar[r] & \bullet \ar[d,"*"] & \bullet \ar[ur,phantom,near end,"B"] \ar[l] \ar[r,"*"] & \cF_{h'_V}\big(\cM(V_{T'})\otimes_{V_{T'}} V_{X'}\big) \ar[lld,"*"] \\
 & \cF_{h'_V}(\cM(V_{X'})) \ar[ur,phantom,"A"] \ar[uuuurr,swap,"\phi_D(\cM)(V)",rounded corners,to path={-- ([xshift=2.6in]\tikztostart.east) -- ([xshift=2.6in]\tikztostart.east|-\tikztotarget.east)\tikztonodes -- (\tikztotarget)}] & &
\end{tikzcd}
\]
where the square $A$ commutes due to the quasi-coherence of $\cM$, the square $B$ commutes because $\theta_{D_V}$ is a natural transformation, and again the other new squares commute by definition. Considering the expanded diagram in its totality, the outermost path is the commutative diagram
\[
\begin{tikzcd}[column sep=1in]
\cF_{X'/X}(g^*(\cM))(U) \ar[r,"\phi_D(\cM)(U)"] \ar[d,"\cF_{X'/X}(V\to U)"] & f^*(\cF_{T'/T}(\cM))(U) \arrow[d,"\cF_{T'/T}(V\to U)"] \\
\cF_{X'/X}(g^*(\cM))(V) \ar[r,"\phi_D(\cM)(V)"] & f^*(\cF_{T'/T}(\cM))(V).
\end{tikzcd}
\]
Since this is commutative for all morphisms $V \to U$, we have a well defined isomorphism of sheaves $\phi_D(\cM)$.

Now, we must check that these isomorphisms are functorial in $\cM$ and thus provide our desired isomorphism of functors $\phi_D$. Luckily, this follows directly from the fact that the various $\cF_h$ for $h\in \fI$ are functors. No more large diagrams are needed, and we are done.
\end{proof}

\begin{prop}\label{prop_stack_morphism_assembled}
The functors $\cF_{T'/T}$ assemble into a morphism of stacks $\fF \colon \QCoh_{\fI} \to \QCoh$ defined by
\begin{enumerate}[label={\rm(\roman*)}]
\item $\fF(h\colon T' \to T,\cM) = (T,\cF_{T'/T}(\cM))$ on objects, and
\item for a morphism $(f,g,\varphi)\colon (h'\colon X' \to X,\cN) \to (h\colon T' \to T,\cM)$, we have a pullback diagram
\[
D = \begin{tikzcd}
X' \ar[r,"g"] \ar[d,"h'"] & T' \ar[d,"h"] \\
X \ar[r,"f"] & T
\end{tikzcd}
\]
and we set $\fF(f,g,\varphi)= (f,\fF(\varphi))$ where
\[
\fF(\varphi) \colon \cF_{X'/X}(\cN) \xrightarrow{\cF_{X'/X}(\varphi)} \cF_{X'/X}(g^*(\cM)) \xrightarrow{\phi_D(\cM)} f^*(\cF_{T'/T}(\cM))
\]
is an $\cO|_X$--module morphism between the appropriate modules.
\end{enumerate}
\end{prop}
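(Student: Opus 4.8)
The plan is to verify that $\fF$ as defined is a well-defined functor which respects the structure functors to $\Sch_S$ and sends cartesian morphisms to cartesian morphisms; being a morphism of stacks then follows from Lemma \ref{lem_cartesian_groupoids} since $\QCoh$, viewed over $\Sch_S$, is fibered in groupoids only after passing to cartesian morphisms, so really what is needed is that $\fF$ is a morphism of fibered categories. First I would check that $\fF$ is well-defined on objects: for $(h\colon T'\to T,\cM)\in \QCoh_{\fI}$ the functor $\cF_{T'/T}$ exists by the discussion preceding Lemma \ref{lem_phi_isomorphism} (which applies Lemma \ref{lem_quasi_coh_functor_construction} after noting that assumptions \ref{assumption_1} and \ref{assumption_2} specialize correctly), and $\cF_{T'/T}(\cM)$ is a quasi-coherent $\cO|_T$--module, so $(T,\cF_{T'/T}(\cM))\in \QCoh$. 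The structure functor is respected on objects by construction.

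Next I would verify functoriality on morphisms. Given $(f,g,\varphi)$ as in the statement, $\cF_{X'/X}(\varphi)$ is a morphism $\cF_{X'/X}(\cN)\to \cF_{X'/X}(g^*(\cM))$ by functoriality of $\cF_{X'/X}$, and $\phi_D(\cM)$ is the isomorphism of functors from Lemma \ref{lem_phi_isomorphism} evaluated at $\cM$, giving $\cF_{X'/X}(g^*(\cM))\iso f^*(\cF_{T'/T}(\cM))$. Composing gives $\fF(\varphi)\colon \cF_{X'/X}(\cN)\to f^*(\cF_{T'/T}(\cM))$, so $\fF(f,g,\varphi)=(f,\fF(\varphi))$ is a legitimate morphism of $\QCoh$; and it lies over $f$, so the structure functors agree. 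Preservation of identities is immediate since $\cF_{X'/X}(\Id)=\Id$, the diagram $D$ for an identity morphism is the trivial square with $g=\Id$, and $\phi_D$ in that case is the identity by assumption \ref{assumption_1} (this is essentially the content that $\theta_{D_U}$ composed with the canonical maps is the identity). The main computation is compatibility with composition: given composable morphisms $(f_1,g_1,\varphi_1)\colon (h_2\colon X'_2\to X_2,\cN_2)\to (h\colon T'\to T,\cM)$ and $(f_2,g_2,\varphi_2)\colon (h_3\colon X'_3\to X_3,\cN_3)\to (h_2\colon X'_2\to X_2,\cN_2)$, one must show $\fF((f_1,g_1,\varphi_1)\circ(f_2,g_2,\varphi_2))=\fF(f_1,g_1,\varphi_1)\circ\fF(f_2,g_2,\varphi_2)$. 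Expanding both sides, the composite in $\QCoh_{\fI}$ is $(f_1\circ f_2,\ g_1\circ g_2,\ g_2^*(\varphi_1)\circ\varphi_2)$, and the two sides reduce to an identity of the form
\[
\phi_{D_{12}}(\cM)\circ \cF_{X'_3/X_3}(g_2^*(\varphi_1)\circ\varphi_2) = f_2^*\big(\phi_{D_1}(\cM)\circ\cF_{X'_2/X_2}(\varphi_1)\big)\circ \phi_{D_2}(g_1^*(\cM))\circ\cF_{X'_3/X_3}(\varphi_2),
\]
where $D_1,D_2,D_{12}$ are the evident fiber product diagrams and $D_{12}$ is the horizontal pasting of $D_1$ and $D_2$. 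The key input is a ``cocycle'' or transitivity property of the isomorphisms $\phi_D$ from Lemma \ref{lem_phi_isomorphism}: for a pasted fiber product diagram $D_{12}=D_1\ast D_2$ one has $\phi_{D_{12}}(\cM)= f_2^*(\phi_{D_1}(\cM))\circ\phi_{D_2}(g_1^*(\cM))$ after identifying $(g_1\circ g_2)^*(\cM)$ with $g_2^*(g_1^*(\cM))$; combined with naturality of $\phi_{D_2}$ in its module argument (so that $\phi_{D_2}(g_1^*(\cM))\circ\cF_{X'_3/X_3}(g_2^*(\varphi_1))= f_2^*(\cF_{X'_2/X_2}(\varphi_1))\circ\phi_{D_2}(g_1^*(\cN_2))$, wait — rather, naturality lets one slide $\cF(\varphi_1)$ past $\phi_{D_2}$) and functoriality of $\cF_{X'_3/X_3}$, the two sides match.

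The hard part will be establishing that transitivity property for the $\phi_D$ isomorphisms, since it is not stated as a separate lemma and must be extracted from the construction in the proof of Lemma \ref{lem_phi_isomorphism}; in practice this is a diagram chase over affine schemes $U\in \Aff_{X_3}$ which ultimately rests on assumption \ref{assumption_2} (the compatibility of the $\theta_D$ with horizontal composition of fiber product diagrams) together with the quasi-coherence transition isomorphisms $\rho_f$ of Lemma \ref{lem_quasi_coh_characterization} satisfying their own cocycle condition. Finally, I would check that $\fF$ sends cartesian morphisms to cartesian morphisms: a morphism $(f,g,\varphi)$ in $\QCoh_{\fI}$ is cartesian over $\Sch_S$ precisely when $\varphi$ is an isomorphism (as all morphisms in $\QCoh_{\fI}$ are, since it is fibered in groupoids), and then $\fF(\varphi)=\phi_D(\cM)\circ\cF_{X'/X}(\varphi)$ is a composite of isomorphisms, hence an isomorphism, so $\fF(f,g,\varphi)$ is cartesian in $\QCoh$ (using that a morphism $(f,\psi)$ in $\QCoh$ is cartesian iff $\psi$ is an isomorphism, as established in Example \ref{ex_stack_sheaves} for $\fSh$ and inherited by the substack $\QCoh$). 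This completes the verification that $\fF$ is a morphism of stacks.
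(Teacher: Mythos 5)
Your proposal is correct and follows essentially the same route as the paper: well-definedness and respect for the structure functor, the identity check via assumption (C.1.a), compatibility with composition via a cocycle identity for the $\phi_D$ together with naturality of $\phi_{D_2}$ in its module argument (the paper packages both into a single large diagram chase over $U\in\Aff_{T_3}$ whose key face rests on assumption (C.1.b)), and preservation of cartesian morphisms from the fact that $\fF(\varphi)$ is a composite of isomorphisms. The "transitivity" identity $\phi_{D_{12}}(\cM)=f_2^*(\phi_{D_1}(\cM))\circ\phi_{D_2}(g_1^*(\cM))$ that you isolate as the hard part is indeed exactly what the paper's diagram establishes, so your decomposition is a faithful refactoring of the same argument.
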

\begin{proof}
We must argue that $\fF$ is a well defined functor and that it preserves cartesian morphisms. Since it is already clear that it respects the structure functors, this will be sufficient to conclude that $\fF$ is a morphism of stacks.

Consider the identity morphism of an object $(\Id,\Id,\Id_{\cM})\colon (h\colon T' \to T,\cM) \to (h\colon T' \to T,\cM)$ and its associated fiber product diagram $D$. Our assumption \ref{assumption_1} implies that the isomorphism $\phi_D$ constructed in Lemma \ref{lem_phi_isomorphism} is the identity. Therefore, $\fF(\Id_{\cM}) = \Id_{\cF_{T'/T}(\cM)}$ as required.

Assume we have a composition of morphisms
\[
(h_3\colon T'_3 \to T_3,\cM_3) \overset{(f_2,g_2,\varphi_2)}{\longrightarrow} (h_2\colon T'_2 \to T_2,\cM_2) \overset{(f_1,g_1,\varphi_1)}{\longrightarrow} (h_1 \colon T'_1 \to T_1,\cM_1).
\]
For $U\in \Aff_{T_3}$, let $T'_i \times_{T_i} U = U'_i$. Then, we have fiber product diagrams
\[
D = \begin{tikzcd}[column sep=10ex]
U'_3 \ar[d,"h_{3,U}"] \ar[r,"g_{2,U}"] \ar[dr,phantom,"D_2"] & U'_2 \ar[d,"h_{2,U}"] \ar[r,"g_{1,U}"] \ar[dr,phantom,"D_1"] & U'_1 \ar[d,"h_{1,U}"] \\
U \ar[r,equals] & U \ar[r,equals] & U.
\end{tikzcd}
\]
The commutativity of the following diagram
\[\small
\begin{tikzcd}[column sep=5ex]
 & & \cF_{h_{1,U}}(\cM_1(U'_1)) \\
 & & \cF_{h_{1,U}}(\cM_1(U'_1))\otimes_U U \otimes_U U \ar[u,swap,"\mathrm{can}"] \ar[d,"\theta_{D_1}\otimes\Id"] \\
 & & \cF_{h_{2,U}}(\cM_1(U'_1)\otimes_{U'_1} U'_2)\otimes_U U \ar[d,"*"] \\
 & \cF_{h_{2,U}}(\cM_2(U'_2))\otimes_U U \ar[r,"\cF_{h_{2,U}}(\varphi_1(U'_2))\otimes\Id" yshift=1ex] \ar[d,"\theta_{D_2}"] & \cF_{h_{2,U}}(\cM_1(U'_2))\otimes_U U \ar[d,"\theta_{D_2}"] \\
 & \cF_{h_{3,U}}(\cM_2(U'_2)\otimes_{U'_2} U'_3) \ar[r,"\cF_{h_{3,U}}(\varphi_1(U'_2)\otimes\Id)" yshift=1ex] \ar[d,"*"] & \cF_{h_{3,U}}(\cM_1(U'_2)\otimes_{U'_2} U'_3) \ar[d,"*"] \\
\cF_{h_{3,U}}(\cM_3(U'_3)) \ar[r,"\cF_{h_{3,U}}(\varphi_2(U'_3))"] \ar[uuuuurr,bend left,"(\fF(\varphi_1)\circ \fF(\varphi_2))(U)"] \ar[rr,swap,bend right,"\cF_{h_{3,U}}((g_2^*(\varphi_1)\circ\varphi_2)(U'_3))"] & \cF_{h_{3,U}}(\cM_2(U'_3)) \ar[r,"\cF_{h_{3,U}}(\varphi_1(U'_3))"] & \cF_{h_{3,U}}(\cM_1(U'_3))
\end{tikzcd}
\]
implies the commutativity of face $A$ in the diagram below
\[\small
\begin{tikzcd}
 & \cF_{h_{1,U}}(\cM_1(U'_1)) &[-3ex] \\
 & \cF_{h_{1,U}}(\cM_1(U'_1))\otimes_U U \otimes_U U \ar[u,swap,"\mathrm{can}"] \ar[r,"\mathrm{can}"] \ar[d,"\theta_{D_1}\otimes\Id"] & \cF_{h_{1,U}}(\cM_1(U'_1))\otimes_U U \ar[dd,"\theta_D"] \ar[dd,phantom,bend right=70,"B"] \ar[ul,swap,"\mathrm{can}"] \\
 & \cF_{h_{2,U}}(\cM_1(U'_1)\otimes_{U'_1}U'_2)\otimes_U U \ar[dl,bend right,start anchor=west,end anchor=north east,"*"] \ar[d,"\theta_{D_2}"] & \\
\cF_{h_{2,U}}(\cM_1(U'_2))\otimes_U U \hspace{-5ex} \ar[uuur,phantom,bend left=70,near start,"A"] \ar[dr,bend right,start anchor=south east,end anchor=west,"\theta_{D_2}"] & \cF_{h_{3,U}}(\cM_1(U'_1)\otimes_{U'_1}U'_2 \otimes_{U'_2} U'_3) \ar[r,"\cF_{h_{3,U}}(\mathrm{can})"] \ar[d,"*"] & \cF_{h_{3,U}}(\cM(U'_1)\otimes_{U'_1} U'_3) \ar[ddl,swap,"*"] \\
 & \cF_{h_{3,U}}(\cM_1(U'_2)\otimes_{U'_2} U'_3) \ar[d,"*"] & \\
\cF_{h_{3,U}}(\cM_3(U'_3)) \ar[r,"\cF_{h_{3,U}}((g_2^*(\varphi_1)\circ \varphi_2)(U'_3))"] \ar[uuuuur,rounded corners,to path={--([xshift=-2ex]\tikztostart.west)--([xshift=-2ex]\tikztostart.west|-\tikztotarget.west)--(\tikztotarget)\tikztonodes} ,"(\fF(\varphi_1)\circ \fF(\varphi_2))(U)"] \ar[uuuuur,rounded corners,to path={--([yshift=-1ex]\tikztostart.south)-|([xshift=66ex]\tikztostart.east)[pos=0.4]\tikztonodes|-(\tikztotarget.east)},"\fF(g_2^*(\varphi_1)\circ\varphi_2)(U)"]& \cF_{h_{3,U}}(\cM_1(U'_3)) & & 
\end{tikzcd}
\]
and face $B$ commutes by assumption \ref{assumption_2}. This diagram implies that $(\fF(\varphi_1)\circ \fF(\varphi_2))(U)=\fF(g_2^*(\varphi_1)\circ\varphi_2)(U)$ for all $U\in \Aff_{T_3}$ and therefore $\fF(\varphi_1)\circ \fF(\varphi_2)=\fF(g_2^*(\varphi_1)\circ\varphi_2)$ in general. This shows that $\fF$ respects composition and hence is a well-defined functor.

Finally, we address cartesian morphisms. Recall that by construction, all morphisms in $\QCoh_{\fI}$ are cartesian. Since $\QCoh$ is a substack of the stack $\fSh$ of Example \ref{ex_stack_sheaves}, a morphism $(f,\psi) \in \QCoh$ is cartesian if and only if $\psi$ is an isomorphism. Now, consider a morphism $(f,g,\varphi)\colon (h'\colon X' \to X,\cN)\to(h\colon T' \to T,\cM)$ in $\QCoh_\fI$. The map $\varphi$ is an isomorphism by definition and the map $\phi_D$ constructed in Lemma \ref{lem_phi_isomorphism} is an isomorphism as well. Therefore, $\fF(\varphi)=\phi_D(\cM)\circ \cF_{X'/X}(\varphi)$ is an isomorphism and thus $\fF(f,g,\varphi)=(f,\fF(\varphi))$ is a cartesian morphism. This concludes the proof.
\end{proof}

\end{document}